\documentclass[12pt]{amsart}
\usepackage[letterpaper,margin=1in,centering,marginparwidth=.95in,marginparsep=1mm]{geometry}
\usepackage{amsmath,amssymb,thmtools}
\usepackage{xcolor}\def\red{\color{red}}
\usepackage{lineno}
\usepackage{enumitem}
\usepackage{lipsum}
\usepackage[normalem]{ulem}
\usepackage[noadjust]{cite}
\usepackage{yfonts}
\usepackage{thmtools}
\usepackage{mathtools}
\usepackage{comment}
\usepackage{soul}
\usepackage{graphicx}
\usepackage[pagebackref,hidelinks]{hyperref}
\usepackage{tikz}
\usetikzlibrary{arrows.meta}
\usetikzlibrary{intersections}
\usetikzlibrary{decorations.markings, arrows, arrows.meta}
\usetikzlibrary{decorations.pathreplacing,angles,quotes}
\graphicspath{{build/tikz/}}
\usetikzlibrary{decorations.markings,arrows.meta,angles,quotes}
\graphicspath{ {./images/} }

\usepackage[capitalize,nameinlink,noabbrev]{cleveref}

\newtheorem{theorem}{Theorem}[section]
\newtheorem{lemma}[theorem]{Lemma}
\newtheorem{corollary}[theorem]{Corollary}
\newtheorem{proposition}[theorem]{Proposition}
\newtheorem{conjecture}[theorem]{Conjecture}
\theoremstyle{definition}
\newtheorem{definition}[theorem]{Definition}
\newtheorem{remark}[theorem]{Remark}



\newif\ifcomments
\newcounter{commentlabel}
  \newlength{\commentlift}
\makeatletter\@mparswitchfalse\def\commentfl@g{%
    \vbox to\z@{%
      \setlength{\fboxrule}{0.75pt}%
      \setlength{\fboxsep}{0.75pt}%
      \setlength{\commentlift}{1ex}%
      \addtolength{\commentlift}{\fboxrule}%
      \addtolength{\commentlift}{\fboxsep}%
\vss\color{blue}\rlap{\rlap{\vrule\@height\commentlift\@width\fboxrule}\raise \commentlift%
      \hbox{\fcolorbox{blue}{blue!3}{\normalfont\tiny\bfseries\thecommentlabel}}}}}
  \DeclareRobustCommand{\COMMENT}[1]{\unskip\global\commentstrue\stepcounter{commentlabel}%
\hypertarget{\the\value{section}-\the\value{commentlabel}}{}%
\edef\WRITECOM##1{\noexpand\write\COM{\thecommentlabel, S.##1}}\WRITECOM{\the\value{section}, \noexpand\hyperlink{\the\value{section}-\the\value{commentlabel}}{p.\the\value{page}}: {#1}}\@bsphack%
    \commentfl@g%
    \marginpar{\fontfamily{EBGaramond-TLF}\selectfont\noindent\lineskip=1pt\lineskiplimit=\maxdimen\raggedright\fontfamily{put}\selectfont %
    \Tiny\def\Hrule{\vphantom{p}\hrule height 1pt\vphantom{!}}\textbf{\color{blue}\llap{\textperiodcentered}\thecommentlabel:}{\color{blue}\thinspace#1\endgraf}}%
  \@esphack}%
  \DeclareRobustCommand{\COMMENTINLINE}[1]{\commentstrue\stepcounter{commentlabel}%
    \write\COM{\thecommentlabel, S.\the\value{section}, p.\the\value{page}: {#1}}\@bsphack%
      \noindent\setlength{\fboxsep}{0pt}\newline\fcolorbox{blue}{lightgray!20}{\begin{minipage}{\textwidth}\tiny\textbf{\color{red}\thecommentlabel}:\thinspace\Tiny\bfseries#1\par\end{minipage}}\allowbreak%
  \@esphack}
\newwrite\COM
\immediate\openout\COM=comments
\AtEndDocument{\ifcomments\def\red{\color{red}}\section*{Marginal Comments}\label{commentstex}{\let\tiny\relax\let\Tiny\relax\let\Hrule\newline\def\pullout#1, S.{\strut\llap{\textbf{#1: }}\S\ }\obeylines\parindent0pt\everypar{\normalsize\normalfont\pullout}\immediate\closeout\COM\def\MT_extended_cref:n {\cref}\def\math@bb{\mathbb }\input comments }\fi}
\let\Hrule\relax
\makeatother

\def\COMMENT#1{}\def\COMMENTINLINE#1{}

\newcommand{\done}{\newline\textbf{\color{red}---Made small changes, will think much more carefully.}}
\newcommand{\DONE}{\newline\textbf{\color{red}---This comment has been addressed. It should now be removed.}}
\newcommand{\wtilde}[1]{\,\widetilde{\!#1}}
\newcommand{\oline}[1]{\,\overline{\!#1}}
\newcommand{\dfn}{\ensuremath{\mathbin{{:}{=}}}}
\newcommand{\nfd}{\ensuremath{\mathbin{{=}{:}}}}

\newcommand \eps{\epsilon}
\newcommand \re{{\mathbb R}}

\newcommand \gav{\ga_v}
\newcommand \la{\lambda}
\newcommand \ga{\gamma}
\newcommand \las{\la^s}
\newcommand \sm{T^1\Sigma}
\newcommand \lau{\la^u}
\newcommand \lav{\la(v)}
\newcommand \de{\delta}

\newcommand \ftv{f_tv}

\newcommand \ws{W^s}
\newcommand \wu{W^u}

\newcommand \fs{\mathcal{F}^s}

\newcommand \M{\tilde \Sigma}
\newcommand \Minfty{\M(\infty)}
\newcommand \om{\,\overline{\Sigma}}
\newcommand \fmu{f^\mu}
\newcommand \kmu{K_\mu}
\newcommand \m{\ensuremath\mu-magnetic}
\newcommand{\PPP}{\mathcal{P}}
\newcommand{\SSS}{\mathcal{S}}
\newcommand{\GGG}{\mathcal{G}}
\newcommand{\BBB}{\mathcal{B}}
\newcommand{\CCC}{\mathcal{C}}

\newcommand{\Reg}{\mathrm{Reg}}
\newcommand{\Sing}{\mathrm{Sing}}
\newcommand{\td}{\tilde d}
\newcommand{\dmu}{d_\mu}
\newcommand{\barc}{\bar c}

\DeclareMathOperator{\emu}{\mu exp}
\newcommand{\h}{\text{If }(\Sigma,\mu) \text{ satisfies }\ref{H1} \text{\red\bfseries\upshape\large[Bad macro, replace]}}

\renewcommand{\h}{{\red If \ensuremath{(\Sigma,\mu)} satisfies \ref{H1}}}

\newcommand{\smm}{T^1\M}
\newcommand{\bmu}{B^\mu}
\newcommand{\Per}{\mathrm{Per}}
\newcommand{\f}{f^\mu}
\DeclareMathOperator{\rank}{rank_\mu}
\let\emptyset\varnothing
\let\implies\Rightarrow\let\iff\Leftrightarrow
\begin{document}
\title[Magnetic flows]{Surfaces with nonpositive magnetic curvature}
\author{Boris Hasselblatt}
\address{Department of Mathematics, Tufts University, Medford, MA 02155, USA}
\email{boris.hasselblatt@tufts.edu}
\author{Jincheng Wang}
\address{Department of Mathematics, Tufts University, Medford, MA 02155, USA}
\email{JinCheng.Wang@tufts.edu}
\keywords{Magnetic flows, hyperbolic dynamics, equilibrium states}
\subjclass{37D40,37D35}
\begin{abstract}
We study weakly hyperbolic magnetic (or twisted geodesic) flows of negatively curved surfaces (with nonpositive ``magnetic curvature'') with a view to topological dynamics and ergodic theory, using their large-scale geometry on the universal cover.
\end{abstract}
\date{\today}
\maketitle
\tableofcontents
\section{Introduction}\label{section:intro}
The unit-speed geodesic flow of a (here negatively curved) surface can be modified by the addition of a Lorentz force modeling the effect of a magnetic field on a charged particle to produce what is called a magnetic (or twisted geodesic) flow. For weak magnetic fields, this retains the uniform hyperbolicity of the geodesic flow, and we study the edge case when the magnetic flow is not uniformly hyperbolic but nonuniformly so, or, more precisely, where the ``magnetic curvature'' is nonpositive but not everywhere negative. The aim is to understand the topological dynamics and ergodic theory, and the approach centers on understanding the large-scale geometry on the universal cover, following the blueprint for the study of the geodesic flow. Thus, the subject of this work is geometry, mainly, but the object is dynamics.

\subsection{Main results}\label{SECMainReults}
We produce a boundary at infinity, the visibility property, an abundance of closed orbits, topological transitivity, expansivity, orbit-equivalence to the geodesic flow, and equilibrium states. However, the most natural definition of a distance function does not yield a distance due to irreversibility: symmetry and the triangle inequality both fail. This  stands in the way of the most obvious approach to natural further developments.

Let $\Sigma$ be a closed Riemannian surface with Gaussian curvature $K$ and denote by $\sm \dfn \{v\in T\Sigma \mid |v|=1\}$ its unit tangent bundle. Given $\mu\in\re$, the \m\ flow $\f\colon \sm \to\sm$ (\cref{def:mag flow}) is an Anosov flow if the \emph{\m\ curvature} $\kmu(p)\coloneqq K(p)+\mu^2$ is negative for all $p\in\Sigma$ (i.e., $\ref{H4}\implies\ref{HA}$ below). 
\begin{definition}[Hyperbolic magnetic flow]\label{DEFHypMag}Let $\Sigma$ be a closed surface with Gaussian curvature $K$ and let $\mu\in\re$. Cases for  $(\Sigma,\mu)=(\Sigma,g,\Omega_g,\mu)$ or $\f$:
\begin{enumerate}[label=\textbf{\upshape{(H\arabic{*})}}]\setcounter{enumi}{-1}
   \item\label{H0}$K<0$ (a standing assumption);
   \item\label{H1} nonpositive magnetic curvature: \ref{H0} and $K_{\mu}\leq0$;
   \item\label{H2}(``Not flat'') \ref{H1} and $K_{\mu}(p)<0$ for some $p\in \Sigma$;
   \item\label{H3} (``No flat strips'') \ref{H2} and for any two distinct points $x,y\in\Minfty$, there exists at most one \m\ geodesic from $x$ to $y$ (see \cref{thm:ptildeqtilde-magnetic-visibility-property}), i.e., $(\Sigma,\mu)$ has no magnetically flat strips (\cref{def:magnetic-flat-strip});
   \item\label{HA}$\f$ is Anosov;   \item\label{H4}$K_\mu<0$.
\end{enumerate}
If \ref{H2}, then we say that $\f$ is a \emph{hyperbolic magnetic flow} and that $\Sigma$ is a \emph{\m\  rank-1} surface, which means \ref{H1} and that it has a \m\  rank-1 \m\  geodesic, that is, a \m\  geodesic that  admits only tangential parallel \m\  Jacobi fields (\cref{def:magnetic-rank-one,prop:magnetic-rank-one-implies-h2}).

We say that $\Sigma$ is \emph{$\mu$-magnetically flat} if \ref{H1} but not \ref{H2}, i.e., $K_\mu\equiv0$, hence  $K\equiv -\mu^2$.
\end{definition} 
If $\Sigma$ is \emph{$\mu$-magnetically flat}, then $\f$ is the horocycle flow; accordingly, we should implicitly think of $K$ (hence $K_\mu$) as nonconstant throughout; indeed, otherwise, $\ref{H2} \implies \ref{H4}$. 

\ref{HA} implies that Liouville measure is ergodic (and more). Among our motivations is to provide tools for deciding what \ref{H2} (or \ref{H3}) implies.

The main results assume \ref{H1} throughout, unless stated otherwise:
\begin{description}
    \item[Magnetic boundary (\cref{prop:boundary-homeomorphic})]The \m\ boundary $\M_\mu(\infty)$ of $\Sigma$, defined as equivalence classes of  asymptotic \m\ geodesics (\cref{def:asymptotic-maggeo-mag-boundary}), is homeomorphic to $\Minfty$ (\cref{def:geo-boundary-point-asymp-class}) in a natural way. Henceforth, we no longer distinguish $\Minfty$ and $\M_\mu(\infty)$.
    \item[Endpoints and flatness (\cref{prop:h2-has-no-closed-orbit})]$\Sigma$ is $\mu$-magnetically flat if and only if there is a \m\  geodesic with the same forward and backward endpoints on $\Minfty$; and in this case, all \m\ geodesics have this property. 
    \item[Magnetic visibility (\cref{thm:ptildeqtilde-magnetic-visibility-property})] If \ref{H2}, then for any two distinct points $x,y\in \Minfty$, there exists a \m\  geodesic $\ga$ from $x$ to $y$. (Uniqueness implies \ref{H3} as follows.)
    \item[Magnetic flat strips (\cref{prop:magnetic-flat-strip-is-connected,lemma:magnetic-flat-strip-parallel})] For $x\neq y\in\Minfty$ connected by more than one \m\ geodesic,  there exists a \m\  flat strip $S([0,C]\times \re)$, which is a subset of $\M$ consisting of points with zero magnetic curvature defined as the union of all \m\ geodesics from $x$ to $y$. In fact, $S$ is an embedding of a Euclidean strip $[0,C]\times \re$ for some $C>0$ where \m\ geodesics $S(c_1,\cdot)$ and $S(c_2,\cdot)$ are such that $d(S(c_1,t),S(c_2,\re))$ is constant for any $c_1,c_2\in [0,C]$ and $t\in \re$. Indeed, $S$ is a parallel \m\ variation (\cref{def:stable-unstable-magJF}), that is, $S$ is a magnetic variation such that for any $c\in [0,C]$, the \m\ Jacobi field along the \m\ geodesic $S(c,\cdot)$ corresponding to $S$ has constant orthogonal component for all $t\in\re$ (\cref{rmk:shearing-effect}).
    
    \item[Orbit-equivalence (\cref{thm:orbit-equivalence})] A \m\ flow is orbit-equivalent to the geodesic flow of $\Sigma$ if and only if \ref{H3}. If \ref{H2}, then the geodesic flow is an orbit factor (\cref{remark:orbit-factor}).
    \item[Expansivity (\cref{prop:expansivity-of-mag-flows,prop:mag-flow-is-h-expansive,thm:kinematic-expansivity})] A \m\ flow is expansive if and only if \ref{H3} and always kinematic-expansive, hence its time-1 map is entropy-expansive.
    \item[Periodic orbits (\cref{prop:periodic-orbits-vectors,cor:nw-is-TM})] A hyperbolic magnetic flow has infinitely many periodic orbits, and periodic vectors are dense in $\sm$. Hence, the nonwandering set is $\sm$.
    \item[Topological transitivity (\cref{thm:topologically-transitivity})] A magnetic flow is topologically transitive (\cref{def:topo_transitivity}). If \ref{H2}, then any dense orbit is regular (\cref{def:sing-reg-mag-vectors}).
    \item[Equilibrium states (\cref{thm:existence-of-es-equidistribution})] A magnetic flow has an equilibrium state for any Bowen-bounded potential function, in particular, a measure of maximal entropy (zero potential). 
    \item[Magnetic triangle inequality (\cref{lemma:partial-magnetic-triangle-inequality,lemma:magnetic-triangle-inequality-fails})]It is natural to imitate Riemannian geometry by defining the distance between two points $p,q\in\M$ as the length of the unique \m\  geodesic $\ga_{pq}$ from $p$ to $q$ (\cref{def:mag-distance}), but this is generally not symmetric (\cref{cor:td-is-not-metric}), and a magnetic triangle inequality only holds for every point on one side of $\ga_{pq}$ (\cref{fig:mag tri ineq}), and fails for an open set of points on the other side (\cref{fig:magtriineqfailed}).
    \item[Magnetic \emph{orthosphere} (\cref{prop:c2-regularity-of-magnetic-horosphere})] Horospheres are both limit spheres and orthogonal curves to asymptotic geodesics. The preceding issue causes difficulties with magnetic horospheres as limit spheres, so we define magnetic \emph{orthospheres}  (\cref{def:F-F-t-magnetic-busemann-function}) and show that these are $C^2$.
    \end{description}

\subsection{Organization of this article}
We now discuss the main results and their interrelations, as well as the organization of this article. 

For a magnetic geodesic flow, we define and study the \m\ boundary $\M_{\mu}(\infty)$ of the universal cover. For negative magnetic curvature, the magnetic flow is Anosov and, in fact, a quasi-geodesic flow \cite{Grognet}. Thus having bounded distance from a geodesic, a magnetic orbit (\cref{def:angle-for-pqtilde-end-points}) therefore has the same (Riemannian) endpoints on the (Riemannian) boundary $\Minfty$ (\cref{def:geo-boundary-point-asymp-class}) as that geodesic. 

This is not so if one only assumes nonpositive \m\ curvature, as made most starkly clear by the horocycle flow (\cref{Halfplanechords}). Therefore, we instead 
define the \m\ boundary $\M_{\mu}(\infty)$ from scratch as asymptote classes of \m\ geodesics and then construct a homeomorphism to $\Minfty$ (\cref{prop:boundary-homeomorphic}).
In the process, we note that \m\ Jacobi fields are stable if and only if the corresponding \m\ geodesics are asymptotic  (\cref{prop:stable-mag-jf-same-endpoint}). 
One application is a characterization of magnetic flatness by the existence of a magnetic geodesic with a single endpoint (\cref{prop:h2-has-no-closed-orbit}). (Thus, any orbit of a hyperbolic magnetic flow has two distinct endpoints on $\M_\mu(\infty)$.) 
On the other hand, we adapt the arguments in \cite{visibility} for geodesic flows to prove \m\ visibility (\cref{thm:ptildeqtilde-magnetic-visibility-property}; any two distinct points on $\M_\mu(\infty)$ are connected by a \m\ geodesic) and deduce topological transitivity of magnetic flows (\cref{thm:topologically-transitivity}) as well as orbit-equivalence to the geodesic flow of the underlying metric (\cref{thm:orbit-equivalence}).

The question of uniqueness of the connecting magnetic geodesic in the magnetic visibility property, which may fail for nonpositive magnetic curvature, leads to magnetic flat strips in \cref{section:mag-axes-and-flat-strip}; this mirrors geodesic flows, where flat strips are isometrically embedded Euclidean strips and associated with axial isometries (\cref{prop:magnetic-flat-strip-is-connected,lemma:magnetic-flat-strip-parallel}; in either context, the existence of flat strips is incompatible with uniform hyperbolicity and expansivity). Axial isometries come with magnetic axes---(periodic) magnetic geodesics preserved by the respective isometry---and this implies density of periodic vectors (\cref{prop:periodic-orbits-vectors}), hence that every vector is nonwandering  (\cref{cor:nw-is-TM}).

The results so far mirror those for geodesic flows. However, magnetic flat strips illustrate a distinction between geodesic and magnetic flows that has significant consequences. This is literally a twist. A (Riemannian) flat strip can be parametrized by a geodesic variation for which all corresponding Jacobi fields are parallel and orthogonal; in this sense, the geodesic flow acts on flat strips by translation. A magnetic flat strip can be parametrized by a magnetic variation for which all magnetic Jacobi fields are parallel, but this now only means that the orthogonal component is constant, while the tangential component is linear in time (see \eqref{eqn:magJac1}). Accordingly, the magnetic flow acts on a magnetic flat strip by linear twist maps (or shears). This is consonant with the terminology ``twisted geodesic flow''; see \cref{rmk:shearing-effect} for more details. The consequences of this are explored in \cref{SHorospheres}; they primarily affect attempts to build a theory of magnetic horospheres modeled on that for geodesic flows and the strong (un)stable subbundles (\cref{rmk:obstacle-sing-strong-subspaces}).

But first, we see how this affects expansivity (\cref{ssection:expansivity}). As is the case with geodesic flow, expansivity fails (only) in the presence of flat strips (\cref{prop:expansivity-of-mag-flows}). Moreover, a magnetic flow is orbit-equivalent to the geodesic flow of the underlying metric if and only if there are no magnetic flat strips (\cref{thm:orbit-equivalence}).

However, kinematic-expansivity, a weaker and possibly more natural notion than expansivity (points, rather than orbits, separate) holds even in the presence of magnetic flat strips because the twisting separates points (see \cref{thm:kinematic-expansivity}). This implies entropy-expansivity and hence the existence of equilibrium states (\cref{prop:mag-flow-is-h-expansive,thm:existence-of-es-equidistribution}). 

\cref{SHorospheres} begins with the most basic consequence of the twisting: the definition of a magnetic distance function based on lengths of connecting magnetic orbit (\cref{def:mag-distance}) does not yield a magnetic triangle inequality. (Indeed, that distance function is not even symmetric---this is to be expected from irreversibility of the magnetic flow and proved in \cref{cor:td-is-not-metric}.) In fact, we can describe the ``intermediate'' points for which the triangle inequality fails (\cref{lemma:partial-magnetic-triangle-inequality,lemma:magnetic-triangle-inequality-fails}).

Geometrically, the problem affecting the triangle inequality is that magnetic spheres are not orthogonal to radial magnetic orbits; \cref{fig:magtriineqfailed} illustrates the connection. As noted before, the two equivalent descriptions of horospheres (as limit spheres or orthogonal curves) disaggregate for magnetic flows, and indeed, there are difficulties with the notion of Busemann functions associated with the aforementioned distance (see \cref{rmk:obstacles-defining-magnetic-busemann-in-obvious-way}). 

Accordingly, we introduce magnetic orthospheres (orthogonal curves) and B-functions (see \cref{def:F-F-t-magnetic-busemann-function,rmk:name-of-B-function}) and connect these to the ``magnetic spray'':
magnetic orthospheres are $C^2$, and the geodesic curvature of orthospheres at $\pi v$ reflects the hyperbolicity of the magnetic flow at $v$ (\cref{prop:c2-regularity-of-magnetic-horosphere,rmk:perspective-on-horospheres}). This produces strong (un)stable spaces on the regular set (\cref{prop:strong_stable_spaces} and \cref{def:strong-stable-space}).

\subsection{Magnetic intensity versus speed of the flow}
As is the case with geodesic flows, magnetic flows conserve energy, i.e., they move at constant speed (because the magnetic force is perpendicular to the direction of motion). In the case of geodesic flows, this is the reason they are usually studied on the unit tangent bundle---because the geodesics are the same regardless of speed, one may as well consider unit speed. We do the same for magnetic flows. It is well to note, however, that magnetic flows at different speeds do not trace out the same curves on the manifold. A fast trajectory is less curved by a given magnetic field. Accordingly, while we consider different possible magnetic intensities and focus on the one at the edge of hyperbolicity, we could instead consider a given magnetic field and choose various speeds: fast motion is uniformly hyperbolic, and extremely fast motion will look very much like the geodesic flow. That is, the low-magnetic-intensity limit corresponds to the high-speed limit and vice versa. See \cref{rmk:observation} and \cite[Remark 2.8]{PeyerimhoffSiburg} for more details. 

We will restrict the study of magnetic flows to unit speed. 
Note that \m\ flows are Anosov (uniform hyperbolic) when $\mu$ is small by the stability of Anosov flows since the 0-magnetic flow is Anosov. 
We are interested in the threshold magnetic intensity
where hyperbolicity is no longer uniform.

\subsection{Some prior works on geodesic and magnetic flows}
A century ago, geodesic flows of negatively curved manifolds began to play a central role in what was to become smooth ergodic theory. The Boltzmann ergodic hypothesis from statistical mechanics and Poincar\'e's discovery of dynamical complexity had prompted a search for ergodic mechanical (i.e., Hamiltonian) systems, and Artin produced one in the form of the geodesic flow of the modular surface, even before ergodicity was a fully developed concept \cite{Artin}. This topic was developed extensively, but relied on the algebraic nature of these systems (Artin reduced the question to one about continued fractions) until Hopf found a fundamentally new mechanism for ergodicity \cite{Hopf}. However, its full application became possible only when Anosov axiomatized the dynamical properties produced by negative curvature and addressed the central technical point (absolute continuity) that had eluded Hopf \cite{Anosov}. Flows for which the entire underlying manifold is a uniformly hyperbolic set have since been known as Anosov flows. And geodesic flows of closed manifolds with negative sectional curvature have retained their status as prototypical Anosov flows, which exhibit structural stability, a complicated orbit structure, and ``chaotic'' dynamics.

Since the 1970s, the scope of these investigations has been widened by studying on one hand, which conditions beyond negative sectional curvature suffice for the Anosov property, and on the other hand, which features of hyperbolic flows persist when the hyperbolicity is not uniform. Both strands center on geodesic flows of manifolds with nonpositive curvature (and slightly beyond) \cite{Eberlein-II-1973, visibility,Pesin1,Pesin2, HI, Ballmann}. While Pesin theory provides a foundation, the presence of useful geometric structures is central to these investigations. 

Notably, the interplay between deck transformations and the geodesic (and now, magnetic) flow on the universal cover is essential, and this involves the ideal boundary, that is, equivalence classes of asymptotic (and now, magnetic) geodesics.

\cref{ssection:geometry-geoflow} contains some of the commonalities and differences between having negative versus nonpositive curvature. Here are a few highlights. First, the definition of the ideal boundary extends from negative to nonpositive sectional curvature, and the action of the deck group on the universal cover $\M$ of $\Sigma$ illuminates large-scale behaviors of geodesics on $\M$, such as, the visibility property, topological transitivity, topological mixing, regularity of horospherical foliations and so on; see \cite{visibility, Eberlein-II-1973, Ballmann, HI}. Ergodic theory provides further insights, such as, the distribution and density of periodic orbits as well as the existence and uniqueness of equilibrium states; see \cite{Eberlein-II-1973, Knieper, BCFT}. Notably, the latter transcends the question of ergodicity of the Liouville measure and instead aims to select from the wealth of invariant probability measures of particular dynamical interest.

Complementary to the exploration beyond negative curvature, there is the aforementioned generalization of geodesic flows to magnetic flows. Instead of following geodesics, that is, curves with geodesic curvature 0, as orbits of the geodesic flow do, a unit-speed \m\ flow traces curves with geodesic curvature $\mu\in\re$. In physics, a \m\ flow models the motion of a unit-charge particle in the presence of a magnetic field ``orthogonal'' to the surface and with \emph{magnetic intensity} $\mu$. (Here $\mu$ can be a scalar function on the unit tangent bundle of the surface, which models a magnetic field with nonconstant magnetic intensity at different points; see \cref{ssection:magflow} for details.) For now we note that the Lorentz force is orthogonal to the velocity and hence does not affect the speed. (Magnetic flows can be defined for manifolds of higher dimension, but this work confines itself to surfaces.) Besides the geodesic flow (zero magnetic intensity), magnetic flows include the horocycle flow as the magnetic flow $\f$ for the closed surface with constant negative curvature $-\mu^2$. 

We mention some salient works from the prior literature about magnetic flows. There must be occurrences in the physics literature, but we take the sole joint mathematics paper of Anosov and Sinai as the starting point \cite[p.~112]{AnosovSinai}. Its main concern is ergodicity of Anosov systems, and it is the most readable account from the time period (indeed, these are lecture notes related to the fabled Khumsan summer school on ergodic theory \cite{Humsan}). But magnetic and Finsler geodesic flows appear as pertinent examples.

Those expository notes are related to the fact that the study of magnetic flows from a dynamical and geometric point of view had been started by Anosov: by the stability of uniformly hyperbolicity, magnetic flows $\f$ for the closed surface with nonconstant negative curvature are uniformly hyperbolic when $\mu$ is sufficiently small. Moreover, his book \cite{Anosov} contained the computation of magnetic Jacobi fields of magnetic flows with constant magnetic field for surfaces with nonconstant curvature  and showed that uniform hyperbolicity follows from having negative magnetic curvature, i.e., $\ref{H4}\Rightarrow\ref{HA}$ (see also \cref{REMNegCurvImpliesAnosov}, and this has since shown not to be necessary \cite[\S 7]{Burns-Paternain}.) The name ``magnetic curvature" comes from the fact that $\kmu$ plays the same role in the \m\ Jacobi equation as the curvature $K$ does in the Jacobi equation for geodesic flows; see \eqref{def:eqn:orthogonal-component-of magnetic-Jacobi-field}. Magnetic flows, magnetic intensity, and magnetic curvature have all been defined in higher dimension later on.

There are articles from the physics point of view, investigating magnetic flows of constant magnetic field for surfaces with constant curvature, such as \cite{ComtetHouston}. These are completely algebraic systems. Moreover, \cite{Comtet} discusses the magnetic flows under a constant magnetic field for a hyperbolic plane of constant negative curvature, in particular, the endpoints of magnetic geodesics of different energy levels, or equivalently, of arbitrary (constant) magnetic curvature. \cite{Sunada} studies $SL_2(\re)$, a group of isometries acting on the Poincar\'e upper half plane: the magnetic flow associated with a uniform magnetic field on a hyperbolic surface is identified with a 1-parameter subgroup of $SL_2(\re)$. According to the intensity of the magnetic field, the corresponding subgroup turns out to be conjugate to one of three 1-parameter subgroups of $SL_2(\re)$. In particular, the magnetic flow with magnetic intensity 
greater than one is periodic.

Starting in the 1990s, investigations moved on from purely algebraic systems. \cite{Paternain-Paternain} consider manifolds of higher dimension with nonconstant curvature. They investigate magnetic flows of nonconstant magnetic fields and their magnetic Jacobi fields. Introducing the magnetic structure as a 2-form, they parametrize a class of magnetic flows $\{\f\}_{\mu}$ with parameter $\mu$ as perturbations of an Anosov geodesic flow ($\mu=0$), and show that the topological entropy strictly decreases with $\mu$. In fact, the entropy is a strictly decreasing function on $[0,\mu_c]$ as later shown in \cite{Paternain-Paternain-1997}. Another result in \cite{Paternain-Paternain} is that there is a maximal open interval $I_c=(-\mu_c,\mu_c)$ such that for $\mu\in I_c$, the $\mu$-magnetic flow is Anosov, where the regularity of the strong stable and unstable subbundles is
later discussed in \cite{Paternain-regularity}. 

\cite{Adachi1} studies nonconstant magnetic fields for Hadamard surfaces with a view to the unboundedness of magnetic geodesics and their endpoints on $\Minfty$. For surfaces with negative magnetic curvature \ref{H4}, in particular, the angle subtended by a magnetic geodesic segment and its geodesic chord (\cref{def:chord}) is uniformly less than $\pi$. This was strengthened to $\pi/2$ in \cite{Grognet} and appears in the proof of \cref{Lemma:angle_<_pi/2}.

\cite{Adachi2} studies magnetic flows with nonconstant magnetic field for both K\"ahler manifolds of arbitrary dimensions and Riemannian surfaces. Applying comparison theorems, Adachi shows in \cite[Theorem 3]{Adachi2} that the magnetic exponential map is a covering map for a Hadamard surface $\Sigma$ for constant magnetic fields and nonpositive magnetic curvature, in particular, \ref{H1}. This leads to the existence of two magnetic geodesics between a given point on $\M$ and a given point on $\Minfty$; see \cite[Corollary, p.~305]{Adachi2}. Furthermore, for negative magnetic curvature \ref{H4}, there is a range of general results, including the homeomorphism between $T^1_p\M$ and $\Minfty$ with respect to the cone topology (\cite[p.~306]{Adachi2}) for any $p\in \M$, magnetic visibility property (\cite[Theorem 4]{Adachi2}), and continuity of the magnetic Busemann functions (\cite[Remark, p.~308]{Adachi2}).

\cite{Gouda} and \cite{Grognet} study magnetic flows of nonconstant magnetic fields for compact Riemannian manifolds of arbitrary dimension with negative sectional curvature, and give sufficient conditions on the magnetic flows being Anosov. Moreover, \cite{Grognet} gives restrictions on the norm of the magnetic field under which the magnetic flow is in fact a quasi-geodesic flow
(\cite[Proposition 3.1]{Grognet}), and is orbit-equivalent to the underlying geodesic flow (\cite[Theorem 3.1]{Grognet}), respectively. In particular, given a surface with negative curvature, Grognet discusses the class of magnetic flows that have the same marked length spectrum as the geodesic flow of the surface (\cite[Theorem 7.3]{Grognet}). Later in \cite{Grognet-entropy}, under the sufficient condition given in \cite{Grognet} when magnetic flows for manifolds are Anosov, an estimate of topological entropy and the lower bound for the Liouville entropy is given. Moreover, for surfaces, there exists an upper bound for the Liouville entropy, which is applied to prove an entropy rigidity result when the topological and Liouville entropies coincide.

\cite{Wojtkowski} further extends the sufficient condition of \cite{Gouda, Grognet} on the intensity of a nonconstant magnetic field, guaranteeing that the associated flow is Anosov, to Gaussian thermostats, a generalization of magnetic flows, for manifolds of arbitrary dimension, and \cite{MRS24} gave necessary and sufficient conditions for a magnetic flow of a surface to be Anosov.

\cite{Burns-Paternain} discusses the Ma\~n\'e critical value $c(g,\Omega)$ of given magnetic structure $(g,\Omega)$ (\cref{def:mag flow}), the infimum of energies $k$ for which $A_{L+k}(\ga)\geq 0$ for every absolutely continuous closed curve $\ga\colon [a,b]\to \widetilde{M}$ where $L$ is Lagrangian and $A_L(\ga)\dfn \int_a^b L(\ga(t), \dot{\ga}(t)) \, dt$. It is a concept closely related to the Anosov magnetic flows for manifolds with nonconstant magnetic field: if the \m\ flow of the manifold with magnetic structure $(g,\Omega)$ is Anosov, then $\mu^2<\frac{1}{2c(g,\Omega)}$. They introduce the notion of volume entropy, which is defined as the exponential growth rate of the average volume of certain balls, and discuss its relation with the topological entropy. Moreover, a surface of negative curvature is constructed to show that as $\mu$ increases, the \m\ geodesic flow can exit and reenter the set of Anosov magnetic flows arbitrarily many times.

\cite{PaternainRigidity} studies the rigidity results of magnetic flows of nonconstant magnetic fields for surfaces and gives characterizations of horocycle flows. In particular, a magnetic flow that is Ma\~n\'e-critical and uniquely ergodic must be a horocycle flow.

\cite{PeyerimhoffSiburg} studies the magnetic flows of sufficiently weak nonconstant magnetic fields (or equivalently, energy levels above the Ma\~n\'e critical value) on manifolds of arbitrary dimension and curvature. It proves that minimal magnetic geodesics are Riemannian $(A,0)$-quasi-geodesics with $A \to 1$ as the energy tends to infinity, which on negatively curved manifolds implies that magnetic geodesics lie in tubes around Riemannian geodesics. This generalizes the quasi-geodesic result of \cite{Grognet}, which requires pinched negative curvature.

\cite{BurnsMatveev} addresses the following rigidity question: if two magnetic structures $(g,\Omega)$ and $(\bar g, \bar\Omega)$ on the same manifold share the same magnetic geodesics up to reparametrization at corresponding energy levels $h$ and $\bar h$, must the two structures be related? The main result is that the answer is yes: either both are rescalings of one another, or $\Omega = 0 = \bar\Omega$ and $g$, $\bar g$ are metrics with the same geodesics. Moreover, if some magnetic geodesic of $(g, \Omega)$ has tangent vectors dense in its energy level, then the only magnetic structures sharing the same magnetic geodesics as $(g, \Omega)$ are its rescalings.

\cite{GomesRuggiero} studies rigidity phenomena in magnetic flows for surfaces: if the flow preserves a highly smooth codimension-one foliation, then it has to be the magnetic flow of a constant magnetic field for a surface with constant nonpositive curvature.

\cite{GLP25} proves  marked length spectrum rigidity for Anosov surfaces in the Riemannian setting, that is, two Riemannian metrics on a closed oriented surface of genus at least two, sharing the same marked length spectrum and Anosov geodesic flow, are isometric via an isometry isotopic to the identity. \cite{MarkedLength}  extends this to magnetic flows: for a closed, connected, oriented surface carrying an Anosov magnetic structure, any volume-preserving conjugacy isotopic to the identity, under which the 2-forms of magnetic structures lie in the same cohomology class, forces the underlying metrics to be isometric.

Two threads of investigation we mentioned---extending the sectional curvature assumption of the manifolds from negative to nonpositive, and generalizing geodesic flows---involve, respectively, the study of geodesic flows beyond strictly negative curvature, and the study of nonuniformly hyperbolic flows beyond geodesic flows.
Thus, the present work goes beyond both geodesic flows and uniform hyperbolicity. We study \emph{magnetic flows} for a surface with negative curvature and with nonpositive magnetic curvature. Significant properties of geodesic flows in negative curvature also hold for such magnetic flows (topological transitivity, kinematic-expansivity, and the existence of equilibrium states), but some other straightforward adaptations of arguments are precluded by the combination of the magnetic twist and the nonuniformity of hyperbolicity. Notably, a natural theory of magnetic horospheres seems elusive.

\section{Background}\label{section:background}
In this section we develop some background facts relating to magnetic geodesics and magnetic flows. We denote by $M$ an oriented, connected and closed Riemannian manifold of arbitrary dimension, and we instead use $\Sigma$ when the manifold is 2-dimensional. Let $\pi\colon TM\to M$ be the footpoint projection.
\subsection{Magnetic flows}\label{ssection:magflow}
A \emph{magnetic structure} on $M$ is a pair $(g,\Omega)$, where $g$ is a Riemannian metric and $\Omega$ is a closed 2-form on $M$. Let $\omega_g$ be the symplectic form on $TM$ obtained by pulling back the canonical symplectic form on $T^*M$ via the Riemannian metric $g$. Consider the symplectic form \[\omega_{mag}\dfn\omega_g+\pi^*\Omega\] and the Hamiltonian function \[E(v)=\frac{1}{2}g(v,v).\]
The magnetic flow of  $(g,\Omega)$ is the Hamiltonian flow of $E$ with respect to $\omega_{mag}$, see \cite[p.~282]{Burns-Paternain}.
The magnetic flow models the motion of a particle of unit mass and charge under the effect of a magnetic field, whose Lorentz force $Y\colon TM\to TM$ is defined by 
\begin{equation}\label{eqn:lorentz}
    \Omega_x(u,v)=g_x(Y_x(u),v)
\end{equation} for all $x\in M$ and all $u, v\in TM$. In other words, the curve \[t\mapsto (\ga(t),\dot\ga(t))\in TM\]
is an orbit of this flow if and only if 
\begin{equation}\label{eqn:general-mag}
    \frac{D\dot\ga}{dt}=Y_\ga(\dot\ga),
\end{equation}
where $D$ stands for the covariant derivative of $g$, and $\ga$ will be called a \emph{magnetic geodesic} of $(g,\Omega)$. The flow $\{f_t\}_{t\in\re}$ with $f_t\colon TM\to TM$, $\dot\ga(t_0)\mapsto \dot\ga(t+t_0)$ for any $t_0\in\re$ is the \emph{magnetic flow} of $(g,\Omega)$. The magnetic flow of $(g,0)$ is the geodesic flow.
\begin{remark}\label{rmk:observation}Magnetic flows preserve energy (hence speed) and volume.
\begin{enumerate}
    \item \cref{eqn:general-mag} shows that magnetic geodesics have constant energy:
\[\frac{d}{dt}\frac{1}{2}|\dot\ga|^2=\langle \frac{D\dot\ga}{dt},\dot\ga \rangle=\langle Y_\ga(\dot\ga),\dot\ga \rangle=g_x(Y_\ga(\dot\ga), \dot\ga) = \Omega_x(\dot\ga,\dot\ga)=0.\]
Therefore, once the energy level $L$ is set, say $\frac{1}{2}$, we can study all magnetic flows by restricting them to $T^1M$.

\item \label{item:obs} Fix the energy level $L$. Let $T=\frac{\dot\ga}{|\dot\ga|}$ be the unit tangent vector to a magnetic geodesic $\ga$ of $(g,\Omega)$. 
The geodesic curvature of $\ga$ satisfies
\[|k_{g,\Omega}(\ga)|= |\nabla_T T|=\frac{1}{|\dot\ga|^2}\frac{D\dot\ga}{dt}= \frac{1}{|\dot\ga|^2}|Y_\ga(\dot\ga)|.\]
From \eqref{eqn:lorentz}, scaling $\Omega$ by $\mu$ rescales the Lorentz force $Y$ by $\mu$. Therefore,
\[|k_{g,\mu\Omega}(\gamma)| = |\mu\, k_{g,\Omega}(\gamma)|.\]
In particular, geodesics arise either as the $\mu\to 0$ limit of unit-speed magnetic geodesics of $(g,\mu\Omega)$, or equivalently, as the $L\to\infty$ (or $|\dot\ga|\to\infty$) limit of magnetic geodesics of $(g,\Omega)$. 
Furthermore, studying the magnetic flow of $(g,\Omega)$ at the energy level $L = {1}/{2\mu^2}$, or equivalently, at speed $|\dot \ga|=1/\mu$, is equivalent to working with the magnetic flow of $(g,\mu\Omega)$ restricted to $T^1M$ (unit-speed, hence with energy level $L=1/2$). Accordingly, we restrict to unit speed henceforth.

\item Since magnetic flows are Hamiltonian, they preserve a natural volume by the Liouville theorem \cite[Proposition 5.5.12]{Katok-Hasselblatt}. We refer to this invariant volume as the \emph{magnetic Liouville measure} $\mathrm{vol}_{\widetilde\Omega}$, induced by the restriction to $T^1\Sigma$ of the symplectic form $\widetilde\Omega = d\lambda + \pi^*\Omega$ on $T\Sigma$, where $\lambda$ is the Liouville 1-form and $\pi\colon T^1\Sigma\to \Sigma$ is the footpoint projection. In fact, the magnetic Liouville measure $\mathrm{vol}_{\widetilde\Omega}$ coincides with the Liouville measure $\mathrm{vol}_{d\lambda}$ of the geodesic flow. To see this, note that $\pi^*\Omega$ is a 2-form on the $3$-dimensional manifold $T^1\Sigma$ that vanishes whenever one of its arguments is tangent to a fiber of $\pi$, that is, tangent to the unit circle $\pi^{-1}(x)=T^1_x\Sigma$ for some $x \in \Sigma$; see \cite[Chapter ~1]{Paternainbook}. Since $T^1\Sigma$ is 3-dimensional with $1$-dimensional fibers, any $3$-form on $T^1\Sigma$ must involve the fiber direction, and hence $\pi^*\Omega$ contributes nothing to the volume form. Therefore,
\[
\mathrm{vol}_{\widetilde\Omega}\big|_{T^1\Sigma} = \mathrm{vol}_{d\lambda}\big|_{T^1\Sigma}.
\]
\end{enumerate}
\end{remark}


In the surface case, let $(p,v)\in T^1\Sigma$ and $iv$ be the vector in $T_{p}^1\Sigma$ such that $\{v,iv\}$ is a positively oriented orthonormal basis of $T_{p}^1\Sigma$. The area form $\Omega_g$ is given by 
\begin{equation}\label{eqn:2-norm-curvature}
    \Omega_g(u,v)=g(iu,v).
\end{equation}
Since any closed 2-form $\Omega$ can be written as 
\begin{equation}\label{eqn:area_form}
    \Omega=h\Omega_g 
\end{equation} for some smooth function $h\colon \Sigma\to\re$, the Lorentz force $Y$ associated with $\Omega$ is given by
\begin{equation}\label{eq:lorentz-force}
Y_p(v)=h(p)iv.
\end{equation} 

\begin{remark}\label{remark:notation_mag_geo}
Henceforth, we consider unit-speed \m\ flows for $(g,\Omega_g)$. Only a few basic notions and results will be presented without assuming $h\equiv1$. From \cref{sec:hyp-mag-flow} onward, we will take $h\equiv1$.\COMMENT{Think about rigorously specializing now rather than from \cref{sec:hyp-mag-flow} onward.\Hrule Maybe it is worth restating my issue with having ``\m'' appear in the context of a general magnetic structure: that terminology is meaningless when arbitrary magnetic structures are allowed in the first place, because $\mu\Omega$ is one of them. Likewise, $h$ and $\mu$ do not belong together because $h\mu$ is just another $h$ and $h\Omega_g$ is the general case for surfaces.\Hrule The only blending that makes any sense to me is to use $h$ for nonconstant scaling of $\Omega_g$ and $\mu$ when $h$ is constant. I do not think that this is the time for radical rewriting, however. Unless it is done with a lot of thought, that is likely to cause more problems.}
\end{remark}

\begin{definition}[$\mu$-magnetic geodesic and flow]\label{def:mag flow}
For $\mu\in\re$, a curve $\ga$ on $M$ 
is said to be a \m\  geodesic if it is a magnetic geodesic of the magnetic structure $(g,\mu\Omega_g)$, i.e.,\begin{equation}\label{eqn:general_lor}
    \frac{D\dot\ga}{dt}=\mu Y_\ga(\dot\ga),
\end{equation} (see \eqref{eqn:general-mag}). For a surface, $Y_\ga(\dot\ga)= i\dot\ga$, so a \m\ geodesic $\ga$ satisfies 
\begin{equation}\label{eqn:mag-geo-flow}
    \frac{D\dot\ga}{dt}=\mu i\dot\ga.
\end{equation}
For $v \in T_p^1M$, let $\gamma_v$ denote the $\mu$-magnetic geodesic (\cref{def:mag flow}) with ($\gamma_v(0) = p$ and) $\dot\gamma_v(0) = v$. Likewise, $g_v$ is the geodesic with $\dot g_v(0)=v$.

The flow $\f\colon\re\times T^1M \rightarrow T^1M$ defined by $\f(t,v)\dfn\f_t(v)\dfn\big(\ga_v(t),\dot{\ga}_v(t)\big)$ 
is called the \emph{$\mu$-magnetic flow of $M$}. 

 \end{definition}

\begin{remark}\label{rmk:general-behaviors-easy-observation} Here are a few basic  observations.
    \begin{enumerate} 
        \item \cite[Remark 1.1.30]{Fisher-Hasselblatt}    The geodesic flow $f^0$ is reversible, that is, $-f^0_{-t}(-v)=f^0_t(v)$. When $\mu\neq0$, the \m\  flow $\f$ is not reversible. 
        \item In the surface case,  \label{item:mu-mu-relation}
        \begin{enumerate}
            \item $\ga$ is a \m\  geodesic on $\Sigma$ if and only if $\ga$ has constant geodesic curvature $\mu$. The \m\ flow models the motion of a particle on $\Sigma$ with unit charge and mass in a magnetic field that is orthogonal to $\Sigma$ and of constant \emph{magnetic intensity} $\mu$, or equivalently, the motion of a unit-mass particle with charge $\mu$ in a magnetic field that is orthogonal to $\Sigma$ and of constant \emph{magnetic intensity} $1$. 
            
            If $\mu>0$ (resp.\ $\mu<0$), then the particle drifts to the left (resp.\ right) of the tangent direction; and $\mu=0$ implies that the trajectories of particles are geodesics.

            Put differently, if $\mu>0$ (resp.\ $\mu<0$), then locally the magnetic geodesic tangent to a given vector $v$ lies to the left (resp.\ right) of the geodesic tangent to $v$. (This can also be seen from \eqref{eqn:mag-geo-flow}.)
            
\item\label{LeftRightTangency}Thus, a \m\ geodesic bounds a (strictly geodesically convex) intersection of (left or right) half-spaces defined by its tangent geodesics. So, unless $\mu=0$, a \m\ geodesic intersects any geodesic at most twice, and for a \m\ geodesic segment, if $\mu>0$ (resp.\ $\mu<0$), then its geodesic chords other than tangent lines (\cref{def:chord}) lie to its left (resp.\ right), with both intersections transverse.
        \item\label{ReverseVersion}$\ga$ is a \m\ geodesic if and only if $\tilde \ga$ is a $(-\mu)$-magnetic geodesic on $\Sigma$ with magnetic structure $(g,\Omega)$, or equivalently, $\tilde \ga\colon t\mapsto\ga(-t)$ is a \m\ geodesic on $\Sigma$ with magnetic structure $(g,-\Omega)$.
        \end{enumerate}
    \end{enumerate}
\end{remark}

\subsection{Magnetic Jacobi fields and invariant subbundles}
\subsubsection{Magnetic Jacobi fields}\label{ssection:magnetic_jacobi_field}
We now introduce two equivalent definitions of magnetic Jacobi fields, one as the infinitesimal change of a magnetic variation, a smooth family of magnetic geodesics, and the other as the solution of a second-order differential equation. \cite{Paternain-Paternain} shows that these are equivalent (and see \cite{Burns-Gidea} in the case of geodesic flows).

\begin{definition}[Magnetic variation and magnetic Jacobi field] \label{def:magnetic-jacobi-variation-field}
Given a magnetic structure $(g,\Omega)$ on $M$, a magnetic variation $\Gamma$ is a smooth mapping \[(s,t)\in(a,b)\times \re\to \Gamma(s,t)\in M\]such that 
$\ga_s(\cdot)\dfn \Gamma(s,\cdot)$ is a magnetic geodesic of $(g,\Omega)$ for any $s\in (a,b)$, and 
$J_{s_0}(\cdot)\coloneqq \frac{\partial \Gamma (s,\cdot)}{\partial s}|_{s=s_0}$ is called the magnetic  Jacobi field corresponding to $\Gamma$ along $\ga_{s_0}$.
For $\Omega=\mu\Omega_g$ we  use the terms \emph{\m\ variation} and \emph{\m\ Jacobi field}.
\end{definition}
Since here $\Gamma$ is smooth, so is $(s,t)\mapsto J_s(t)$.

\begin{proposition}[{\cite[Equation(3)]{Paternain-Paternain}}]
Given a magnetic structure $(g,\Omega)$ on $M$ and $\mu\in\re$, let $J$ be a magnetic Jacobi field along a magnetic geodesic $\ga=\gav$ of the magnetic structure $(g,\mu\Omega)$. Then $J$ satisfies the following \emph{magnetic Jacobi equation}:
\begin{equation}\label{eqn:magJF}
\ddot J+R(\dot\ga,J)\dot\ga-\mu[Y(\dot J)+(\nabla_{J} Y)(\dot \ga)]=0,
\end{equation}
where $R$ is the curvature tensor of $g$. 
 
Furthermore, let $f$ be the magnetic flow and $\xi\in T_vTM$ such that $J(t)=d_v(\pi\circ \fmu_t)\xi$ (note that the energy level $L$ is not necessarily $1/2$). 
By identifying $T_vTM$ with $T_{\pi v}M\oplus T_{\pi v}M$ using horizontal and vertical subbundles, we have 
\begin{equation}\label{eqn:differential-of-fmu}
    d\f_t(\xi)=(J(t),\dot J(t)).
\end{equation}
We call $J$ the \emph{magnetic Jacobi field} along $\ga$ with initial condition $(J(0),\dot J(0))$.

In the surface case, if we write $J$ as
\begin{equation}\label{def:eqn:orthogonal-component-of magnetic-Jacobi-field}
    J(t)=x(t)\dot{\ga}(t)+y(t)i\dot{\ga}(t)
    ,
\end{equation}
then
\begin{align}
&\ddot{x}=\mu \dot{y},   \label{eqn:general-magJac11}\\
&\ddot{y}+\mu\dot x+Ky=0;
\label{eqn:general-magJac22}
\end{align}
where $K$ is the Gaussian curvature on $\Sigma$ \cite[\S 5]{Paternain-Paternain}.
\end{proposition}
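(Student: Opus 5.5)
The plan is to derive the magnetic Jacobi equation \eqref{eqn:magJF} by differentiating the defining equation of a magnetic variation, to read off \eqref{eqn:differential-of-fmu} from the horizontal/vertical splitting, and then to specialize to surfaces. Throughout, $J$ arises from a magnetic variation $\Gamma(s,t)$ with $\Gamma(0,\cdot)=\ga$, as in \cref{def:magnetic-jacobi-variation-field}. Write $T=\partial_t\Gamma$ and $S=\partial_s\Gamma$, so that $J=S|_{s=0}$. Each curve $\ga_s$ satisfies $\frac{D}{dt}T=\mu Y_\Gamma(T)$, and we apply $\frac{D}{ds}$ at $s=0$ to this equation.

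\textbf{Step 1: the general equation.} Because $\Gamma$ is a parametrized surface and the connection is torsion-free, $\frac{D}{ds}T=\frac{D}{dt}S$. The curvature identity gives
\[
\tfrac{D}{ds}\tfrac{D}{dt}T=\tfrac{D}{dt}\tfrac{D}{ds}T+R(S,T)T=\ddot J-R(\dot\ga,J)\dot\ga .
\]
Here the sign convention is chosen so that the geodesic case reproduces the usual Jacobi equation. On the other side, the Leibniz rule for the tensor $Y$ gives
\[
\tfrac{D}{ds}\bigl(Y(T)\bigr)=(\nabla_S Y)(T)+Y\bigl(\tfrac{D}{ds}T\bigr)=(\nabla_JY)(\dot\ga)+Y(\dot J).
\]
Equating the two sides yields \eqref{eqn:magJF}, once the sign convention for $R$ is fixed consistently.

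\textbf{Step 2: the differential of the flow.} Set $\xi=\partial_s|_{s=0}\dot\ga_s(0)\in T_vTM$. Then $f_t(\dot\ga_s(0))=\dot\ga_s(t)$. Differentiating in $s$, the horizontal component of $d f_t\xi$ is $d\pi$ of it, namely $S=J(t)$. The vertical component is the connector applied to it, namely $\frac{D}{ds}T=\frac{D}{dt}S=\dot J(t)$. Conversely, every $\xi$ arises from some variation: take $\ga_s=\ga_{v(s)}$ for any curve $v(s)$ in $TM$ with $v'(0)=\xi$. This proves \eqref{eqn:differential-of-fmu}, and it also establishes the equivalence of the two definitions of magnetic Jacobi field.

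\textbf{Step 3: the surface case.} Here $Y=i$ is parallel, since $\Omega_g$ is parallel and $h\equiv1$, so $\nabla Y=0$. Writing $J=x\dot\ga+y\,i\dot\ga$, use $\frac{D}{dt}\dot\ga=\mu i\dot\ga$ and $\frac{D}{dt}(i\dot\ga)=i\frac{D}{dt}\dot\ga=-\mu\dot\ga$ to compute
\[
\dot J=(\dot x-\mu y)\dot\ga+(\dot y+\mu x)i\dot\ga,
\]
and differentiate once more to obtain $\ddot J$. The remaining terms are $R(\dot\ga,J)\dot\ga=Ky\,i\dot\ga$ and $\mu i\dot J=\mu(\dot x-\mu y)i\dot\ga-\mu(\dot y+\mu x)\dot\ga$. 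Collecting the $\dot\ga$ components gives $\ddot x=\mu\dot y$. Collecting the $i\dot\ga$ components gives $\ddot y+\mu\dot x+Ky=0$, after cancellation of the $\mu^2$ terms.

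The main obstacle is bookkeeping rather than ideas. The sign and convention for $R$, and the fact that the $\mu^2$ contributions cancel exactly, both have to be tracked carefully to land on exactly \eqref{eqn:general-magJac11}--\eqref{eqn:general-magJac22}. This is where I expect to spend the checking effort.
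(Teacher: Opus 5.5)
Your argument is correct and is the standard derivation behind the result the paper quotes from Paternain--Paternain; the paper gives no proof of its own. You differentiate $\frac{D}{dt}\partial_t\Gamma=\mu Y(\partial_t\Gamma)$ in $s$ via the symmetry lemma and the curvature identity, read off $df_t\xi=(J,\dot J)$ from $d\pi$ and the connection map, and expand in the frame $\{\dot\ga,i\dot\ga\}$ using $\nabla i=0$ and $\lvert\dot\ga\rvert=1$; I checked that the $\mu^2$ terms do cancel as you claim. The only tidying needed is the curvature sign: your Step~1 display uses the convention $R(X,Y)=[\nabla_X,\nabla_Y]-\nabla_{[X,Y]}$, whereas \eqref{eqn:magJF} and your Step~3 identity $R(\dot\ga,J)\dot\ga=Ky\,i\dot\ga$ use the opposite convention, in which $\frac{D}{ds}\frac{D}{dt}T=\ddot J+R(\dot\ga,J)\dot\ga$ holds directly.
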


From now on, we focus on  surfaces (except for \cref{ssection:geometry-geoflow,SECDynPropGeodFlow}, which reproduce background results that hold in arbitrary  dimension). 

We note a difference in the dimension of the space of tangential magnetic Jacobi fields ($J$ as in \eqref{def:eqn:orthogonal-component-of magnetic-Jacobi-field} with $y\equiv 0$) between geodesic and magnetic flows.

\begin{proposition}\label{prop:dim-of-tang-JF}
    Fix any energy level and any $\mu\in\re$. Given a surface $\Sigma$ with magnetic structure $(g, h\Omega_g)$, denote by $V$ the space of tangential magnetic  Jacobi fields along a magnetic geodesic $\ga$ of magnetic structure $(g, \mu h\Omega_g)$. Then
    \begin{enumerate}
        \item if $\mu=0$, then $V$ is $2$-dimensional.
        \item if $\mu\neq0$, then $V$ is $1$-dimensional.
    \end{enumerate}
\end{proposition}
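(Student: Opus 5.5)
We will work directly with the surface form of the magnetic Jacobi equation, \eqref{eqn:general-magJac11}--\eqref{eqn:general-magJac22}, in the decomposition \eqref{def:eqn:orthogonal-component-of magnetic-Jacobi-field}. We need the nonconstant-$h$ version: the Lorentz force is $Y_p(v)=h(p)iv$ by \eqref{eq:lorentz-force}. So before anything else we rederive (or cite from \cite[\S 5]{Paternain-Paternain}) the tangential equation for $(g,\mu h\Omega_g)$ at the chosen energy level. It should read $\ddot x=\mu h(\gamma)\dot y$ up to a positive constant factor depending on the speed, together with a second-order equation for $y$ that contains the term $\mu h\dot x$ and zeroth-order terms in $y$. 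A magnetic Jacobi field is determined by its initial data $(x(0),\dot x(0),y(0),\dot y(0))$, because \eqref{eqn:differential-of-fmu} identifies the solution space with $T_vTM$. Hence $V$ is the linear subspace of solutions with $y\equiv0$, and we only need to find which initial data produce $y\equiv 0$.

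Take $y\equiv0$ and substitute. The first equation gives $\ddot x=0$, so $x(t)=a+bt$. The second equation then becomes $\mu h(\gamma(t))\,\dot x=0$, that is, $\mu h(\gamma(t))\, b=0$ for all $t$.

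If $\mu=0$, this constraint is vacuous. Then both $x\equiv1$ and $x=t$ (with $y\equiv0$) solve the system, which gives $\dim V=2$. These are exactly the fields $\dot\gamma$ and $t\dot\gamma$, coming from the variations $\gamma(t+s)$ and $\gamma((1+s)t)$. The second of these is admissible because geodesics may be reparametrized linearly when the energy is not fixed.

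If $\mu\neq0$, the constant field $x\equiv1$, $y\equiv0$ still solves the system. It corresponds to the variation $\Gamma(s,t)=\gamma(t+s)$, so $\dim V\ge1$. For the upper bound we need $b=0$. This follows from $\mu h(\gamma(t))b=0$ as soon as $h(\gamma(t))\neq0$ for a single $t$, giving $\dim V=1$.

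The main obstacle is this step. If $h$ vanishes identically along $\gamma$, the constraint $\mu h(\gamma(t))b=0$ imposes nothing and $t\dot\gamma$ survives. I would therefore check carefully how the statement is meant to be read: either the dimension count is intended for $h\equiv1$, consistent with \cref{remark:notation_mag_geo}, or one assumes $h\not\equiv0$ along $\gamma$. If $h$ may vanish along a particular trajectory, I would add that nondegeneracy hypothesis explicitly.

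A second point also needs care. At a fixed energy, linear reparametrization $t\mapsto(1+s)t$ is not a magnetic variation when $\mu\neq0$, because speed changes curvature (\cref{rmk:observation}). This gives a geometric explanation of why $t\dot\gamma$ disappears. If variations are confined to a single energy level, even the $\mu=0$ count becomes $1$. I would state explicitly that $V$ is taken in the full solution space of \eqref{eqn:magJF}, which is how the count of $2$ in case (1) is obtained.
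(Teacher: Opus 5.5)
Your argument is correct and is essentially the paper's proof. With $y\equiv0$ the tangential equation makes $x$ affine, giving $\{\dot\ga,t\dot\ga\}$ for $\mu=0$, and the normal equation forces $\dot x\equiv0$ for $\mu\neq0$. The paper does this with the $h$-free system \eqref{eqn:general-magJac11}--\eqref{eqn:general-magJac22} (i.e.\ $h\equiv1$) in the full solution space. So both of your caveats are well founded: for general $h$ one needs $h(\ga(t))\neq0$ for some $t$, and $t\dot\ga$ is not a fixed-energy field.

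One small slip: for nonconstant $h$ the tangential equation is $\ddot x=\mu\frac{d}{dt}\bigl(h(\ga)\,y\bigr)$ rather than $\mu h(\ga)\dot y$. This is harmless here since $y\equiv0$.
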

\begin{proof}
    The case $\mu=0$ is apparent from \eqref{eqn:general-magJac11}: $x$ is linear in $t$, so $\{\dot\ga(t),t\dot\ga(t)\}$ spans $V$. 
    For $\mu\neq0$, setting $y\equiv 0$ in \eqref{eqn:general-magJac22} gives $\dot x\equiv0$, so $x$ is constant, and $\dot\ga(t)$ spans $V$. 
\end{proof}


\begin{remark}\label{rmk:tangential-magnetic-JF}
$J(t)=C\dot\ga(t)$ is always a tangential magnetic Jacobi field; it arises from the magnetic  variation consisting of reparametrizations $t\mapsto t+Cs$. \cref{prop:dim-of-tang-JF} says that this is it, except  for geodesic flows.
\end{remark}

We briefly return to the generality of \eqref{eqn:area_form} 
with the Lorentz force in \eqref{eq:lorentz-force}.
Given $v\in T^1\Sigma$, $\mu\in\re$,  and a magnetic Jacobi field $J$ along $\ga=\gav$ of the magnetic structure $(g,\mu h\Omega_g)$, 
by \eqref{eqn:area_form}, \eqref{eq:lorentz-force}, \eqref{eqn:general_lor}, \eqref{eqn:magJF} and \eqref{def:eqn:orthogonal-component-of magnetic-Jacobi-field},  we have
\begin{align}
&\dot{x}=\mu h(\ga)\,y,   \label{eqn:general-magJac1}\\
&\ddot{y}+\left[K(\ga)-\mu\langle\nabla h(\ga),i\dot\ga\rangle+\mu^2h^2(\ga)\right]y=0,
\label{eqn:general-magJac2}
\end{align}
where $K$ is the Gaussian curvature; see \cite[\S 22]{Anosov} and \cite[\S 6]{Burns-Paternain} for details.\COMMENT{This would be a reason to specialize to $h\equiv1$ earlier\dots}

\begin{definition}[Magnetic curvature]\label{def:y-of-J-and-general-magnetic-curvature}
Given a surface $\Sigma$ with magnetic structure $(g,h\Omega_g)$, $\mu\in\re$, and a magnetic Jacobi field $J$ along a magnetic geodesic $\ga$ of the magnetic structure $(g,\mu h\Omega_g)$, $y$ in \eqref{def:eqn:orthogonal-component-of magnetic-Jacobi-field} (and therefore satisfying \eqref{eqn:general-magJac2}) is called the \emph{orthogonal component of $J$}, and 
    \begin{equation}\label{eqn:general-magnetic-curvature}
        K_\mu(p,v)\dfn K(p)-\mu\langle\nabla h(p),iv\rangle+\mu^2h^2(p)
    \end{equation} is called the \emph{\m\ curvature of $(p,v)\in\sm$} with respect to $(g,h\Omega_g)$. With our standing convention $h\equiv1$, this becomes $K_\mu(p)=K(p)+\mu^2$.

\end{definition}
The name of ``magnetic curvature'' comes from the fact that replacing $\kmu$ by the sectional curvature $K$, \eqref{eqn:general-magJac2} is satisfied by the orthogonal Jacobi field for geodesic flows. 

\begin{definition}[(Un)stable, parallel magnetic Jacobi field and magnetic variation]\label{def:stable-unstable-magJF}
Given a surface $\Sigma$ with magnetic structure $(g,h\Omega_g)$ and $\mu\in\re$, a magnetic Jacobi field $J$ of $(g,\mu h\Omega_g)$  is said to be \emph{stable} (resp.\ \emph{unstable}) if its \emph{orthogonal component} $y(t)$ in \eqref{eqn:general-magJac2} is 
bounded on $[0,\infty)$ (resp. $[-\infty,0]$). Denote by $\mathcal{J}^s(\ga)$ (resp.\ $\mathcal{J}^u(\ga)$) the linear subspace of stable (resp. unstable) magnetic Jacobi fields along a magnetic  geodesic $\ga$. $J$ is \emph{parallel at $t$} if $\dot y(t)=0$. 
$J$ is \emph{parallel} if it is parallel at all $t\in \re$ (i.e., $y$ is constant).


\end{definition}


The following result is immediate. 

\begin{lemma}\label{lemma:sing-vector-has-constant-y}
    Given a surface $\Sigma$ with magnetic structure $(g,h\Omega_g)$ and $\mu\in\re$, a magnetic  Jacobi field $J$ of $(g,\mu h\Omega_g)$ is parallel if and only if $y$ is constant, and in this case, $J$ is both stable and unstable.
\end{lemma}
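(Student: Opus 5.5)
The statement is nearly a restatement of \cref{def:stable-unstable-magJF}, so the plan is to unwind the definitions and check the two claims separately.

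\textbf{The equivalence.} By \cref{def:stable-unstable-magJF}, $J$ is parallel when $\dot y(t)=0$ for every $t\in\re$. Here $y$ is the orthogonal component from \eqref{def:eqn:orthogonal-component-of magnetic-Jacobi-field}. It is smooth, because $J$ is smooth and $\{\dot\ga,i\dot\ga\}$ is a smooth orthonormal frame along $\ga$. A differentiable function on the connected interval $\re$ has identically vanishing derivative exactly when it is constant, by the mean value theorem. So the first claim is immediate, and I record it only to fix the terminology.

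\textbf{Stability.} Suppose $y\equiv c$. Then $y$ is bounded on $[0,\infty)$, so $J$ is stable, and it is bounded on $(-\infty,0]$, so $J$ is unstable. This again follows directly from the definitions.

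\textbf{Possible obstacle.} The only point needing care is a consistency check: can a nonzero constant $y$ actually occur? Plugging $y\equiv c$ into \eqref{eqn:general-magJac2} gives $K_\mu(\ga(t),\dot\ga(t))\,c=0$. So nonzero parallel fields force $K_\mu\equiv0$ along $\ga$, which is the flat-strip situation discussed later in the paper. The lemma, however, asserts no existence, so this does not affect the proof.

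I would also note that the tangential component $x$ plays no role. By \eqref{eqn:general-magJac1}, $x$ grows linearly when $y\equiv c\neq0$ and $\mu h\neq0$. Stability is defined through $y$ alone, so this growth is irrelevant, and it is exactly the shearing later described in \cref{rmk:shearing-effect}.
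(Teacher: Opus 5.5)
Your argument is correct and matches the paper, which gives no proof beyond calling the lemma immediate from \cref{def:stable-unstable-magJF}: parallel means $\dot y\equiv0$, i.e.\ $y$ is constant, and a constant $y$ is bounded on both half-lines. One small correction to your side remark: for nonconstant $h$, \eqref{eqn:general-magJac1} gives $x(t)=x(0)+\mu c\int_0^t h(\ga(s))\,ds$, which is linear only when $h$ is constant along $\ga$ (e.g.\ $h\equiv1$), though this plays no role in the proof.
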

\begin{definition}[Singular and regular vectors]\label{def:sing-reg-mag-vectors}
Given a surface $\Sigma$ and $\mu\in\re$, we say $v\in\sm$, or $\ga_v$, is \emph{singular}, or $v\in \Sing$, if there exists a nontangential parallel \m\  Jacobi field, that is, a \m\ Jacobi field with nonzero constant $y$ along the \m\  geodesic $\ga_v(t)$ for all $t\in \re$. Otherwise, $v$, or $\gav$ is said to be \emph{regular}, or $v\in\Reg$.
\end{definition}

We will see that $\Sing$ is the obstruction to uniform hyperbolicity. It can also be characterized in different ways; see \cref{prop:reg-sing-dim-of-subbundle,lemma:sing-zero-0-magnetic-curvature,lemma:la-vanish-on-sing}.

\subsubsection{Invariant subbundles}\label{ssec:invsubbundle}

Given a surface $\Sigma$ and $\mu\in\re$, let $\f$ be the \m\ flow. 
Similar to the case of geodesic flows, there are three $\f$-invariant subbundles $E^u$, $E^s$, and $E^c$ of $TT^1\Sigma$, where $E^c$ is spanned by the vector field that generates $\f$, and 
\begin{align}
&E^s(v)=\{\xi\in T_vT^1\Sigma\mid J_\xi \text{ is nontangential and stable along } \ga_v\}, \label{eqn:stable-space} \\
&E^u(v)=\{\xi\in T_vT^1\Sigma\mid J_\xi \text{ is nontangential and unstable along } \ga_v\}. \label{eqn:unstable-space}
\end{align}
We write $E^{cu}=E^c\oplus E^u$ and $E^{cs}=E^c\oplus E^s$. \cref{def:sing-reg-mag-vectors} gives:
\begin{proposition} \label{prop:reg-sing-dim-of-subbundle}
  \begin{enumerate}
   \item $E^u$ and $E^s$ are both transverse to $E^c$.
   \item $\Reg=\{v\in T^1\Sigma\mid E^{cs}(v)\cap E^{cu}(v)= E^c(v)\}$;
   \item $\Sing=\{v\in T^1\Sigma \mid E^{cs}(v)\cap E^{cu}(v) \neq E^c(v)\}$;
    \end{enumerate}
\end{proposition}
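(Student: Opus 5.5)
The plan is to work entirely in the identification $T_vT^1\Sigma\cong\{(J(0),\dot J(0))\}$ given by \eqref{eqn:differential-of-fmu}: each $\xi\in T_vT^1\Sigma$ corresponds to a magnetic Jacobi field $J_\xi$ along $\ga_v$, and $\xi\mapsto J_\xi$ is linear and injective. On the unit tangent bundle the admissible Jacobi fields form a $3$-dimensional space, since unit speed forces a linear constraint on the initial data. Writing $J=x\dot\ga+y\,i\dot\ga$ with $h\equiv1$, equation \eqref{eqn:general-magJac1} gives $\dot x=\mu y$. The space of Jacobi fields is therefore parametrized by $(x(0),y(0),\dot y(0))$, where $y$ solves \eqref{eqn:general-magJac2} and $x$ is determined by $y$ and $x(0)$. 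By \cref{rmk:tangential-magnetic-JF} and \cref{prop:dim-of-tang-JF}, $E^c(v)$ corresponds exactly to the tangential fields $y\equiv0$, i.e.\ $J=C\dot\ga$.

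For (1), note that by definition \eqref{eqn:stable-space}, \eqref{eqn:unstable-space}, every nonzero element of $E^s(v)$ or $E^u(v)$ has nontangential $J_\xi$, so $y\not\equiv0$, whereas every element of $E^c(v)$ has $y\equiv0$. Hence $E^s\cap E^c=E^u\cap E^c=0$. I will read $E^s(v)$ as the linear space of $\xi$ with stable $J_\xi$, modulo the tangential direction, or equivalently normalized, say by $x(0)=0$. Care is needed here, since ``nontangential'' is not a linear condition. The cleanest approach is to note that $E^{cs}(v)=\{\xi: y_\xi \text{ bounded on }[0,\infty)\}$. This is a linear subspace containing $E^c$, because boundedness of $y$ is linear and $y\equiv0$ is bounded. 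Then $E^s$ is a complement to $E^c$ inside it, and similarly for $E^{cu}$.

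For (2) and (3), consider $E^{cs}(v)\cap E^{cu}(v)=\{\xi: y_\xi \text{ bounded on }\re\}$, which always contains $E^c(v)$. The key step is to show that a $y$ bounded on all of $\re$ is constant. The route is the standard nonpositive-curvature argument using $K_\mu\le0$ (\ref{H1}). Since $\ddot y=-K_\mu y$, we have $\tfrac{d^2}{dt^2}(y^2)=2\dot y^2-2K_\mu y^2\ge 2\dot y^2\ge0$, so $y^2$ is convex. A bounded convex function on $\re$ is constant, which forces $\dot y\equiv0$, i.e.\ $y$ is constant. Conversely, constant $y$ is bounded, as in \cref{lemma:sing-vector-has-constant-y}. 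It follows that $E^{cs}(v)\cap E^{cu}(v)\ne E^c(v)$ exactly when some Jacobi field along $\ga_v$ has nonzero constant $y$, which is the definition of $v\in\Sing$ (\cref{def:sing-reg-mag-vectors}). This gives (3), and (2) is its complement.

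The main obstacle is this step: whether one may invoke $K_\mu\le0$. The section is stated for general $\mu$, but without \ref{H1} a bounded non-constant $y$ could exist, for example oscillating solutions when $K_\mu>0$, and the characterization would fail. I would therefore state explicitly that the proposition is understood under the standing hypothesis \ref{H1}, and use the convexity of $y^2$ as above. Otherwise the proof is only the bookkeeping that $E^{cs}$ and $E^{cu}$ are the linear spaces of forward- and backward-bounded $y$.
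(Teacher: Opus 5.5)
Your proposal is correct and follows the paper's route. The paper only says the statement is read off from \cref{def:sing-reg-mag-vectors}, and the step this leaves implicit is that a $y$ bounded on all of $\re$ is constant (``stable and unstable implies parallel''); that is exactly your convexity argument, which the paper proves only later, under \ref{H1}, as \cref{lem:parallel-iff-stable-and-unstable}.

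Your caveat about \ref{H1} is right: for $K\equiv-1$ and $\mu=2$ every solution of $\ddot y+K_\mu y=0$ is bounded but none is a nonzero constant, so every $v$ is regular while $E^{cs}(v)\cap E^{cu}(v)\neq E^c(v)$. Using convexity of $y^2$ is also more careful than the paper's appeal to convexity of $y$ itself. One small correction: $E^c(v)$ is exactly the tangential fields because $\dot x=\mu y$ forces $x$ to be constant when $y\equiv0$, not because of \cref{prop:dim-of-tang-JF}, which for $\mu=0$ also counts the non-unit-speed field $t\dot\ga$.
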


\subsection{Hyperbolic magnetic flows}\label{sec:hyp-mag-flow}
In the rest of this work, we fix $h\equiv 1$ in \eqref{eqn:area_form} and assume \ref{H1} from \cref{DEFHypMag}.

Equations \eqref{eqn:general-magJac1}, \eqref{eqn:general-magJac2}, \eqref{eqn:general-magnetic-curvature} thus become 
\begin{align}
&\dot{x}=\mu y,   \label{eqn:magJac1}\\
&\ddot{y}+\big(K(\gav)+\mu^2\big)y=0, \label{eqn:magJac2}\\
&\kmu(p)\coloneqq \kmu(p,v)=K(p)+\mu^2,\label{semi-general-mag-curv}
\end{align}
where (\ref{semi-general-mag-curv}) follows from the fact that now $\kmu(p,v)$ does not depend on $v\in T_p^1\Sigma$ but only on $p\in \Sigma$. We call $\kmu(p)$ the \emph{\m\  curvature at $p\in \Sigma$}, and \eqref{eqn:magJac2} becomes
\begin{equation}\label{MagneticJacobiEquation}
    \ddot{y}(t)+\kmu(\gav(t))y(t)=0.
\end{equation}
Differentiating \eqref{def:eqn:orthogonal-component-of magnetic-Jacobi-field} using \eqref{eqn:mag-geo-flow} and \eqref{eqn:magJac1} gives
\begin{equation}
    J'=(\mu x+\dot y)i\dot\ga. \label{eqn:J'}
\end{equation}

\begin{remark}[Convexity]\label{rmk:comment-on-mag-JE}
If $(\Sigma,\mu)$ satisfies \ref{H1} and $J(t)=x(t)\dot\ga(t)+y(t)i\dot\ga(t)$ is a \m\  Jacobi field along a \m\  geodesic $\ga(t)$, then
$y$ is a convex function by \eqref{MagneticJacobiEquation}. This is key to understanding the exponential divergence of magnetic geodesics as seen in \cref{remark:riccati-equation} and \cref{sec:mag-jf}, mirroring the behavior of geodesics on manifolds with nonpositive sectional curvature.
\end{remark}

\begin{definition}[Magnetic rank]\label{def:magnetic-rank-one}
 The \emph{\m\  rank} of a vector $v\in\sm$, denoted by $\rank (v)$, is the dimension of the space of parallel \m\  Jacobi fields along the \m\  geodesic induced by $v$. The \emph{\m\  rank} of $\Sigma$, denoted by $\rank(M)$, is the minimum rank over all vectors in $\sm$.
\end{definition}

\begin{remark}\label{rmk:singular-vector-has-higher-rank}
By \cref{def:sing-reg-mag-vectors}, $\Sing$ is the set of vectors with \m\  rank larger than 1, and $\Reg$ is the set of vectors  with \m\  rank 1. 
\end{remark}

\begin{proposition}\label{prop:magnetic-rank-one-implies-h2}
    Suppose $(\Sigma,\mu)$ satisfies \ref{H1}. Then $\rank(\Sigma)=1$ if and only if there exists a point $p\in \Sigma$ such that $\kmu(p)<0$. That is, for surfaces, \ref{H2} is equivalent to having both \ref{H1} and $\rank(\Sigma)=1$.
\end{proposition}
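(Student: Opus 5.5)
We prove both directions separately. The rank of a vector counts parallel \m\ Jacobi fields, and these always include the tangential field $\dot\ga$ (\cref{rmk:tangential-magnetic-JF}). Note first that a parallel field has constant $y$. By \eqref{eqn:magJac1} its $x$ is then determined up to an additive constant, and that constant corresponds to adding a multiple of $\dot\ga$. So $\rank(v)\ge 2$ exactly when there is a parallel field with constant $y\equiv c\neq 0$, i.e.\ when $v\in\Sing$ (\cref{rmk:singular-vector-has-higher-rank}). By \eqref{MagneticJacobiEquation}, a constant $y\equiv c\neq0$ solves the equation iff $\kmu(\gav(t))c=0$ for all $t$, i.e.\ iff $\kmu\equiv 0$ along $\gav$. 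Conversely, if $\kmu\equiv0$ along $\gav$, take $y\equiv 1$ and $x(t)=\mu t$; together these solve \eqref{eqn:magJac1}--\eqref{eqn:magJac2}. Since the magnetic Jacobi fields are exactly the solutions of these equations (via \eqref{eqn:differential-of-fmu}), this gives a nontangential parallel field. Hence $\rank(v)=1$ iff $\kmu(\gav(t_0))<0$ for some $t_0$, using \ref{H1}.

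\emph{($\Leftarrow$)} Suppose $\kmu(p)<0$ for some $p$. Take any $v\in T^1_p\Sigma$; then $\gav(0)=p$, so by the criterion $\rank(v)=1$. Since rank is always at least $1$, $\rank(\Sigma)=1$.

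\emph{($\Rightarrow$)} Suppose $\rank(\Sigma)=1$. Then some $v$ has rank $1$, so by the criterion $\kmu(\gav(t_0))<0$ at some point, and that point is the desired $p$.

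The remaining bookkeeping is to check that the parallel fields form a linear space whose dimension is $1$ plus the dimension of the space of allowed constants $c$ (either $0$ or $1$), and that the minimum over $\sm$ is attained. Both are immediate, since ranks take only the values $1$ and $2$.

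The only step requiring care is identifying "magnetic Jacobi field" with "solution of \eqref{eqn:magJac1}--\eqref{eqn:magJac2}", so that the explicit pair $(x,y)=(\mu t,1)$ really is a Jacobi field. I would invoke the cited equivalence \cite{Paternain-Paternain}: every initial condition $(J(0),\dot J(0))$ is realized by some $\xi$ via \eqref{eqn:differential-of-fmu}, and the solution space is $3$-dimensional on the unit bundle. No other obstacle is expected.
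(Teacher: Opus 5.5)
Your proof is correct and is essentially the paper's argument: a parallel field has constant $y$, so $\kmu(p)<0$ at $\gav(0)=p$ forces $y\equiv0$ (the paper reads off $y(0)=0$ from \eqref{MagneticJacobiEquation} at $t=0$ and then gets $y\equiv0$ by uniqueness), while $\kmu\equiv0$ along $\gav$ yields the nontangential parallel field $\mu t\dot\ga_v+i\dot\ga_v$. The paper proves the forward direction by contraposition through \cref{rmk:singular-vector-has-higher-rank} and \cref{lemma:sing-zero-0-magnetic-curvature}, which is your per-geodesic criterion applied to all vectors at once.
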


\begin{proof}
For $v\in T^1_p\Sigma$ consider a parallel \m\  Jacobi field along $\gav$. Then its orthogonal component $y$ is constant, so $\ddot y(0)\equiv\dot y(0)\equiv 0$, hence also $y(0)=0$ by \eqref{MagneticJacobiEquation} since $\kmu(p)<0$. Now, $y(0)=0=\dot y(0)$ implies $y\equiv0$ because \eqref{MagneticJacobiEquation} is linear and homogeneous, so the Jacobi field is tangential. 
The (contrapositive of the) other direction follows from \cref{rmk:singular-vector-has-higher-rank}.
\end{proof}

\begin{remark}[Twist property and shearing effect]\label{rmk:shearing-effect}
The twist property (or shearing effect) of the magnetic flow arises from the coupling in \eqref{eqn:magJac1}. Let $\Sigma$ be a Riemannian surface (with no curvature assumption yet). Consider a \m\ variation $\Gamma\colon [0,C]\times \re\to\Sigma$ such that the corresponding \m\ Jacobi field $J_c(t) = x_c(t)\dot\ga_c(t) + y_c(t)i\dot\ga_c(t)$ along $\ga_c(t)\dfn \Gamma(c,t)$ has $y_c \equiv 1$ for any $c\in[0,C]$. Equivalently, $\Gamma$ parametrizes a \m\ flat strip; see \cref{def:magnetic-flat-strip}. Then \eqref{eqn:magJac1} gives $\dot x_c \equiv \mu$ and $x_c(t)= \mu t + x_c(0)$ for any $c\in[0,C]$.

If $\mu\neq0$, in the $(\dot\ga_c,i\dot\ga_c)$-coordinates of the \m\ Jacobi field along $\ga_c$, the derivative of the time-$t$ \m\ flow is given by the shear matrix \[ D\f_t = \begin{pmatrix} 1 & \mu t \\ 0 & 1 \end{pmatrix}, \] so $(x_c(0),y_c(0)) \mapsto (x_c(0)+\mu t\, y_c(0),\, y_c(0))$: the \m\ flow acts on the \m\ flat strip by a twist (shear) map whose shear rate is $\mu t$, producing a shearing effect along the flow direction. A notable instance of this phenomenon is the \m\ flow of $\Sigma$ with constant curvature $-\mu^2$: the \m\ flow coincides with the horocycle flow of $\Sigma$, which moves at unit speed with respect to the hyperbolic metric, while its Euclidean speed in the upper half-plane varies with height. See \cite[Chapter I.2.1.c]{Fisher-Hasselblatt} for more details.


If $\mu = 0$ and $\Sigma$ has nonpositive curvature, the same computation gives $\dot x_c \equiv 0$, so in the same coordinates \[ Df_t^{0} = \begin{pmatrix} 1 & 0 \\ 0 & 1 \end{pmatrix} \] is the identity: the geodesic flow acts on the flat strip by translation of the basepoint, with no shear on the transverse $(x,y)$-coordinates. Here the variation can be reparametrized to produce orthogonal Jacobi fields with $x_c\equiv 0$ for all $c\in[0,C]$, and kinematic-expansivity fails (an infinite Bowen ball \eqref{def:bowen_ball} for the geodesic flow might correspond to a small flat rectangle); see \cref{def:k-expansivity}.

    
This twist for $\mu\neq 0$ has direct dynamical consequences. It entails kinematic-expansivity of the magnetic flow (an infinite Bowen ball for the magnetic flow is a short orbit segment, \cref{thm:kinematic-expansivity}).
On the other hand, strong stable and unstable directions of a vector $v$ exist  only when $\int_0^\infty \dot x\,dt=\mu\int_0^\infty y\,dt$ and $\int_{-\infty}^0 \dot x\,dt=\mu\int_{-\infty}^0 y\,dt$ are finite, respectively, for stable and unstable \m\ Jacobi fields $x\dot\ga+yi\dot\ga$ along $\gav$. Finiteness is obvious only if the integrand decays exponentially; see \cref{prop:strong_stable_spaces}. 
Horocycle flows are the borderline case where neither integral converges and these directions do not exist.

\end{remark}

\begin{lemma}\label{lemma:sing-zero-0-magnetic-curvature}
If $(\Sigma,\mu)$ satisfies \ref{H1}, then $\Sing=\{v\in \sm\mid K_\mu(\gav(t))=0,\forall t\in \re\}$.
\end{lemma}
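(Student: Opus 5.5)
We prove the two inclusions separately, working throughout with the orthogonal component $y$ of a \m\ Jacobi field along $\gav$, which satisfies the scalar equation \eqref{MagneticJacobiEquation}, $\ddot y+\kmu(\gav(t))y=0$. By \cref{def:sing-reg-mag-vectors}, $v\in\Sing$ exactly when some \m\ Jacobi field along $\gav$ has nonzero constant $y$. So the statement reduces to the following: a nonzero constant solution of \eqref{MagneticJacobiEquation} exists iff $\kmu\circ\gav\equiv0$. The key point is that \eqref{MagneticJacobiEquation} is a closed equation in $y$ alone, and the tangential coordinate $x$ is then determined by \eqref{eqn:magJac1}.

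($\subseteq$) Let $v\in\Sing$, and let $J$ be a \m\ Jacobi field along $\gav$ with $y\equiv c\neq0$. Then $\ddot y\equiv0$, so \eqref{MagneticJacobiEquation} gives $\kmu(\gav(t))\,c=0$ for every $t$. Hence $\kmu(\gav(t))=0$ for all $t\in\re$. This direction is the argument already used in the proof of \cref{prop:magnetic-rank-one-implies-h2}, now applied at every $t$ instead of a single point. It does not even use \ref{H1}.

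($\supseteq$) Suppose $\kmu(\gav(t))=0$ for all $t$. Then $y\equiv1$ solves \eqref{MagneticJacobiEquation}. We must check that it is the orthogonal component of a genuine \m\ Jacobi field along $\gav$. Set $x(t)=\mu t$, which solves \eqref{eqn:magJac1}, and put $J=x\dot\ga+yi\dot\ga$. By \eqref{eqn:J'}, $J'=\mu i\dot\ga$. Hence $J$ is the Jacobi field with initial data $(J(0),\dot J(0))=(i\dot\ga(0),\mu i\dot\ga(0))$.

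The only real obstacle is justifying that this solution of the coupled system (\eqref{eqn:magJac1} together with \eqref{eqn:magJac2}, equivalently \eqref{eqn:general-magJac11}–\eqref{eqn:general-magJac22}) is indeed a \m\ Jacobi field. We do this as follows. The system is a linear ODE whose solution space is the $3$-dimensional space of \m\ Jacobi fields, namely $\{d(\pi\circ\f_t)\xi\}$ for $\xi\in T_vT^1\Sigma$ by \eqref{eqn:differential-of-fmu}. The space of initial data $(x(0),y(0),\dot y(0))$ is $3$-dimensional as well, so every solution of the system is realized by some $\xi$. As a check, \eqref{eqn:general-magJac11} gives $\ddot x=\mu\dot y=0$, consistent with $x(t)=\mu t$. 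Therefore $J$ is a nontangential parallel \m\ Jacobi field along $\gav$, and so $v\in\Sing$.
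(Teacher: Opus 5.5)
This is essentially the paper's proof. The forward inclusion reads $K_\mu(\gamma_v(t))\,c=0$ off a nonzero constant $y$, and the reverse inclusion exhibits the same field $J=\mu t\,\dot\gamma_v+i\dot\gamma_v$. Your dimension count (the magnetic Jacobi fields along $\gamma_v$ form a $3$-dimensional space of solutions of \eqref{eqn:magJac1} and \eqref{MagneticJacobiEquation}, and that system's full solution space is $3$-dimensional) supplies a justification that the paper leaves implicit.

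One computation is wrong, however. By \eqref{eqn:J'}, $J'=(\mu x+\dot y)\,i\dot\gamma=\mu^2t\,i\dot\gamma$, not $\mu\,i\dot\gamma$, so the initial data are $(J(0),J'(0))=(i\dot\gamma(0),0)$. The field with the data you wrote, $(i\dot\gamma(0),\mu\,i\dot\gamma(0))$, has $x(0)=0$ and $\dot y(0)=\mu$, hence $y=1+\mu t$, which is not parallel. Your dimension count already shows that $(x,y)=(\mu t,1)$ is realized by some $\xi$, so the conclusion survives, but that sentence should be corrected or deleted.

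Likewise, \eqref{eqn:general-magJac11}--\eqref{eqn:general-magJac22} is not equivalent to the unit-speed system. It is $4$-dimensional, because it allows $\dot x-\mu y$ to be a nonzero constant. Your count must therefore use \eqref{eqn:magJac1}, as it in fact does.
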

\begin{proof}
For $v \in \Sing$, by \cref{lemma:sing-vector-has-constant-y} there exists a nontangential parallel \m\  Jacobi field along $\ga_v$ with $y(t)\equiv C\neq0$ for all $t\in\re$. Therefore, $K_\mu(\gav(t))=0$ for all $t\in\re$ by \eqref{MagneticJacobiEquation}. For the other direction, if $v$ is such that $K_\mu(\gav(t))=0$ for all $t\in\re$, then $J(t)\coloneqq \mu t\dot \ga_v(t)+i\dot\ga_v(t) $ is a nontangential parallel \m\  Jacobi field along $\gav$, so $v\in \Sing$.
\end{proof}

\begin{corollary}\label{prop:singular-vector-orbit-closed-invariant}
If $(\Sigma,\mu)$ satisfies \ref{H1}, then $\Sing$ is $\f$-invariant and closed.
\end{corollary}

\begin{proof}
\cref{lemma:sing-zero-0-magnetic-curvature} gives invariance. 
Closed: 
\(v\mapsto g(v, t) = \kmu(\gav(t)) = \kmu \circ \pi \circ \f_t(v)\) 
is continuous, so
\[
\Sing = \bigcap_{t \in \re} \{ v \in T^1\Sigma \mid \kmu(\gamma_v(t)) = 0 \} = \bigcap_{t \in \mathbb{R}} g(\,\cdot\,, t)^{-1}(\{0\})
\]
is an intersection of closed sets.
\end{proof}
\begin{remark}\label{prop:dim-of-subbundle}
If $(\Sigma,\mu)$ satisfies \ref{H1} and $v\in\sm$, then $\dim(T_v\sm)=3$, $\dim(E^c(v))=1$, and $\dim(E^u)= \dim(E^s)$. For $v\in \Reg$, $\dim(E^u(v))= \dim(E^s(v))=1$. Moreover, $\f$ is Anosov if and only if $\Sing=\emptyset$. 
\end{remark}

\begin{remark}[{\cite[p.~300]{Burns-Paternain}}]\label{remark:riccati-equation}
Analogously to geodesic flows, if  $y(t)$ is a solution of \eqref{MagneticJacobiEquation} along a \m\ geodesic $\ga$, then (by the quotient rule) $u(t) = \dot y(t)/y(t)$ is a solution of the \m\  Riccati equation
\begin{equation}\label{eqn:riccati}
    \dot u(t)+u^2(t)+\kmu(\ga(t))=0.
\end{equation}
Geometrically, $u(t)$ is the geodesic curvature at $\ga(t)$ of the curve through $\ga(t)$ orthogonal to the \m\  variation corresponding to the \m\  Jacobi field $x(t)\dot\ga(t)+y(t)i\dot\ga(t)$. In the geodesic flow case $\{g_t\colon \sm\to\sm\}_{t\in\re}$, in particular, the curve through $\gav(t)$ orthogonal to the (un)stable variation containing $\gav$ is the (un)stable horosphere of $g_t(v)$, and $u(t)$ measures the hyperbolicity of $g_t(v)$. \emph{Orthospheres} of magnetic flows can be defined using this notion of orthogonality (\cref{def:F-F-t-magnetic-busemann-function,rmk:perspective-on-horospheres}).
\end{remark}

\subsection{Geodesic flows: isometries, ideal boundary and visibility}\label{ssection:geometry-geoflow}
In this section, we review some properties of a closed surface $\Sigma$ with curvature $K<0$ and its geodesic flow, which is not only an instance of magnetic flows of $\Sigma$ but also an important tool for us to later study magnetic flows.  All of these results generalize to higher-dimensional manifolds with negative sectional curvature (and under mild restrictions, to manifolds with nonpositive curvature; see \cite{Eberlein-book}).
We also highlight some commonalities and differences between surfaces (and their corresponding geodesic flows) with negative versus nonpositive curvature.

\subsubsection{Isometries}
\begin{definition}[Isometry]\label{def:isometry}
    Let $M_1$ and $M_2$ be Riemannian manifolds.  A diffeomorphism $\phi\colon M_1\to M_2$ is said to be an \emph{isometry} if
    $\langle u,v\rangle_p=\langle d\phi_p(u),d\phi_p(v)\rangle_{\phi(p)}$ for all $p\in M_1$ and $u,v\in T_pM_1$. Equivalently,  $d_2(\phi (x),\phi(y))=d_1(x,y)$ for all $x,y\in M_1$ \cite{Myers-Steenrod-theorem}.
\end{definition}
The following result will be used in  \cref{ssection:magnetic-visibility,section:mag-axes-and-flat-strip}.
\begin{proposition}\label{cor:isometry-preserves-geodesic-curvature}
    An isometry $\phi$ of a Riemannian surface preserves the geodesic curvature, and therefore sends \m\  geodesics to \m\  geodesics.
\end{proposition}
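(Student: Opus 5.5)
A subtlety comes first. An orientation-reversing isometry sends a \m\ geodesic to a $(-\mu)$-magnetic geodesic, since it flips the sign of the geodesic curvature. So I read ``preserves the geodesic curvature'' as preserving its absolute value in general, and its signed value when $\phi$ preserves orientation. The isometries used later are deck transformations of an oriented surface, which are orientation-preserving, and I will state the second clause under that hypothesis. The plan is to show that $\phi$ intertwines the Levi-Civita connections and commutes with the rotation $i$. Both facts then pass directly into the characterization of \m\ geodesics as unit-speed solutions of \eqref{eqn:mag-geo-flow}, which by \cref{rmk:general-behaviors-easy-observation} is the same as having constant geodesic curvature $\mu$.

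The first step is naturality of the connection. For a Riemannian isometry $\phi$, the pushforward connection $(X,Y)\mapsto d\phi\big(\nabla_{\phi^{-1}_*X}\phi^{-1}_*Y\big)$ on $M_2$ is torsion-free and compatible with the metric, because $\phi$ preserves $\langle\cdot,\cdot\rangle$. By uniqueness of the Levi-Civita connection it equals $\nabla$. Consequently, for any curve $\ga$ and any vector field $V$ along $\ga$,
\[
\frac{D}{dt}\,d\phi(V)=d\phi\Big(\frac{DV}{dt}\Big).
\]
In particular $\phi\circ\ga$ has unit speed whenever $\ga$ does, and $\frac{D}{dt}(\phi\circ\ga)^{\cdot}=d\phi\big(\frac{D\dot\ga}{dt}\big)$.

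The second step concerns the rotation $i$. Since $d\phi_p$ is a linear isometry $T_p\to T_{\phi(p)}$, it sends the orthonormal basis $\{v,iv\}$ to an orthonormal basis $\{d\phi\, v, d\phi(iv)\}$. Hence $d\phi(iv)=\pm i\,d\phi(v)$, with the sign $+$ exactly when $\phi$ preserves orientation. Combining the two steps, if $\frac{D\dot\ga}{dt}=\mu i\dot\ga$, then
\[
\frac{D}{dt}(\phi\circ\ga)^{\cdot}=d\phi(\mu i\dot\ga)=\pm\mu\, i\,(\phi\circ\ga)^{\cdot}.
\]
So the geodesic curvature $k=\langle \frac{D\dot\ga}{dt}, i\dot\ga\rangle$ of any unit-speed curve is preserved up to this sign, and exactly when $\phi$ preserves orientation. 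Therefore $\phi\circ\ga$ is a \m\ geodesic when $\phi$ preserves orientation. When $\phi$ reverses orientation, $\phi\circ\ga$ is a $(-\mu)$-magnetic geodesic, and by \cref{rmk:general-behaviors-easy-observation}\eqref{ReverseVersion} its reversal $t\mapsto\phi(\ga(-t))$ is a \m\ geodesic, so \m\ geodesics still go to \m\ geodesics up to reversing orientation.

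The main obstacle is the naturality of $\nabla$ under $\phi$. It is standard, following from the Koszul formula or from the uniqueness argument above, and the only care needed is in handling fields along curves rather than global vector fields. The orientation sign is the remaining point to state explicitly.
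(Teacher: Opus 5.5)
Your proof is correct and follows the paper's route: naturality of $\frac{D}{dt}$ under an isometry via uniqueness of the Levi-Civita connection, then moving $d\phi$ past $\mu i$. Your orientation caveat is a genuine refinement, since the paper's step $d\phi(\mu i\dot\gamma)=\mu i\,d\phi(\dot\gamma)$ tacitly assumes $\phi$ preserves orientation; this holds for the deck transformations to which the result is applied.
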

\begin{proof}
If $\ga$ is a \m\  geodesic, then by \cref{rmk:general-behaviors-easy-observation}(\ref{item:mu-mu-relation}), $\tilde \ga(t)\coloneqq \phi (\ga(t))$ is a \m\ geodesic  since
$\frac{D \dot{\tilde \ga}}{dt}= 
\frac{D}{dt}\big(d\phi(\dot\ga)\big)
= d\phi(\frac{D\dot\ga}{dt})
= d\phi(\mu i \dot\ga  )
=\mu i d\phi (\dot \ga)
=\mu i \dot {\tilde\ga}.
$
Here the second equation comes from the uniqueness of the Levi-Civit\`a connection \cite[p.~71]{John-Lee-Riemannian-MFD}.
\end{proof}

\begin{definition}[Axial isometry]\label{def:axial-iso-geodesic-flow}
An isometry $\phi\colon M\to M$ of a simply connected manifold $M$ is \emph{axial} if there exists a geodesic $a$ on $M$, called the \emph{axis} of $\phi$ preserved by $\phi$, that is, $\phi(a(\re))=a(\re)$ (or equivalently, $\phi(a(t))=a(t+\omega) \text{ for all }t \in \mathbb{R}$).
\end{definition}

\begin{definition}[Universal cover and deck transformations]\label{def:deck-trans}
    The \emph{universal cover} $\wtilde{M}$ of $M$ is a simply connected manifold together with a covering projection $\operatorname{proj} \colon \wtilde{M} \to M$. A 
    \emph{deck transformation} is an isometry $\phi \colon \wtilde{M} \to \wtilde{M}$ 
    satisfying $\operatorname{proj} \circ \phi = \operatorname{proj}$. The group of all deck transformations is 
    denoted $D$, and we write $M = \wtilde{M} / D$.
\end{definition}

Every nontrivial deck transformation is an axial isometry (\cref{def:axial-iso-geodesic-flow}) whose axis projects to a closed geodesic on $M$; see \cite[Proposition 10.15]{Bishop-Oneil}, \cite[p.~133]{Ballmann}, and \cite[p.~258]{Bridson-Haefliger}.

\subsubsection{Ideal boundary and cone topology}\label{ssec:ideal-boundary-cone-topo}
The following lemma is a direct consequence of the Cartan--Hadamard Theorem (\cite[Chapter 7, Theorem~3.1]{DoCarmo-Riem-Geo}), and will be used in \cref{def:chord}. 
\begin{lemma}\label{Lemma:connectivity-by-geodesic}
For $p,q\in \wtilde{M}$, there is a unique geodesic (up to reparametrization) connecting $p$ and $q$.
\end{lemma}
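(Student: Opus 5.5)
The plan is to deduce both existence and uniqueness from the Cartan--Hadamard Theorem, applied to the exponential map at $p$. Here $\wtilde M$ is the universal cover of the closed manifold $M$ with the pulled-back metric, so $K<0$ (or merely nonpositive sectional curvature) holds on $\wtilde M$. Moreover, $\wtilde M$ is complete, because the lift of a complete metric under a covering map is complete. It is also simply connected by \cref{def:deck-trans}. Cartan--Hadamard then says that $\exp_p\colon T_p\wtilde M\to\wtilde M$ is a diffeomorphism for every $p\in\wtilde M$.

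\emph{Existence.} Since $\exp_p$ is surjective, there is a vector $w\in T_p\wtilde M$ with $\exp_p(w)=q$. The curve $t\mapsto\exp_p(tw)$ for $t\in[0,1]$ is then a geodesic from $p$ to $q$. If $p\neq q$, we may reparametrize it by arclength, taking $t\mapsto \exp_p(t\,w/|w|)$ on $[0,|w|]$.

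\emph{Uniqueness.} Let $c\colon[0,L]\to\wtilde M$ be any geodesic with $c(0)=p$ and $c(L)=q$. Then $c(t)=\exp_p(t\dot c(0))$, so $q=\exp_p(L\dot c(0))$. Injectivity of $\exp_p$ forces $L\dot c(0)=w$. Hence $c$ agrees with the geodesic constructed above up to an affine reparametrization (and a reversal of orientation, if we allow traversing from $q$ to $p$).

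I do not expect a real obstacle. The only points to check are that the hypotheses of Cartan--Hadamard hold on $\wtilde M$, namely completeness of the lifted metric and nonpositive curvature. The second is inherited from $M$ because the covering projection is a local isometry.
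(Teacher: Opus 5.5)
Your proof is correct and follows the same route as the paper, which records the lemma as a direct consequence of the Cartan--Hadamard Theorem: $\exp_p$ is a diffeomorphism on the complete, simply connected, nonpositively curved $\wtilde M$, so surjectivity gives existence and injectivity gives uniqueness. Your checks that the lifted metric is complete and inherits the curvature bound are exactly the hypotheses the paper leaves implicit.
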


Compactness of $M$ implies (geodesic) completeness, which allows us to define asymptotes.
\begin{definition}[Asymptotic geodesics]\label{def:geoflow_asymp}
Two geodesics $g$ and $h$ on $\wtilde{M}$ are said to be (forward) \emph{asymptotic} if $t\mapsto d(g(t),h(t))$ is bounded on $[0,\infty)$. Two unit vectors $v,w\in T^1M$ are said to be \emph{(forward) asymptotic} if their corresponding geodesics $g_v$ and $g_w$ are so.
\end{definition}
This defines an equivalence relation on the set of geodesics, and we will later introduce a magnetic counterpart (\cref{def:asymptotic-mag-geodesics,prop:asymptoticity-equivalence-relation}).

\begin{definition}[Ideal boundary]\label{def:geo-boundary-point-asymp-class}
A point at infinity for $\wtilde{M}$ is an equivalence class of asymptotic geodesics of $\wtilde{M}$. The set of all points at infinity of $\wtilde{M}$ is denoted by $\wtilde{M}(\infty)$. The equivalence class represented by a geodesic $g$ is denoted by $g(\infty)$, and the equivalence class represented by the oppositely oriented geodesic $g^{-1}\colon t\to g(-t)$ is denoted by $g(-\infty)$.
\end{definition}

\begin{proposition}\label{prop:connectivity-geoflow-MtoMbdry}Let $M$ be a closed Riemannian manifold with negative sectional curvature.
\begin{enumerate}
   \item For $p\in \wtilde{M}$ and $x\in\wtilde{M}(\infty)$ there is a unique geodesic $g_{px}\in x$ with $g_{px}(0)=p$ \cite[Proposition 1.2]{visibility}.
   
         \item (Visibility, \cite[Lemma 9.10]{Bishop-Oneil}, \cite[Corollary 5.2]{visibility}) For any two distinct points $x, y\in\wtilde{M}(\infty)$, there exists a unique (up to reparametrization) geodesic $g_{xy}$ on $\wtilde{M}$ such that $g_{xy}(-\infty)=x$ and $g_{xy}(\infty)=y$. \label{lemma:connectivity_geoflow_nega_curv}
\end{enumerate}
\end{proposition}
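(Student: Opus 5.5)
The plan is to work on the Hadamard manifold $\wtilde M$ (complete, simply connected, with sectional curvature bounded above by some $-a^2<0$, because $M$ is compact and $K<0$). Convexity of the distance function and comparison with the hyperbolic plane of curvature $-a^2$ will carry both parts.

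For (1), I would prove existence by a limiting argument. Fix $x=g(\infty)$ for some geodesic $g$. Let $g_n$ be the unit-speed geodesic from $p$ to $g(n)$, which is unique by \cref{Lemma:connectivity-by-geodesic}, and set $v_n=\dot g_n(0)\in T^1_p\wtilde M$. By compactness of the unit sphere, a subsequence $v_n$ converges to some $v$. I would then show that $g_v$ is asymptotic to $g$. Convexity of $t\mapsto d(g_n(t),g(t\cdot n/|\,\cdot\,|))$, or more simply convexity of $s\mapsto d(g_n(s),g(s))$ on $[0,d(p,g(n))]$, bounds $d(g_n(s),g(s))$ by roughly $2d(p,g(0))$ for $s$ in that interval. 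Since the endpoint values are at most $d(p,g(0))$ and $|d(p,g(n))-n|\le d(p,g(0))$, the triangle inequality makes this uniform in $n$. Passing to the limit with $v_n\to v$ gives $d(g_v(s),g(s))\le 2d(p,g(0))$ for all $s\ge0$, so $g_v\in x$.

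For uniqueness in (1), suppose $g_v,g_w\in x$ both start at $p$. Then $t\mapsto d(g_v(t),g_w(t))$ is convex, bounded on $[0,\infty)$, and vanishes at $0$, so it is nonincreasing; being $\ge0$ and $0$ at $t=0$, it is identically $0$. Hence $v=w$. (Strict negative curvature is not even needed here.)

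For (2), uniqueness comes from the standard flat-strip argument. If two distinct geodesics $g_1,g_2$ both run from $x$ to $y$, then $t\mapsto d(g_1(t),g_2(\re))$ is convex and bounded on $\re$, hence constant. By the flat strip theorem, or directly by Gauss--Bonnet applied to the geodesic quadrilateral with two long sides on $g_1,g_2$ and two short connecting segments, whose total curvature integral must tend to something $\ge -2\pi$ while $K\le -a^2$ forces it to $-\infty$, we get a contradiction. This is the cleanest route on surfaces.

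The main obstacle is existence in (2), the visibility property. I would take $g_n$ to be the geodesic segment from $g_{px}(n)$ to $g_{py}(n)$, and show that $g_n$ meets a fixed compact ball $B(p,R)$ for all $n$. Then the tangent vectors at the nearest points to $p$ subconverge, and the limit geodesic has endpoints $x$ and $y$ by the same convexity estimate as in (1) applied on each half. The key estimate is that, since the angle $\angle_p(x,y)=\theta>0$, comparison (Toponogov/Rauch with $K\le -a^2$) with the triangle in the plane of curvature $-a^2$ with vertex angle $\theta$ and two long sides $n$ shows the opposite side stays within distance $R(\theta,a)$ of $p$, uniformly in $n$. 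I would cite \cref{prop:connectivity-geoflow-MtoMbdry} as the standard reference \cite{visibility} if the comparison details become lengthy.
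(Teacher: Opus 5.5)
Your proposal is correct. The paper gives no proof of its own here; it only cites \cite{visibility} and \cite{Bishop-Oneil}, and what you wrote is the standard argument from those sources: convexity of distance along geodesics for existence and uniqueness of $g_{px}$; the bounded convex function $t\mapsto d(g_1(t),g_2(\mathbb{R}))$ together with the flat strip theorem (or Gauss--Bonnet on surfaces) for uniqueness of $g_{xy}$; and a uniform comparison with $\mathbb{H}^2(-a^2)$ for visibility, which does give an $n$-independent bound $R(\theta,a)$. Two cosmetic points. Under $K\le -a^2$ the comparison you need is the Rauch/CAT$(-a^2)$ one (actual triangles are thinner and their angles are no larger than the comparison angles), not Toponogov's theorem. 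Also, your fallback should cite \cite{visibility} directly rather than \cref{prop:connectivity-geoflow-MtoMbdry} itself, since citing the statement being proved would be circular.
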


\begin{definition}[Visibility]\label{def:visibility}
A Hadamard manifold $N$ is said to have the \emph{visibility property} if for every pair of distinct points $x\neq y \in N(\infty)$ there exists a geodesic $g_{xy}$ on $N$ with $g_{xy}(-\infty)=x$ and $g_{xy}(\infty)=y$. A manifold of nonpositive curvature is said to have the visibility property if its universal cover does. 
\end{definition}
\begin{remark}\label{remark:visibility_nonpositive_curvature}
Uniqueness in \cref{prop:connectivity-geoflow-MtoMbdry}(\ref{lemma:connectivity_geoflow_nega_curv}) fails in the presence of flat strips; see \cite[Proposition 5.1]{visibility}.
\end{remark}

The visibility property was first studied for manifolds with nonpositive curvature, 
and there are several equivalent formulations; see \cite{visibility}. We will later show that in the magnetic setting, if $(\Sigma,\mu)$ satisfies \ref{H2}, then $\Sigma$ has the \m\  visibility property; see \cref{thm:ptildeqtilde-magnetic-visibility-property}.

We now introduce the cone topology on $\oline{M}\dfn\wtilde{M}\cup \wtilde{M}(\infty)$ following \cite{visibility,Eberlein-book}. It was first introduced for symmetric spaces of noncompact type by Karpelevi\v c \cite{Karpelevivc}, and we will introduce a magnetic counterpart in \cref{def:magentic-cone-topology}.

\begin{definition}\label{def:truncated_cone}
Let $M$ be a Riemannian manifold and let $p\in\wtilde{M}$.  For $v\in T_p^1\wtilde{M}$ and $\epsilon,r>0$, the truncated cone with vertex $p=\pi v$, axis $v$, and angle $\epsilon$ is \[T(v,\epsilon,r)\dfn \{q\in \oline{M} \mid \measuredangle_p(g_v(\infty),q)<\epsilon\}\smallsetminus\{q\in\wtilde{M}\mid d(p,q)\leq r)\},\]where $g_v$ is the geodesic induced by vector $v$ as in \cref{remark:notation_mag_geo}, and  
$\measuredangle_p(x,y)\dfn \measuredangle(\dot g_{px}(0),\dot g_{py}(0))$.
\end{definition}

\begin{proposition}[{\cite[Proposition 2.3, Proposition 2.9, Theorem 2.10]{visibility}}]\label{ConeTopology}
    If $M$ is a closed manifold with negative curvature, then there exists a unique topology $\tau$ on $\oline{M}$ such that
    \begin{enumerate}
        \item given a point $p\in\wtilde{M}$ and $v\in T_p^1\wtilde{M}$, the truncated cones $T(v,\epsilon,r)$ for all $\epsilon,r>0$ form a neighborhood basis for $\tau$ at $x=g_v(\infty)$;
        \item the topology on $\wtilde{M}$ induced from $\tau$ is the original topology of $\wtilde{M}$.
    \end{enumerate}
        \noindent As a consequence,
    \begin{enumerate}[resume]
        \item $\oline{M}$ is homeomorphic to the closed unit disk, and the map $f_p:T_p^1\wtilde{M} \to \wtilde{M}(\infty)$ given by $f_p(v)=g_v(\infty)$ is a homeomorphism for every point $p\in\wtilde{M}$;
        \item $\wtilde{M}$ is a dense open subset of $\oline{M}$.
    \end{enumerate}
This topology is called the \emph{cone topology}.
\end{proposition}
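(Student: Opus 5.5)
We construct $\tau$ by declaring open the sets $U\subset\oline{M}$ with $U\cap\wtilde{M}$ open in $\wtilde{M}$ such that every $x\in U\cap\wtilde{M}(\infty)$ has a truncated cone $T(v,\epsilon,r)\subset U$, where $v\in T^1_p\wtilde{M}$ is the unit vector at a fixed basepoint $p$ with $g_v(\infty)=x$; such $v$ exists and is unique by \cref{prop:connectivity-geoflow-MtoMbdry}(1). This family is closed under unions, and under finite intersections because nested cones are again of the same form: $T(v,\min\epsilon,\max r)$. Property (2) follows once we know each truncated cone is open, since its trace on $\wtilde{M}$, namely $\{q:\measuredangle_p(x,q)<\epsilon,\ d(p,q)>r\}$, is open because the initial direction of $g_{pq}$ depends continuously on $q\neq p$. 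Uniqueness of $\tau$ is formal: any topology satisfying (1) and (2) has a prescribed neighborhood basis at every point of $\oline{M}$.

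The substantive step, and the main obstacle, is that the construction does not depend on the basepoint $p$. We must show that for $p'\neq p$, every cone at $p$ around $x$ contains a cone at $p'$ around $x$, and conversely; this also shows that cones are open. We use the comparison estimate available in negative curvature (with curvature bounded above by $-a^2<0$ on the compact $\Sigma$). For $q$ far from both $p$ and $p'$, and making a small angle at $p'$ with $g_{p'x}$, the geodesic triangle $p,p',q$ has a small angle at $q$, by Toponogov/CAT$(-a^2)$ comparison. Combined with the exponential convergence of the asymptotic geodesics $g_{px}$ and $g_{p'x}$, this shows that $\measuredangle_p(x,q)$ is small when $r$ is large and $\epsilon$ is small. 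We must treat separately the case where $q$ is an ideal point, using the fact that $g_{pq}$ and $g_{p'q}$ are asymptotic. We also need the triangle inequality for angles at a fixed vertex.

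For (3), we define $\Phi:\overline{B}\to\oline{M}$ from the closed unit ball in $T_p\wtilde{M}$ by $\Phi(sv)=\exp_p(\tfrac{s}{1-s}v)$ for $s<1$ and $\Phi(v)=g_v(\infty)$. This map is a bijection because $\exp_p$ is a diffeomorphism by Cartan--Hadamard, together with \cref{prop:connectivity-geoflow-MtoMbdry}(1). Continuity at boundary points comes straight from the cone definition: the preimage of $T(v,\epsilon,r)$ contains the set of $sw$ with $\measuredangle(v,w)<\epsilon$ and $s$ close to $1$. Continuity of the inverse holds for the same reason, since cones correspond to angular sectors times radial collars. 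In dimension two, $\overline{B}$ is the closed disk, and restricting to the boundary gives the homeomorphism $f_p$. Finally, (4) follows because $\Phi$ carries the open ball onto $\wtilde{M}$, which is therefore open and dense.
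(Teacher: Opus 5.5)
Your architecture (cones at a single basepoint define $\tau$, then basepoint independence, then the radial compactification $\Phi$) is the standard one of Eberlein--O'Neill; the paper only cites it. Your treatment of (3) and (4) is fine once the key lemma is in hand. The gap is that lemma: you assert basepoint independence, but the mechanism you sketch does not prove it. The angle at $q$ in the triangle $(p,p',q)$ is small for \emph{every} sufficiently far $q$, whatever its angle at $p'$. By itself it therefore says nothing about the direction of $q$ as seen from $p$. With $v'\dfn\dot g_{p'x}(0)$ and $q\in T(v',\eps',r')$, angle sums in that triangle and in the ideal triangle $(p,p',x)$ give only two upper bounds: $\measuredangle_p(p',q)<\pi-\measuredangle_{p'}(p,x)+\eps'$ and $\measuredangle_p(p',x)\le\pi-\measuredangle_{p'}(p,x)$. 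They do not show that these two angles are close; in variable curvature a side and two angles do not determine the third angle. Exponential convergence of $g_{px}$ and $g_{p'x}$ does not bridge this either. It controls points near these rays, whereas $T(v',\eps',r')$ contains points arbitrarily far from both rays, however small $\eps'$ and large $r'$ are.

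What is missing is a comparison at a scale $R$ that does not depend on $q$. Fix $R\gg d(p,p')$ and take $r'\ge R$. Set $a\dfn g_{p'x}(R)$ and $b\dfn g_{p'q}(R)$, and split $\measuredangle_p(x,q)\le\measuredangle_p(x,a)+\measuredangle_p(a,b)+\measuredangle_p(b,q)$.
\begin{itemize}
\item The first term is $O(d(p,p')/R)$, since $d(a,g_{px}(R))\le d(p,p')$ by convexity along asymptotic rays; no exponential rate is needed.
\item The second term is small for this fixed $R$ once $\eps'$ is small, by continuity of $\exp_{p'}$ and of $y\mapsto\dot g_{py}(0)$ at $a\neq p$.
\item For the third term, apply your far-vertex estimate at $b$ rather than at $q$. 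You have $\measuredangle_b(p,p')\le\de(R)=O(d(p,p')/R)$, so $\measuredangle_b(p,q)\ge\pi-\de(R)$ because $b$ lies on $g_{p'q}$. The angle sum in the (possibly ideal) triangle $(p,b,q)$ then gives $\measuredangle_p(b,q)\le\de(R)$, uniformly in $q$.
\end{itemize}
Alternatively, argue by compactness. Suppose $\measuredangle_{p'}(x,q_n)\to0$ and $d(p',q_n)\to\infty$. Any subsequential limit $w$ of $\dot g_{pq_n}(0)$ satisfies $d(g_w(t),g_{p'x}(t))\le d(p,p')$ for all $t\ge0$, by convexity of $t\mapsto d(g_{pq_n}(t),g_{p'q_n}(t))$. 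Hence $g_w(\infty)=x$, and $w=\dot g_{px}(0)$ by the uniqueness in \cref{prop:connectivity-geoflow-MtoMbdry}(1). Either route uses only nonpositive curvature.

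Two smaller points. First, (2) is immediate from your definition, since every open $V\subset\wtilde M$ is vacuously $\tau$-open; it does not depend on cones being open. Second, uniqueness is not purely formal at interior points: you need that (1) forces $\wtilde M$ to be open. For instance, by (2) some $U\in\tau'$ has $U\cap\wtilde M$ equal to a given open ball. By (1), $U$ then contains no ideal point, because truncated cones have unbounded trace on $\wtilde M$.
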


Axial isometries have north-south dynamics on $\oline{M}$:
\begin{proposition}[{\cite[Chapter~III, Lemma~3.3]{Ballmann_lecture}}]\label{prop:endpoints-converge-to-axis-endpoints}
    Suppose $M$ is an oriented and closed manifold with negative curvature. If $\phi$ is an axial isometry with axis $a \colon \re \to \wtilde{M}$, then for any neighborhood $U$ of $a(-\infty)$ and any neighborhood $V$ of $a(\infty)$ in $\oline{M}$, there exists $N \in \mathbb{N}$ such that for all $n \geq N$,
    \[  \phi^n(\oline{M} \setminus U) \subset V \text{ and }
        \phi^{-n}(\oline{M} \setminus V) \subset U.
    \]
\end{proposition}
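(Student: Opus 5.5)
The plan is to reduce the north--south dynamics to the geometry of the axis $a$ together with the cone topology of \cref{ConeTopology}. Write $\phi(a(t))=a(t+\omega)$ with $\omega>0$; replacing $\phi$ by $\phi^{-1}$ exchanges the roles of $U$ and $V$ and reverses the axis. So it suffices to prove the first inclusion $\phi^n(\oline{M}\setminus U)\subset V$ for large $n$, and the second inclusion then follows by symmetry. Fix the base point $p=a(0)$ and $v=\dot a(0)$. By \cref{ConeTopology}(1) we may shrink $V$ to a truncated cone $T(v,\epsilon,r)$ and $U$ to a truncated cone $T(-v,\epsilon',r')$, both with vertex $p$.

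The key steps run as follows. First, because $\oline{M}$ is compact (a closed disk), $\oline{M}\setminus U$ is compact, and every point $q$ of it satisfies either $\measuredangle_p(a(-\infty),q)\ge\epsilon'$ or $d(p,q)\le r'$. Second, $\phi^n$ is an isometry fixing the axis, and it extends to a homeomorphism of $\oline{M}$ by mapping geodesic rays to geodesic rays. Consequently $\phi^n(q)$ is a point whose geodesic from $p_n\dfn a(n\omega)=\phi^n(p)$ makes angle at least $\epsilon'$ with $-\dot a(n\omega)$, or lies within distance $r'$ of $p_n$.

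The main step is to show that any such point, as seen from $p$, lies in $T(v,\epsilon,r)$ once $n$ is large. For this we use uniform negative curvature $K\le -k^2<0$, which holds because $M$ is compact, together with comparison (Toponogov/CAT($-k^2$)). Consider the geodesic triangle $p,p_n,z$, where $z=\phi^n(q)$ (allowing $z$ ideal). The angle at $p_n$ between the directions to $p$ and to $z$ is at least $\epsilon'$ in the first case; in the second case $z$ is close to $p_n$. In a CAT($-k^2$) space, when a side $d(p,p_n)=n|\omega|\to\infty$ and the angle at $p_n$ is bounded below, the angle at $p$ tends to $0$ uniformly in $z$. The same comparison gives $d(p,z)\ge d(p,p_n)-C(\epsilon')$, so the distance grows uniformly as well. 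Together these place $z$ in the truncated cone $T(v,\epsilon,r)$.

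The hardest part is this uniform angle estimate, especially for ideal points $z$. I would handle it by comparing with the hyperbolic plane of curvature $-k^2$, where an explicit formula bounds the angle at $p$ by a function of $n|\omega|$ and $\epsilon'$ that decays exponentially in $n$. Ideal points are then covered by taking limits along the ray from $p_n$ to $z$, using the continuity provided by \cref{ConeTopology}(3).
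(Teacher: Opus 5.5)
Your argument is correct. The paper gives no proof of its own here; it cites Ballmann's lecture notes, where the lemma is stated for axial isometries of (locally compact) Hadamard spaces whose axis does not bound a flat half-plane. There the proof is qualitative: it uses the absence of flat half-planes and compactness of $\oline{M}$, and assumes no curvature bound at all.

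You instead use the uniform bound $K\le -k^2<0$, which follows from compactness of $M$, together with an explicit CAT$(-k^2)$ comparison. The directions of the comparison inequalities work out as needed. Riemannian angles are at most comparison angles at every vertex. Hence the lower bound $\ge\epsilon'$ at $p_n$ passes to the comparison triangle, and the upper bound obtained at $\bar p$ passes back to $p$.

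In $\mathbb H^2(-k^2)$, a triangle with one ideal vertex, finite side $L$ and angles $\alpha,\beta$ satisfies $e^{kL}=\cot(\alpha/2)\cot(\beta/2)$. Finite third vertices on the same ray from $\bar p_n$ give smaller angles at $\bar p$. Together these yield the uniform estimate $\tan(\alpha/2)\le\cot(\epsilon'/2)\,e^{-kn\omega}$. The distance estimate also holds: your bound $d(p,z)\ge n\omega-C(\epsilon')$ follows from the hyperbolic law of cosines, and the cruder bound $d(p,z)\ge n\omega\sin\epsilon'$ from CAT$(0)$ already suffices.

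Your route buys explicit, uniform exponential contraction toward $a(\infty)$. It genuinely needs strict negative curvature: in flat space, as $z$ recedes along a ray at angle $\beta$ from $p_n$, the angle at $p$ tends to $\pi-\beta$, not $0$. Ballmann's argument, by contrast, also covers rank-one nonpositive curvature.

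Two small remarks on the write-up:
\begin{itemize}
\item Compactness of $\oline{M}\setminus U$ is never used.
\item In the case $d(p_n,z)\le r'$ you should state the estimate explicitly: Euclidean comparison gives an angle at $p$ of at most $\arcsin\bigl(r'/(n\omega-r')\bigr)$ once $n\omega>2r'$.
\end{itemize}
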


We have the following result about the density of pairs of endpoints that determine an axis, and we will show its magnetic counterpart in \cref{prop:periodic-orbits-vectors} for the surface case.
\begin{proposition}[{\cite[Theorem 2.13]{Ballmann}}]\label{prop:axial-endpoints-pairs-are-dense}
    Suppose $M$ is an oriented and closed manifold with negative curvature. Then the set of pairs $(x, y) \in \wtilde{M}(\infty) \times \wtilde{M}(\infty)$ that arise as the endpoints at $\wtilde{M}(\infty)$ of the axis of some axial isometry is dense in $\wtilde{M}(\infty) \times \wtilde{M}(\infty)$.
\end{proposition}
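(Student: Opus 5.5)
This is a cited background result, \cite[Theorem 2.13]{Ballmann}, so the plan is to reconstruct the standard argument from the tools already recalled: north--south dynamics of axial isometries (\cref{prop:endpoints-converge-to-axis-endpoints}), the cone topology (\cref{ConeTopology}), and visibility (\cref{prop:connectivity-geoflow-MtoMbdry}).

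\textbf{Step 1: reduce to pairs of distinct points.} Since $\wtilde M(\infty)$ is a sphere with no isolated points, the off-diagonal pairs $x\neq y$ are dense in $\wtilde M(\infty)\times\wtilde M(\infty)$. It therefore suffices, given $x\neq y$ and neighborhoods $U\ni x$, $V\ni y$ in $\oline M$, to produce a deck transformation $\phi$ with axis $a$ such that $a(-\infty)\in U$ and $a(\infty)\in V$.

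\textbf{Step 2: find an orbit segment that travels from near $x$ to near $y$.} Let $g=g_{xy}$ be the connecting geodesic from visibility. Because $M$ is compact, the vectors $\dot g(-T)$ and $\dot g(T)$ project to $T^1M$, so by pigeonhole I can pick large $T$ and find times $s<t$ with $s\le -T$ and $t\ge T$ whose projected vectors are close. Concretely, there is a deck transformation $\phi$ and a small $\delta$ with
\[
d\bigl(\phi(g(s)),g(t)\bigr)<\delta \quad\text{and}\quad d\phi\,\dot g(s)\approx \dot g(t).
\]
The Closing Lemma (Anosov closing for the geodesic flow in negative curvature), or equivalently a direct comparison argument, then shows that $\phi$ is axial. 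Its axis $a$ passes within a small distance of $g(s)$ and of $g(t)$, and its direction is close to $\dot g$ at both points.

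\textbf{Step 3: locate the endpoints of the axis.} Since $t-s$ is large and $a$ stays $\epsilon$-close to $g$ on $[s,t]$, I will show that $a(\infty)$ lies in a truncated cone $T(\dot g(0),\epsilon,r)$ around $y$, and $a(-\infty)$ lies in a similar cone around $x$. This uses that geodesics remaining close over long intervals have close endpoints. In negative curvature this follows from convexity of the distance function together with the cone topology description in \cref{ConeTopology}, with the angle at $g(0)$ controlled by comparison geometry.

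\textbf{Main obstacle.} The hardest step is producing the axial $\phi$ with controlled axis in Step~2, which is the closing argument. If I want to avoid the Closing Lemma, I would instead pick any axial $\psi$ whose endpoints $\{a_\psi(\pm\infty)\}$ are both distinct from $x$ and $y$; such a $\psi$ exists because the deck group is non-elementary and one can conjugate. I would then apply north--south dynamics (\cref{prop:endpoints-converge-to-axis-endpoints}) to compositions $\psi_1^n\psi_2^{-n}$ of two such axial isometries with endpoints near $y$ and $x$. A ping-pong argument shows that the composition maps $\oline M\setminus U$ into $V$ and its inverse maps $\oline M\setminus V$ into $U$. Its fixed points on the boundary, which are the axis endpoints by \cref{prop:endpoints-converge-to-axis-endpoints} applied in reverse, then lie in $U$ and $V$.

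Making this work still requires axial isometries with endpoints near arbitrary $x$, which is a minimality statement for the limit set. I would get it from cocompactness: translates $\phi(p)$ of a basepoint accumulate at every boundary point, and a translate of a fixed axis by such a $\phi$ has an endpoint near $x$.
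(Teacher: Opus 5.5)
The paper does not prove this statement; it only cites Ballmann. Your main route (Steps 1--3) has a genuine gap in the pigeonhole claim of Step 2. Compactness of $T^1M$ guarantees two times at which the projected vectors are close. It does \emph{not} guarantee that one of these times lies in the past ($s\le -T$) and the other in the future ($t\ge T$) of the \emph{given} geodesic $g=g_{xy}$, because the $\alpha$- and $\omega$-limit sets of its projected orbit can be disjoint.

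Here is a counterexample. Let $a_1,a_2$ be axes of deck transformations that project to two different closed geodesics $c_1,c_2$, and put $x=a_1(-\infty)$, $y=a_2(\infty)$, choosing lifts so that $x\ne y$. Then $g_{xy}$ is backward asymptotic to $a_1$ and forward asymptotic to $a_2$. So for large $T$, the projections of $\dot g(s)$ with $s\le -T$ lie near the periodic orbit of $\dot c_1$, and those of $\dot g(t)$ with $t\ge T$ lie near the periodic orbit of $\dot c_2$. These are disjoint compact sets, so no admissible $s,t$ exist. If you instead close a segment $[s,t]$ lying on one side of $0$, you control only one endpoint; in this example the closing just returns (a power of) $c_2$, whose lift has backward endpoint different from $x$.

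The missing step is to move off $g_{xy}$ before closing. The set $W=\{v\in T^1\wtilde M\mid g_v(-\infty)\in U,\ g_v(\infty)\in V\}$ is open, by continuity of $v\mapsto g_v(\pm\infty)$, and it contains $\dot g_{xy}(0)$. The Liouville measure is finite (\cref{rmk:nonwandering-set-is-entire-sm}) and has full support, so Poincar\'e recurrence gives some $v\in W$ whose projection is forward recurrent. Closing that orbit along a return time then yields an axis tangent to a vector in $W$. Equivalently, \cref{prop:geo-periodic}(1) already states that axis tangent vectors are dense, and openness of $W$ finishes the proof in two lines. With either fix, the long fellow-travelling estimate of Step 3 is unnecessary.

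Your ping-pong fallback is closer in spirit to Ballmann's argument and can be completed, but as written it omits the condition that makes it work. Suppose $\psi_1$ attracts to $a_1\in V$ and $\psi_2$ attracts to $a_2\in U$; you need to pass to $\phi\psi^{\pm1}\phi^{-1}$ to make the endpoint near $y$, respectively $x$, the attracting one. North--south dynamics then gives $\Phi_n(\oline M\smallsetminus U)\subset V$ and $\Phi_n^{-1}(\oline M\smallsetminus V)\subset U$ for $\Phi_n=\psi_1^n\psi_2^{-n}$ and large $n$ only if the repelling points satisfy $r_1\neq r_2$. The argument routes $\oline M\smallsetminus U$ through a neighborhood of $r_2$ that must avoid a neighborhood of $r_1$, and conversely. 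If $r_1=r_2\notin U\cup V$, that point is fixed by every $\Phi_n$ and the first containment is false. So you must either choose the conjugates so that $r_1\neq r_2$, or use the fact that in a cocompact group acting on negatively curved $\wtilde M$, two axial elements with a common fixed point at infinity have the same axis; then $a_1\ne a_2$ forces $r_1\neq r_2$.

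Your minimality claim also needs a comparison argument. The translate $\phi(a)$ passes within $d(p,a)$ of $\phi(p)$. One of its two rays from that point makes an angle at least $\pi/2$ with the direction back to the base point. An upper bound $K\le-\kappa^2<0$ then forces that ray's endpoint into a small cone around $x$. With these additions, $\Phi_n$ is a nontrivial, hence axial, deck transformation whose axis endpoints lie in $\overline U$ and $\overline V$, as you intend.
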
 

\begin{proposition}\label{prop:cone-nbhd-contains-fundamental-domain}
    If $M$ is a closed manifold with negative curvature and $x \in \wtilde{M}(\infty)$, then any cone topology neighborhood of $x$ in $\oline{M}$ contains a translate $\phi D$ of a fundamental domain $D$ for some axial isometry $\phi$ of $M$.
\end{proposition}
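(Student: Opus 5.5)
The plan is to combine the north--south dynamics of axial isometries (\cref{prop:endpoints-converge-to-axis-endpoints}) with the density of axis endpoint pairs (\cref{prop:axial-endpoints-pairs-are-dense}). Fix $x\in\Minfty$ and a cone-topology neighborhood $W$ of $x$ in $\oline{M}$. By \cref{ConeTopology}(1), we may shrink $W$ to a truncated cone $T(v,\epsilon,r)$ with $g_v(\infty)=x$. Inside $W$ I then choose a smaller cone neighborhood $V\subset W$ of $x$ whose closure in $\oline{M}$ lies in $W$. This is possible because $\oline{M}$ is homeomorphic to the closed disk, hence regular.

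Next I produce an axial isometry $\phi$, which I take to be a deck transformation, whose attracting endpoint $a(\infty)$ lies in $V$ and whose repelling endpoint $a(-\infty)$ lies outside $\overline V$. By \cref{prop:axial-endpoints-pairs-are-dense}, pairs of axis endpoints are dense in $\Minfty\times\Minfty$. Since $\Minfty$ is a circle with more than one point, I can pick $y\in\Minfty\setminus\overline V$ and approximate $(y,x)$ by an axis endpoint pair $(a(-\infty),a(\infty))$ with $a(-\infty)\notin\overline V$ and $a(\infty)\in V$. Now choose a neighborhood $V'\subset V$ of $a(\infty)$, and a neighborhood $U$ of $a(-\infty)$ disjoint from a fixed compact set, as described below.

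Let $D$ be a fundamental domain for the deck group, taken to be the closure of a relatively compact (Dirichlet) domain. Since $D$ is compact in $\M$ and $a(-\infty)\in\Minfty$, there is a neighborhood $U$ of $a(-\infty)$ in $\oline{M}$ with $U\cap D=\emptyset$. For instance, take a truncated cone about $a(-\infty)$ with radius $r$ exceeding the diameter of $D$ measured from a base point. Thus $D\subset\oline{M}\setminus U$. Applying \cref{prop:endpoints-converge-to-axis-endpoints} with $U$ and $V'$, we get $\phi^n D\subset\phi^n(\oline{M}\setminus U)\subset V'\subset W$ for large $n$. Since $\phi^n$ is again a deck transformation, and is axial with the same axis, $\phi^nD$ is a translate of a fundamental domain by an axial isometry, as required. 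Alternatively, one can read the statement as a translate of $D$ by some axial isometry, and $\phi^n$ serves.

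The main obstacle is bookkeeping in the topology. We must ensure that a compact subset of $\M$ avoids some neighborhood of a boundary point. This follows from (2) and (4) of \cref{ConeTopology}, since truncated cones remove the ball of radius $r$ about the vertex. We must also ensure that the repelling endpoint can be kept away from $x$, which is exactly what density of endpoint pairs, rather than mere density of endpoints, provides.
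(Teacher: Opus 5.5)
Your proposal is correct and follows the paper's proof. Density of axis endpoint pairs gives an axial deck transformation $\varphi$ whose attracting fixed point lies in the neighborhood, and north--south dynamics then pushes the compact fundamental domain $D$ into that neighborhood, so $\phi=\varphi^n$ works. Keeping the repelling endpoint outside $\overline V$ (and invoking regularity of $\oline{M}$) is harmless but unnecessary: the compact set $D\subset\wtilde{M}$ avoids some neighborhood of any point of $\wtilde{M}(\infty)$, wherever the repelling fixed point lies.
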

\begin{proof}

For a (cone topology) neighborhood $N$ of $x \in \wtilde{M}(\infty)$,  by \cref{prop:axial-endpoints-pairs-are-dense} there exists an $x'\in N\cap\wtilde{M}(\infty)$ which is the attracting fixed point of an axial isometry $\varphi$. $x'$ is a global attractor on the complement of the repelling fixed point. Thus, by \cref{prop:endpoints-converge-to-axis-endpoints}, for the compact set $D$, there is an $n$ such that $\varphi^n(D)\subset N$. Take $\phi=\varphi^n$.
\end{proof}


\subsection{Dynamical properties of the geodesic flow}\label{SECDynPropGeodFlow}

\begin{proposition}[\cite{Eberlein-geodesic-flow-negative-curvature-I}]\label{prop:geo-periodic} Suppose $M$ is an oriented and closed manifold with negative curvature. Then
\begin{enumerate}
    \item tangent vectors of axes are dense in $T^1\tilde M$;
    \item periodic unit vectors are dense in $T^1M$;
    \item there are infinitely many distinct periodic orbits in $T^1M$;
    \item the geodesic flow has a dense orbit in $T^1M$.
\end{enumerate}
\end{proposition}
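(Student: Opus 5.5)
The four claims are classical (Eberlein's results for negative curvature), and I would derive them in the order (1)$\Rightarrow$(2)$\Rightarrow$(3), then (4). The main tools are the density of axis endpoint pairs (\cref{prop:axial-endpoints-pairs-are-dense}), visibility (\cref{prop:connectivity-geoflow-MtoMbdry}), and the cone topology (\cref{ConeTopology}).

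\textbf{Step (1).} Fix $v\in T^1\wtilde M$ and let $x=g_v(-\infty)$ and $y=g_v(\infty)$; these are distinct. By \cref{prop:axial-endpoints-pairs-are-dense}, choose axial isometries $\phi_n$ whose axes $a_n$ have endpoints $(x_n,y_n)\to(x,y)$ in $\wtilde M(\infty)\times\wtilde M(\infty)$.

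It remains to show that the geodesics $g_{x_ny_n}$ converge to $g_{xy}=g_v$ uniformly on compacta, after suitable reparametrization. I would argue this through a compactness and uniqueness argument:
\begin{itemize}
\item[(a)] Parametrize $a_n$ so that $a_n(0)$ is the point closest to $\pi v$.
\item[(b)] Show that $d(\pi v,a_n(\re))$ stays bounded. If it did not, the angle at $\pi v$ subtended by $x_n,y_n$ would tend to $0$ (negative curvature, comparison with the angle-distance relation). This contradicts $\measuredangle_{\pi v}(x_n,y_n)\to\measuredangle_{\pi v}(x,y)=\pi$, where the convergence follows from the homeomorphism $f_{\pi v}$ in \cref{ConeTopology}.
\item[(c)] Extract a convergent subsequence of $\dot a_n(0)$. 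Its limit geodesic has endpoints $x,y$, again by continuity of endpoints in the cone topology.
\item[(d)] By uniqueness in visibility, this limit is $g_v$ up to a time shift. Flowing along $a_n$ by the corresponding shift gives axis vectors $\dot a_n(t_n)\to v$.
\end{itemize}
I expect this step, controlling the convergence of connecting geodesics from convergence of endpoints, to be the main technical point. It is essentially continuity of $(x,y)\mapsto g_{xy}$, which holds under negative curvature.

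\textbf{Steps (2) and (3).} An axis $a$ of a deck transformation $\phi$ with $\phi(a(t))=a(t+\omega)$ projects to a closed geodesic. Hence $\operatorname{proj}_*\dot a(t)$ is periodic with period $\omega$. The axial isometries in \cref{prop:axial-endpoints-pairs-are-dense} are deck transformations, as in Ballmann's setting, so (2) follows from (1) by projecting.

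For (3), finitely many periodic orbits form a closed set: a finite union of circles, which is nowhere dense in the $3$-dimensional (more generally $(2\dim M-1)$-dimensional) manifold $T^1M$. This contradicts density, so there are infinitely many distinct periodic orbits.

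\textbf{Step (4).} It suffices to prove topological transitivity: for open $U,V\subset T^1M$ there is $t$ with $g_t(U)\cap V\ne\emptyset$. A dense orbit then follows by the Baire category argument, since $T^1M$ is compact metric with a countable base.

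Lift $U,V$ to $\wtilde U,\wtilde V$. Shrinking them, I would take them to be sets of vectors whose backward and forward endpoints lie in small open sets $A^-\times A^+$ and $B^-\times B^+$ respectively, with basepoints near fixed points. I want a single geodesic from near $A^-$ to near $B^+$ that, after applying deck transformations, passes through both $\wtilde U$ and $\wtilde V$. Choose a deck transformation $\psi$ such that:
\begin{itemize}
\item $\psi(B^-)$ is close to the attracting endpoint region of a suitable axis;
\item $\psi(B^+)$ lies close to the repelling endpoint.
\end{itemize}
This is possible by north-south dynamics (\cref{prop:endpoints-converge-to-axis-endpoints}) together with \cref{prop:cone-nbhd-contains-fundamental-domain}.

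Alternatively, and more simply, use the fact that $\wtilde M(\infty)$ admits dense orbits of the deck group and choose $\psi$ with $\psi(B^-)$ arbitrarily close to a point of $A^+$, while keeping control over $\psi(B^+)$. Then take the visibility geodesic $g_{x\,\psi(z)}$ with $x\in A^-$ and $z\in B^+$. Its backward part lies in $\wtilde U$ after shifting, and its forward part, pulled back by $\psi^{-1}$, lies near vectors of $\wtilde V$ because its endpoints lie near $B^-\times B^+$. The same continuity of connecting geodesics in endpoints as in Step (1) makes "nearby endpoints" imply "passes through the open set". Projecting gives an orbit meeting both $U$ and $V$.
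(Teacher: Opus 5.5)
Your Steps (1)--(3) are sound. The paper gives no proof here and only cites Eberlein. Your route (density of axis endpoint pairs, continuity of $(x,y)\mapsto g_{xy}$ under pinched negative curvature, then projection) is the idea the paper uses for the magnetic analogue in \cref{prop:periodic-orbits-vectors}.

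Step (4), however, does not work as written. For one orbit to meet both $U$ and $V$ you need a single geodesic $g$ satisfying two conditions at once:
\begin{itemize}
\item $(g(-\infty),g(\infty))\in A^-\times A^+$, so that $g$ meets $\tilde U$;
\item $(g(-\infty),g(\infty))\in\psi(B^-)\times\psi(B^+)$, so that $\psi^{-1}g$ meets $\tilde V$.
\end{itemize}
Whether $g$ meets $\tilde U$ depends on \emph{both} endpoints jointly. There is no decoupling into a ``backward part'' in $\tilde U$ and a ``forward part'' near $\tilde V$. So $\psi$ must satisfy $\psi(B^-)\cap A^-\neq\emptyset$ and $\psi(B^+)\cap A^+\neq\emptyset$ \emph{simultaneously}.

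In your geodesic $g_{x\,\psi(z)}$ with $x\in A^-$ and $z\in B^+$, neither $\psi(z)\in A^+$ nor $\psi^{-1}(x)\in B^-$ is arranged. Asking that $\psi(B^-)$ lie near a point of $A^+$ is the wrong condition. Minimality of the deck group on $\tilde M(\infty)$ controls where one point goes. ``Keeping control over $\psi(B^+)$'' at the same time is exactly transitivity of $D$ on pairs of distinct boundary points, which is essentially what you are trying to prove.

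Your first alternative is also inconsistent. For $\psi=\phi^n$ with $n$ large, north--south dynamics pushes everything off a neighborhood of the repelling point toward the attracting point. So you cannot have $\psi(B^-)$ near the attracting point and $\psi(B^+)$ near the repelling point. Also, \cref{prop:cone-nbhd-contains-fundamental-domain} plays no role.

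The missing idea is to place the \emph{repelling} endpoint inside $B^+$.
\begin{itemize}
\item Take $A^-\cap A^+=\emptyset=B^-\cap B^+$, and use \cref{prop:axial-endpoints-pairs-are-dense} to choose a deck transformation $\phi$ whose axis runs from $\xi_-\in B^+$ to $\xi_+\in A^-$.
\item Apply \cref{prop:endpoints-converge-to-axis-endpoints} with the neighborhoods $B^+\ni\xi_-$ and $A^-\ni\xi_+$. For large $n$ this gives $\phi^n(B^-)\subset\phi^n(\tilde M(\infty)\smallsetminus B^+)\subset A^-$.
\item The same proposition gives $\phi^{-n}(\tilde M(\infty)\smallsetminus A^-)\subset B^+$, i.e.\ $A^+\subset\phi^n(B^+)$.
\item Then for any $x\in\phi^n(B^-)$ and $y\in A^+$, the geodesic $g_{xy}$ meets $\tilde U$ and $\phi^{-n}g_{xy}$ meets $\tilde V$. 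Projecting gives an orbit through $U$ and $V$.
\end{itemize}

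Alternatively, use Eberlein's duality argument, as the paper does for the magnetic flow in \cref{thm:topologically-transitivity}. Finite volume and Poincar\'e recurrence give $\Omega(D)=T^1\tilde M$. Then \cref{prop:duality-equivalence} provides $\phi_n\in D$ with $\phi_n^{-1}q\to g_{v^*}(\infty)$ and $\phi_n p\to g_{w^*}(-\infty)$ simultaneously, where $v^*\in\tilde U$ and $w^*\in\tilde V$ have footpoints $p$ and $q$. The segments from $p$ to $\phi_n^{-1}q$ then start near $v^*$, and their $\phi_n$-images end near $w^*$.

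Either way, simultaneous control of two boundary points by a single group element is what your Step (4) lacks.
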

\cref{prop:periodic-orbits-vectors,thm:topologically-transitivity} are magnetic counterparts of \cref{prop:geo-periodic} in the surface case. Their proofs use
\cref{def:limit-set-non-wandering-set,prop:duality-equivalence}.
\begin{definition}\label{def:limit-set-non-wandering-set}
For a manifold $M$ and the group $D$ of deck transformations in \cref{def:deck-trans},
\begin{enumerate}
    \item define the \emph{limit set} $L(D)$ of $D$ as the set of accumulation points in $\oline{M}$ of $Dp$ for some (hence all) $p\in \wtilde{M}$; 
    \item a vector $v\in T^1\wtilde{M}$ is said to be \emph{nonwandering} with respect to $D$ (and the flow $f$) if there exist sequences $\{\phi_n\}$ in $D$, $\{t_n\}$ in $\re$, and $\{v_n\}$ in $T^1\wtilde{M}$ such that $t_n\to\infty$, $v_n\to v$, and $\phi_n(f_{t_n}(v_n))\to v$. Define the \emph{nonwandering set} $\Omega(D)$ as the collection of nonwandering vectors (see \cref{def:nw-set} for an equivalent definition).
\end{enumerate}
\end{definition}
\begin{remark}\label{rmk:nonwandering-set-is-entire-sm}
    A closed manifold $M$ with negative curvature has finite volume, and so does $T^1 M$; see \cite[p.~505]{Eberlein-geodesic-flow-negative-curvature-I}.
\end{remark}

\begin{proposition}\label{prop:duality-equivalence}
    Let $M$ be a closed Riemannian manifold with negative curvature. The following are equivalent.
    \begin{enumerate}
        \item $\Omega(D)=T^1\wtilde{M}$; \label{item11}
        \item for any two points $x,y\in \wtilde{M}(\infty)$ and any open sets $U$, $V$ containing $x$, $y$ respectively, there exists $\phi\in D$ such that $\phi(\oline{M}\smallsetminus U)\subset V$; \label{item12}
        \item for any two points $x,y \in \oline{M}(\infty)$, there exists a sequence $\{\phi_n\}\subset D$ such that for any point $p\in \wtilde{M}$, $\phi^{-1}_n(p)\to x$ and $\phi_n(p)\to y$.\label{item13}
    \end{enumerate}
\end{proposition}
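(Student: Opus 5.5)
I will prove the cycle of implications (\ref{item11})$\Rightarrow$(\ref{item13})$\Rightarrow$(\ref{item12})$\Rightarrow$(\ref{item11}), using the cone topology from \cref{ConeTopology} and the visibility property in \cref{prop:connectivity-geoflow-MtoMbdry}.

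\emph{(\ref{item11})$\Rightarrow$(\ref{item13}).} Let $x\neq y$ in $\wtilde M(\infty)$. By visibility there is a geodesic $g$ with $g(-\infty)=x$ and $g(\infty)=y$; put $v=\dot g(0)$. Since $v$ is nonwandering, there are $v_n\to v$, $t_n\to\infty$ and $\phi_n\in D$ with $\phi_n f_{t_n}(v_n)\to v$. Set $w_n=f_{t_n}(v_n)$. The point $\pi w_n$ lies on $g_{v_n}$ at distance $t_n$ from $\pi v_n$. The geodesics $g_{v_n}$ converge to $g_v$ uniformly on compacta, and in negative curvature the cone topology is controlled by initial directions. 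Hence $\pi w_n\to y$.

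Because $\phi_n(\pi w_n)\to \pi v$ stays bounded, $d(\phi_n^{-1}(p),\pi w_n)$ is bounded for any fixed $p$. Points at bounded distance from a sequence converging to $y$ in the cone topology converge to $y$ as well, since angles seen from $p$ shrink. So $\phi_n^{-1}(p)\to y$. Applying the same argument to $-v$, or to $\phi_n^{-1}$ with the roles reversed, gives $\phi_n(p)\to x$.

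This yields the statement with $x$ and $y$ interchanged; interchanging the labels gives (\ref{item13}) as stated. When $x=y$, I will use a diagonal argument: choose $y'\neq x$ close to $x$ and pass to the limit.

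\emph{(\ref{item13})$\Rightarrow$(\ref{item12}).} Take $x,y,U,V$ and the sequence $\phi_n$ from (\ref{item13}), with $\phi_n^{-1}p\to x$ and $\phi_n p\to y$. The task is to upgrade pointwise convergence to the uniform statement $\phi_n(\oline M\smallsetminus U)\subset V$ for large $n$. This is the standard north–south argument. For $q\in\oline M\smallsetminus U$, the angle at $\phi_n^{-1}p$ subtended by $q$ and $p$ is bounded away from $0$, because $\phi_n^{-1}p\to x\notin\oline{\oline M\smallsetminus U}$ once $U$ is shrunk to a cone neighborhood.

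Negative curvature, with curvature bounded above by $-a^2$ by compactness, gives a comparison estimate. If $\measuredangle_{r}(p,q)\ge\epsilon$ and $d(r,p)\to\infty$, then the geodesic from $p$ to $q$ passes within bounded distance $C(\epsilon)$ of $r$. Applying $\phi_n$, the geodesic from $\phi_n p$ to $\phi_n q$ passes within $C$ of $p$ while $\phi_n p\to y$. It follows that $\measuredangle_p(\phi_n q,\phi_n p)$ is uniformly small, so $\phi_n q$ lies in a cone around $y$. The remaining issue is distance from $p$; I handle it by the symmetric estimate, which shows $d(p,\phi_n q)$ grows uniformly. I expect this uniform geometric estimate, the step that turns a bounded angle into a thin-triangle conclusion, to be the main obstacle. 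I would prove it via CAT($-a^2$) comparison with the hyperbolic plane.

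\emph{(\ref{item12})$\Rightarrow$(\ref{item11}).} Let $v\in T^1\wtilde M$ with $x=g_v(-\infty)$ and $y=g_v(\infty)$. Choose small cone neighborhoods $U\ni x$ and $V\ni y$, and apply (\ref{item12}) with the roles arranged so that $\phi(\oline M\smallsetminus U')\subset V'$. Here $U'$ is a small neighborhood of $y$ and $V'$ is a small neighborhood of $x$, so that $\phi^{-1}$ maps $U'$ approximately into a neighborhood of $y$.

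Then $\phi^{-1}(g_v)$ has endpoints in neighborhoods of $x$ and $y$ respectively. By visibility and uniqueness of connecting geodesics, together with continuity of the connecting geodesic in its endpoints (from the cone topology in negative curvature), some vector $u_n$ on $\phi_n^{-1}g_v$ is close to $v$. Equivalently, $\phi_n u_n$ lies on $g_v$, so $\phi_n u_n=f_{t_n}(v)$ for some $t_n$. We also need $t_n\to\infty$. This holds because $\phi_n$ must move $p$ to infinity as the neighborhoods shrink, given that $D$ acts properly discontinuously and the $\phi_n$ are eventually distinct. Rewriting in terms of the definition gives the sequences $v_n\to v$, $t_n\to\infty$ and $\phi_n f_{t_n}(v_n)\to v$ required for $v\in\Omega(D)$.
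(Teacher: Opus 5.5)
Your step (\ref{item11})$\Rightarrow$(\ref{item13}) is sound, including the relabeling and the diagonal argument for $x=y$. Use your second option (work with $\phi_n^{-1}$ and the backward flow), so that the same sequence $\phi_n$ serves for both limits. The paper does not argue directly at all: it cites Chen--Eberlein for (\ref{item11})$\Leftrightarrow$(\ref{item12}) and Eberlein for (\ref{item12})$\Leftrightarrow$(\ref{item13}). The other two steps of your cycle fail as written.

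\textbf{(\ref{item13})$\Rightarrow$(\ref{item12}): the key estimate is reversed.} Put $r_n\dfn\phi_n^{-1}p\to x$ and take $q$ in the closed set $\oline M\smallsetminus U\not\ni x$.
\begin{itemize}
\item The angle $\measuredangle_{r_n}(p,q)$ is not bounded away from $0$; it tends to $0$, uniformly in $q$. Seen from a point far out toward $x$, everything away from $x$ subtends a small visual angle. For instance, in $\mathbb H^2$ take $x=\infty$, $r_n=ni$, $p=i$, $q=1+i$.
\item Your deduction is also backwards. Suppose $[\phi_np,\phi_nq]$ passed within $C$ of $p$ while $d(p,\phi_np)\to\infty$. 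Then, once $\phi_nq$ is far from $p$, the two points are seen from $p$ at an angle close to $\pi$, not a small one.
\end{itemize}
Visibility must be used the other way round. First, $\inf_{q\in\wtilde M\smallsetminus U}d(r_n,q)\to\infty$; otherwise points $q_n$ at bounded distance from $r_n$ would converge to $x$. Second, for every $R$ and all large $n$, the segments or rays $[p,q]$ with $q\in\oline M\smallsetminus U$ avoid the $R$-ball about $r_n$. Indeed, if $s_n\in[p,q_n]$ with $d(s_n,r_n)<R$, then $s_n\to x$, and $q_n$, which lies beyond $s_n$ on a geodesic from $p$, also converges to $x$. Uniform visibility (curvature $\le -a^2$) then gives $\measuredangle_{r_n}(p,q)\to0$ uniformly. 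Since $\phi_n$ is an isometry, $\measuredangle_p(\phi_np,\phi_nq)=\measuredangle_{r_n}(p,q)$ and $d(p,\phi_nq)=d(r_n,q)$. Together with $\phi_np\to y$, this puts $\phi_n(\oline M\smallsetminus U)$ inside any truncated cone about $y$ for large $n$.

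\textbf{(\ref{item12})$\Rightarrow$(\ref{item11}): one endpoint is uncontrolled.} From $\phi(\oline M\smallsetminus U')\subset V'$ with $U'\ni y$, $V'\ni x$, you only learn that $\phi(x)\in V'$ and $\phi^{-1}(y)\in U'$. The images $\phi^{-1}(x)$ and $\phi(y)$ are uncontrolled, because $x\in V'$ and $y\in U'$. So neither $\phi^{-1}(g_v)$ nor $\phi(g_v)$ need have endpoints near $(x,y)$, and the claim that $\phi_n^{-1}g_v$ carries a vector close to $v$ is unjustified.

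A fix: let $\tau_n$ be the geodesic from $x$ to $b_n\dfn\phi_n^{-1}(y)\in U'_n$. Then $\phi_n\tau_n$ runs from $\phi_n(x)\in V'_n$ to $y$. Both geodesics have endpoints converging to $(x,y)$, so they carry vectors $u_n\to v$ and $w_n\to v$ with $\phi_n^{-1}w_n=f_{s_n}(u_n)$, that is, $\phi_nf_{s_n}(u_n)=w_n\to v$.

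It remains to show $s_n\to+\infty$. The point $\tau_n(s_n)=\phi_n^{-1}(\pi w_n)$ lies within $d(\pi w_n,\pi v)\to0$ of $\phi_n^{-1}(\pi v)\in U'_n$, so it converges to $y$. Bounded $s_n$ would keep it in a compact set, and $s_n\to-\infty$ would send it to $x\neq y$.

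Alternatively, for truncated cones about a fixed vertex with disjoint closures, $\phi_n$ maps $\overline{V'_n}$ into $V'_n$ and $\phi_n^{-1}$ maps $\overline{U'_n}$ into $U'_n$. Brouwer plus freeness of the action then makes $\phi_n$ axial, with axis endpoints in $V'_n$ and $U'_n$, and translation along that axis gives the returns.

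Finally, your proper-discontinuity argument only yields $|t_n|\to\infty$; the sign needs an argument like the one above.
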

\begin{proof}
    (\ref{item11}) $\iff$ (\ref{item12}) is from \cite{SSChen-Eberlein}, and (\ref{item12}) $\iff$ (\ref{item13}) is \cite[Proposition 2.5]{Eberlein-non-conjugate}.
\end{proof}
\subsection{Dynamics preliminaries}
We now review definitions and results concerning topological pressure and equilibrium states for both flows and homeomorphisms. Refer to \cite[\S 4.3]{Fisher-Hasselblatt} for more details. 
We will study these topics for magnetic flows and their time-1 maps in \cref{ssection:expansivity}.

\subsubsection{Topological pressure and equilibrium states}\label{ssection:topological_entropy_and_MME}

Let $X$ be a compact metric space and $\varphi\colon X\to \mathbb{R}$ continuous. Recall that a \emph{continuous flow} on $X$ is a continuous map $\mathcal{F}\colon \mathbb{R}\times X \to X$, $(t,x)\mapsto f_t(x)$, satisfying $f_0 = \operatorname{id}_X$ and $f_{s+t} = f_s \circ f_t$ for all $s,t\in\mathbb{R}$.

Denote by $\mathcal{M}(\mathcal{F})$ the space of $\mathcal{F}$-invariant probability measures on $X$, i.e., $\mu \in \mathcal{M}(\mathcal{F})$ if and only if $(f_t)_*\mu = \mu$ for every $t \in \mathbb{R}$. We recall the definition of the topological pressure of $\varphi$ with respect to $\mathcal{F}$; see \cite{Walters, Fisher-Hasselblatt} for more details. 

For $t,\epsilon>0$, the \emph{Bowen ball} centered at $x\in X$ of radius $\epsilon$ and order $t$ is \begin{equation}\label{def:bowen_ball} B_t(x,\epsilon)\dfn\{y\in X\mid d(f_sx,f_sy)<\epsilon, \ \forall s\in [0,t]\}. \end{equation}

A set $E\subset X$ is \emph{$(t,\epsilon)$-separated in $F\subset X$} if $E\subset F$ and for all distinct $x,y\in F$, we have $y\notin \overline{B_t(x,\epsilon)}$. When $F=X$ we simply say $E$ is $(t,\epsilon)$-separated. 
Set
\begin{equation}\label{eqn:Lambda-sep}
N_d(\varphi,\epsilon, t) \dfn \sup
\Big\{ \sum_{x\in E} e^{S_t\varphi(x)} \mid E\subset X \text{ is $(t,\epsilon)$-separated} \Big\},
\end{equation}
where $S_t\varphi \dfn \int_{0}^{t} \varphi\circ f_s\,ds$, and let $V(\mathcal{F})$ be the set of \emph{Bowen-bounded} functions, that is, 
\[V(\mathcal{F})\dfn\big\{\varphi\colon X\to \re\mid\exists K,\eps>0, \; \forall t>0, x\in B_t(y,\eps)\colon\lvert S_t\varphi(x)-S_t\varphi(y) \rvert<K\big\}.\]

\begin{definition}[{\cite[Definition 4.3.2, Remark 4.3.3]{Fisher-Hasselblatt}}]
\label{def:topological-pressue-potential-function-bowen-bounded}
    The \emph{topological pressure of $\varphi\colon X\to \re$ with respect to $\mathcal{F}$} is 
\[
P(\varphi) \dfn P(\mathcal{F}, \varphi)= \lim_{\epsilon\to 0} \limsup_{t\to\infty} \frac 1t \log N_d(\varphi,\epsilon, t),
\]
and here $\varphi$ is called the \emph{potential function}. In particular, $P(0)$ is called the \emph{topological entropy}.
\end{definition}
 
The \emph{variational principle} \cite[Theorem 4.3.8]{Fisher-Hasselblatt} states that
\[
P(\varphi)=\sup_{\mu\in \mathcal{M}(\mathcal{F})}\Big\{ h_{\mu}(\mathcal{F}) +\int \varphi \,d\mu\Big\},
\]
where $h_\mu(\mathcal{F})$ is the measure-theoretic entropy of the time-1 map of $\mathcal{F}$ \cite[Definition 4.3.9]{Katok-Hasselblatt}.

A measure achieving the supremum is called an \emph{equilibrium state for $\varphi$}, or a \emph{measure of maximal entropy} when $\varphi=0$. 

\begin{theorem}[{\cite[Proposition 3.7, Proposition 4.15]{Climenhaga-Thompson-advances}}, Existence of equilibrium states]\label{thm:FH19-existence-ES-construction}
    For a flow $\mathcal{F} =\{f_t\}_{t\in\re}$ on a compact metric space $X$ such that the time-1 map $f_1$ of $\mathcal{F}$ is entropy-expansive (\cref{def:h-expansivity}), if $\varphi\in V(\mathcal{F})$ and $P(\varphi)<\infty$, then every $\text{weak}^{\star}$-accumulation point $\nu$ of $\nu_n$ is an equilibrium state for $\varphi$, where
    \[\nu_n\dfn \frac{1}{n}\int_0^n (f_s)_*\mu_nds,\qquad\mu_n\dfn \big(\sum_{x\in E_n}e^{S_n\varphi(x)}\big)^{-1}\sum_{x\in E_n}e^{S_n\varphi(x)}\delta_x,\]
    $\delta_x$ is the Dirac measure at $x$, and $E_n\subset X$ is an $(n,\eps)$-separated set such that for some $\de>0$, with $N_d$ defined in \eqref{eqn:Lambda-sep},
    \begin{equation}\label{eqn:separated-set-card}
        \sum_{x\in E_n}e^{S_n\varphi(x)}\geq N_d(\varphi,\eps,n)-\delta.
    \end{equation} 
    \end{theorem}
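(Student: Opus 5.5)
The plan is to run Misiurewicz's proof of the variational principle on the measures $\nu_n$ at a \emph{fixed} scale $\eps$, and then use entropy-expansivity of $f_1$ to remove the dependence on $\eps$. I take $\eps$ below the entropy-expansivity constant of $f_1$ (\cref{def:h-expansivity}), and assume, after shrinking $\eps$ further if necessary, that $\eps$ also satisfies the requirement in $V(\mathcal{F})$. The proof has three steps.

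\textbf{Step 1: invariance of limits.} For fixed $t$, the measures $(f_t)_*\nu_n$ and $\nu_n$ differ by two boundary averages over time intervals of length $|t|$. Hence the total variation of their difference is at most $2|t|/n$. Consequently every weak$^\star$ accumulation point $\nu$ of $\nu_n$ lies in $\mathcal{M}(\mathcal{F})$. Also, by Fubini, $\int\varphi\,d\nu_n=\frac1n\int S_n\varphi\,d\mu_n$. Since $\varphi$ is continuous, along the subsequence $\frac1n\int S_n\varphi\,d\mu_n\to\int\varphi\,d\nu$.

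\textbf{Step 2: the Misiurewicz estimate.} Choose a finite Borel partition $\xi$ of $X$ with elements of diameter $<\eps'$ and $\nu(\partial\xi)=0$. Here $\eps'$ is chosen by uniform continuity of the flow on $[0,1]$, so that two points lying in the same element of $\xi^n=\bigvee_{k=0}^{n-1}f_1^{-k}\xi$ are $\eps$-close along the whole flow segment $[0,n]$. Since $E_n$ is $(n,\eps)$-separated, each element of $\xi^n$ contains at most one point of $E_n$. Therefore
\[
H_{\mu_n}(\xi^n)+\int S_n\varphi\,d\mu_n=\log\sum_{x\in E_n}e^{S_n\varphi(x)}.
\]
Now fix $q$ and split $\{0,\dots,n-1\}$ into residue classes mod $q$. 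Using subadditivity of entropy and concavity of $H$ in the measure, applied to the push-forwards $(f_s)_*\mu_n$ averaged over $s\in[0,n]$, the standard bookkeeping yields
\[
\frac1q H_{\nu_n}(\xi^q)+\int\varphi\,d\nu_n\ \ge\ \frac1n\log\sum_{x\in E_n}e^{S_n\varphi(x)}-O\!\left(\frac qn\right).
\]
In the continuous-time setting this needs a small extra averaging over $s\in[0,1]$ to pass from integer shifts to real ones. The partition $\xi$ is chosen with boundary of measure zero for the relevant push-forwards, so I can pass to the limit along the subsequence. By \eqref{eqn:separated-set-card}, with $\delta$ fixed and the right-hand side growing exponentially, $\delta$ is negligible. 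Letting first $n\to\infty$ and then $q\to\infty$ gives
\[
h_\nu(f_1,\xi)+\int\varphi\,d\nu\ \ge\ \limsup_{n\to\infty}\frac1n\log N_d(\varphi,\eps,n).
\]

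\textbf{Step 3: removing the scale (expected main obstacle).} The key input is Bowen's theorem for entropy-expansive maps. It gives $h_\nu(f_1)=h_\nu(f_1,\xi)$ for every partition of diameter below the expansivity constant. It also gives that the pressure is already realized at scale $\eps$, namely $\limsup_n\frac1n\log N_d(\varphi,\eps,n)=P(\varphi)$. The first statement is quoted directly. For the second I need to compare the flow quantities with those of $f_1$ in two respects. First, the flow Bowen balls $B_t(x,\eps)$ must be compared with Bowen balls of the time-one map, which I do via uniform continuity on $[0,1]$. Second, $S_t\varphi$ must be compared with Birkhoff sums of $\int_0^1\varphi\circ f_s\,ds$. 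Bowen-boundedness ($\varphi\in V(\mathcal{F})$) is what makes the errors in both comparisons at most a constant. I would first try to prove $P(\varphi)=P_\eps(\varphi)$ by covering $(n,\eps/2)$-Bowen balls with $(n,\delta)$-balls, whose number is subexponential by entropy-expansivity, and absorbing the potential variation via the constant from $V(\mathcal{F})$. With both statements in hand, $h_\nu(f_1)+\int\varphi\,d\nu\ge P(\varphi)$. The reverse inequality is the variational principle, which also uses $P(\varphi)<\infty$, so $\nu$ is an equilibrium state.
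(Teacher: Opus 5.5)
Your strategy is the standard argument behind the Climenhaga--Thompson results that the paper cites without proof: a Misiurewicz argument at the fixed scale $\epsilon$, followed by entropy-expansivity of $f_1$ to replace $h_\nu(f_1,\xi)$ by $h_\nu(f_1)$ and the scale-$\epsilon$ pressure by $P(\varphi)$. Steps 1 and 2 are sound up to their last display, but there is a genuine gap there. You can only let $n\to\infty$ along the subsequence $(n_k)$ with $\nu_{n_k}\to\nu$, since that is what gives $H_{\nu_{n_k}}(\xi^q)\to H_\nu(\xi^q)$. So what you actually obtain is
\[
h_\nu(f_1,\xi)+\int\varphi\,d\nu\ \ge\ \limsup_{k\to\infty}\frac{1}{n_k}\log N_d(\varphi,\epsilon,n_k).
\]
This dominates $\liminf_{n}\frac1n\log N_d(\varphi,\epsilon,n)$, but not the $\limsup$ over all $n$. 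The theorem is about \emph{every} accumulation point, so Step 3 must prove $\liminf_{n}\frac1n\log N_d(\varphi,\epsilon,n)\ge P(\varphi)$. Your target, $\limsup_n\frac1n\log N_d(\varphi,\epsilon,n)=P(\varphi)$, only covers accumulation points along a subsequence that realizes the $\limsup$.

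The missing ingredient is a lower bound on the partition sums that holds for every $n$, and this is where the Bowen property really matters. Use the sup-based spanning sums $\Lambda(\epsilon,t)\coloneqq\inf_F\sum_{x\in F}\sup_{B_t(x,\epsilon)}e^{S_t\varphi}$, with the infimum over $(t,\epsilon)$-spanning sets $F$.
\begin{itemize}
\item Concatenating spanning sets and applying the Bowen property at scale $2\epsilon$ gives $\Lambda(2\epsilon,mt)\le e^{K}\Lambda(\epsilon,t)^m$, with $K$ the constant from $V(\mathcal{F})$.
\item Letting $m\to\infty$ gives $\Lambda(\epsilon,t)\ge e^{tP(\varphi,2\epsilon)}$ for every $t$, where $P(\varphi,2\epsilon)\coloneqq\limsup_{T\to\infty}\frac1T\log\Lambda(2\epsilon,T)$.
\item A maximal $(t,\epsilon)$-separated set is $(t,\epsilon)$-spanning, so $N_d(\varphi,\epsilon,t)\ge e^{-K}e^{tP(\varphi,2\epsilon)}$.
\item Entropy-expansivity gives $P(\varphi,2\epsilon)=P(\varphi)$, provided $\epsilon$ is below a fixed fraction of the expansivity and Bowen scales, not merely below the expansivity constant.
\end{itemize}
Alternatively, your Bowen-covering estimate in Step 3 is a per-$n$ inequality, so it does pass to $\liminf$. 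In that case you also need $\lim_{\gamma\to0}\liminf_n\frac1n\log N_d(\varphi,\gamma,n)=P(\varphi)$. This is true, because the upper-bound half of the variational principle is a per-$n$ estimate, but it has to be argued.

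Finally, your claim that the $\delta$ in \eqref{eqn:separated-set-card} is negligible assumes $P(\varphi)>0$. If $P(\varphi)<0$, then $N_d(\varphi,\epsilon,n)\to0$, so the additive condition is eventually vacuous: a single periodic point is then an admissible $E_n$, and the conclusion fails. The condition has to be read multiplicatively, $\log\sum_{x\in E_n}e^{S_n\varphi(x)}\ge\log N_d(\varphi,\epsilon,n)-\delta$, as in Walters and Climenhaga--Thompson.
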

In \cref{ssection:expansivity}, we will use the \emph{kinematic-expansivity} property (\cref{def:k-expansivity}) of a magnetic flow to prove entropy-expansivity of its time-1 map, and then apply \cref{thm:FH19-existence-ES-construction} to show the existence of equilibrium states of the magnetic flow. Whether such an equilibrium state is unique in our context remains an open question.

\subsubsection{Topological entropy and entropy-expansivity of a homeomorphism}
Since \cref{thm:FH19-existence-ES-construction} assumes entropy-expansivity, we need to define topological entropy.
Entropy-expansivity (or $h$-expansivity) was first studied by Bowen \cite{Bowen-h-expansive} and later by Knieper \cite[\S 3]{Knieper} for the time-1 map of geodesic flows of manifolds with nonpositive curvature. 
We discuss in \cref{ssection:expansivity} the time-1 map of magnetic flows (\cref{prop:mag-flow-is-h-expansive}) and here review notations and definitions; see \cite[\S 3]{Knieper} for more details.

    Let $X$ be a compact metric space, $f\colon X\to X$ be a homeomorphism, and $E\subset X$. A set $N=N(n,\eps,E)\subset X$ is said to be an $(n,\eps)$-spanning set of $E$ with respect to $f$ if for each $y\in E$, there exists $x\in N$ such that $d(f^kx,f^ky)\leq \eps$ for all $0\leq k<n$. Let 
    \begin{equation}\label{def:r_n}
        r_n(E,\eps)\dfn\min\{\operatorname{card} N \mid N \text{ is an } (n,\eps)\text{-spanning set of } E\}.
    \end{equation}
    Since $X$ is compact, $r_n(E,\eps)<\infty$. Define 
    \[r(E,\eps)=\limsup_{n\to\infty}\frac{1}{n}\log r_n(E,\eps)\]
    and \begin{equation}\label{eqn:topological-entropy-on-a-set}
        h(f,E)=\lim_{\eps\to0}r(E,\eps).
    \end{equation}

\begin{definition}[Topological entropy of a homeomorphism]\label{def:topo-entropy-homeo}
    $h(f)=h(f,X)$ in \eqref{eqn:topological-entropy-on-a-set} is called the \emph{topological entropy} of $f$ (which is independent of the metric generating a given topology).
\end{definition}
    
    For $x\in X$ and $\eps_0>0$, define the infinite-order Bowen ball by \[Z_{\eps_0}(x)=\bigcap_{n\in\mathbb{Z}}f^{-n}B(f^nx,\eps_0)=\{y\in X\mid d(f^nx,f^ny)\leq\eps_0 \text{ for all } n\in\mathbb{Z}\},\]  
    where $B(y,\eps_0)$ is the $\eps_0$-ball centered at $y$ with respect to $d$. Define 
    \begin{equation}\label{eqn:h-*}
        h^*(f,\eps_0)\dfn\sup_{x\in X}h(f,Z_{\eps_0}(x)),
    \end{equation} where $h(f,Z_{\eps_0}(x))$ is defined in \eqref{eqn:topological-entropy-on-a-set}. 
\begin{definition}[Entropy-expansivity]\label{def:h-expansivity}
    A homeomorphism $f\colon X\to X$ is said to be \emph{entropy-expansive} (or \emph{$h$-expansive}) if there exists an $\eps_0>0$ such that $h^*(f,\eps_0)=0$ in \eqref{eqn:h-*}. The number $\eps_0$ is called an \emph{entropy-expansivity} constant for $f$.
\end{definition}

\section{Magnetic flows with nonpositive magnetic curvature}\label{section:geometry_of_mag_flow}

Recall that we assume the following throughout: $(\Sigma,g,\Omega_g,\mu)$ is an oriented closed Riemannian surface satisfying \ref{H1}, i.e., with  area form $\Omega_g$, magnetic structure $(g,\Omega_g)$, Gauss curvature $K<0$, and $\mu\in\re$ such that the \m\ curvature $K_\mu\le0$. Notably, we henceforth assume this (unless otherwise noted) whenever we use the terms ``magnetic flow'', ``magnetic geodesic'', or ``\m\  geodesic''. By a \emph{hyperbolic magnetic flow} we mean such a magnetic flow which further satisfies \ref{H2}. By contrast, we say that $(\Sigma,g,\Omega_g,\mu)$ is \emph{$\mu$-magnetically flat} if $K_\mu\equiv0$.

In \cref{sec:mag-jf}, we study magnetic Jacobi fields and related notions, such as magnetic conjugate points and magnetic exponential maps, for (hyperbolic) magnetic flows with nonpositive magnetic curvature as tools for the following sections. In \cref{ssection:connectivityI}, we define and study asymptotic magnetic geodesics and the magnetic boundary of $\M$ (\cref{def:asymptotic-mag-geodesics,def:asymptotic-maggeo-mag-boundary}), which is a fundamental step to investigate large-scale behaviors of magnetic flows. To do this, \cref{prop:boundary-bijection,prop:boundary-homeomorphic} relate asymptotic geodesics and the ideal boundary of $\M$ to asymptotic magnetic geodesics and the magnetic boundary. This also helps characterize magnetically flat surfaces in \cref{sec:mag-flat}: a surface is magnetically flat if and only if there exists a magnetic geodesic with the same forward and backward endpoint (\cref{prop:h2-has-no-closed-orbit}). A corollary of this result is that any orbit of hyperbolic magnetic flows has different forward and backward endpoints, whose complement, called the magnetic visibility property of $\M$, is established in \cref{ssection:magnetic-visibility}. That is, any two distinct and ordered points on the magnetic boundary determine a hyperbolic magnetic geodesic. The magnetic visibility property allows us to apply classic arguments to study the density of periodic orbits in \cref{section:mag-axes-and-flat-strip}, and topological transitivity and the nonwandering set in \cref{sec:topo-transitivity}. Note that the possible nonuniqueness of magnetic geodesics in the magnetic visibility property is due to the potential presence of magnetic flat strips (\cref{def:magnetic-flat-strip}), whose dynamics, in particular, under the action of deck transformations, is considered in \cref{section:mag-axes-and-flat-strip}.
In \cref{ssection:expansivity}, we study various notions of expansivity for magnetic flows. One consequence is to conclude a condition under which magnetic flows are orbit-equivalent to the geodesic flow of $\Sigma$ (\cref{thm:orbit-equivalence}). Moreover, the kinematic-expansivity of magnetic flows (\cref{thm:kinematic-expansivity}) guarantees the existence of equilibrium states (\cref{thm:existence-of-es-equidistribution}). 

\subsection{Magnetic Jacobi fields II}\label{sec:mag-jf}
Magnetic Jacobi fields encode the infinitesimal change of adjacent magnetic geodesics and play a similarly central role as Jacobi fields do in the study of geodesic flows (see \cref{ssection:magnetic_jacobi_field} and \cite[\S 5.5]{Burns-Gidea}).
\begin{lemma}\label{lemma:stable-y-is-monotonic}
    If $(\Sigma,\mu)$ satisfies \ref{H1}, then the perpendicular size $\lvert y(t)\rvert$ of a stable (resp. unstable) \m\ Jacobi field $J$ in \eqref{def:eqn:orthogonal-component-of magnetic-Jacobi-field} is nonincreasing (resp. nondecreasing) on $\re$. Moreover, $y(t_0)=0$ for some $t_0\in\re$ if and only if $J$ is tangential, i.e., $y\equiv0$. 
\end{lemma}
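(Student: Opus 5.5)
The plan is to work entirely with the orthogonal component $y$ of $J$, which by \eqref{MagneticJacobiEquation} satisfies $\ddot y=-\kmu(\gav(t))\,y$ with $\kmu\le 0$ under \ref{H1}. The key observation, sharpening \cref{rmk:comment-on-mag-JE}, is that $y^2$ is convex on all of $\re$. Indeed,
\[
\tfrac{d^2}{dt^2}\,y^2=2\dot y^2+2y\ddot y=2\dot y^2-2\kmu(\gav(t))\,y^2\ \ge\ 0 .
\]
We use $y^2$ rather than $|y|$ because $y^2$ is smooth, which avoids any trouble at zeros of $y$.

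Next, suppose $J$ is stable, so $y$ (hence $y^2$) is bounded on $[0,\infty)$. A convex function $\phi=y^2$ on $\re$ that is bounded on $[0,\infty)$ must be nonincreasing on all of $\re$. To see this, suppose $\phi(t_1)>\phi(t_0)$ for some $t_0<t_1$. By convexity, $\phi$ lies above the secant line through these two points for $t\ge t_1$, so $\phi(t)\to\infty$, a contradiction. Equivalently, $\dot\phi\le 0$ everywhere, since $\dot\phi$ is nondecreasing and a positive value would force linear growth. Hence $|y|=\sqrt{y^2}$ is nonincreasing on $\re$. The unstable case is identical after the time reversal $t\mapsto -t$, or equivalently by applying the same argument to boundedness on $(-\infty,0]$; this gives $|y|$ nondecreasing.

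For the second claim, take $J$ stable with $y(t_0)=0$. Monotonicity of $|y|$ gives $y\equiv 0$ on $[t_0,\infty)$. Choose any $t_1>t_0$; then $y(t_1)=\dot y(t_1)=0$. Since \eqref{MagneticJacobiEquation} is a linear homogeneous second-order ODE, uniqueness of solutions forces $y\equiv 0$ on $\re$, so $J$ is tangential. The unstable case is symmetric, using $(-\infty,t_0]$. The converse direction is trivial.

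The only step needing care is the passage from boundedness on a half-line to global monotonicity. This is exactly where convexity of $y^2$ (rather than of $y$, which may change sign) is used, and I expect no real obstacle there. It is also worth noting that the zero statement genuinely needs stability or instability: for instance, $y(t)=t$ solves \eqref{MagneticJacobiEquation} where $\kmu\equiv0$, but it is neither stable nor unstable.
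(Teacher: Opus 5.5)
Your proof is correct and follows the paper's approach: convexity of $y^2$ from $\frac{d^2}{dt^2}y^2 = 2\dot y^2 - 2K_\mu y^2 \ge 0$, then boundedness on a half-line forces $(y^2)'\le 0$, so $|y|$ is monotone. Your zero case differs only slightly: you propagate $y(t_0)=0$ forward by monotonicity of $|y|$ and then apply ODE uniqueness, where the paper does a case analysis on the sign of $\dot y(t_0)$ using $y\dot y\le 0$.
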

\begin{proof}
To show monotonicity of $\lvert y(t)\rvert$, we first show that $F(t)\dfn y^2(t)$ is convex on $\re$, so $F'$ is nondecreasing: $F''=(\lvert y\rvert^2)''=(y^2)''=2(y')^2+2yy''=2(y')^2-2\kmu y^2\geq0$. Next, $F'\le0$ for stable $J$: If there exists $t_0$ such that $F'(t_0)>0$, then $F'(t)\geq F'(t_0)$  for any $t\geq t_0$, hence $F(t)\geq F(t_0)+(t-t_0)F'(t_0)\xrightarrow{t\to\infty} \infty$, and $J$ is not stable. We have shown that $yy'\le0$.

Now, if $y(t_0)>0$ for some $t_0\in\re$, then $y'(t_0)\leq0$ and $|y(t)|$ is nonincreasing at $t_0$. Similarly, if $y(t_0)<0$ for some $t_0\in\re$, then $y'(t_0)\geq0$ and $|y(t)|$ is nonincreasing at $t_0$. 

Finally, suppose $y(t_0)=0$ for some $t_0\in\re$ to conclude the proof of monotonicity and also establish the second conclusion.
\begin{enumerate}
    \item If $y'(t_0)>0$, then by the contuniuity of $y(t)$ and $y'(t)$, there exists $t_1>t_0$ close to $t_0$ such that $y(t_1)>0$ and $y'(t_1)>0$, a contradiction.
    \item If $y'(t_0)<0$, then similarly, there exists $t_2>t_0$ close to $t_0$ such that $y(t_2)<0$ and $y'(t_2)<0$, a contradiction.
    \item If $y'(t_0)=0$, then by the uniqueness of the solution to the homogeneous second-order ODE \eqref{MagneticJacobiEquation}, $y(t)\equiv0$, which is monotone and implies that $J$ is tangential.\qedhere
\end{enumerate}

\end{proof}

\begin{lemma}\label{lem:parallel-iff-stable-and-unstable}
   If $(\Sigma,\mu)$ satisfies \ref{H1} and $J$ is a \m\  Jacobi field, then the following are equivalent.
   \begin{enumerate}
   \item $J$ is parallel;\label{p:item1}
   \item $J$ is both stable and unstable;\label{p:item2}
   \item the orthogonal component $y$ of $J$ is bounded on $\re$.\label{p:item3}
   \end{enumerate}
\end{lemma}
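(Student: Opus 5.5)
The plan is to prove the cycle (\ref{p:item1})$\Rightarrow$(\ref{p:item2})$\Rightarrow$(\ref{p:item3})$\Rightarrow$(\ref{p:item1}). The first implication is already contained in \cref{lemma:sing-vector-has-constant-y}: if $J$ is parallel then $y$ is constant, hence bounded on both $[0,\infty)$ and $(-\infty,0]$, so $J$ is stable and unstable. The second implication is immediate from \cref{def:stable-unstable-magJF}: stability gives boundedness of $y$ on $[0,\infty)$ and instability on $(-\infty,0]$, hence $y$ is bounded on $\re$.

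The substantive step is (\ref{p:item3})$\Rightarrow$(\ref{p:item1}). I would reuse the convexity computation from the proof of \cref{lemma:stable-y-is-monotonic}. Set $F=y^2$. By \eqref{MagneticJacobiEquation} and \ref{H1},
\[
F''=2\dot y^2-2\kmu(\ga(t))y^2\ge 2\dot y^2\ge0 .
\]
So $F$ is a convex function on $\re$, and it is bounded because $y$ is. A bounded convex function on all of $\re$ is constant. Indeed, if $F'(t_0)\neq0$ at some $t_0$, then convexity gives $F(t)\ge F(t_0)+F'(t_0)(t-t_0)$, which is unbounded as $t\to+\infty$ or as $t\to-\infty$ according to the sign of $F'(t_0)$. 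Hence $F'\equiv0$, so $F''\equiv0$. The displayed inequality then forces $\dot y\equiv0$, that is, $y$ is constant and $J$ is parallel.

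I expect no genuine obstacle. The only care point is to deduce $\dot y\equiv 0$ from $F''\equiv0$ via the inequality $F''\ge2\dot y^2$, rather than from $F$ being constant. The latter would only give $y\dot y=0$, which leaves open the case where $y$ vanishes. In that case one could alternatively invoke the second part of \cref{lemma:stable-y-is-monotonic} to conclude $y\equiv0$, but the inequality argument handles it directly.
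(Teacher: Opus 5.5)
Your proposal is correct and follows essentially the paper's argument: (1)$\Rightarrow$(2)$\Rightarrow$(3) is immediate, and (3)$\Rightarrow$(1) rests on the fact that a bounded convex function on $\re$ is constant. Applying this to $y^2$, using $(y^2)''\ge 2\dot y^2$, is slightly more careful than the paper, which calls $y$ itself convex even though $\ddot y=-\kmu y$ only gives convexity where $y\ge0$; and your stated worry about the case $y=0$ is unnecessary, since $y^2$ constant already forces $y$ constant by continuity.
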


\begin{proof}
$\eqref{p:item1}\iff\eqref{p:item2}$ is \cref{lemma:sing-vector-has-constant-y}. $\eqref{p:item1}\implies\dot y\equiv 0\implies\ y$ is constant $\implies\eqref{p:item3}$. To see $\eqref{p:item3}\implies \eqref{p:item1}$, note that $y$ is convex and bounded on $\re$, hence constant.
\end{proof}

\begin{remark}\label{rmk:norm-of-mag-jf-convexity}
If $(\Sigma,\mu)$ satisfies \ref{H1}, then unlike in the case of the geodesic flow of manifolds with nonpositive curvature, the usual convexity argument for the norm of a Jacobi field does not directly extend to the norm of a \m\  Jacobi field due to the shearing effect (\cref{rmk:shearing-effect}).
\end{remark}

We now introduce Sturm's Theorem and give two applications to magnetic Jacobi fields. The corresponding results for geodesic flows of manifolds with nonpositive curvature can be found in \cite[\S 2]{HI} using the Rauch Comparison Theorem. Later in \cref{sec:mag-radial-C2-reg}, we will use Sturm's Theorem again to prove $C^2$-regularity of orthospheres (\cref{prop:c2-regularity-of-magnetic-horosphere}).

\begin{theorem}[Sturm's Theorem]\label{thm:sturm}
For $i=1,2$, let $x_i(t)$ be solutions of  $x''+p_i(t)x=0$ with initial conditions $x_i(0)=0$  and $x_1'(0)=x_2'(0)$, $p_i$  continuous, and $p_1(t)\leq p_2(t)$ on $[0,T]$. If $x_2(t)>0$ on $(0,T]$, then $x_1(t)\geq x_2(t)$ on $[0,T]$.
\end{theorem}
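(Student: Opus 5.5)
The plan is the classical Wronskian (Sturm–Picone) argument. Set $W(t)\dfn x_1'(t)x_2(t)-x_1(t)x_2'(t)$. Using $x_i''=-p_ix_i$, the derivative is
\[
W'=x_1''x_2-x_1x_2''=(p_2-p_1)\,x_1x_2 .
\]
The initial conditions give $W(0)=0$. The goal is to show $(x_1/x_2)'=W/x_2^2\ge0$ on $(0,T]$, so that $x_1/x_2$ is nondecreasing there. We then compare it with its limit at $0^+$.

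The argument splits into cases on $c\dfn x_1'(0)=x_2'(0)$.

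\begin{itemize}
\item If $c=0$, uniqueness for the linear ODE forces $x_1\equiv x_2\equiv0$, which contradicts $x_2>0$ on $(0,T]$. So this case is vacuous.
\item Since $x_2(0)=0$ and $x_2>0$ just to the right of $0$, we have $c\ge0$, hence $c>0$.
\end{itemize}

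With $c>0$, l'H\^opital gives $\lim_{t\to0^+}x_1(t)/x_2(t)=x_1'(0)/x_2'(0)=1$. Now let $t_1\dfn\sup\{t\in(0,T]\mid x_1>0\text{ on }(0,t]\}$; this is positive because $x_1'(0)=c>0$. On $(0,t_1)$ both functions are positive and $p_2-p_1\ge0$, so $W'\ge0$. Together with $W(0)=0$ this gives $W\ge0$ on $[0,t_1]$. Hence $x_1/x_2$ is nondecreasing on $(0,t_1)$ and tends to $1$ at $0^+$, so $x_1\ge x_2$ on $(0,t_1)$.

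Next we need $t_1=T$. Suppose instead $t_1<T$. Then $x_1(t_1)=0$ by continuity. But $x_1(t_1)\ge x_2(t_1)>0$ by continuity of the inequality just obtained, a contradiction. So $t_1=T$, and we get $x_1\ge x_2$ on $(0,T]$. At $t=0$ both functions vanish, so equality holds there.

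The main subtlety is this last step: a priori $x_1$ might change sign, which would break the sign of $W'$. The supremum argument handles it by showing that $x_1$ stays above the positive $x_2$ and so cannot reach zero first. No earlier results of the paper are needed beyond the standard existence and uniqueness theorem for linear ODEs.
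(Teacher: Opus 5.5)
The paper gives no proof of this statement to compare against. It quotes Sturm's Theorem as the classical comparison result and uses it as a black box, for instance in the existence of stable magnetic Jacobi fields, in \cref{THMStableComp}, in \cref{prop:rauch-app}, and in the $C^2$-regularity of orthospheres. Your Wronskian (Sturm--Picone) argument is the standard proof, and it is correct and complete.

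Each step checks out:
\begin{itemize}
\item The identity $W'=(p_2-p_1)x_1x_2$ with $W(0)=0$ is correct.
\item The reduction to $c>0$ is valid. If $c=0$, uniqueness forces $x_2\equiv 0$. Positivity of $x_2$ just right of $0$ forces $c\ge 0$.
\item The limit $x_1/x_2\to 1$ at $0^+$ is right. You could get it without l'H\^opital from $x_i(t)/t\to c$.
\item Monotonicity of $x_1/x_2$ on $(0,t_1)$ follows from $W\ge 0$ there.
\end{itemize}

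The one phrase to tighten is ``$x_1(t_1)=0$ by continuity'' when $t_1<T$. What forces this is the maximality of $t_1$: if $x_1(t_1)>0$, continuity would give $x_1>0$ on $(0,t_1+\epsilon]$, contradicting the definition of $t_1$. Since you also have $x_1(t_1)\ge x_2(t_1)>0$, the contradiction goes through either way.

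You correctly identified the real subtlety: the sign of $W'$ is controlled only while $x_1>0$, and the comparison itself keeps $x_1$ positive. As a byproduct, your argument also shows $x_1>0$ on $(0,T]$.
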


\begin{proposition}
    If $(\Sigma,\mu)$ satisfies \ref{H1}, then for any \m\  geodesic $\ga$ and any $v\in T^1_{\ga(0)}\Sigma$, there exists a unique stable \m\  Jacobi field $J$ along $\ga$ with $J(0)=v$.  
\end{proposition}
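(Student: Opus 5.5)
The plan is to reduce everything to the scalar equation \eqref{MagneticJacobiEquation} for the orthogonal component. Write $v=x_0\dot\ga(0)+y_0\,i\dot\ga(0)$. By \eqref{eqn:magJac1}, once $y$ is known the tangential component is $x(t)=x_0+\mu\int_0^t y$. Stability (\cref{def:stable-unstable-magJF}) only constrains $y$. So the statement is equivalent to the following: there is a unique $s\in\re$ such that the solution $y$ of $\ddot y+\kmu(\ga(t))y=0$ with $y(0)=y_0$, $\dot y(0)=s$ is bounded on $[0,\infty)$. If $y_0=0$, then $y\equiv0$ gives existence. Otherwise, by linearity I may assume $y_0>0$.

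\emph{Key auxiliary solution.} Let $z$ solve \eqref{MagneticJacobiEquation} with $z(0)=0$, $\dot z(0)=1$. I apply Sturm's Theorem (\cref{thm:sturm}) with $p_1=\kmu\circ\ga\le 0=p_2$ and $x_2(t)=t$. This gives $z(t)\ge t$ for all $t\ge0$; in particular $z>0$ on $(0,\infty)$, so there are no magnetic conjugate points.

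\emph{Uniqueness.} Two solutions with the same $y_0$ differ by $c\,z$. Since $z(t)\ge t$, the difference is unbounded on $[0,\infty)$ unless $c=0$. Hence at most one stable $y$ exists, and therefore at most one stable $J$ with $J(0)=v$.

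\emph{Existence by a limiting argument.} For $T>0$, let $y_T$ be the solution with $y_T(0)=y_0$ and $y_T(T)=0$. It exists because $z(T)\neq0$: take $y_T=u-\frac{u(T)}{z(T)}z$, where $u$ solves the equation with $u(0)=y_0$, $\dot u(0)=0$. Convexity of solutions (\cref{rmk:comment-on-mag-JE}) gives $0\le y_T\le y_0$ on $[0,T]$. It also gives $\dot y_T(0)\le -y_0/T\le0$.

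Next I show that $T\mapsto\dot y_T(0)$ is nondecreasing. For $T<T'$, the difference $y_{T'}-y_T$ vanishes at $0$, so it equals $(\dot y_{T'}(0)-\dot y_T(0))\,z$. Evaluating at $T$, where $y_{T'}(T)\ge0=y_T(T)$ and $z(T)>0$, shows the coefficient is $\ge0$. Consequently, for $T\ge1$ the values $s_T\dfn\dot y_T(0)$ lie in $[\dot y_1(0),0]$.

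Being monotone and bounded, $s_T$ converges to some $s$. By continuous dependence on initial data, $y_T\to y$ uniformly on compacts, where $y$ has data $(y_0,s)$. Fixing $t$ and letting $T\to\infty$ in $0\le y_T(t)\le y_0$ gives $0\le y\le y_0$ on $[0,\infty)$, so $y$ is bounded. The corresponding $J=x\dot\ga+y\,i\dot\ga$, with $x$ defined as above, is then the desired stable field with $J(0)=v$.

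\emph{Main obstacle.} The step needing the most care is the monotonicity and compactness of $s_T$, which is what makes the limit exist. The comparison $z(t)\ge t$ via Sturm's Theorem is the crux: it supplies both the absence of conjugate points (so $y_T$ is defined) and the linear growth used in the uniqueness argument.
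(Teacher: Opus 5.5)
Your proof is correct and follows essentially the same route as the paper: uniqueness holds because a stable field whose orthogonal component vanishes at $t=0$ must be zero (you get this from the linear growth $z(t)\ge t$, the paper from \cref{lemma:stable-y-is-monotonic}), and existence comes from taking the limit of the fields whose orthogonal component vanishes at time $T$. The only difference is that you obtain convergence of $\dot y_T(0)$ from its monotonicity in $T$ together with the bound $\dot y_1(0)\le \dot y_T(0)\le 0$, whereas the paper uses the Sturm comparison estimate $\lvert y_n'(0)-y_m'(0)\rvert\le \lvert v\rvert/n$ to get a Cauchy sequence; you also verify explicitly that the limit is bounded, which the paper leaves implicit.
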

\begin{proof}
Uniqueness: Suppose \m\  Jacobi fields $J_1$ and $J_2$ along $\ga$ are stable and $J_1(0)=J_2(0)=v$. Then $J\dfn J_1-J_2$ is a stable \m\  Jacobi field along $\ga$ with $J(0)=0$. By \cref{lemma:stable-y-is-monotonic} we have $J\equiv 0$, and therefore, $J_1=J_2$.

Existence: Denote by $J_n=x_n\dot\ga+y_ni\dot\ga$ the \m\  Jacobi field along $\ga$ with $J_n(0)=v$ and $y_n(n)=0$, whose existence follows from the fact that the boundary value problem $\ddot y_n+\kmu y_n=0$, $y_n(0)=\langle v, \dot\ga(0) \rangle$ and $y_n(n)=0$ has a solution when $\kmu\leq0$. Moreover, such $J_n$ is unique: suppose $J_{n,i}$, $i\in\{1,2\}$ are such that $J_{n,i}(0)=v$ and $y_{n,i}(n)=0$, then $(J_{n,1}-J_{n,2})(t)$ is a solution to $w_n''+\kmu w_n=0$ with $w_n(0)=w_n(n)=0$. Since $\kmu \le 0$, Sturm's Theorem (with the comparison equation $z''=0$ whose solution vanishes at most once) implies that a nontrivial solution of $w_n'' + \kmu w_n = 0$ has at most one zero on $[0,n]$; the boundary conditions $w_n(0) = w_n(n) = 0$ then force $w_n(t) \equiv 0$, and $J_{n,1} = J_{n,2}$. 

Fix $m\geq n$. We compare two solutions via Sturm's Theorem: $y_n - y_m$, which solves $y'' + \kmu y = 0$ with $y(0) = 0$, and the comparison function $z(t) = |y_n'(0) - y_m'(0)|\cdot t$, which solves $z'' = 0$ with $z(0) = 0$. Since $\kmu \leq 0$, Sturm's Theorem then gives
    \begin{equation}
        \lvert y'_n(0)-y'_m(0)\rvert \leq \frac{1}{t}\lvert y_n(t)-y_m(t) \rvert
    \end{equation}
    for $t\in [0,n]$.
    Note that $y_n$ and $y_m$ have the same sign on $[0,n)$. Moreover, since $y_n$ and $y_m$ are convex, both $\lvert y_n\rvert$ and $\lvert y_m\rvert$ are nonincreasing on $[0,n]$.
   Therefore, $\lvert y_n(t)-y_m(t) \rvert\leq \max\{\lvert y_n(0)\rvert, \lvert y_m(0)\rvert \}\leq\lvert v\rvert$ for $t\in[0,n]$. By \eqref{eqn:J'}, we have 
    \begin{equation}
        \lvert J'_n(0)-J'_m(0)\rvert=\lvert \mu\big(x_n(0)-x_m(0)\big)+ y_n'(0)-y_m'(0)\rvert\leq  \frac{1}{t}\lvert y_n(t)-y_m(t)\rvert\leq \frac{1}{t}\lvert v \rvert.
    \end{equation}
    Hence $\{J_n'(0)\}$ is a Cauchy sequence. Let $J_v$ denote the \m\  Jacobi field along $\ga$ with $J_v(0)=v$ and $J'_v(0)=\lim_{n\to\infty}J_n'(0)$. It follows that $J_v$, as the limit of $J_n$, is stable, which completes the proof.
\end{proof}


\begin{theorem}[Comparison theorem for stable magnetic Jacobi fields]\label{THMStableComp}
If $0\leq a\leq b<\infty$, $-b^2\leq \kmu\leq -a^2$, $\ga$ is a \m\  geodesic, $J(t)=x(t)\dot\ga(t)+y(t)i\dot\ga(t)$ is the unique stable \m\  Jacobi field along $\ga$ with $J(0)=v\in T_{\ga(0)}\Sigma$ and $v\perp \dot\ga(0)$, and $t\geq 0$, then
    \begin{enumerate}
        \item $\lvert v \rvert e^{-bt}\leq  \lvert  y(t)\rvert\leq \lvert v \rvert e^{-at} $;\label{item:comparison_1}
        \item $\mu a \lvert v \rvert \big( 1-e^{-bt} \big)\leq ab \lvert x(t)\rvert \leq \mu b \lvert v \rvert \big( 1-e^{-at} \big)$. \label{item:comparison_2}
    \end{enumerate}
\end{theorem}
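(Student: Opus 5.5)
Write $v=y_0\, i\dot\ga(0)$ with $|y_0|=|v|$; after replacing $J$ by $-J$ if necessary, we may assume $y(0)=|v|>0$. Since $v\perp\dot\ga(0)$, we have $x(0)=0$. By \cref{lemma:stable-y-is-monotonic}, $y$ never vanishes, because $J$ is not tangential when $v\neq0$ (the case $v=0$ is trivial). The same lemma shows that $y$ is positive and nonincreasing on $\re$. So $u\dfn\dot y/y$ is defined on all of $\re$, satisfies $u\le0$, and solves the Riccati equation \eqref{eqn:riccati}, $\dot u+u^2+\kmu(\ga)=0$. The plan is to show $-b\le u(t)\le -a$ for all $t\ge0$. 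Writing $y(t)=|v|\exp\int_0^t u(s)\,ds$ then gives \eqref{item:comparison_1} directly.

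The Riccati comparison is the main step.

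\emph{Lower bound.} Suppose $u(t_0)<-b$ for some $t_0$. The hypothesis $\kmu\ge -b^2$ gives $\dot u=-u^2-\kmu\le b^2-u^2$. At $t_0$ this is strictly negative, so $u$ keeps decreasing and stays below $u(t_0)<-b$. Comparing with the explicit solution of $\dot w=b^2-w^2$ with $w(t_0)=u(t_0)$, which reaches $-\infty$ in finite forward time, shows that $u$ blows down to $-\infty$ in finite time. That would force $y$ to hit $0$ in finite time, contradicting \cref{lemma:stable-y-is-monotonic}.

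\emph{Upper bound.} Suppose $u(t_0)>-a$ for some $t_0$; this requires $a>0$, since $u\le 0$ already settles $a=0$. Then $-a<u(t_0)\le0$, and $\kmu\le -a^2$ gives $\dot u\ge a^2-u^2$. Hence $u$ is nondecreasing from $t_0$ on, so it stays in $[u(t_0),0]$ as long as $u\le 0$. While that holds, $\dot u\ge a^2-u(t_0)^2>0$, a uniform positive lower bound. Therefore $u$ becomes positive in finite time, contradicting $u\le0$.

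For \eqref{item:comparison_2}, use \eqref{eqn:magJac1}, $\dot x=\mu y$, together with $x(0)=0$, to get $x(t)=\mu\int_0^t y(s)\,ds$. Since $y>0$, we have $|x(t)|=|\mu|\int_0^t y$. Integrating the two bounds of \eqref{item:comparison_1} gives
\[
|\mu||v|\,\frac{1-e^{-bt}}{b}\;\le\;|x(t)|\;\le\;|\mu||v|\,\frac{1-e^{-at}}{a}.
\]
Multiplying by $ab$ yields the stated inequality, with $\mu$ read as $|\mu|$ or with $\mu>0$ assumed. In the degenerate case $a=0$ both sides of the scaled inequality vanish, so there is nothing to prove; the case $b=0$ is handled the same way.
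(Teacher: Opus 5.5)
Your proof is correct, but it takes a different route from the paper's.

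\textbf{The paper's route.} For (1) the paper appeals to its Sturm comparison theorem (\cref{thm:sturm}) with $p_1\equiv -b^2$ (resp.\ $p_2\equiv -a^2$) and $K_\mu(\gamma(t))$ as the other coefficient. That is, it compares $y$ directly, at the level of the second-order equation, with the solutions $|v|e^{-bt}$ and $|v|e^{-at}$ of the constant-coefficient equations. Part (2) is then done exactly as you do it, by integrating $\dot x=\mu y$ from $x(0)=0$.

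\textbf{Your route.} You pass to the Riccati equation for $u=\dot y/y$. You show that a solution defined for all time with $u\le 0$ is trapped in $[-b,-a]$: below $-b$ it blows down in finite time, and above $-a$ it is pushed past $0$.

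\textbf{What each buys.} Your argument is self-contained. It uses only the two facts from \cref{lemma:stable-y-is-monotonic} that a stable $y$ never vanishes and that $|y|$ is nonincreasing, and it never needs $\dot y(0)$. The Sturm theorem as stated in the paper compares solutions with $x_i(0)=0$ and equal initial slopes. That is not the situation of a stable field, which has $y(0)=|v|\neq 0$ and unknown $\dot y(0)$. So the paper's one-line argument needs an extra step to be rigorous. For example, one can apply a ratio form of Sturm comparison to the time-reversed approximants $y_n$ with $y_n(n)=0$ from the existence proof, and then let $n\to\infty$. In exchange, that route parallels the Rauch-comparison arguments of Heintze--Im Hof and ties the estimate to the construction of the stable field.

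Your caveat on (2) is also right. As written, its upper bound fails for $\mu<0$, $a>0$, $v\neq 0$, $t>0$, because the right-hand side is then negative. The paper's proof tacitly assumes $\mu>0$.
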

\begin{proof}
     Setting $p_1(t)\equiv -b^2$ and $p_2(t)=\kmu(\ga(t))$ in \cref{thm:sturm}, the first inequality in (\ref{item:comparison_1}) follows immediately from Sturm's Theorem. The proof of the second inequality is similar.

    To show (\ref{item:comparison_2}), note that $x(0)=0$ since $v\perp \dot\ga(0)$. Hence $\vert x(t)\rvert=\lvert \mu\int_0^t y(s)ds\rvert$ from \eqref{eqn:magJac1}. Using (\ref{item:comparison_1}), we have $a\lvert x(t)\rvert
    \leq \mu\lvert \int_0^t  \lvert v \rvert ae^{-at}   ds\rvert
    \leq \mu\lvert v \rvert \big( 1-e^{-at} \big)$. The proof of the other inequality is similar.
\end{proof}
\begin{remark}\label{REMNegCurvImpliesAnosov}
    We note that \cref{THMStableComp}\eqref{item:comparison_1} gives $\ref{H4}\implies\ref{HA}$. 
\end{remark}
\begin{definition}[Magnetic conjugate point]\label{def:magnetic-conjugate-point} Two points $p$ and $q$ on $\M$ are $\mu$-magnetically conjugate along a \m\  geodesic $\ga$ if there exists a nonzero \m\  Jacobi field along $\ga$ whose orthogonal component vanishes at both $p$ and $q$. We say $\f$ has no conjugate points on $\Sigma$ if such a pair does not exist on $\M$.   
\end{definition}
\begin{definition}[Magnetic exponential map]\label{def:magnetic-exponential-function}$\emu\colon T\M\to \M$, $v\mapsto\ga_{v/\left\Vert v\right\Vert}(\lVert v\rVert)$ is called the \emph{\m\ exponential map}; here $\ga_{v/\lVert v\rVert}$ is the \m\  geodesic with initial vector $v/\lVert v\rVert$.
\end{definition}

\begin{theorem}[{\cite[Theorem 3]{Adachi2}}] \label{thm:adachi-connectivity}
If $(\Sigma,\mu)$ satisfies \ref{H1}, then $\emu_p\colon T_p\M\to \M$ is a covering map at any point $p\in \M$.
\end{theorem}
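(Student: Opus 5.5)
The proof is the standard argument that a local diffeomorphism of a complete simply connected surface with the path-lifting property is a covering, run for $\emu_p$ with magnetic Jacobi fields in place of Jacobi fields. It has three steps.

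\textbf{Step 1: $\emu_p$ is a local diffeomorphism away from the origin.} Take $w\in T_p\M$ with $w\neq 0$, and write $w=r u$ with $u\in T^1_p\M$ and $r>0$. The differential $d(\emu_p)_w$ applied to the angular direction $iu$ is (up to the factor $r$) $J(r)$, where $J$ is the \m\ Jacobi field along $\ga_u$ coming from the variation $s\mapsto\ga_{u(s)}$. This field has $J(0)=0$, and its orthogonal component satisfies $y(0)=0$ and $\dot y(0)=1$. The radial direction goes to $\dot\ga_u(r)$.

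By \eqref{MagneticJacobiEquation} with $\kmu\le 0$, the component $y$ is convex. Comparing via \cref{thm:sturm} with $z''=0$ gives $y(t)\ge t>0$ for $t>0$. Hence $y(r)\neq 0$, so there are no \m\ conjugate points in the sense of \cref{def:magnetic-conjugate-point}. It follows that the images of the two directions are linearly independent, and $d(\emu_p)_w$ is invertible.

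\textbf{Step 2: the origin.} Near $0$ the map $\emu_p$ is not given by a smooth-in-$w$ formula in the naive sense. Polar coordinates show it is a homeomorphism of a small disk onto a neighborhood of $p$: for small $r$, curves $\ga_u$ with $u\in T_p^1\M$ leave $p$ radially and are injective in $(r,u)$ there. I would instead sidestep the origin by working on $T_p\M\smallsetminus\{0\}\to\M\smallsetminus\{p\}$ and handle $p$ separately.

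\textbf{Step 3: path lifting.} Pull back the metric $g$ by $\emu_p$ to $T_p\M\smallsetminus\{0\}$. By Step 1 this is a Riemannian metric, and we want to show that $\emu_p$ is a local isometry from a complete space. The key estimate is a lower bound $\lvert d(\emu_p)_w\xi\rvert\ge c\lvert\xi\rvert$ uniformly on compact sets of radii. That bound follows from $y(t)\ge t$ together with the radial unit speed, using Euclidean polar coordinates on $T_p\M$. Since $\M$ is complete, a lifted path in $\M$ of finite length then has a lift of finite polar length, which cannot escape to infinity in finite time. Radially, the rule $r\mapsto\ga_u(r)$ is unit speed, so no escape is possible there.

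The obstacle is near $r=0$, where the angular factor degenerates. It is handled by using a small disk around $p$ that is evenly covered by Step 2. With uniqueness of lifts from the local diffeomorphism property, the standard criterion (a local homeomorphism with unique path lifting, e.g.\ via completeness of the pulled-back metric) shows $\emu_p$ is a covering map.

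The main obstacle I expect is Step 3: the shear (\cref{rmk:shearing-effect}) means $\lvert J\rvert$ is not convex, so I would use only the orthogonal component $y$ for the lower bound on the angular stretching. This is justified because the tangential part $x\dot\ga$ lies along the radial image direction and cannot reduce the component transverse to it.
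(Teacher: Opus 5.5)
\textbf{Verdict: genuine gap in Step 3 (path lifting).} The paper does not prove this statement; it quotes Adachi's comparison-theorem result. So I judge your plan on its own.

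\textbf{Steps 1 and 2 are essentially fine.} The bound $y(r)\ge r$ rules out conjugate points. At the origin, $\emu_p$ is $C^1$ with derivative the identity, so the $C^1$ inverse function theorem gives a local diffeomorphism there. Restricting to $T_p\M\smallsetminus\{0\}\to\M\smallsetminus\{p\}$ would additionally require $\emu_p^{-1}(p)=\{0\}$, which you do not prove.

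\textbf{The gap in Step 3.} The real obstacle is $r\to\infty$, not $r=0$. In the frames $\partial_r,\,r^{-1}\partial_\theta$ and $\dot\ga_u(r),\,i\dot\ga_u(r)$, the differential at $ru$ is the shear
\[
\begin{pmatrix} 1 & x(r)/r\\ 0 & y(r)/r\end{pmatrix}.
\]
So for a lift $\tilde c(s)=r(s)u(\theta(s))$ of a path $c$ one gets $c'=(r'+\theta'x)\dot\ga+\theta'y\,i\dot\ga$.
\begin{itemize}
\item You are right that $x\dot\ga$ cannot reduce the transverse component. But it does reduce the component along $\dot\ga$, and that is the component governing $r$. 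If $\theta'x\approx-r'$, the lift moves outward fast while $c$ moves slowly.
\item Hence ``radially unit speed, so no escape'' is not a valid inference.
\item A lower bound $\lvert d(\emu_p)_w\xi\rvert\ge c\lvert\xi\rvert$ on compact sets of radii is automatic from Step 1 by compactness. It says nothing about escape to infinity.
\end{itemize}
This is the same shear that forces the monotonicity hypothesis in \cref{prop:rauch-app}, and it is not an artifact. In the magnetically flat case, $y(r)=r$ and $x(r)=\mu r^2/2$, so the smallest singular value is of order $1/(\lvert\mu\rvert r)$. Thus $\emu_p$ is not distance-nondecreasing, and the Cartan--Hadamard argument does not run as you set it up.

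\textbf{What is missing is a global bound on the shear.} One way to supply it:
\begin{itemize}
\item Since $\dot x=\mu y$ and $x(0)=0$, and by convexity $\dot y\ge\dot y(0)=1$ on $[0,\infty)$, $y$ is increasing. Hence $\lvert x(r)\rvert\le\lvert\mu\rvert\int_0^r y\le\lvert\mu\rvert\,r\,y(r)$.
\item Set $a=r'+\theta'x$ and $b=\theta'y$, so that $\theta'=b/y$ and $r'=a-xb/y$. Combining the shear bound with $y(r)\ge r$ gives $\lvert\tilde c'\rvert^2\le 2(1+\mu^2r^2)\lvert c'\rvert^2$.
\item By Gr\"onwall, $\lvert\tilde c\rvert$ grows at most exponentially in $L(c)$. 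So lifts of finite-length paths stay bounded, and your path-lifting argument then goes through.
\end{itemize}
Equivalently, the pulled-back metric dominates $\phi(r)^2$ times the Euclidean one, with $\phi(r)=(2+2\mu^2r^2)^{-1/2}$ and $\int^\infty\phi=\infty$.

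\textbf{An alternative that avoids metrics.} By \cref{prop:mag-intersect-geocircle-atmost-once}, $t\mapsto d(p,\ga_u(t))$ is increasing and unbounded for each $u$. Compactness of $T^1_p\M$ then gives $\min_u d(p,\ga_u(t))\to\infty$. Hence $\emu_p$ is proper, and a proper local homeomorphism onto a connected manifold is a covering map.
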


The next result is the magnetic counterpart of \cref{Lemma:connectivity-by-geodesic}. 
\begin{corollary}\label{lemma:pq-connectivity-and-no-self-intersect}
If $(\Sigma,\mu)$ satisfies \ref{H1}, then for $p,q \in \M$ there exists a unique \m\  geodesic $\gamma_{pq}$ from $p$ to $q$ with $\gamma_{pq}(0)=p$ and $\gamma_{pq}(\tilde{d})=q$ for some $\tilde{d}\geq0$ (when $\mu=0$, we write $g_{pq}$ for the geodesic $\gamma_{pq}$). In particular, a $\mu$-magnetic geodesic cannot self-intersect on $\M$, and $\f$ has no $\mu$-magnetically conjugate points. 
\end{corollary}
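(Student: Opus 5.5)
The plan is to derive everything from \cref{thm:adachi-connectivity}, using simple connectivity of $\M$. Since $\M$ is the universal cover of a closed surface with $K<0$, it is simply connected (and a plane by Cartan--Hadamard). The covering map $\emu_p\colon T_p\M\to\M$ therefore has a simply connected, connected base. A connected covering of a simply connected, locally path-connected space is a homeomorphism. So $\emu_p$ is a bijection.

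Existence and uniqueness of $\ga_{pq}$ then follow by unwinding \cref{def:magnetic-exponential-function}. For $q\neq p$, let $w\dfn\emu_p^{-1}(q)\neq0$. Then $v=w/\lVert w\rVert$ and $\tilde d=\lVert w\rVert$ give $\ga_v(0)=p$ and $\ga_v(\tilde d)=q$. Conversely, any unit-speed \m\ geodesic $\ga$ with $\ga(0)=p$ and $\ga(s)=q$ for some $s\ge0$ satisfies $\emu_p(s\dot\ga(0))=q$, so injectivity forces $s\dot\ga(0)=w$, i.e. $\ga=\ga_v$ on $[0,\tilde d]$. For $q=p$ we take $\tilde d=0$; injectivity at $0$ excludes any other loop, which is exactly the no-self-intersection statement below.

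For the absence of self-intersections, suppose $\ga(t_1)=\ga(t_2)$ with $t_1<t_2$. Put $p=\ga(t_1)$ and $v=\dot\ga(t_1)$. Then $\emu_p((t_2-t_1)v)=p=\emu_p(0)$ with $(t_2-t_1)v\neq0$, contradicting injectivity.

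For the absence of conjugate points, I will use that a covering map is a local diffeomorphism, so $d(\emu_p)$ is nonsingular everywhere. Suppose a nonzero \m\ Jacobi field $J$ along $\ga_v$ has $y(0)=0=y(T)$. Then $J$ is not tangential, because tangential fields are multiples of $\dot\ga$ (\cref{prop:dim-of-tang-JF}, $\mu\neq0$; the case $\mu=0$ is classical). I will realize $J$, modulo a tangential multiple, as the variation field of $s\mapsto\emu_p(T\,v(s))$, where $v(s)$ rotates $v$ in $T^1_p\M$. That field is $\partial_s$ of $\ga_{v(s)}(t)$, which vanishes at $t=0$. The comparison step is the delicate part: I need that $y(0)=0$ determines $J$ up to a tangential field and a scalar. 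This holds because the fields vanishing at $0$ form a one-dimensional family, and adding $C\dot\ga$ adjusts $x$ but not $y$. The orthogonal component of the variation field at $T$ is then $y(T)=0$, so $d(\emu_p)_{Tv}$ maps $Tv^{\perp}$ into the line spanned by $\dot\ga_v(T)$. The radial direction $v$ maps to the same line, to $\dot\ga_v(T)$ itself. Hence $d(\emu_p)_{Tv}$ has rank at most one, a contradiction.

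Alternatively, one can avoid the differential entirely. By \cref{lemma:stable-y-is-monotonic}-type convexity, a solution of \eqref{MagneticJacobiEquation} with $\kmu\le0$ vanishing at two times is identically zero, by the same Sturm argument used in the existence of stable Jacobi fields. I would present this as the primary argument since it is self-contained, and keep the covering-map route as a remark.
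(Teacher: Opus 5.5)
Your treatment of existence, uniqueness and non-self-intersection matches the paper's: simple connectivity of $\M$ turns the covering map $\emu_p$ of \cref{thm:adachi-connectivity} into a bijection, and everything is read off from injectivity.

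The real difference is in the clause about conjugate points. The paper stays with the exponential map. It computes $d(\emu_p)_{t_0v}(cv+w_\perp)=(c+x(t_0))\dot\ga_v(t_0)+y(t_0)\,i\dot\ga_v(t_0)$, where $J$ is the Jacobi field with $J(0)=0$. A nontrivial kernel is then identified with a Jacobi field whose orthogonal component vanishes at $0$ and at $t_0$. This is ruled out because $\emu_p$ is a diffeomorphism. That is essentially your secondary ``covering-map route''.

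Your primary argument works directly with \eqref{MagneticJacobiEquation}. Since $(y^2)''=2\dot y^2-2\kmu y^2\ge 0$, a solution vanishing at two times vanishes on the interval between them, hence identically. This route is more elementary. It uses only $\kmu\le 0$, and needs neither Adachi's theorem nor simple connectivity. It also avoids a tacit step shared by the paper and by your remark, namely that a covering map has nonsingular differential. A smooth map that is a topological covering, even a homeomorphism, need not be a local diffeomorphism ($x\mapsto x^3$ on $\mathbb{R}$). So that step really relies on Adachi's map being a smooth covering, and such a statement is typically proved by first ruling out conjugate points. What the paper's route buys is the conceptual link between conjugate points and critical points of $\emu_p$, as in Cartan--Hadamard.

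One sentence needs repair. ``Then $J$ is not tangential, because tangential fields are multiples of $\dot\ga$'' does not follow: $J=\dot\ga$ is itself a nonzero magnetic Jacobi field with $y\equiv 0$. Under a literal reading of \cref{def:magnetic-conjugate-point}, every pair of points along every magnetic geodesic would therefore be conjugate. The definition has to be read as requiring $y\not\equiv 0$; the paper's ``if and only if'' tacitly assumes this too. With that reading, your direct argument, which concludes $y\equiv 0$, is exactly what is needed, and the covering-map variant goes through for the same reason.
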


\begin{proof}
Since $\M$ is simply connected, by \cref{thm:adachi-connectivity}, $\emu_p$ is in fact a diffeomorphism. Hence $\gamma_{pq}(t) = \emu_p(t\, (\emu_p)^{-1}(q)/\|(\emu_p)^{-1}(q)\|)$ is the unique \m\ geodesic from $p$ to $q$, and $\ga_{pq}$ does not self-intersect.

Fix $v \in T^1_p\M$ and $t_0>0$, and identify $T_{t_0v}(T_p\M) \cong T_p\M$. For $w = cv + w_\perp \in T_p\M$ with $w_\perp \perp v$, let $J(t) = x(t)\dot\gamma_v(t) + y(t)\,i\dot\gamma_v(t)$ be the \m\ Jacobi field along $\gamma_v$ with $J(0)=0$ and $J'(0)=w_\perp$. Then $d(\emu_p)_{t_0v}(w_\perp) = J(t_0)$ and $$d(\emu_p)_{t_0v}(w) = \bigl(c+x(t_0)\bigr)\dot\gamma_v(t_0) + y(t_0)\,i\dot\gamma_v(t_0),$$ 
which vanishes if and only if $c+x(t_0)=0$ and $y(t_0)=0$. Since $c$ is free, $\ker d(\emu_p)_{t_0v}$ is nontrivial if and only if some nonzero \m\ Jacobi field $J$ along $\gamma_v$ satisfies $y(0)=y(t_0)=0$, i.e., $p$ and $\gamma_v(t_0)$ are $\mu$-magnetically conjugate (\cref{def:magnetic-conjugate-point}). Since $\emu_p$ is a diffeomorphism, $d(\emu_p)_{t_0v}$ is injective for all $v$ and $t_0>0$, hence 
$\f$ has no $\mu$-magnetically conjugate points.
\end{proof}

\begin{definition}[Magnetic distance]\label{def:mag-distance}
$\tilde{d}_\mu(p,q)\dfn\tilde{d}$ from \cref{lemma:pq-connectivity-and-no-self-intersect} is called the \m\  distance from $p$ to $q$.
\end{definition}
\begin{remark}
This is not a metric as one would expect because $\ga_{pq}$ is different from $\ga_{qp}$. Indeed, symmetry fails (\cref{cor:td-is-not-metric}), as does the triangle inequality (\cref{lemma:magnetic-triangle-inequality-fails}).
\end{remark}


\subsection{Magnetic boundary}\label{ssection:connectivityI}

Mirroring the classical construction of the ideal boundary $\Minfty$ of $\M$ via asymptotic geodesic rays, this section
defines the \m\ boundary $\M_\mu(\infty)$ via
asymptotic \m\ geodesics (\cref{def:asymptotic-mag-geodesics,def:asymptotic-maggeo-mag-boundary}) and establishes a natural homeomorphism between $\Minfty$ and $\M_\mu(\infty)$ (\cref{prop:boundary-homeomorphic}):
The key ingredient is the angle between a \m\ geodesic and its geodesic chords, whose control (\cref{Lemma:angle_<_pi/2}) allows endpoints of a magnetic geodesic to be read off from those of its boundary chord  (\cref{prop:stable-mag-jf-same-endpoint,prop:boundary-bijection}), and the homeomorphism (\cref{lemma: well-define-boundary-homeo}) then yields a characterization of the magnetically flat case: it is precisely the one in which some magnetic geodesic has the same forward and backward endpoints (\cref{prop:h2-has-no-closed-orbit}).


\begin{proposition}[{\cite[p.~229, Theorem 1]{Adachi1}}]\label{prop:mag-intersect-geocircle-atmost-once}\label{cor:magnetic-geodesic-is-unbounded}
    Let $S_{r}(p)\dfn\{x\in \M\mid d(x, p)=r\}$ denote the geodesic circle  of radius $r$ centered at $p$. If $(\Sigma,\mu)$ satisfies \ref{H1}, then all \m\  rays $\gamma|_{[0,\infty)}$ and $\gamma|_{(-\infty,0]}$ cross the geodesic circle $S_{r}(\gamma(0))$ exactly once for any $r\geq 0$, and (by taking $r= d(\ga(0),\ga(s))$) any \m\  geodesic is unbounded in both directions.
\end{proposition}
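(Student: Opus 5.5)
The plan is to show that $r(t)\dfn d(p,\ga(t))$, with $p=\ga(0)$, is strictly increasing on $(0,\infty)$ and strictly decreasing on $(-\infty,0)$ as $t\mapsto\pm\infty$ — more precisely, that $|r(t)|$ increases in $|t|$ and tends to infinity. Crossing each circle $S_r(p)$ exactly once in each direction then follows from continuity and strict monotonicity, and unboundedness is immediate. By \cref{rmk:general-behaviors-easy-observation}(\ref{ReverseVersion}), the backward ray is a forward ray of a $(-\mu)$-magnetic geodesic, and $K_{-\mu}=K_\mu\le0$. So it suffices to treat forward rays.

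\textbf{Step 1 (monotonicity).} On $\M\smallsetminus\{p\}$ the distance function $\rho=d(p,\cdot)$ is smooth, since $\M$ is a Hadamard surface with $K<0$. We have $\frac{d}{dt}\rho(\ga(t))=\langle\nabla\rho,\dot\ga\rangle=\cos\theta(t)$, where $\theta$ is the angle between $\dot\ga$ and the outward radial direction. Near $t=0^+$, $\theta\to0$, so $\rho$ increases initially. The aim is to show $\theta(t)$ never reaches $\pi/2$.

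Differentiate $\theta$ along $\ga$. Using \eqref{eqn:mag-geo-flow} and the fact that the geodesic curvature of $S_r(p)$ is $\kappa_r=\Delta\rho>0$, one gets $\dot\theta=\mu-\kappa_{\rho}\sin\theta$, with sign conventions fixed by orientation; $\kappa_\rho$ is the Hessian of $\rho$ on the orthogonal direction. At a first time $t_1$ with $\theta(t_1)=\pi/2$, we would need $\dot\theta(t_1)\ge0$, i.e.\ $\mu\ge\kappa_{\rho(t_1)}$.

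By Rauch/Sturm comparison (\cref{thm:sturm}) with $K\le-\mu^2$ (from \ref{H1}), circles in $\M$ have curvature $\kappa_r\ge\mu\coth(\mu r)>|\mu|$. This is a strict inequality, which gives a contradiction. Hence $\theta\in[0,\pi/2)$ for all $t>0$, so $\rho(\ga(t))$ is strictly increasing. This comparison is the crux: a geodesic circle in curvature $\le-\mu^2$ is strictly more curved than a $\mu$-magnetic geodesic, so the magnetic curve can never become tangent to a circle from inside.

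\textbf{Step 2 (unboundedness).} I must show $\rho(\ga(t))\to\infty$. Suppose instead that $\rho\to R<\infty$. Then the ray lies in the compact ball $\overline{B_R(p)}$, and $\kappa_\rho\ge\kappa_0>|\mu|$ uniformly there, with $\kappa_0\dfn\mu\coth(\mu R)$. Now use $\frac{d}{dt}\sin\theta$. The ODE gives $\dot\theta\le \mu-\kappa_0\sin\theta$, whenever $\sin\theta$ exceeds $|\mu|/\kappa_0<1$, and hence $\theta$ stays bounded away from $\pi/2$: $\sin\theta\le\max(\sin\theta_*,|\mu|/\kappa_0)<1$. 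Then $\dot\rho=\cos\theta\ge c>0$ eventually, which contradicts boundedness.

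Alternatively, one can cite \cref{lemma:pq-connectivity-and-no-self-intersect} and compactness. A ray trapped in a compact set accumulates on some vector $w$, and nearby flow boxes contradict injectivity of $\emu_p$. The ODE argument is cleaner, so I will use that.

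The step I expect to be the main obstacle is getting the sign conventions in $\dot\theta=\mu-\kappa_\rho\sin\theta$ right for both signs of $\mu$. I will handle this by replacing $\mu$ with $|\mu|$ via orientation reversal.
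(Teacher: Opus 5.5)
Your proof is correct. The paper gives no proof of this statement; it only quotes Adachi's Theorem 1, so your argument is a self-contained replacement rather than a variant of an in-paper proof.

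It is the natural curvature-comparison argument. With $\theta$ the oriented angle from $\nabla\rho$ to $\dot\gamma$, one indeed gets $\dot\theta=\mu-\kappa\sin\theta$, where $\kappa=\operatorname{Hess}\rho(i\nabla\rho,i\nabla\rho)$. The hypothesis $K\le-\mu^2$ gives $\kappa>|\mu|$ at every point of $\M\smallsetminus\{p\}$, so $\theta$ can never reach $\pm\pi/2$. An equivalent formulation: at any critical point of $t\mapsto\rho(\gamma(t))$ one has $\frac{d^2}{dt^2}\rho(\gamma(t))=\kappa\mp\mu\ge\kappa-|\mu|>0$, so every critical point is a strict local minimum. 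Your Step 2 correctly supplies the existence half of ``exactly once'', i.e.\ unboundedness. It does so by using the bound $\kappa\ge|\mu|\coth(|\mu|R)>|\mu|$ on a bounded region to keep $\theta$ uniformly away from $\pi/2$.

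A few points to tighten:
\begin{enumerate}
\item The bound $\kappa\ge|\mu|\coth(|\mu|\rho)$ compares logarithmic derivatives $y'/y$ of the radial Jacobi field with $y(0)=0$, $y'(0)=1$. \cref{thm:sturm} as stated compares values, not $y'/y$, so it does not give this directly. Use the Riccati equation instead, or the Wronskian identity $(y'z-yz')'=(-K-\mu^2)\,yz\ge0$ with $z(t)=\sinh(|\mu|t)/|\mu|$ (read as $z(t)=t$ if $\mu=0$).
\item $\kappa$ depends on the point, not on $r$ alone. This is harmless because the bound holds pointwise.
\item You assert $\theta\ge0$ without comment. For $\mu>0$ it follows because $\dot\theta=\mu>0$ whenever $\theta=0$, and $\theta\approx\mu t/2$ for small $t>0$. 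More simply, a first arrival at $\theta=-\pi/2$ would require $\dot\theta\le0$, whereas $\dot\theta=\mu+\kappa>0$ there. So the same inequality $\kappa>|\mu|$ excludes both endpoints.
\item The alternative you set aside would not work as sketched. Injectivity of $\emu_p$ only rules out self-intersections, and a curve returning near $w$ in nearly the same direction need not cross itself (it can spiral). Relying on the ODE was the right choice.
\end{enumerate}
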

This allows us to introduce asymptotic magnetic geodesics and the magnetic boundary.

\begin{definition}[Asymptotic magnetic geodesics]\label{def:asymptotic-mag-geodesics}
    Two \m\  geodesics $\ga_1$ and $\ga_2$ on $\M$ are \emph{forward asymptotic} if there exists a nondecreasing continuous function $s\colon \re\to \re$ such that $d(\ga_1(t),\ga_2(s(t)))<C$ for some $C>0$ and for all $t\in[0,\infty)$, or equivalently, if $d_H(\ga_1([0,\infty)),\ga_2([0,\infty)))<\infty$ where $d$ and $d_H$ are the distance and Hausdorff distance functions, respectively  induced by the Riemannian metric of $\Sigma$. 
    Two \m\  geodesics $\ga_1$ and $\ga_2$ are \emph{backward asymptotic} if there exists a nonincreasing continuous function $s\colon \re\to \re$ such that $d(\ga_1(t),\ga_2(s(t)))<C$ for some $C>0$ and for all $t\in(-\infty,0]$.
\end{definition}

\begin{proposition}\label{prop:asymptoticity-equivalence-relation}
    Being asymptotic is an equivalence relation.
\end{proposition}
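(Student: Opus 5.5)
The plan is to work with the Hausdorff-distance formulation in \cref{def:asymptotic-mag-geodesics} and check reflexivity, symmetry, and transitivity in turn. Before that, I will confirm that the reparametrization formulation and the Hausdorff formulation agree, since the definition asserts this. One direction is immediate: if $d(\ga_1(t),\ga_2(s(t)))<C$ for $t\ge0$, then every point of $\ga_1([0,\infty))$ lies within $C$ of $\ga_2(\re)$. The remaining issue is that $s$ might take values below $0$. Points of $\ga_2([0,\infty))$ not of the form $\ga_2(s(t))$ with $t\ge 0$ fall into two groups: those before $s(0)$, a compact arc at bounded distance from $\ga_1(0)$, and those skipped over, which continuity of $s$ and the intermediate value theorem rule out. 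So the Hausdorff distance is finite.

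For the converse, suppose $d_H<C$. I will build $s$ by setting $s(t)=\sup\{\tau\ge0 \mid d(\ga_2(\tau),\ga_1([0,t]))\le C\}$. \cref{cor:magnetic-geodesic-is-unbounded} keeps this finite, because $\ga_2$ is unbounded: each ray meets every geodesic circle exactly once, so $\ga_2(\tau)$ leaves every ball. The function is nondecreasing by construction. Continuity may fail, but a jump can be bridged by linear interpolation in the parameter: nearby points of $\ga_2$ still lie within $C$ of $\ga_2([0,\infty))$-close points of $\ga_1$, and unit speed keeps the constant bounded. I expect this equivalence to be the main technical nuisance. I would rather organize the proof entirely around $d_H$, which is manifestly well behaved, and treat the monotone-reparametrization version as a remark.

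With the Hausdorff formulation the three properties are short. Reflexivity holds since $d_H(A,A)=0$. Symmetry holds because $d_H$ is symmetric in its arguments. For transitivity, if $d_H(\ga_1([0,\infty)),\ga_2([0,\infty)))<C_1$ and $d_H(\ga_2([0,\infty)),\ga_3([0,\infty)))<C_2$, the triangle inequality for the Hausdorff (pseudo)distance on subsets of the metric space $(\M,d)$ gives a bound of $C_1+C_2$. Here $d$ is the genuine Riemannian distance, not the asymmetric $\tilde d_\mu$, so there is no difficulty. Backward asymptoticity is handled the same way with the rays $\ga_i((-\infty,0])$; equivalently, by \cref{rmk:general-behaviors-easy-observation}(\ref{ReverseVersion}), one reverses time and applies the forward case to $(-\mu)$-magnetic geodesics.

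A final point to check is that asymptoticity does not depend on the chosen time origin. Replacing $[0,\infty)$ by $[a,\infty)$ changes each ray by a compact arc, which changes the Hausdorff distance by at most a finite amount. So the relation is well defined on \m\ geodesics as oriented curves, including under reparametrizations $t\mapsto t+c$.
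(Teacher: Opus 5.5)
Your argument is correct, but it uses the other formulation of \cref{def:asymptotic-mag-geodesics} than the paper does.

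The paper works with monotone reparametrizations. It calls reflexivity and symmetry obvious and gets transitivity from $s_3=s_2\circ s_1$ plus the pointwise triangle inequality for $d$. You switch to the Hausdorff version, which the definition declares equivalent, so all three properties reduce to properties of $d_H$ on subsets of $(\tilde\Sigma,d)$.

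Your route buys something real. Symmetry becomes genuinely trivial; with reparametrizations one must invert a nondecreasing $s$ that may be non-injective and need not have range $[0,\infty)$, which the paper's ``obvious'' passes over. Transitivity also needs no care about times with $s_1(t)<0$. There the paper's bound $d(\gamma_2(s_1(t)),\gamma_3(s_2(s_1(t))))<C_2$ is not among the hypotheses, though compactness of $[s_1(0),0]$ repairs it.

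The price is that everything rests on the asserted equivalence. The later arguments (\cref{prop:stable-mag-jf-same-endpoint,prop:boundary-bijection}) actually use the reparametrization version.

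Your second paragraph, which tries to verify that equivalence, does not succeed as written. Either drop it and cite the definition, as the paper effectively does, or repair it as follows.

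\begin{itemize}
\item \emph{Easy direction.} You need $s(t)\to\infty$; otherwise the part of $\gamma_2([0,\infty))$ beyond $\sup s$ is not covered. This follows from the unboundedness of $\gamma_1$ in \cref{cor:magnetic-geodesic-is-unbounded}. Also, points $\gamma_1(t)$ with $s(t)<0$ are within $C+|s(0)|$ of $\gamma_2(0)$, not of $\gamma_2(s(t))\in\gamma_2([0,\infty))$.
\item \emph{Converse: the missing bound.} Your $s(t)=\sup\{\tau\ge0\mid d(\gamma_2(\tau),\gamma_1([0,t]))\le C\}$ only places $\gamma_2(s(t))$ within $C$ of \emph{some} $\gamma_1(t')$ with $t'\le t$. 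Nothing you say bounds $d(\gamma_1(t),\gamma_2(s(t)))$.
\item \emph{Converse: the jumps.} Where $s$ jumps up at $t_0$, the skipped points $\gamma_2(\tau)$ are in general farther than $C$ from $\gamma_1([0,t_0])$. The Hausdorff bound pairs them only with points $\gamma_1(t')$, $t'>t_0$, possibly far along, so interpolating in the parameter gives no bound.
\end{itemize}

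What is missing is a coarse monotonicity of the correspondence between the two rays. For general unbounded unit-speed curves at finite Hausdorff distance this fails (think of a ray with ever longer hairpins along a line). So it needs input specific to $\mu$-magnetic geodesics. One option is the uniform properness $d(\gamma(a),\gamma(b))\le D\Rightarrow|a-b|\le L(D)$, which follows from \cref{cor:magnetic-geodesic-is-unbounded} and compactness of $T^1\Sigma$. Combine it with a coarse intermediate-value argument along $\gamma_2$.
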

\begin{proof}
    We show this for forward asymptoticity, and the proof of backward asymptoticity is similar. Reflexivity and symmetry are obvious, so it remains to show transitivity.
    
    Let $\ga_i, i\in\{1,2,3\}$ be \m\  geodesics for which there are $C_1, C_2>0$ and continuous nondecreasing  $s_1,  s_2\colon \re\to \re$ such that $d(\ga_1(t),\ga_2(s_1(t)))<C_1$ and $d(\ga_2(t),\ga_3( s_2(t)))<C_2$ for all $t\in[0,\infty)$. Setting $s_3\dfn s_2\circ s_1$ and using the triangle inequality gives
    \[d(\ga_1(t),\ga_3(s_3(t)))\leq d(\ga_1(t),\ga_2(s_1(t)))+ d(\ga_2(s_1(t)),\ga_3(s_3(t)))<C_1+C_2.\qedhere\]
\end{proof}

\begin{definition}[Magnetic boundary]\label{def:asymptotic-maggeo-mag-boundary}
    Suppose $(\Sigma,\mu)$ satisfies \ref{H1}. We define the \emph{forward} and \emph{backward \m\  boundary} $\M^+_\mu(\infty)$ and $\M^-_\mu(\infty)$ of $\M$ as the equivalence classes of forward and backward asymptotic \m\  geodesics, respectively. We also write $\M_\mu(\infty)$ for either $\M^+_\mu(\infty)$ or $\M^-_\mu(\infty)$ when the choice is clear from context.

    For any \m\  geodesic $\gamma$, we write $\gamma(\infty)$ for its forward equivalence class and $\gamma(-\infty)$ for its backward equivalence class.
\end{definition}
We could next imitate the analogous development for geodesic flows to attach these boundaries to the universal cover, but instead we will canonically identify the boundaries with the Riemannian boundary at infinity. The approach is to find an asymptotic direction of a magnetic orbit as ``seen'' by looking along geodesics.
\begin{definition}[Chord]\label{def:chord}\label{def:mag_angles}
A (geodesic) chord of a curve $C$ (at $C(0)$) is a unit-speed geodesic
\[s\mapsto g_{C,t}(s)\dfn\begin{cases} g_{C(0)C(t)}(s)&\text{if }t>0,\\g_{C'0)}(s)&\text{if }t=0,\\g_{C(t)C(0)}(s+l(t))&\text{if }t<0,\end{cases}\]
where $g_{C'(0)}$ is as in \cref{remark:notation_mag_geo}, $g_{pq}$ as in \cref{lemma:pq-connectivity-and-no-self-intersect}, $l(t)=d(C(0),C(t))$, and $d=\td_0$ (\cref{def:mag-distance}), so $g_{C,t}(0)=C(0)$ in either case; see \cref{fig:def_chord}.
\begin{figure}[hbt]
  \centering
  \begin{tikzpicture}[>=stealth]
  \draw[thick, postaction={decorate},
      decoration={markings, mark=at position 0.5 with {\arrow{stealth}}}]
      (-2,0) .. controls (-1,-1) and (1,-1) .. (2,0);
    \draw[dashed] (-2,0) -- (2,0);

\node[above] at (-2,0) {$C(0)=g_{C,t}(0)$};
\node[above] at (2,0) {$C(t)=g_{C,t}(l(t))$};
\node[below] at (0,-1) {$C$};
\end{tikzpicture}\hfil\begin{tikzpicture}[>=stealth,scale=1.5]

\draw[dashed] (-2,0) -- (2,0);

\draw[thick, postaction={decorate},
      decoration={markings, mark=at position 0.5 with {\arrow{stealth}}}]
      (-2,0) .. controls (-1,-1) and (1,-1) .. (2,0);

\node[below] at (0,-1) {$C$};
\node[above] at (2,0) {$C(t)$};

\draw[->, thick] (-2,0) -- (-1.1,0);
\node[above] at (-1.85,0) {$g'_{C,t}(0)\nfd v_t$};

\draw[->, thick] (-2,0) -- ++(0.6,-0.9);
\node[left] at (-1.7,-0.7) {$C'(0)$};

\coordinate (O) at (-2,0);        
\coordinate (A) at (-1.7,0);      
\coordinate (B) at (-1.7,-0.455); 

\pic [draw=red,
      text=red,
      "$\alpha_C(t)$",
      angle radius=5mm,          
      angle eccentricity=1.2,    
      label distance=2mm,        
      pic text options={right}]  
      {angle = B--O--A};

\end{tikzpicture}
  \caption{Chord and angle in \cref{def:chord}}
  \label{fig:def_chord}\label{fig:chord_angle}
\end{figure}
Denote by $\alpha_C(t)$ the angle subtended by $C'(0)$ and $v_t\dfn g'_{C,t}(0)\in T^1_{C(0)}\M$ for any $t$ (\cref{fig:chord_angle}).
\end{definition}
By the continuity of $\f$, $t\mapsto\alpha_\ga(t)$ is continuous for any \m\ geodesic $\ga$. We show that it is increasing and bounded, hence has a limit as $t\to\infty$; this provides the asymptotic direction.

\begin{lemma}\label{lemma:angle-alpha-is-increasing}
    If $(\Sigma,\mu)$ satisfies \ref{H1} and $\mu\neq0$, then $t\mapsto\alpha_\ga(t)$ is strictly increasing on $[0,\infty)$ for any \m\ geodesic $\ga$.   
\end{lemma}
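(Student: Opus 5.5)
Throughout, I take $\mu>0$; the case $\mu<0$ follows by the orientation reversal in \cref{rmk:general-behaviors-easy-observation}(\ref{ReverseVersion}), or by the mirror-image argument. Fix a \m\ geodesic $\ga$ and put $p=\ga(0)$. The plan is to replace the unsigned angle $\alpha_\ga(t)$ by a signed, continuously varying polar angle of the chord direction $v_t$. I would show that this signed angle takes values in $(0,\pi)$ for $t>0$ and is injective. Continuity plus injectivity then give strict monotonicity, and the behaviour near $t=0$ fixes the direction.

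Step 1 (polar angle). By \cref{Lemma:connectivity-by-geodesic} (Cartan--Hadamard), $\exp_p$ is a diffeomorphism. Hence for $t>0$ the point $\ga(t)\neq p$ (by \cref{lemma:pq-connectivity-and-no-self-intersect}, since $\ga$ does not self-intersect) has a well-defined unit direction $v_t\in T^1_p\M$, and $v_t$ depends continuously on $t$. Write $v_t=\cos\theta(t)\,\ga'(0)+\sin\theta(t)\,i\ga'(0)$, with $\theta$ a continuous lift on $(0,\infty)$.

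Step 2 (the angle stays in $(0,\pi)$). By \cref{rmk:general-behaviors-easy-observation}(\ref{LeftRightTangency}), $\ga$ lies in the convex region to the left of each of its tangent geodesics, in particular of $g_{\ga'(0)}$. It meets that geodesic only at $p$, since the tangency there already accounts for the allowed intersection count. So for $t>0$ the point $\ga(t)$ lies strictly in the open left half-plane bounded by $g_{\ga'(0)}$. This means $\sin\theta(t)>0$, and the lift can be chosen with $\theta(t)\in(0,\pi)$. Consequently $\alpha_\ga(t)=\theta(t)$ for $t>0$, and also $\theta(t)\to0=\alpha_\ga(0)$ as $t\to0^+$, because the chord direction converges to the tangent direction.

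Step 3 (injectivity). Suppose $\theta(t_1)=\theta(t_2)$ with $0<t_1<t_2$. Then $p$, $\ga(t_1)$ and $\ga(t_2)$ are three distinct points of $\ga$, all lying on the single geodesic $g_{v_{t_1}}$. This contradicts the statement in \cref{rmk:general-behaviors-easy-observation}(\ref{LeftRightTangency}) that a \m\ geodesic with $\mu\neq0$ meets any geodesic at most twice.

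Conclusion. A continuous injective function on $[0,\infty)$ is strictly monotone. Since $\theta(0)=0$ and $\theta>0$ on $(0,\infty)$, $\theta$ must be strictly increasing. By Step 2, $\theta=\alpha_\ga$, so $\alpha_\ga$ is strictly increasing.

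The main point to check is that the convexity and at-most-two-intersections statements in \cref{rmk:general-behaviors-easy-observation}(\ref{LeftRightTangency}) hold globally on $\M$, not just locally. If that needed justification, I would argue as follows. Take a geodesic and a \m\ arc between two consecutive intersection points with it. The arc has geodesic curvature $\mu>0$, and together with the geodesic segment it bounds a disk in $\M$. Applying Gauss--Bonnet to this disk with $K<0$ (curvature turning to the left) limits the exterior angles. Combining this with the absence of self-intersections from \cref{lemma:pq-connectivity-and-no-self-intersect} should exclude a third intersection point.
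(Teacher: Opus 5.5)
Your proof is correct and is the paper's argument. Both obtain the contradiction from the three distinct points $\gamma(0)$, $\gamma(t_1)$, $\gamma(t_2)$ lying on a single geodesic, contrary to \cref{rmk:general-behaviors-easy-observation}\eqref{LeftRightTangency}; your signed polar angle, the strict-left-side observation and the ``continuous and injective implies strictly monotone'' step only make explicit what the paper's one-line proof leaves implicit, since it treats just equality $\alpha_\gamma(t_1)=\alpha_\gamma(t_2)$ of unsigned angles. The Gauss--Bonnet fallback is unnecessary, because the paper also uses that remark without further proof; note too that, as sketched, the sign of $K$ alone would not rule out $\gamma$ returning to its tangent geodesic on the far side of $\gamma(0)$.
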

\begin{proof}
Otherwise, there are a \m\ geodesic $\ga$ and $0<t_1 < t_2$ with $\alpha_\ga(t_1)=\alpha_\ga(t_2)$, so $p\dfn\ga(0)$ and $q\dfn\ga(t_2)$ satisfy $\ga_{pq}(\re)\cap g_{pq}(\re) \supset \{p,q, \ga(t_1)\}$, contrary to \cref{rmk:general-behaviors-easy-observation}\eqref{LeftRightTangency}.
\end{proof}

Note that if $K_\mu\equiv0$ (horocycle flow), then $\alpha_\ga\to\pi/2$ as $t\to\infty$. By contrast:
\begin{lemma}[{\cite{Grognet}}]\label{lemma:Anosovmag-angle<pi/2}
If $(\Sigma,\mu)$ satisfies \ref{H4}, then 
there exists $\alpha_0<\pi/2$ such that $\alpha_\ga(t)<\alpha_0$ for any $\mu$-magnetic geodesic $\ga$ and $t>0$.
\end{lemma}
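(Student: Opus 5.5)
To prove the uniform bound, I would compare the angle $\alpha_\gamma(t)$ with the geodesic curvature of the geodesic chords. The plan is to write $\alpha(t)$ in terms of the first variation of the family of chords $t\mapsto g_{\gamma,t}$. Write $l(t)=d(\gamma(0),\gamma(t))$, and let $\beta(t)$ be the angle at $\gamma(t)$ between $\dot\gamma(t)$ and the outward radial direction $\dot g_{\gamma,t}(l(t))$. By the first variation formula, $\dot l(t)=\cos\beta(t)$. The rate of change of the chord direction at $\gamma(0)$ is governed by the Jacobi field along the chord that vanishes at $\gamma(0)$ and equals the normal component $\sin\beta(t)$ at the far end. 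Together these give an ODE system for $(\alpha,\beta)$.

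First, I would apply Gauss--Bonnet to the region bounded by the arc $\gamma|_{[0,t]}$ and its chord $g_{\gamma,t}$. These meet only at the endpoints, by \cref{rmk:general-behaviors-easy-observation}\eqref{LeftRightTangency}, so the region is a geodesic digon with one magnetic side of constant geodesic curvature $\mu$. With interior angles $\alpha(t)$ and $\beta(t)$, this gives
\[\alpha(t)+\beta(t)=\mu\,t+\int_{D_t}K\,dA .\]
Second, I would use $K_\mu\le -c<0$, which follows from \ref{H4} and compactness. This gives $\int_{D_t}K\,dA\le -(\mu^2+c)\,\mathrm{Area}(D_t)$.

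The third step is to get a lower bound for $\mathrm{Area}(D_t)$ in terms of $t$ that defeats the linear term $\mu t$. A natural choice is the isoperimetric-type comparison for curves of constant curvature $\mu$ in curvature $\le -(\mu^2+c)$: in the comparison model, a curve of geodesic curvature $\mu$ below the threshold $\sqrt{\mu^2+c}$ is an equidistant curve to a geodesic. Its arc is trapped in a band of width $r_0$ with $\tanh r_0=\mu/\sqrt{\mu^2+c}<1$, and the enclosed area grows like $\mu t/(\mu^2+c)$ plus a bounded correction. Sharper bookkeeping with the Riccati equation \eqref{eqn:riccati} along the chords then yields $\alpha+\beta\le \pi-2\alpha_1$ for a uniform $\alpha_1>0$ depending only on $c,\mu$ and the lower curvature bound.

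Finally, to separate $\alpha$ from $\beta$, I would compare with the reversed curve. By \cref{rmk:general-behaviors-easy-observation}\eqref{ReverseVersion}, the angle $\beta$ is an $\alpha$-type angle for the $(-\mu)$-magnetic geodesic $t\mapsto\gamma(-t)$, and $\beta\ge0$. The remaining task is to show $\alpha\le\beta+$const, or more directly $\alpha<\pi/2-\alpha_1$.

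I expect the main obstacle to be this comparison. Gauss--Bonnet only controls the sum, so I would fall back on Toponogov/Rauch comparison with the constant-curvature model $-(\mu^2+c)$. There the angle between an equidistant curve and its chords is explicitly bounded by $\arcsin(\tanh r_0)<\pi/2$. I would then transfer this bound via the Sturm comparison (\cref{thm:sturm}) applied to the orthogonal components of the Jacobi fields along the chords.
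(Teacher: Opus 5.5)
There is a genuine gap: the proposal never bounds $\alpha$ by itself. Your Gauss--Bonnet identity $\alpha+\beta=\mu t+\int_{D_t}K\,dA$ is correct, but it only controls the sum $\alpha+\beta$. Even that control depends on a lower bound for $\mathrm{Area}(D_t)$ that you assert but do not prove. By your own identity, bounding $\int_{D_t}(-K)\,dA$ from below is the same as bounding $\alpha+\beta$ from above, so this step gives no independent handle. In the model the estimate is asymptotically sharp, so there is no slack for rough area bounds.

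The decisive step, separating $\alpha$ from $\beta$, is only gestured at. Toponogov comparison does not apply directly to a digon one of whose sides has nonzero geodesic curvature. The inequality $\alpha\le\beta+\mathrm{const}$ would help only if the constant were smaller than $2\alpha_1$, and nothing you wrote forces that in variable curvature.

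The missing idea is that the far-end angle $\beta$ satisfies a closed first-order ODE. Your reversal observation then finishes the argument without Gauss--Bonnet. Let $r=d(\ga(0),\cdot)$, which is smooth on $\M\smallsetminus\{\ga(0)\}$. Take $\beta(t)$ to be the signed angle from $\nabla r$ to $\dot\ga(t)$, so that $\dot\ga=\cos\beta\,\nabla r+\sin\beta\,i\nabla r$. Let $k_r$ be the geodesic curvature of the distance circle through $\ga(t)$, so that $\mathrm{Hess}\,r(X,X)=k_r\langle X,i\nabla r\rangle^2$. Differentiate $\cos\beta=\langle\dot\ga,\nabla r\rangle$ using $\frac{D\dot\ga}{dt}=\mu i\dot\ga$ and this Hessian formula. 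This gives
\[\beta'(t)=\mu-k_r(\ga(t))\sin\beta(t),\qquad \lim_{t\to0^+}\beta(t)=0 .\]
By \ref{H4} and compactness, $K\le-\kappa^2$ for some $\kappa>|\mu|$. The Riccati/Sturm comparison along the radial geodesic (\cref{thm:sturm}, \cref{remark:riccati-equation}) gives $k_r\ge\kappa\coth(\kappa r)>\kappa$.

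Hence $\beta$ can never reach $\pm\arcsin(|\mu|/\kappa)$: at a first such time, $\beta'$ would have the wrong sign. So $|\beta(t)|<\arcsin(|\mu|/\kappa)$ for all $t>0$, and this holds for every $\pm\mu$-magnetic geodesic.

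Now note that $\alpha_\ga(t)$ is exactly $|\beta|$ at time $t$ for the reversed curve $s\mapsto\ga(t-s)$. That curve is a $(-\mu)$-magnetic geodesic by \cref{rmk:general-behaviors-easy-observation}\eqref{ReverseVersion}, and \ref{H4} is invariant under $\mu\mapsto-\mu$. Therefore any $\alpha_0\in(\arcsin(|\mu|/\kappa),\pi/2)$ works. Your first paragraph, with $\dot l=\cos\beta$ and Jacobi fields along the chord, already contained these ingredients; differentiating $\beta$ rather than $\alpha$ is what decouples the system.
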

This is surprisingly useful. Bounding $\alpha_\ga$ shows that it converges, but the uniform bound for $\alpha_\ga(t)$ in \cref{lemma:Anosovmag-angle<pi/2} yields much more. Since the angle between any magnetic geodesic segment and its chord is acute, every magnetic geodesic segment makes definite forward progress in every chord direction. This yields a continuous bijection between orbits of the magnetic flow and orbits of the geodesic flow, which allows Grognet to prove that such magnetic flows are orbit-equivalent to the geodesic flow of $\Sigma$ (\cref{prop:grognet_oe}) and, in fact, quasi-geodesic \cite[Proposition 3.1]{Grognet}. 

We now extend \cref{lemma:Anosovmag-angle<pi/2} to \m\ flows satisfying \ref{H1}. Even without the uniform bound away from $\pi/2$, this will play an important role in extending \cref{lemma:pq-connectivity-and-no-self-intersect} to the case where one of the points $p$ and $q$ is on $\Minfty$ (\cref{def:angle-for-pqtilde-end-points,thm:pqtilde}), as well as in the proof of the characterization of magnetic flatness (\cref{prop:h2-has-no-closed-orbit}).

\begin{lemma}\label{Lemma:angle_<_pi/2}
If $(\Sigma,\mu)$ satisfies \ref{H1}, then $\alpha_\ga(t)<\pi/2$ for any $\mu$-magnetic geodesic $\ga$ and $t>0$.
\end{lemma}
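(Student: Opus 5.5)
We argue by contradiction, using the reversibility trick of \cref{rmk:general-behaviors-easy-observation}\eqref{ReverseVersion} together with \cref{prop:mag-intersect-geocircle-atmost-once} and the first variation formula for the Riemannian distance on the Hadamard surface $\M$. If $\mu=0$, then $\ga$ is a geodesic, every chord at $\ga(0)$ is $\ga$ itself, and $\alpha_\ga\equiv0$. So we may assume $\mu\neq0$. Then \cref{lemma:angle-alpha-is-increasing} applies and $t\mapsto\alpha_\ga(t)$ is continuous and strictly increasing on $[0,\infty)$, with $\alpha_\ga(0)=0$.

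\emph{Reduction to a strict inequality.} Suppose $\alpha_\ga(t_0)\ge\pi/2$ for some $t_0>0$. By strict monotonicity, any $t_1>t_0$ satisfies $\alpha_\ga(t_1)>\pi/2$. It therefore suffices to rule out $\alpha_\ga(t_1)>\pi/2$ for a single $t_1>0$. Set $p\dfn\ga(0)$ and $q\dfn\ga(t_1)$, and consider $r(s)\dfn d(q,\ga(s))$. Since $\M$ is a Hadamard surface, $d(q,\cdot)$ is smooth away from $q$. Its gradient at $p$ is the unit vector pointing away from $q$, namely $-v_{t_1}$, where $v_{t_1}=g'_{\ga,t_1}(0)$ points from $p$ toward $q$. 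The first variation formula then gives
\[
r'(0)=\langle \dot\ga(0),-v_{t_1}\rangle=-\cos\alpha_\ga(t_1)>0 .
\]

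\emph{Monotonicity from the reversed curve.} By \cref{rmk:general-behaviors-easy-observation}\eqref{ReverseVersion}, $\sigma(u)\dfn\ga(t_1-u)$ is a $(-\mu)$-magnetic geodesic with $\sigma(0)=q$. The pair $(\Sigma,-\mu)$ also satisfies \ref{H1}, because $K_{-\mu}=K+\mu^2=K_\mu$. Applying \cref{prop:mag-intersect-geocircle-atmost-once} to the ray $\sigma|_{[0,\infty)}$, the continuous function $u\mapsto d(q,\sigma(u))$ attains every value $\rho\ge0$ exactly once on $[0,\infty)$. It equals $0$ at $u=0$ and is unbounded. A continuous injective function on an interval is strictly monotone, so this function is strictly increasing. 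Equivalently, $s\mapsto r(s)$ is strictly decreasing on $(-\infty,t_1]$. Since $0<t_1$, this forces $r'(0)\le0$, which contradicts $r'(0)>0$. Hence $\alpha_\ga(t)<\pi/2$ for all $t>0$.

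The main point requiring care is the monotonicity step. We must check that ``crosses each geodesic circle exactly once'' really yields strict monotonicity of the distance from the \emph{initial} point along the reversed ray. It does, because the function is injective and continuous, as argued above. We must also confirm that the reversed curve is a legitimate magnetic geodesic for a structure satisfying \ref{H1}, which holds since the magnetic curvature depends on $\mu$ only through $\mu^2$. The passage from $\alpha_\ga(t_0)=\pi/2$ to a strict inequality at a later time is what lets us avoid any second-order analysis at a degenerate critical point of $r$.
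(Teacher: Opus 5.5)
Your argument is correct, but it takes a different route from the paper. Both proofs start the same way, using strict monotonicity of $\alpha_\ga$ to pass to some $t_1$ with $\alpha_\ga(t_1)>\pi/2$.

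\emph{The paper's route.} It then perturbs the magnetic intensity. It considers the $\mu'$-magnetic geodesic from $\ga(0)$ to $\ga(t_1)$ for nearby $\mu'$, and asserts that its initial angle with the fixed chord depends continuously on $\mu'$. This yields some $\mu^*$ with $|\mu^*|<|\mu|$, hence $K_{\mu^*}<0$, for which the angle is still $>\pi/2$. That contradicts Grognet's uniform bound for \ref{H4} (\cref{lemma:Anosovmag-angle<pi/2}).

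\emph{Your route.} You stay at the given $\mu$. The first variation formula gives $\cos\alpha_\ga(t_1)=-\frac{d}{ds}\big|_{s=0}\,d(\ga(t_1),\ga(s))$. Adachi's circle-crossing property (\cref{prop:mag-intersect-geocircle-atmost-once}), applied to the backward ray from $\ga(t_1)$, says this distance is strictly decreasing on $(-\infty,t_1]$. Hence $\alpha_\ga(t_1)\le\pi/2$ for every $t_1>0$, and strict monotonicity finishes the proof.

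\emph{Comparison.} Your approach is more self-contained. It uses only a result already stated under \ref{H1} and preceding the lemma. It avoids importing the Anosov-case bound. It also avoids the continuity-in-$\mu$ step, which the paper asserts without detail and which needs continuous dependence of the connecting magnetic geodesic on the parameter (e.g.\ via the inverse of the magnetic exponential map). Your proof also makes the geometric content explicit: $\alpha_\ga(t)\le\pi/2$ says exactly that $\ga$ approaches each of its later points $\ga(t)$ monotonically in Riemannian distance. The paper's approach instead presents the lemma as a limiting case of the uniformly hyperbolic picture.

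\emph{Minor remark.} Reversing to a $(-\mu)$-magnetic geodesic is harmless but unnecessary. \cref{prop:mag-intersect-geocircle-atmost-once} as stated already covers the backward ray $s\mapsto\ga(t_1+s)$, $s\le 0$.
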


\begin{proof}
   Suppose that $\alpha_{\ga}(t_0)=\pi/2$ for some \m\  geodesic $\ga$ and some $t_0>0$. Then by \cref{lemma:angle-alpha-is-increasing}, $\alpha_{\ga}(t_1)>\pi/2$ for $t_1>t_0$. By \cref{lemma:pq-connectivity-and-no-self-intersect}, for any $\mu$ such that $(\Sigma,\mu)$ satisfies \ref{H1}, there exists a unique $\mu$-magnetic geodesic $\ga_{\mu}$ such that $\ga_{\mu}(0)=\ga(0)$ and $\ga_{\mu}(t_{\mu})=\ga(t_1)$ for some $t_{\mu}>0$.  
   Since $\mu\mapsto \alpha_{\ga_\mu}(t_\mu)$ is a continuous function with respect to $\mu$ for all $\mu$ such that $(\Sigma,\mu)$ satisfies \ref{H1}, there exists $\mu^*$ such that $K_{\mu^*}<0$ and $\alpha_{\ga_{\mu^*}}(t_{\mu^*})>\pi/2$, which contradicts \cref{lemma:Anosovmag-angle<pi/2}.
\end{proof}

We next introduce boundary chords and magnetic endpoints first considered by Adachi. We then define magnetic boundary (\cref{def:asymptotic-maggeo-mag-boundary}) and study its relation with magnetic endpoints. In particular, we establish a correspondence between the magnetic boundary and the Riemannian boundary at infinity (\cref{prop:boundary-bijection}).

\begin{definition}[Boundary chord; endpoint {\cite[p.~229]{Adachi1}}]\label{def:angle-for-pqtilde-end-points}
    Consider $(\Sigma,\mu)$ satisfying \ref{H1} and a \m\  geodesic $\ga$.  Let $g_{v^\pm}$ be the geodesic with initial vector $v^\pm\coloneqq \lim_{t\to\pm\infty}g'_{\ga,t}(0)\in T^1_{\ga(0)}\M$, where, $g_{\ga,t}$ is from \cref{def:chord} and $v^\pm$ is well defined by \cref{lemma:angle-alpha-is-increasing}, \cref{Lemma:angle_<_pi/2} and the monotone convergence theorem; see \cref{fig:extended_chord_vector}.  
       
    As in \cref{ssection:geometry-geoflow}, let $\om=\M\cup \Minfty$, equipped with the cone topology. We call $g_{v^+}\colon [0,\infty]\to \om$ (resp. $g_{v^-}\colon [-\infty,0]\to \om$) the \emph{forward} (resp. \emph{backward}) {boundary chord} of $\ga$ at $\gamma(0)$ (or of $\dot\gamma(0)$) and $\ga[\pm\infty]\dfn g_{v^\pm}(\pm\infty)\in\Minfty$ the \emph{forward}, and respectively, \emph{backward} endpoint of $\ga$. A geodesic $g$ is said to be the \emph{boundary chord} of $\ga$ if $g(\pm\infty)=\ga[\pm\infty]$.
\end{definition}
\begin{figure}[hbt]
  \centering
  \begin{tikzpicture}[>=stealth, scale=2.5]

\draw[dashed] (-2,0) -- (2,0);

\draw[thick,
      postaction={decorate},
     decoration={markings, mark=at position 0.5 with {\arrow{stealth}}}]
      (-2,0) .. controls (-1,-1) and (1,-1) .. (2,0);

\node[below] at (0,-0.8) {$\gamma$};
\node[above] at (-2,0) {$\gamma(0)$};
\node[above] at (2,0) {$\gamma[\infty]$};

\draw[->, thick] (-2,0) -- (-1,0);
\node[above] at (-1.5,0) {$v$};

\draw[dashed] (-2,0) -- (1.2,-0.5);
\node[below] at (1.4,-0.5) {$\gamma(t)$};

\draw[->, thick]
  (-2,0) -- ++(0.977,-0.154)
  node[below] {$g'_{\gamma,t}(0)$};
\end{tikzpicture}
  \caption{boundary chord and endpoint}
  \label{fig:extended_chord_vector}
\end{figure}

\begin{remark}\label{Halfplanechords}
Here, $\gamma([0,\infty))$ need not have finite Hausdorff distance from $g_v([0,\infty))$. Horizontal lines in the upper half-plane model of $\mathbb H^2$ as 1-magnetic geodesics provide an example: their endpoint corresponds to vertical geodesics, whose neighborhoods of size $r$ do not contain any 1-magnetic geodesic for any $r>0$. In contrast, the \m\ flow of $\Sigma$ with negative \m\ magnetic curvature is Anosov, and in fact, a quasi-geodesic flow (\cite[Proposition 3.1]{Grognet}). Therefore, by the Morse lemma, the Hausdorff distance between a \m\ geodesic $\gamma$ and its boundary chord is finite;  see \cite[Chapter~6, Theorem~2.3]{Handbook-Hasselblatt-Katok}. This difference makes \cref{def:asymptotic-maggeo-mag-boundary} important.
\end{remark}

\begin{lemma}
    If $(\Sigma,\mu)$ satisfies \ref{H1}, then $\gamma(t)\to\gamma[\infty]$ in the cone topology for any \m\  geodesic $\ga$.
\end{lemma}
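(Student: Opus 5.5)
We must show that $\ga(t)\to\ga[\infty]=g_{v^+}(\infty)$ in the cone topology. Set $p\dfn\ga(0)$. By \cref{ConeTopology}, the truncated cones $T(v^+,\eps,r)$, $\eps,r>0$, form a neighborhood basis at $\ga[\infty]$. So it suffices to show two things:
\begin{itemize}
\item[(a)] $d(p,\ga(t))\to\infty$ as $t\to\infty$;
\item[(b)] $\measuredangle_p(\ga[\infty],\ga(t))\to0$ as $t\to\infty$.
\end{itemize}
Then for any $\eps,r>0$ there is a $T$ such that $\ga(t)\in T(v^+,\eps,r)$ for all $t\ge T$.

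For (b), note that $\measuredangle_p(\ga[\infty],\ga(t))$ is the angle between $v^+=\dot g_{p\ga[\infty]}(0)$ and $v_t=g'_{\ga,t}(0)=\dot g_{p\ga(t)}(0)$. Here $v_t$ makes angle $\alpha_\ga(t)$ with $\dot\ga(0)$, on a fixed side (left or right, according to the sign of $\mu$; see \cref{rmk:general-behaviors-easy-observation}(\ref{LeftRightTangency})). By \cref{lemma:angle-alpha-is-increasing} and \cref{Lemma:angle_<_pi/2}, $\alpha_\ga(t)$ increases to its limit $\alpha_\infty\le\pi/2$. Since $v^+$ is by definition $\lim v_t$, and the unit vectors on that side are parametrized continuously by their angle from $\dot\ga(0)$, the angle between $v_t$ and $v^+$ equals $\alpha_\infty-\alpha_\ga(t)\to0$. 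This step is essentially definitional. If $\mu=0$, then $v_t=\dot\ga(0)=v^+$ and the claim is the classical one.

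For (a), I would use \cref{prop:mag-intersect-geocircle-atmost-once}: the ray $\ga|_{[0,\infty)}$ crosses each geodesic circle $S_r(p)$ exactly once. Hence $t\mapsto d(p,\ga(t))$ is continuous, starts at $0$, and takes each value $r\ge0$ exactly once on $[0,\infty)$. A continuous injective function on $[0,\infty)$ with value $0$ at $0$ is strictly increasing. Its image is all of $[0,\infty)$, because every $r$ is attained, and so it is unbounded. Therefore $d(p,\ga(t))\to\infty$. This is also the content of the unboundedness clause of that proposition.

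The main subtlety, I expect, is (a): the angle convergence alone would not suffice if $\ga$ could stay bounded while its chord directions converge. The "exactly once" crossing is what rules this out, since it gives monotone escape rather than mere unboundedness along a subsequence. With (a) and (b) in hand, membership in every basic truncated cone for all large $t$ follows immediately, which completes the proof.
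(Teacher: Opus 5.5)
Your proof is correct and follows the paper's argument. It combines the escape $d(\gamma(0),\gamma(t))\to\infty$, obtained from Adachi's circle-crossing result, with the essentially definitional convergence $v_t\to v^+$, and together these place $\gamma(t)$ in every truncated cone $T(v^+,\epsilon,r)$ for large $t$; your use of the ``exactly once'' clause to get monotonicity of $t\mapsto d(\gamma(0),\gamma(t))$ usefully makes explicit a point the paper glosses, since unboundedness alone would only give divergence along a subsequence.
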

\begin{proof}
By \cref{cor:magnetic-geodesic-is-unbounded}, $d(\ga(0),\ga(t))\xrightarrow{t\to\infty}\infty$. From \cref{def:angle-for-pqtilde-end-points}, $\measuredangle_{\ga(0)}(\ga(t),g_v(\infty))=
    \measuredangle_{\ga(0)}(\dot g_v(0),\dot g_{\ga,t}(0))=\measuredangle_{\ga(0)} (v,v_t)\xrightarrow{t\to \infty}0$. This proves the convergence with respect to the cone topology.
\end{proof}

\begin{theorem}[{\cite[Corollary, p.~305]{Adachi2}}]\label{thm:pqtilde}
Suppose $(\Sigma,\mu)$ satisfies \ref{H1}. For any points $p\in \M$ and $x\in \Minfty$, there exists a unique \m\  geodesic ray $\ga_{px}$ such that $\ga_{px}(0)=p$ and $\ga_{px}[\infty]=x$, and a unique \m\  geodesic ray $\ga_{xp}$ such that $\ga_{xp}(0)=p$ and $\ga_{xp}[-\infty]=x$; likewise define geodesic rays $g_{px}$ and $g_{xp}$. 
Furthermore, $H^\pm_p\colon T^1_p\M\to\Minfty$, $v\mapsto\gav[\pm\infty]$, are homeomorphisms with respect to the cone topology on $\Minfty$, where $\ga_v$ is from \cref{def:mag flow}.
\end{theorem}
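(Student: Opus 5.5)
The plan is to study the maps $H^\pm_p$ directly and read off existence and uniqueness of $\ga_{px}$ and $\ga_{xp}$ from bijectivity of $H^+_p$ and $H^-_p$. I treat $H^+_p$; the backward case follows from \cref{rmk:general-behaviors-easy-observation}\eqref{ReverseVersion}, since reversing time turns a \m\ geodesic into a $(-\mu)$-magnetic one and \ref{H1} is symmetric in $\mu$. Note also that the boundary chord vector $v^+(v)$ of $\ga_v$ makes an angle $\lim_t\alpha_{\ga_v}(t)\in[0,\pi/2]$ with $v$, lying on the side dictated by the sign of $\mu$. Hence $H^+_p=f_p\circ\Phi$, where $\Phi\colon T^1_p\M\to T^1_p\M$, $v\mapsto v^+(v)$, and $f_p$ is the homeomorphism of \cref{ConeTopology}. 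It therefore suffices to show that $\Phi$ is a homeomorphism of the circle.

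\textbf{Continuity.} Write $\Phi(v)=\lim_{t\to\infty}v_t(v)$. Each map $v\mapsto v_t(v)=g'_{\ga_v,t}(0)$ is continuous, by continuity of $\f$ and of $q\mapsto \dot g_{pq}(0)$. To pass to the limit I would aim for a limit that is locally uniform in $v$. By \cref{lemma:angle-alpha-is-increasing} the convergence is monotone in $t$, so Dini's theorem applies once we know the limit is continuous. I therefore expect a direct argument to be needed, and this is the main obstacle. The first attempt would be by contradiction. Suppose $v_n\to v$ but $\Phi(v_n)\to w\neq\Phi(v)$. For large $T$, the points $\ga_{v_n}(T)$ are close to $\ga_v(T)$, which lies deep in a small cone around $\Phi(v)$. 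The rays $\ga_{v_n}|_{[T,\infty)}$ must then leave that cone to end near $g_w(\infty)$. Leaving the cone forces such a ray to cross a geodesic that bounds the cone, and we control this crossing using \cref{rmk:general-behaviors-easy-observation}\eqref{LeftRightTangency} (a \m\ geodesic meets a geodesic at most twice, with chords on one side) together with \cref{Lemma:angle_<_pi/2} applied at the base point $\ga_{v_n}(T)$. The chord angle there is less than $\pi/2$, which should keep the endpoint within a region whose visual angle from $p$ shrinks as $T\to\infty$. This last claim uses negative curvature $K<0$ of $\M$, since visual angles of far-away sets of bounded angular width tend to zero.

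\textbf{Injectivity and surjectivity.} For injectivity, suppose $v\neq w$ but $\Phi(v)=\Phi(w)=x$. By \cref{lemma:pq-connectivity-and-no-self-intersect} the curves $\ga_v$ and $\ga_w$ do not meet again after $p$. Together with the geodesic ray $g_{px}$ they bound a region, and I would derive a contradiction with the monotonicity and side constraint of \cref{rmk:general-behaviors-easy-observation}\eqref{LeftRightTangency}. A cleaner alternative is to show $\Phi$ is monotone of degree one: as $v$ rotates once around the circle, the rays $\ga_v$ sweep $\M$ diffeomorphically, because $\emu_p$ is a diffeomorphism by \cref{thm:adachi-connectivity}. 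Their cyclic order is therefore preserved. A continuous, cyclic-order-preserving map of degree one that is not locally constant is a homeomorphism, which gives both injectivity and surjectivity at once. Ruling out local constancy is exactly the injectivity point: two rays with the same endpoint would trap a whole sector of rays, all sharing that endpoint. I would exclude this using the comparison of stable \m\ Jacobi fields, since the orthogonal components $y$ are nonincreasing by \cref{lemma:stable-y-is-monotonic}. Finally, a continuous bijection from a compact space to a Hausdorff space is a homeomorphism.
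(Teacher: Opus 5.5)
\textbf{Gap 1: continuity of $\Phi$.} The paper does not prove this statement; it quotes it from Adachi. So your sketch has to stand on its own, and it has two genuine gaps.

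The following parts are fine: the reduction $H^+_p=f_p\circ\Phi$, the time reversal for $H^-_p$, and weak cyclic monotonicity of $\Phi$. Lower semicontinuity of $v\mapsto\lim_t\alpha_{\ga_v}(t)$ is automatic, since it is a supremum of continuous functions by \cref{lemma:angle-alpha-is-increasing}. So, for $\mu>0$, continuity can only fail if $\Phi(v_n)$ overshoots $\Phi(v)$ toward $iv$.

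The substantive control on overshoot in your sketch comes from \cref{Lemma:angle_<_pi/2} at $q=\ga_{v_n}(T)$. It places the endpoint among the endpoints of rays from $q$ with directions between $\dot\ga_{v_n}(T)$ and $i\dot\ga_{v_n}(T)$. You then assert that this set has small visual angle from $p$ because it is far away and of bounded angular width. That principle requires the sector to be angularly separated from the direction at $q$ back to $p$, and here it is not:
\begin{itemize}
\item \cref{Lemma:angle_<_pi/2}, applied to the reversed $(-\mu)$-magnetic geodesic, puts the direction from $\ga(T)$ to $p$ in the adjacent quarter, between $i\dot\ga(T)$ and $-\dot\ga(T)$.
\item Its angle to the shared edge $i\dot\ga(T)$ can tend to $0$. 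For the horocycle $t\mapsto(t,1)$ in the upper half-plane with $p=(0,1)$, this angle is about $2/T$.
\end{itemize}

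What you actually need is that the endpoints of the orthogonal rays $s\mapsto\exp_{\ga_v(T)}(s\,i\dot\ga_v(T))$ converge to $\ga_v[\infty]$ as $T\to\infty$. (The analogous statement for the tangent rays follows easily from the monotonicity of $t\mapsto d(p,\ga_v(t))$, \cref{prop:mag-intersect-geocircle-atmost-once}.) This is true, but it is the heart of the matter and uses \ref{H1} quantitatively:
\begin{itemize}
\item Along these rays the transverse Jacobi field satisfies $f''+Kf=0$, $f(0)=1$, $f'(0)=-\mu$, so $f(s)\ge e^{-\mu s}$.
\item Suppose the endpoints converged to some $c\neq\ga_v[\infty]$. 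Then the rays would pass near a fixed point.
\item CAT$(-\mu^2)$ comparison would then force the angle at $\ga_v(t)$ between $i\dot\ga_v(t)$ and the direction to $p$ to be $O(e^{-\mu d(p,\ga_v(t))})$. Hence $\lvert\tfrac{d}{dt}d(p,\ga_v(t))\rvert\le Ce^{-\mu d(p,\ga_v(t))}$, and so $e^{\mu d(p,\ga_v(t))}\le A+\mu Ct$.
\item The rays are disjoint (\cref{lemma:gamma-jf-disjoint}), so their crossing points with a short transversal near that fixed point move monotonically. They would traverse a length at least $c\int^\infty dt/(A+\mu Ct)=\infty$, a contradiction.
\end{itemize}
The exponents match exactly. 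So the general remark about visual angles in negative curvature cannot replace this step.

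\textbf{Gap 2: injectivity.} Your degree-one argument correctly reduces injectivity to showing that $\Phi$ is not constant on any arc. But \cref{lemma:stable-y-is-monotonic} cannot rule out a sector of rays with a common endpoint. The Jacobi fields of the fan $s\mapsto\ga_{v_s}$ through $p$ have $y(0)=0\neq\dot y(0)$, hence $\lvert y(t)\rvert\ge\lvert\dot y(0)\rvert t$. So they are not stable, and nothing in your sketch makes a common endpoint force stability.

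The paper's route to that implication is \cref{prop:stable-mag-jf-same-endpoint} combined with the direction $\ga_1[\infty]=\ga_2[\infty]\Rightarrow\ga_1(\infty)=\ga_2(\infty)$ of \cref{prop:boundary-bijection}. The latter is proved there using the uniqueness part of the very theorem you are proving, so the argument would be circular. Uniqueness of $\ga_{px}$, and with it the homeomorphism claim, therefore still needs an independent argument that two distinct magnetic rays from $p$ cannot have the same boundary-chord endpoint.
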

In particular, magnetic geodesics with a common endpoint are pairwise disjoint.

We now study the relation between $\gamma(\pm\infty)$ and $\ga[\pm\infty]$ (\cref{def:asymptotic-maggeo-mag-boundary,def:angle-for-pqtilde-end-points}) for a given magnetic geodesic $\ga$. That is, the following two ways of classifying \m\  geodesics are the same (\cref{prop:boundary-bijection}):
\begin{itemize}[label={}]
    \item\cref{def:asymptotic-maggeo-mag-boundary}; asymptoticity of \m\  geodesics,
    \item\cref{def:angle-for-pqtilde-end-points}; asymptoticity of their boundary chords.
\end{itemize} 

We begin with an oft-used fact.
\begin{proposition}\label{prop:stable-mag-jf-same-endpoint}
    If $(\Sigma,\mu)$ satisfies \ref{H1}, then a \m\  variation $\Gamma\colon (a,b)\times \re\to \M$ is stable (resp. unstable; see \cref{def:stable-unstable-magJF}) if and only if \m\  geodesics $\ga_s(\cdot)\dfn \Gamma(s,\cdot)$ for all $s\in (a,b)$ are forward (resp. backward) asymptotic (\cref{def:asymptotic-maggeo-mag-boundary}).
\end{proposition}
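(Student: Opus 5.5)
I will prove the forward case; the backward case then follows by applying the forward case to the reversed flow. By \cref{rmk:general-behaviors-easy-observation}(\ref{ReverseVersion}), reversed $\mu$-magnetic geodesics are $(-\mu)$-magnetic geodesics, and $K_{-\mu}=K_\mu$, so \ref{H1} is preserved. Throughout, "$\Gamma$ is stable" means that for every $s$ the \m\ Jacobi field $J_s=x_s\dot\ga_s+y_si\dot\ga_s$ has $y_s$ bounded on $[0,\infty)$. By \cref{lemma:stable-y-is-monotonic}, this is the same as $|y_s|$ being nonincreasing.

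\emph{Stable $\Rightarrow$ asymptotic.} Fix $s_0<s_1$ in $(a,b)$. Restricting to a compact $s$-interval, $|y_s(t)|\le |y_s(0)|\le C$ for all $t\ge0$. The tangential part of $J_s$ only reparametrizes, so I will build the reparametrization $s(t)$ of \cref{def:asymptotic-mag-geodesics} to absorb it. For fixed $t$, consider the curve $\sigma\mapsto\Gamma(\sigma,\tau_\sigma(t))$, where $\tau_\sigma(t)$ solves $\partial_\sigma\tau_\sigma=-x_\sigma(\tau_\sigma)$ with $\tau_{s_0}(t)=t$. This is the flow of the vector field $\partial_\sigma-x_\sigma\partial_t$ in the $(\sigma,t)$-strip. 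Along this curve the velocity is $J_\sigma-x_\sigma\dot\ga_\sigma=y_\sigma i\dot\ga_\sigma$, so its length is at most $C(s_1-s_0)$. Setting $s(t)=\tau_{s_1}(t)$ then gives $d(\ga_{s_0}(t),\ga_{s_1}(s(t)))\le C|s_1-s_0|$. This $s$ is continuous, and it is strictly increasing because it is the time-$(s_1-s_0)$ map of a flow on the strip that preserves the ordering in $t$. One technical point is to check that the flow exists for $\sigma\in[s_0,s_1]$ and all $t\ge 0$. The vector field is smooth with $x_\sigma$ bounded on compact sets, so at worst $\tau$ could escape below $0$. I will handle this by working on $t\ge T_0$ and patching with a compact initial segment, which only changes the constant. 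The Hausdorff distance is then finite, since $s(t)\to\infty$: otherwise $\ga_{s_0}$ would stay bounded, contradicting \cref{cor:magnetic-geodesic-is-unbounded}.

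\emph{Asymptotic $\Rightarrow$ stable.} This is the main obstacle. Suppose some $y_{s_0}$ is unbounded on $[0,\infty)$. Convexity (\cref{rmk:comment-on-mag-JE}) makes $|y_{s_0}|$ eventually grow at least linearly. I would first try to turn this into linearly growing separation of nearby geodesics, but that is not robust: the Jacobi field is only infinitesimal, and the curvature may vanish along long stretches. I therefore plan to reduce to the endpoint picture instead. Asymptotic \m\ geodesics within Hausdorff distance $C$ have the same forward endpoint in $\Minfty$: convergence in the cone topology only depends on the rays up to bounded distance, since the angle at a fixed point subtended by a ball of radius $C$ at distance $R$ tends to $0$. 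Hence $\ga_s[\infty]$ is constant in $s$.

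Now write $\Gamma(s,t)=\ga_{v_s}(t)$ with $v_s=\dot\ga_s(0)$. By \cref{thm:pqtilde}, for each footpoint $p=\ga_s(0)$ there is exactly one \m\ ray from $p$ with endpoint $x$. So the whole family is determined by $x$ and the footpoints $\ga_s(0)$, up to reparametrization. I then compare it with the family $\ga_{p_s x}$, $p_s\dfn \ga_s(0)$. Its Jacobi field is a limit of the fields along $\ga_{p_s q_n}$ with $q_n\to x$, and it is exactly the stable field constructed in the proposition on existence of stable \m\ Jacobi fields. That construction took limits of fields with $y_n(n)=0$. Here, taking $q_n=\ga_{s_0}(n)$ gives fields vanishing orthogonally at time $\approx n$ along $\ga_{s_0}$, and these converge to the stable field with the same $J(0)$. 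The stable Jacobi field with given $J(0)$ is unique, so $J_{s_0}$ equals the stable field plus a tangential term, and it is therefore stable.

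The step I expect to need care is making this limiting identification rigorous. It requires the continuity of $H^+_p$ in \cref{thm:pqtilde} to pass from the convergence $\dot\ga_{p_s q_n}(0)\to v_s$ to $C^1$ convergence of the variations in $s$.
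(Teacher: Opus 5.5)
Your stable $\Rightarrow$ asymptotic argument is the paper's: join $\ga_{s_0}(t)$ to $\ga_{s_1}$ by a curve orthogonal to the family, whose length $\int\lvert y_\sigma\rvert\,d\sigma$ is at most $C\lvert s_1-s_0\rvert$. You carry it out more carefully than the paper does, and the reduction of the backward case via $\mu\mapsto-\mu$ is fine.

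The converse, however, has a genuine gap exactly where you suspect it. The reduction to $\Gamma(s,t)=\ga_{p_sx}(t)$ is valid. After it, everything hinges on the Jacobi field $\partial_s\ga_{p_sx}|_{s=s_0}$ along $\ga_{s_0}$ being the limit of $\partial_s\ga_{p_sq_n}|_{s=s_0}$. Continuity of $H^+_p$, or of $q\mapsto\dot\ga_{pq}(0)$ on $\om$, only gives $C^0$ convergence $v^n_s\dfn\dot\ga_{p_sq_n}(0)\to v_s$. $C^0$ convergence of the curves $s\mapsto v^n_s$ says nothing about their $s$-derivatives. So, as written, identifying $J_{s_0}$ with the stable field presupposes the conclusion: that the family of \m\ rays toward $x$ is tangent to the stable \m\ Jacobi fields.

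To close the gap you need $\partial_s v^n_s$ to converge uniformly for $s$ in a neighborhood of $s_0$. The theorem on uniform convergence of derivatives then identifies the limit with $\partial_s v_s$, and the existence proposition for stable \m\ Jacobi fields identifies it with the stable field. Getting that uniform convergence requires three ingredients:
\begin{enumerate}
\item the Sturm-comparison Cauchy estimate from that existence proof, run along every $\ga_{p_sq_n}$ with $s$ near $s_0$, uniformly in $s$ and $n$ (not only along $\ga_{s_0}$, the only geodesic of the family through $q_n$);
\item convergence $\ga_{p_sq_n}\to\ga_{p_sx}$ on compact time intervals, uniformly in $s$;
\item continuous dependence of the two-point boundary-value problem on its data.
\end{enumerate}
This is precisely part~(b) of the proof of \cref{prop:c2-regularity-of-magnetic-horosphere}. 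With that inserted, your route works.

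Your route is genuinely different from the paper's, which argues by contraposition. There, unboundedness of $y_{s_0}$ forces $\lvert y_s\rvert\to\infty$ uniformly for $s\in[s_0,s_0+\de]$. The separation $d(\ga_{s_0}(t),\ga_{s_0+\de}(r(t)))$ is then bounded below by the length $\int\lvert y_s(r_s(t))\rvert\,ds$ of an orthogonal curve. Your doubt about that separation step is well founded: the length of one particular connecting curve bounds a distance from above, not from below. Still, your replacement is not complete until the uniform derivative convergence is proved.
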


\begin{proof}
Without loss of generality, let $(a,b)=(-1,1)$. 
For any $s\in (-1,1)$, denote by $y_s$ the orthogonal component of the \m\  Jacobi field $J_s$ corresponding to $\Gamma$ along $\ga_s(\cdot)$.

We first show that if a \m\ variation is stable, then all \m\ geodesics in this variation are forward asymptotic. Since $\Gamma$ is stable and nontangential  for any $s\in (-1,1)$, $y_s$ is convex and bounded on $[0,\infty)$, and by \cref{lemma:stable-y-is-monotonic}, $t\mapsto\lvert y_s(t)\rvert$ is nonincreasing and nonvanishing. We assume $y_s(t)\geq0$ on $[0,\infty)$ for all $s\in (-1,1)$ without loss of generality. 

The smoothness of $\Gamma$ implies that $s\mapsto y_s(0)$ is continuous on $(-1,1)$. Hence, for $\de\in(0,1)$, there exists a constant $C=C(\de)$ such that $y_s(t)<C$ for all $s\in[0,\de]$ and $t>0$.
Let $\{r_s\colon \re\to\re\}_{s\in(-1,1)}$ be a family of nondecreasing and continuous functions  with $r_0=\mathrm{id}_\re$ such that for any $t\in \re$, $s\mapsto \ga_s(r_s(t))$ is a curve orthogonal to $\ga_s$ for all $s\in(-1,1)$. Then there exists $T>0$ such that for all $t>T$, we have 
\[d(\ga_{\de}(r_{\de}(t)), \ga_{0}(t)\leq\int _0^{\de} y_s(r_s(t)) ds\leq \de\cdot C,\] 
where the second term represents the length of the curve from $\ga_{\de}(r_{\de}(t))$ to $\ga_{0}(t)$ orthogonal to $\ga_s$ for all $s\in[0,\de]$. Therefore, $\ga_{\de}(\infty)=\ga_0(\infty)$. 
Since $\de$ can be arbitrarily chosen in $(0,1)$, following a similar proof for $\de\in(-1,0)$ completes the proof of the forward direction.


Next we show the contrapositive of the other direction, that is, if a \m\ variation $\Gamma\colon (a,b)\times \re\to \M$ is not stable, then there exists $s\in (-1,1)$ such that the \m\  geodesic $\ga_s(\cdot)\dfn \Gamma(s,\cdot)$ in $\Gamma$ is not forward asymptotic to some other \m\ geodesic in $\Gamma$. 
    
    Suppose that $\Gamma$ is not stable. Then there exists $s_0\in(-1,1)$ such that $|y_{s_0}(t)|$ of the \m\  Jacobi field $J_{s_0}$ to $\Gamma$ along $\ga_{s_0}$ is unbounded on $[0,\infty)$. Moreover, since $y_{s_0}(t)$ is convex, $|y_{s_0}(t)|$ is eventually increasing with $|y_{s_0}(t)|\xrightarrow{t\to\infty}\infty$.     Note that there exists sufficiently large $t_0>0$ such that
    \begin{itemize}
    \item  $y_{s_0}(t_0)y_{s_0}'(t_0)> 0$ (otherwise $y_{s_0}'(t)$ would be eventually constant 0, and $y_{s_0}(t)$ would be bounded on $t\in[0,\infty)$, a contradiction), and
    \item $y_{s_0}(t)$ does not change sign on $t\in[t_0,\infty)$.
\end{itemize} 
Since $\Gamma$ is smooth ($C^2$-differentiable),
there exists $\de>0$ such that for any $s\in[s_0,s_0+\de]\subset(-1,1)$, we have $y_{s}(t_0)y_{s}'(t_0)> 0$, $y_s(t_0)y_{s_0}(t_0)>0$ and $y_s'(t_0)y_{s_0}'(t_0)>0$. 
Fix such an $s$ for the remainder of this proof.
These initial conditions for $y_s$ at $t_0$ together with \eqref{MagneticJacobiEquation} imply that 
$|y_s'(t)|$ is increasing on $t\in[t_0,\infty)$, hence (as before) $|y_s(t)|\xrightarrow{t\to\infty}\infty$ and $y_s(t)$ does not change sign on $t\in[t_0,\infty)$. Moreover, for any $C>0$, there exists $T=T(C,s_0,\de)>0$, independent of $s$, such that $|y_s(t)|>C$ for all $t>T$.
Let $\{r_s\colon \re\to\re\}_{s\in [s_0,s_0+\de]}$ be such that $s\mapsto \ga_s(r_s(t))$ is a curve orthogonal to $\Gamma(s,\re)$ and $|y_s(r_s(t))|\geq C$ for any $t>T$ and $s\in [s_0,s_0+\de]$. 
Then for any continuous and nondecreasing function $r\colon \re\to\re$:

Case I: if $r(t)\xrightarrow{t\to\infty}\infty$, then for any $t$ sufficiently large such that $r_s(t)>T$ for all $s\in [s_0,s_0+\de]$,
 \[d(\ga_{s_0}(t),\ga_{s_0+\de}(r(t)))\geq\int _{s_0}^{s_0+\de} |y_s(r_s(t)| ds \geq C\cdot \de.\] 
 
Case II: if $\lim_{t\to\infty}r(t)<\infty$, then 
 $d(\ga_{s_0}(t),\ga_{s_0+\de}(r(t)))\to \infty$ follows immediately from \cref{cor:magnetic-geodesic-is-unbounded}. 
 
 Hence, $\ga_{s_0}(\infty)\neq \ga_{s_0+\de}(\infty)$, and the proof is now complete. The proof in the unstable case is similar.
\end{proof}

\begin{proposition}\label{prop:boundary-bijection}
    If $(\Sigma,\mu)$ satisfies \ref{H1}, then two \m\  geodesics $\ga_1$ and $\ga_2$ are asymptotic if and only if they have asymptotic boundary chords. That is, $\ga_1(\pm\infty)=\ga_2(\pm\infty)\in \M^\pm_{\mu}(\infty)$ if and only if $\ga_1[\pm\infty]=\ga_2[\pm\infty]\in\Minfty$.
\end{proposition}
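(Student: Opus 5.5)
We treat the forward case; the backward case follows by the same argument, or by applying \cref{rmk:general-behaviors-easy-observation}(\ref{ReverseVersion}) to pass to $(-\mu)$-magnetic geodesics. The plan is to compare $\ga_2$ with the unique \m\ ray that starts at $\ga_2(0)$ and has the same boundary endpoint as $\ga_1$, which \cref{thm:pqtilde} provides. We then link the two geodesics by a family of \m\ geodesics sharing that endpoint, and apply \cref{prop:stable-mag-jf-same-endpoint} to that family.

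\emph{($\Leftarrow$)} Suppose $\ga_1[\infty]=\ga_2[\infty]=x$. Join $\ga_1(0)$ to $\ga_2(0)$ by a smooth curve $c\colon[0,1]\to\M$ and set $\Gamma(s,t)\dfn\ga_{c(s)x}(t)$, using \cref{thm:pqtilde}. By uniqueness in \cref{thm:pqtilde} we have $\Gamma(0,\cdot)=\ga_1$ and $\Gamma(1,\cdot)=\ga_2$, after restricting to rays and reparametrizing. Smoothness of $\Gamma$ in $s$ follows because $H^+_{c(s)}$ is a homeomorphism depending continuously on the basepoint; we may need to upgrade this to smoothness, or instead work locally with stable Jacobi fields. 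The main claim is that the Jacobi fields of $\Gamma$ are stable. To show this we argue by contradiction using the second half of the proof of \cref{prop:stable-mag-jf-same-endpoint}. If some $y_s$ were unbounded, nearby geodesics $\ga_s,\ga_{s'}$ would separate linearly in Hausdorff distance. Separation in Hausdorff distance alone does not contradict having a common endpoint $x$, so we need a stronger statement. We argue that unbounded $y_s$ forces the boundary chords to differ. Equivalently, we identify the stable direction at $v$ with the derivative of $w\mapsto (H^+_{\pi w})^{-1}(x)$: the family of rays toward a fixed $x$ is the limit of the families $\ga_{c(s)\,g(n)}$ aimed at points $g(n)\to x$. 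Each such family has Jacobi fields with $y_s(n)=0$, and these converge to the stable fields constructed in the existence part of the stable Jacobi field proposition. Once stability is established, \cref{prop:stable-mag-jf-same-endpoint} gives $\ga_1(\infty)=\ga_2(\infty)$.

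\emph{($\Rightarrow$)} Suppose $\ga_1(\infty)=\ga_2(\infty)$ but $\ga_1[\infty]=x\neq y=\ga_2[\infty]$. Let $\ga_3\dfn\ga_{\ga_2(0)x}$. By ($\Leftarrow$), $\ga_3(\infty)=\ga_1(\infty)=\ga_2(\infty)$. Now $\ga_2$ and $\ga_3$ start at the same point with different initial vectors. Consider the variation $s\mapsto\ga_{v_s}$ through $v_s$ in the arc of $T^1_{\ga_2(0)}\M$ from $\dot\ga_2(0)$ to $\dot\ga_3(0)$. Its Jacobi fields vanish at $t=0$, so they are nontangential with $y(0)=0$. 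By \cref{lemma:stable-y-is-monotonic} they are therefore not stable. The contrapositive argument in \cref{prop:stable-mag-jf-same-endpoint} then shows that $|y_s|$ grows without bound, uniformly on a subinterval of $s$. Combined with \cref{thm:pqtilde}, which says rays from one point with distinct endpoints are disjoint and nested by angle, this implies that $\ga_3$ is not forward asymptotic to $\ga_2$: the rays $\ga_{v_s}$ for intermediate $s$ separate $\ga_2$ from $\ga_3$, and $d(\ga_2(t),\ga_3(\re_{\ge0}))\geq C\delta$ for every $C$. This is a contradiction.

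The main obstacle is ($\Leftarrow$), namely proving that the family of \m\ rays with a common boundary endpoint is stable. This must hold even in the horocycle-like regime, where $\ga$ is at infinite Hausdorff distance from its chord (\cref{Halfplanechords}). We will attempt it through the limiting construction of stable fields from fields with $y_n(n)=0$, aimed at chord points $g(n)$ approaching $x$.
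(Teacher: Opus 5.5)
Your proposal leaves its central step unproved, as you acknowledge. The direction ($\Leftarrow$) rests entirely on showing that the rays $\Gamma(s,t)=\ga_{c(s)x}(t)$ toward a common ideal point form a \emph{stable} \m\ variation that is at least $C^1$ in $s$, and this is only sketched. The justification you offer for smoothness does not work: \cref{thm:pqtilde} says nothing about how $H^+_p$ depends on $p$. Moreover, continuity in $p$ would not give the differentiability in $s$ that \cref{prop:stable-mag-jf-same-endpoint} needs before one can even speak of the Jacobi fields of $\Gamma$. Your limiting scheme can be completed: aim at $g(n)\to x$, use $y_s=0$ at the endpoint, and use the uniform Sturm bound $\lvert J_n'(0)-J_m'(0)\rvert\le\lvert v\rvert/t$ from the construction of stable fields. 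But completing it amounts to proving that the \m\ radial field toward $x$ is $C^1$ with derivative given by stable Jacobi fields. That is essentially \cref{prop:c2-regularity-of-magnetic-horosphere}, which the paper proves only later, and it requires convergence of $\dot\ga_{c(s)g(n)}(0)$ to $\dot\ga_{c(s)x}(0)$ uniformly in $s$, plus the passage from convergence of $\partial_s\Gamma_n$ to differentiability of $\Gamma$. Your ($\Rightarrow$) inherits this gap, since it uses ($\Leftarrow$) to get $\ga_3(\infty)=\ga_1(\infty)$. It also adds a second gap: the contrapositive half of \cref{prop:stable-mag-jf-same-endpoint} only produces \emph{some} pair of intermediate rays of the fan that are not asymptotic. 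Transferring this to the extreme rays $\ga_2,\ga_3$ needs a sandwiching argument you do not give.

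The missing idea is to reverse the order. The direction ($\Rightarrow$) needs no Jacobi fields; the paper argues directly from negative curvature. Suppose the boundary chords $g_1,g_2$ had distinct endpoints, and choose a chord $g=g_{\ga_1,T}$ with $g((0,\infty])\cap g_2((0,\infty])=\emptyset$. For large $t$ the Riemannian segment from $\ga_1(t)$ to $\ga_2(r(t))$ crosses both $g$ and $g_2$. Points on geodesic rays with distinct ideal endpoints drift apart, so $d(\ga_1(t),\ga_2(r(t)))\to\infty$ for every admissible $r$; bounded $r$ is handled by \cref{cor:magnetic-geodesic-is-unbounded}. Put differently, $\ga_i(t)\to\ga_i[\infty]$ in the cone topology, and sequences at bounded distance cannot converge to distinct ideal points. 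With ($\Rightarrow$) available, ($\Leftarrow$) no longer requires knowing in advance that the family toward $x$ is stable. Take a stable variation $\Gamma$ with $\Gamma(0,\cdot)=\ga_1$ and $\Gamma(\cdot,0)$ the segment from $\ga_1(0)$ to $\ga_2(0)$. By \cref{prop:stable-mag-jf-same-endpoint} its geodesics are asymptotic to $\ga_1$, so by ($\Rightarrow$) they all have boundary endpoint $x$. Uniqueness in \cref{thm:pqtilde} then forces the one through $\ga_2(0)$ to be $\ga_2$. That uniqueness step is what disposes of your ``main obstacle.''
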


\begin{proof}   
    We prove the ``$+$''-case: $\ga_1(\infty)=\ga_2(\infty)\in \M^+_{\mu}(\infty)$ if and only if $\ga_1[\infty]=\ga_2[\infty]\in\Minfty$.  The other case is similar. Let $v\dfn \dot\ga_1(0)$ and $w\dfn \dot \ga_2(0)$.
    
     If $\ga_1(\re)\cap\ga_2(\re)\neq \emptyset$, then by choosing a suitable time shift by $T_0$, the \m\  geodesic rays induced by $\f_{T_0}v$ and by $w$ will not intersect since $\f$ does not have conjugate points by \cref{lemma:pq-connectivity-and-no-self-intersect}. Without loss of generality, we assume $\ga_1([0,\infty))$ and $\ga_2([0,\infty))$ are as shown in \cref{fig:magnetic_boundary}.
\begin{figure}[hbt]
  \centering
  \begin{tikzpicture}[scale=1.4, >=stealth]
\usetikzlibrary{decorations.markings}

\draw[thin] (0,0) circle (2);

\path[name path=gamma_v]
  (-2,0)
    .. controls (-1,-0.3) and (0.3,-0.3) ..
  (1.7,1.06)
  coordinate[pos=0.3] (Pone)   
  coordinate[pos=1]   (Qone)  
    coordinate[pos=0.85] (I);
\path[name path=gamma_w]
  (-1.9,-0.624)
    .. controls (-0.5,-1.2) and (0.5,-1.2) ..
  (1.9,-0.624)
  coordinate[pos=0.3] (Ptwo)   
  coordinate[pos=1]   (Qtwo)  
  coordinate[pos=0.56] (bifur)
coordinate[pos=0.8] (II);
\draw[thick,
  postaction={decorate},
  decoration={markings,
    mark=at position 0.3 with {
      \draw[->, thick] (0,0) -- (0.8,0) node[below] {$v$};
    }
  }]
  (-2,0)
    .. controls (-1,-0.3) and (0.3,-0.3) ..
  (1.7,1.06);

\draw[thick,
  postaction={decorate},
  decoration={markings,
    mark=at position 0.3 with {
      \draw[->, thick] (0,0) -- (0.8,0) node[below, xshift=-1mm] {$w$};
    }
  }]
  (-1.9,-0.624)
    .. controls (-0.5,-1.2) and (0.5,-1.2) ..
  (1.9,-0.624);

\draw[dashed] (Pone) -- (Qone) node[midway, above] {$g_1$};
\draw[dashed, name path=g_2] (Ptwo) -- (Qtwo) node[midway, above] {$g_2$};

\node[right] at (Qone) {$\ga_1[\infty]$};
\node[right] at (Qtwo) {$\gamma_2[\infty]$};

\fill (Pone) circle (1pt);
\fill (Ptwo) circle (1pt);

\coordinate (Vtip) at (1.97,0.345);  

\draw[dashed, name path=g] (Pone) -- (Vtip) node[midway, above] {$g$};

\coordinate (GammaV) at (0.222, 0.065);
\fill[red] (GammaV) circle (1.2 pt);
\node[below,red] at (GammaV) {$\gamma_1(T)$};

\draw[blue,thick, name path=key_seg] (I) -- (II);

\path[name intersections={of=g and key_seg, by=p1}];
\path[name intersections={of=g_2 and key_seg, by=p2}];
\path[name intersections={of=gamma_w and key_seg, by=p3}];

\fill[blue] (I) circle (1.2pt);
\node[right,blue] at (I) {$\ga_1(t)$};

\fill[blue] (p1) circle (1.2pt);
\node[below right,blue] at (p1) {$g(t_1(t))$};

\fill[blue] (p2) circle (1.2pt);
\node[above right, blue] at (p2) {$g_2(t_2(t))$};

\fill[blue] (p3) circle (1.2pt);
\node[below, blue] at (p3) {$\ga_2(r(t))$};

\fill[red] (bifur) circle (1.2pt);
\node[below,red] at (bifur) {$\ga_2(T)$};

\end{tikzpicture}
\caption{$\ga_1(\infty)=\ga_2(\infty)\implies\ga_1[\infty]=\ga_2[\infty]$}
  \label{fig:magnetic_boundary}
\end{figure}

We first show $\ga_1(\infty)=\ga_2(\infty)\implies\ga_1[\infty]=\ga_2[\infty]$ by contraposition.
Let $g_1, g_2$ be forward boundary chords (\cref{def:angle-for-pqtilde-end-points}) of $\ga_1$ and $\ga_2$ respectively, and let $g\dfn g_{\ga_1,T}$ (\cref{def:chord}) for some $T>0$ be a chord such that $g((0,\infty])\cap g_i((0,\infty])=\emptyset$, $i\in\{1,2\}$. 
Hence for any $t,t'>T$, after choosing a proper order of labeling $\ga_1$ and $\ga_2$, $g$ intersects the geodesic segment connecting $\ga_1(t)$ and $\ga_2(t')$.
    
For a nondecreasing continuous function $r\colon \re\to\re$, if $r(t)\xrightarrow{t\to\infty}T_0<\infty$ for some $T_0\in\re$, then $d(\ga_1(t),\ga_2(r(t)))\xrightarrow{t\to \infty}\infty$ by \cref{cor:magnetic-geodesic-is-unbounded}; if $r(t)\xrightarrow{t\to\infty}\infty$, then there exists $T_1=T_1(T,r)>T$ such that $r(t)\geq T$ for any $t>T_1$. Therefore, we have functions $t_1, t_2$ of $t$ such that for all $t>T_1$,
\begin{equation}\label{eqn:distance_comparison}
        d(\ga_1(t),\ga_2(r(t)))\geq d(g_2(t_2(t)),g(t_1(t)))\geq
\min\{d(g(t_1(t)),g_2([0,\infty))),d(g_2(t_2(t)),g([0,\infty)))\},
\end{equation}
and $\ga_1(t)$, $\ga_2(r(t))$, $g_2(t_2(t))$ and $g(t_1(t))$ lie on a common geodesic; see \cref{fig:magnetic_boundary}.
    Since $g(\infty)\neq g_2(\infty)$, both $d(g(t_1(t)),g_2([0,\infty)))$ and $d(g_2(t_2(t)),g([0,\infty)))$ in \eqref{eqn:distance_comparison} go to infinity. Therefore, we have $d(\ga_1(t),\ga_2(r(t)))\xrightarrow{t\to \infty}\infty$, which proves $\ga_1(\infty)=\ga_2(\infty)\implies\ga_1[\infty]=\ga_2[\infty]$ 
   
   To show that $\ga_1[\infty]=\ga_2[\infty]\implies \ga_1(\infty)=\ga_2(\infty)$, let $s_0\coloneqq d(\ga_1(0),\ga_2(0))$ and $L\colon (-\eps_0,s_0+\eps_0)\to \M$ be the geodesic segment passing through points $L(0)=\ga_1(0)$ and $L(s_0)=\ga_2(0)$ for some $\eps_0>0$. Define a \m\ variation $\Gamma\colon (-\eps_0,s_0+\eps_0)\to \M$ such that $\Gamma$ is stable, $\Gamma(s,0)=L(s)$, and $\Gamma(0,\cdot)=\ga_1(\cdot)$.
    Define $w_0\coloneqq 
    \frac{\partial \Gamma}{\partial t}(s_0,0)\in T^1_{\ga_2(0)}\M$. We show that $\ga_1(\infty)=\ga_{w_0}(\infty)$ and $w_0=\dot\ga_2(0)$. 

    By \cref{prop:stable-mag-jf-same-endpoint}, all \m\  geodesics  $\ga_s(\cdot)\dfn \Gamma(s,\cdot)$ for $s\in (-\eps_0,s_0+\eps_0)$ are forward asymptotic since $\Gamma$ is stable. Hence $\ga_1(\infty)=\ga_{w_0}(\infty)$. Then by $\ga_1(\infty)=\ga_2(\infty)\implies\ga_1[\infty]=\ga_2[\infty]$ from above, 
    we have $\ga_1[\infty]=\ga_2[\infty]=\ga_{w_0}[\infty]\in \Minfty$. On the other hand, there exists a unique \m\  geodesic from $\pi {w_0}=\ga_2(0)$ to $\ga_{w_0}[\infty]=\ga_2[\infty]$ by \cref{thm:pqtilde}. Therefore, $\ga_1(\infty)=\ga_{w_0}(\infty)=\ga_2(\infty)$ and $w_0=\dot\ga_2(0)$. 
\end{proof}

\begin{lemma}\label{lemma: well-define-boundary-homeo}
    The following is well defined, i.e., independent of $p$:
\begin{equation}
  \begin{aligned}
    \tilde h\colon \Minfty &\to \M^+_{\mu}(\infty) \\
    q = \gamma_{pq}[\infty] &\mapsto \gamma_{pq}(\infty)
  \end{aligned}
  \label{eq:homeo}
\end{equation}
\end{lemma}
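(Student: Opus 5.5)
The plan is to reduce well-definedness to \cref{prop:boundary-bijection}. Fix $q\in\Minfty$ and two basepoints $p,p'\in\M$. By \cref{thm:pqtilde} there are unique \m\ geodesic rays $\ga_{pq}$ and $\ga_{p'q}$ starting at $p$ and $p'$ respectively and having forward endpoint $q$, i.e.\ $\ga_{pq}[\infty]=q=\ga_{p'q}[\infty]$. So the formula in \eqref{eq:homeo} at least assigns a definite class $\ga_{pq}(\infty)\in\M^+_\mu(\infty)$ once $p$ is chosen. (Here $\ga_{pq}$ means the ray from \cref{thm:pqtilde}, not the finite segment of \cref{lemma:pq-connectivity-and-no-self-intersect}.) What remains is to show that this class does not depend on $p$, namely $\ga_{pq}(\infty)=\ga_{p'q}(\infty)$.

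Extend both rays to complete \m\ geodesics, so that \cref{prop:boundary-bijection} applies. Its ``$+$''-case says that two \m\ geodesics satisfy $\ga_1(\infty)=\ga_2(\infty)$ if and only if $\ga_1[\infty]=\ga_2[\infty]$. We use the direction $\ga_1[\infty]=\ga_2[\infty]\implies\ga_1(\infty)=\ga_2(\infty)$ with $\ga_1=\ga_{pq}$ and $\ga_2=\ga_{p'q}$. Since both have forward endpoint $q$, the proposition gives $\ga_{pq}(\infty)=\ga_{p'q}(\infty)$, which is the required independence.

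One check is needed: the endpoint $\ga[\infty]$ in \cref{def:angle-for-pqtilde-end-points} is computed from chords at $\ga(0)$. Reparametrizing by a time shift could in principle change it, which would matter if the extension or the application of \cref{prop:boundary-bijection} involves shifting basepoints, as its proof does. I would verify that $\ga[\infty]$ is shift-invariant using the preceding lemma that $\ga(t)\to\ga[\infty]$ in the cone topology. Any limit of $\ga(t)$ is intrinsic to the curve, and the cone topology on $\om$ is Hausdorff by \cref{ConeTopology}, so the limit does not depend on where the parametrization starts. I expect this bookkeeping to be the only delicate point. The substance is already contained in \cref{prop:boundary-bijection} (via \cref{prop:stable-mag-jf-same-endpoint} and \cref{thm:pqtilde}).
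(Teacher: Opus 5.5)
Your proposal is correct and takes the same route as the paper: it deduces $\ga_{p_1q}(\infty)=\ga_{p_2q}(\infty)$ directly from the direction $\ga_1[\infty]=\ga_2[\infty]\implies\ga_1(\infty)=\ga_2(\infty)$ of \cref{prop:boundary-bijection}, with the rays supplied by \cref{thm:pqtilde}. Your extra check that $\ga[\infty]$ is invariant under time shifts, via $\ga(t)\to\ga[\infty]$ in the Hausdorff cone topology, is valid bookkeeping that the paper leaves implicit.
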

\begin{proof}
    Given $q\in \Minfty$, for any $p_1,p_2\in \M$, we have $\tilde h (q)=\ga_{p_1q}(\infty)=\ga_{p_2q}(\infty)$ by \cref{prop:boundary-bijection}, and therefore, $\tilde h$ is well defined.
\end{proof}

\begin{definition}\label{def:magentic-cone-topology}
The magnetic cone topology on $\overline \Sigma^+_{\mu}\dfn \M \cup \M_\mu^+(\infty)$ is  the one that makes \begin{equation}\label{eqn:homeo-extended}
h\colon \om \to \overline \Sigma^+_{\mu},\qquad q\mapsto\begin{cases} q &\text{if }q\in \M,\\ \tilde h(q) &\text{if }q\in \Minfty\end{cases}
\end{equation}
a homeomorphism.
\end{definition}

\begin{remark}\label{rmk:regularity_of_boundary_map}
\ref{HA} implies that the map $g_p^{-1}\circ g_q$ is nearly $C^2$ \cite{HurderKatok}, where $g_p\colon T^1_p\Sigma\to\M^+_\mu(\infty)$, $v\mapsto\gamma_v(\infty)$, and $p,q\in\M$. It is not clear what regularity of $g_p^{-1}\circ g_q$ is implied by \ref{H1}. 
\end{remark}
\cref{prop:boundary-bijection} and \cref{lemma: well-define-boundary-homeo} immediately give the following.
\begin{proposition}\label{prop:boundary-homeomorphic}
    Suppose $(\Sigma,\mu)$ satisfies \ref{H1}. $\overline \Sigma^+_{\mu}$ equipped with the magnetic cone topology in \cref{def:magentic-cone-topology} is homeomorphic to $\om$ equipped with the cone topology in \cref{ConeTopology}.
    In particular, the magnetic boundary $\M^+_\mu(\infty)$ (\cref{def:asymptotic-maggeo-mag-boundary}) is homeomorphic to $\Minfty$ in a natural way.
\end{proposition}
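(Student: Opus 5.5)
The plan is to check that $h$ in \eqref{eqn:homeo-extended} is a well-defined bijection. Once that is done, the statement is essentially a tautology, because the magnetic cone topology on $\overline\Sigma^+_\mu$ is \emph{defined} (\cref{def:magentic-cone-topology}) as the topology transported from $\om$ by $h$.

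Well-definedness of $\tilde h$ is \cref{lemma: well-define-boundary-homeo}, so only bijectivity of $\tilde h$ remains.

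For surjectivity, take a class $\xi\in\M^+_\mu(\infty)$ and a representative \m\ geodesic $\ga$ with $\ga(\infty)=\xi$. Put $p\dfn\ga(0)$ and $q\dfn\ga[\infty]\in\Minfty$ (\cref{def:angle-for-pqtilde-end-points}). By the uniqueness in \cref{thm:pqtilde}, the ray $\ga_{pq}$ coincides with $\ga|_{[0,\infty)}$. Hence $\tilde h(q)=\ga_{pq}(\infty)=\ga(\infty)=\xi$.

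For injectivity, suppose $\tilde h(q_1)=\tilde h(q_2)$. Then $\ga_{pq_1}(\infty)=\ga_{pq_2}(\infty)$. The forward direction of \cref{prop:boundary-bijection} gives $\ga_{pq_1}[\infty]=\ga_{pq_2}[\infty]$, that is, $q_1=q_2$.

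With these facts, $h$ is the identity on $\M$ and a bijection $\Minfty\to\M^+_\mu(\infty)$, so $h$ is a bijection $\om\to\overline\Sigma^+_\mu$. We then equip $\overline\Sigma^+_\mu$ with the pushforward topology $\{h(U)\mid U \text{ open in } \om\}$. This is a topology because $h$ is a bijection, and by construction $h$ is a homeomorphism, which is the first claim. Restricting $h$ to the boundary gives the homeomorphism $\tilde h\colon\Minfty\to\M^+_\mu(\infty)$, which carries the subspace topologies. The map is natural: it sends $q$ to the class of \m\ geodesics whose boundary chords end at $q$, and the choice of basepoint does not matter.

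The backward boundary $\M^-_\mu(\infty)$ is handled the same way, using $\ga_{xp}$ from \cref{thm:pqtilde} and the ``$-$'' case of \cref{prop:boundary-bijection}.

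The only point needing care, and so the main potential obstacle, is justifying that the pushed-forward topology deserves the name ``magnetic cone topology.'' Concretely, one would want $h$ restricted to $\M$ to be the identity on the manifold topology, and the images of truncated cones $h(T(v,\epsilon,r))$ to be describable by magnetic data. The first holds because $h|_{\M}=\mathrm{id}$ and, by \cref{ConeTopology}, $\M$ is open in $\om$ with its original topology. For the second, I would use the homeomorphisms $H^+_p$ of \cref{thm:pqtilde}: a neighborhood basis of $\tilde h(x)$ then consists of sets of endpoints of \m\ rays from $p$ whose initial vectors lie near $(H^+_p)^{-1}(x)$, together with far-away points in $\M$. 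I would present this only as a remark, since the proposition as stated follows from bijectivity alone.
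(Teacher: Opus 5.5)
Your proof is correct and follows the paper's approach. The paper derives the statement in one line from \cref{prop:boundary-bijection} and \cref{lemma: well-define-boundary-homeo}, because the magnetic cone topology is defined by transport along $h$; your explicit bijectivity check (surjectivity from the uniqueness in \cref{thm:pqtilde}, injectivity from the forward implication of \cref{prop:boundary-bijection}) supplies the details that line leaves implicit, and, as you say, your closing remark about describing neighborhoods by magnetic data is not needed.
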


From now on, when $(\Sigma,\mu)$ satisfies \ref{H1}, we will no longer distinguish  $\overline \Sigma^{+,-}_{\mu}$ and $\om$, $\M_\mu(\infty)$ ($\M_\mu^{\pm}(\infty)$) and $\Minfty$, or $\ga(\infty)$ and $\ga[\infty]$ for any \m\  geodesic $\ga$ on $\M$.

\subsection{Characterization of magnetic flatness}\label{sec:mag-flat}
We characterize $\mu$-magnetically flat surfaces as those where the forward and backward endpoints of a \m\ geodesic coincide. As a consequence, if $(\Sigma,\mu)$ satisfies \ref{H2}, then any \m\ geodesic on $\Sigma$ has distinct forward and backward endpoints (\cref{cor:h2-diff-endpoints}).


\begin{definition}\label{def:gamma-jacobi-field}
Let $\ga$ be a \m\  geodesic. Denote by $\Delta(t,s)\colon\re\times\re_{\geq0}\to\M$ a variation of geodesics orthogonal to $\gamma$, where $\Delta(t,\cdot)$ is a geodesic ray with $\Delta(t,0)=\ga(t)$ and $\frac{\partial \Delta(t,0)}{\partial s} =i \dot \ga (t)$ for all $t$, as suggested in \cref{fig:orth_variation}.

$V_t(s)\coloneqq \frac{\partial \Delta(t,s)}{\partial t}$ denotes the orthogonal (Riemannian) Jacobi field along $\Delta(t,\cdot)$.

\begin{figure}[hbt]
  \centering
  \begin{tikzpicture}[>=stealth,scale=1.5]
\usetikzlibrary{decorations.markings}

\draw[thick,
  postaction={decorate},
  decoration={markings,
    mark=at position 0.6 with {\arrow{stealth}}
  }]
  (-2,0)
    .. controls (-1.2,-1.1) and (1.2,-1.1) ..
  (2,0);

\coordinate (q) at (-1.025,-0.619);
\coordinate (m) at (0,-0.825);
\coordinate (p) at (1.025,-0.619);

\node[below right] at (p) {$\gamma(t)$};


\draw[blue,dashed] (m) -- ++(90:1.5);

\draw[blue,dashed] (p) -- ++(120:1.5);

\draw[blue,dashed] (q) -- ++(60:1.5);


\draw[blue,thick]
  (m) ++(0:0.1) -- ++(90:0.1) -- ++(180:0.1);

\draw[blue,thick]
  (p) ++(2:0.1) -- ++(120:0.1) -- ++(210:0.1);

\draw[blue,thick]
  (q) ++(170:0.1) -- ++(60:0.1) -- ++(-30:0.1);

\node[blue] at (1.3,0.5) {$\Delta(t,s)$};

\draw[blue,->,thick] (-0.25,-1.2) -- (0.25,-1.2);
\node[blue,below] at (0,-1.2) {$t$};

\draw[blue,->,thick] (-0.2,-0.6) -- (-0.2,0.1);
\node[blue,left] at (-0.2,-0.5) {$s$};

\end{tikzpicture}
  \caption{Variation $\Delta(t,s)$ in \cref{def:gamma-jacobi-field}}
  \label{fig:orth_variation}
\end{figure}
\end{definition}

\begin{lemma}[{\cite[p.~229]{Adachi1}}]\label{lemma:gamma-jf-disjoint}
Suppose $(\Sigma,\mu)$ satisfies \ref{H1}. With notations from \cref{def:gamma-jacobi-field}, we have
    \begin{enumerate}
        \item $V_t(s)\neq0$ for any $(t,s)\in \re\times \re_{\geq0}$;
        \item$\Delta$ is injective:  for $t_1\neq t_2$, geodesics $\Delta(t_1,\cdot)$ and $\Delta(t_2,\cdot)$ do not intersect on $\M$.
    \end{enumerate}

\end{lemma}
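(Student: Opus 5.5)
The plan is to prove (1) by a one-variable Riccati comparison along each orthogonal ray, and then to deduce (2) from (1) by combining the local-diffeomorphism property with a Gauss--Bonnet argument on a hypothetical geodesic ``bigon''.

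\textbf{Part (1).} Fix $t$ and write $V_t(s)=v(s)E(s)$, where $E$ is the parallel unit field along the geodesic $s\mapsto\Delta(t,s)$ with $E(0)=\dot\ga(t)$. Since $V_t$ is an orthogonal Jacobi field, $v''+K(\Delta(t,s))v=0$. The initial data come from the definition of $\Delta$ and \eqref{eqn:mag-geo-flow}:
\[
v(0)=1,\qquad V_t'(0)=\frac{D}{dt}\frac{\partial\Delta}{\partial s}(t,0)=\frac{D}{dt}\,i\dot\ga(t)=i\,\mu i\dot\ga(t)=-\mu\dot\ga(t),
\]
so $v'(0)=-\mu$. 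Hypothesis \ref{H1} gives $K\le-\mu^2$. While $v>0$, the function $u=v'/v$ solves $u'=-K-u^2\ge\mu^2-u^2$ with $u(0)=-\mu$.

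If $\mu\le0$, then $v'(0)\ge0$ and $v''=-Kv\ge0$ while $v>0$, so $v$ is nondecreasing and $v\ge1$. If $\mu>0$, note that $u'\ge0$ whenever $u=-\mu$, so the barrier $u\ge-\mu$ persists for all $s\ge0$. Integrating gives $v(s)\ge e^{-\mu s}>0$. This is the Sturm comparison (\cref{thm:sturm}) with the comparison solution $e^{-\mu s}$ of $w''=\mu^2w$. Either way $v$ never vanishes, so $V_t(s)\neq0$.

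\textbf{Part (2).} By (1), $\partial_s\Delta$ is a unit vector orthogonal to the nonzero vector $\partial_t\Delta$. Hence $\Delta$ is a local diffeomorphism, and each level curve $c_s(t)=\Delta(t,s)$ is regular.

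Now suppose $\Delta(t_1,s_1)=\Delta(t_2,s_2)=q$ with $t_1<t_2$. Replace $t_2$ by the smallest $t'>t_1$ whose ray meets the ray from $t_1$, and take the first meeting point along each ray. This choice is possible by compactness and the local-diffeomorphism property. The arc $\ga|_{[t_1,t_2]}$ and the two ray segments from $\ga(t_1),\ga(t_2)$ to $q$ then bound an embedded disk $R$. The rays leave $\ga$ toward $i\dot\ga$, so $R$ lies to the left of $\ga$. The corners of $R$ have interior angles $\pi/2$ at $\ga(t_1)$, $\pi/2$ at $\ga(t_2)$, and some $\theta\in[0,\pi)$ at $q$. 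The geodesic curvature along the $\ga$-side is $\mu$ and is zero on the rays. Gauss--Bonnet then gives
\[
\int_R K\,dA=\theta-\mu(t_2-t_1).
\]
If $\mu\le0$, the right-hand side is $\ge0$ while $K<0$, which is an immediate contradiction.

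\textbf{Main obstacle.} I expect the case $\mu>0$ to be the main obstacle: here the rays point into the concave side of $\ga$, so the Gauss--Bonnet identity alone does not yield a contradiction. For this case I would compute $\int_RK\,dA$ in the $(t,s)$ coordinates, using that the minimality of $t_2$ makes $\Delta$ injective on the part of $[t_1,t_2]\times\re_{\ge0}$ that covers $R$. For each $t$, let $\sigma(t)$ be the parameter at which the ray from $\ga(t)$ exits $R$. Then
\[
\int_RK\,dA=\int_{t_1}^{t_2}\!\!\int_0^{\sigma(t)}K\,v\,ds\,dt=-\int_{t_1}^{t_2}\bigl(v'(\sigma(t))+\mu\bigr)\,dt,
\]
which turns the identity into $\theta=-\int_{t_1}^{t_2}v'(\sigma(t))\,dt$. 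The remaining step is to show that this is incompatible with the barrier $v'\ge-\mu v$ from part (1) together with the geometry of the exit curve, whose turning angle equals $\theta$. If this comparison does not close, my fallback is to invoke Adachi's argument directly. That argument works with the same bigon but estimates area against length using $K\le-\mu^2$, which gives $\int_RK\,dA\le-\mu^2\operatorname{Area}(R)$, combined with the lower bound $v\ge e^{-\mu s}$ from part (1).
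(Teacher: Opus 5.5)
Your part (1) is correct: the comparison of $v$ with $e^{-\mu s}$ is exactly where $K\le-\mu^2$ enters.

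\textbf{The gap is in part (2) for $\mu>0$, which is the essential case.} There the rays run to the concave side of $\gamma$, Gauss--Bonnet alone gives no contradiction, and you stop at $\theta=-\int_{t_1}^{t_2}v'(\sigma(t))\,dt$. Your fallback does not close this either. Combining $\int_RK\,dA\le-\mu^2\operatorname{Area}(R)$ with $\operatorname{Area}(R)\ge\mu^{-1}\int_{t_1}^{t_2}(1-e^{-\mu\sigma(t)})\,dt$ only gives $\theta\le\mu\int_{t_1}^{t_2}e^{-\mu\sigma(t)}\,dt$, which is compatible with $\theta>0$.

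The set-up is also unsupported. Compactness does not give a smallest $t'>t_1$ whose ray meets the ray from $\gamma(t_1)$: the meeting points may escape to infinity as $t'\downarrow t_1$, and the local-diffeomorphism property says nothing there. Even if such a $t'$ exists, it only keeps intermediate rays off the ray from $\gamma(t_1)$, not off each other. So neither the injectivity behind $\int_RK\,dA=\iint Kv\,ds\,dt$ nor a well-defined exit parameter $\sigma(t)$ follows. For $\mu\le0$ you also assert without proof that $R$ lies to the left of $\gamma$ with right angles at $\gamma(t_1)$ and $\gamma(t_2)$.

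\textbf{The missing idea is to use the bound from (1) globally, as a completeness statement, instead of on a bigon.} The paper itself only cites Adachi here, so what follows is a self-contained way to finish.
\begin{itemize}
\item Put $\tilde\Delta(t,s)=\exp_{\gamma(t)}(s\,i\dot\gamma(t))$ for $(t,s)\in\mathbb{R}^2$. Your comparison applied to $s\mapsto v(t,-s)$, which has initial slope $+\mu$, gives $v(t,s)\ge e^{-|\mu||s|}$ everywhere.
\item Since $V_t\perp\partial_s\tilde\Delta$, the pull-back metric is $ds^2+v^2\,dt^2\ge ds^2+e^{-2|\mu||s|}\,dt^2$.
\item The latter metric is complete: its ball of radius $D$ about $(t_0,s_0)$ lies in $\{|s-s_0|\le D,\ |t-t_0|\le De^{|\mu|(|s_0|+D)}\}$.
\item Hence $\tilde\Delta$ is a local isometry from a complete surface to $\tilde\Sigma$, so it is a covering map (the lemma used for Cartan--Hadamard). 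Since $\tilde\Sigma$ is simply connected, it is a diffeomorphism.
\end{itemize}
This proves (2) for every sign of $\mu$ at once, with no Gauss--Bonnet. It also shows why the quantitative bound is needed: nonvanishing of $v$ alone would not give completeness.

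Your exit-curve route can also be finished, but only once a genuinely minimal configuration is secured. Let $\rho(t)$ be the exit parameter on the ray from $\gamma(t_2)$ and $\phi(t)$ the crossing angle there.
\begin{itemize}
\item Gauss--Bonnet on the subregions gives $\phi'(t)=\partial_sv(t,\sigma(t))\ge-\mu v(t,\sigma(t))$.
\item Differentiating the exit point gives $v(t,\sigma(t))=|\rho'(t)|\sin\phi(t)$.
\item Hence $d\phi/d\rho\le\mu\sin\phi$, with $\phi\to0$ as $\rho\to0$. This forces $\phi\equiv0$, contrary to $\phi=\theta>0$ at the far end.
\end{itemize}
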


\begin{corollary} [{\cite[Remark(1), p.~230]{Adachi1}}]\label{lemma:single-infinity-point-gamma-jf}
If $(\Sigma,\mu)$ satisfies \ref{H1}, then for any \m\  geodesic $\ga$, $\gamma(\infty)=\gamma(-\infty)$ if and only if the orthogonal geodesics $\Delta(t,s)$ from  \cref{def:gamma-jacobi-field} go to the same endpoint on $\Minfty $ for all $t$ as $s\to\infty$.
\end{corollary}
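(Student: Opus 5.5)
The plan is to read the corollary off \cref{lemma:gamma-jf-disjoint}, which says that the orthogonal geodesic rays $\Delta(t,\cdot)$ are pairwise disjoint and sweep out a region of $\M$ foliated by them. Together with the cone topology on $\om$ (\cref{ConeTopology}), this should let us describe the endpoints $\Delta(t,\infty)\in\Minfty$ as a monotone function of $t$, and compare its limits as $t\to\pm\infty$ with $\ga[\pm\infty]$; by \cref{prop:boundary-homeomorphic} we may identify $\ga(\pm\infty)$ with $\ga[\pm\infty]$.

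\textbf{Step 1 (monotonicity).} I will show that $e(t)\dfn\Delta(t,\infty)$ is a monotone map from $\re$ into the circle $\Minfty$ with respect to the cyclic order. The rays are disjoint by \cref{lemma:gamma-jf-disjoint}, and their starting points $\ga(t)$ are ordered along the non-self-intersecting curve $\ga$ (\cref{lemma:pq-connectivity-and-no-self-intersect}). Hence for $t_1<t_2<t_3$ the ray $\Delta(t_2,\cdot)$ is trapped in the region bounded by $\ga([t_1,t_3])$, the two rays $\Delta(t_1,\cdot)$ and $\Delta(t_3,\cdot)$, and the boundary arc between their endpoints. So $e(t_2)$ lies in the closed arc from $e(t_1)$ to $e(t_3)$ on the side of $\ga$ into which the rays point, namely the $i\dot\ga$ side. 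Consequently the limits $e(\pm\infty)\dfn\lim_{t\to\pm\infty}e(t)$ exist.

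\textbf{Step 2 (limits of $e$ are the endpoints of $\ga$).} I expect $e(\infty)=\ga[\infty]$, and this is the main obstacle. The point $\ga(t)\to\ga[\infty]$ in the cone topology, and each $\Delta(t,\cdot)$ starts at $\ga(t)$. I would rule out $e(\infty)\ne\ga[\infty]$ as follows. Suppose the arc $A$ between $e(\infty)$ and $\ga[\infty]$ on the rays' side has interior. Choose a geodesic $g$ joining an interior point of $A$ to the opposite side; visibility holds because $K<0$ (\cref{prop:connectivity-geoflow-MtoMbdry}). For large $t$ the point $\ga(t)$ lies near $\ga[\infty]$, and $e(t)$ lies near $e(\infty)$ but on the other side of $g$. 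So $\Delta(t,\cdot)$ crosses $g$, and by convexity it crosses $g$ exactly once. Let $t\to\infty$. The crossing points stay in a compact part of $g$ (otherwise $\Delta(t,\cdot)$ would pass near the endpoint of $g$ inside $A$), so a subsequence of the initial vectors converges to a geodesic $\Delta_\infty$ through a point of $g$, with one endpoint in $A$. Its backward continuation must follow $\ga(t)\to\ga[\infty]$; this conflicts with the disjointness and nesting of the rays. The same argument gives $e(-\infty)=\ga[-\infty]$.

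\textbf{Step 3 (conclusion).} If $\ga(\infty)=\ga(-\infty)$, then the monotone map $e$ has equal limits at both ends, so it cannot leave that point: by the nesting from Step 1, any value $e(t_0)$ different from the common limit would force one of the two limits away from it. Thus $e$ is constant. Conversely, if $e$ is constant, equal to $x$, then Step 2 gives $\ga[\infty]=x=\ga[-\infty]$. By \cref{prop:boundary-bijection} and the identification of the two boundaries, this is $\ga(\infty)=\ga(-\infty)$. The delicate point is Step 2, in particular handling the subtended arc correctly and its orientation relative to $\mu$.
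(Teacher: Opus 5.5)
Step~1 is fine: the disjointness in \cref{lemma:gamma-jf-disjoint} includes the initial points $\Delta(t,0)=\ga(t)$, so no ray recrosses $\ga$. Steps~2 and~3, however, each have a genuine gap. The last sentence of Step~2 does not produce a contradiction. Your limit $\Delta_\infty$ is a geodesic from $\ga[\infty]$ to $z=e(\infty)$. The rays $\Delta(t,\cdot)$, $t\to\infty$, can accumulate on it from one side without violating disjointness or nesting; topologically, disjoint monotone rays issuing from a proper curve tending to $y$ may have endpoints converging to any $z\neq y$. The missing idea is that the rays \emph{fill} the closed side $\overline R$ of $\ga$ into which they point. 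For $q\in\overline R$, the function $t\mapsto d(q,\ga(t))$ attains its minimum at some $t_q$, because $\ga$ is unbounded in both directions (\cref{cor:magnetic-geodesic-is-unbounded}). The minimizing segment is orthogonal to $\ga$ and cannot leave $\ga(t_q)$ into the other side, since it would then have to meet $\ga$ again at a point closer to $q$. Hence $q=\Delta(t_q,d(q,\ga))$. With this, the open half-plane bounded by $\Delta_\infty$ on the side of $A$ lies in $R$ but meets no ray, which is the contradiction you want.

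The forward half of Step~3 is invalid as stated. A cyclically monotone $e$ with equal limits at $\pm\infty$ may wind once around $\Minfty$, and nesting cannot exclude this. A concrete configuration satisfying everything you use is a magnetically flat surface with $\mu<0$. There $\ga$ is a horocycle centered at some $x$, traversed so that it bends to the right, and the rays (which start along $i\dot\ga$) leave the horoball. Then $\ga(\pm\infty)=x=e(\pm\infty)$, yet $e$ takes every value in $\Minfty\smallsetminus\{x\}$. So the statement is really about rays pointing into the side toward which $\ga$ bends (i.e.\ $\mu>0$, or $-i\dot\ga$ when $\mu<0$), and that is the input you must use. By \cref{rmk:general-behaviors-easy-observation}\eqref{LeftRightTangency}, this side lies in a half-plane of each tangent geodesic. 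So when $\ga(\pm\infty)=x$ its only ideal point is $x$, and since the rays stay in it they all end at $x$; Step~2 is not needed for this direction. For the converse you do need Step~2, repaired as above. Alternatively, use the argument behind \cref{remark:horoball}: if every $\Delta(t,\cdot)$ ends at $x$, then $\ga$ is orthogonal to the geodesics asymptotic to $x$, hence lies on a horocycle centered at $x$. Both ends of that horocycle converge to $x$ because $K$ is bounded above by a negative constant.
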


\begin{remark}\label{remark:horoball}
This means that $\ga$ is a curve orthogonal to a family of geodesics with a common boundary point and hence a horocycle of $\Sigma$.
\end{remark}


Suppose $(\Sigma,\mu)$ satisfies \ref{H1}. Given a \m\ geodesic $\ga$ and $t \in \re$, let $g_t$ be the forward boundary chord of the \m\ geodesic $\ga_t(s)\dfn \ga(t+s)$; i.e., $g_t$ is the geodesic with $g_t(0)=\ga(t)$ and $g_t(\infty)=\ga(\infty)\in\Minfty$. 
Define \begin{equation}\label{eqn:angle_eta}
    \eta(t)\coloneqq \measuredangle_{\ga(t)}(\dot\ga(t),\dot g_t(0)).    
\end{equation} By \cref{Lemma:angle_<_pi/2}, $\eta(t)\leq \pi/2$, and we now make the following rigidity conjecture.

\begin{conjecture}\label{conjecture}
Suppose $(\Sigma,\mu)$ satisfies \ref{H1}.  If $\eta(T_0)=\pi/2$ for some $T_0$, then $\eta(t)=\pi/2$ for all $t\in\re$, so $\Sigma$ is $\mu$-magnetically flat.
\end{conjecture}


We can prove a partial (one-sided) result:
\begin{lemma}\label{lemma:infinite-chord-angle<pi/2}
If $(\Sigma,\mu)$ satisfies \ref{H1} 
and $\eta(T_0)=\pi/2$ for some $T_0\in\re$, then $\eta(T)=\pi/2$ for all $T\geq T_0$.
\end{lemma}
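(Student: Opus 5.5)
The plan is to derive a differential inequality for $\eta$ and close with an ODE comparison argument. Take $\mu>0$; the case $\mu<0$ follows by reflection (\cref{rmk:general-behaviors-easy-observation}(\ref{ReverseVersion})), and $\mu=0$ is not a genuine case since then $\eta\equiv 0$. Write $x\dfn\ga(\infty)$.

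For $t\in[T_0,T]$, the forward boundary chords $g_t$ form a one-parameter family of geodesic rays all asymptotic to $x$. By \cref{rmk:general-behaviors-easy-observation}(\ref{LeftRightTangency}) and \cref{thm:pqtilde}, these rays sweep out, to the left of $\ga$, the region $D$ bounded by $\ga([T_0,T])$, $g_{T_0}$ and $g_T$. Distinct rays to the same ideal point are disjoint, as in the remark after \cref{thm:pqtilde}. Consequently the variation field $\partial_t g_t(s)$ has normal component $j_t(s)$ solving $j''=|K|j$, with $j_t(0)=\sin\eta(t)$, and it is a \emph{stable} Riemannian Jacobi field. Stability holds because the rays are asymptotic, so $j_t$ is bounded and convex, hence $j_t'(s)\to0$.

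Step 1 is Gauss--Bonnet on the (truncated, then limiting) region $D$. The boundary $\ga$ has geodesic curvature $\mu$ and is traversed with $D$ on its left. The exterior angles are $\eta(T)$ at $\ga(T)$, $\pi-\eta(T_0)$ at $\ga(T_0)$, and $\pi$ at $x$. This gives
\[
\eta(T)-\eta(T_0)=\int_D|K|\,dA-\mu(T-T_0).
\]
Step 2 evaluates the area integral in the coordinates $(t,s)$:
\[
\int_D|K|\,dA=\int_{T_0}^T\!\int_0^\infty |K|j_t\,ds\,dt=\int_{T_0}^T -j_t'(0)\,dt=\int_{T_0}^T\kappa(t)\sin\eta(t)\,dt.
\]
Here $\kappa(t)\dfn -j_t'(0)/j_t(0)$ is the geodesic curvature at $\ga(t)$ of the stable horocycle through $\ga(t)$ centred at $x$, and it solves the Riccati equation $\dot u+u^2+K=0$ along $g_t$. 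Since $K\le-\mu^2$ by \ref{H1}, Riccati comparison (equivalently Sturm, \cref{thm:sturm}, applied to $j''=|K|j$ against $j''=\mu^2 j$) gives $\kappa(t)\ge\mu$. Differentiating the Gauss--Bonnet identity in $T$ then yields
\[
\eta'(t)=\kappa(t)\sin\eta(t)-\mu\ \ge\ \mu(\sin\eta(t)-1).
\]
Only the integrated form of this inequality is needed if differentiability of $\eta$ is in doubt.

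Step 3 is the comparison. Let $\varphi$ solve $\varphi'=\mu(\sin\varphi-1)$ with $\varphi(T_0)=\pi/2=\eta(T_0)$. The right-hand side is Lipschitz, so $\varphi\equiv\pi/2$ is the unique solution. The standard comparison lemma for differential inequalities, applied to $\eta'\ge F(\eta)$ with $F$ Lipschitz, gives $\eta(T)\ge\varphi(T)=\pi/2$ for $T\ge T_0$. Combined with $\eta\le\pi/2$ from \cref{Lemma:angle_<_pi/2} (passed to the limit along chords), this gives $\eta(T)=\pi/2$. The argument only runs forward in time, because the comparison lemma for $\eta'\ge F(\eta)$ propagates lower bounds forward only; this is consistent with the one-sidedness of the statement.

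The main obstacle is justifying Step 2 on the noncompact region $D$. One must check three things: that the rays $g_t$ smoothly foliate $D$ (this is where continuity and regularity of $t\mapsto g_t$, via \cref{thm:pqtilde}, enter); that $j_t'(s)\to0$ as $s\to\infty$, uniformly enough to apply Fubini and to pass to the limit when the region is truncated at a large geodesic circle; and that the corner contribution at $x$ is exactly $\pi$ in that limit. I would first carry out the whole computation on the compact region cut off by a horocycle about $x$, where Gauss--Bonnet is unproblematic, and then let that horocycle shrink toward $x$.
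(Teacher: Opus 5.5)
Your argument is correct, and it takes a genuinely different route from the paper's. The paper argues synthetically. With $T_0=0$, the geodesic $\Delta(0,\cdot)$ orthogonal to $\ga$ at $\ga(0)$ is $g_0$. By the disjointness of orthogonal geodesics (\cref{lemma:gamma-jf-disjoint}), each $\Delta(T,\cdot)$ with $T>0$ is then trapped in the cusp region bounded by $g_0$ and $\ga([0,\infty))$. So it either ends at $\ga(\infty)$, in which case it equals $g_T$ and $\eta(T)=\pi/2$, or it meets $\ga$ again at some $\ga(T_1)$, producing a chord at angle exactly $\pi/2$ and contradicting \cref{Lemma:angle_<_pi/2}. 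That proof needs no regularity, no curvature integrals and no comparison theory.

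Your route is heavier. The area formula in $(t,s)$-coordinates needs $t\mapsto\dot g_t(0)$ to be $C^1$; this comes from the $C^2$ regularity of Busemann functions of the negatively curved metric \cite{HI}, not from \cref{thm:pqtilde}, which only gives continuity. Setting up $D$ as a disk also requires that $g_T$ never meets $\ga$ again, which uses \cref{lemma:angle-alpha-is-increasing} and \cref{Lemma:angle_<_pi/2} much as the paper does. Beyond that you need the horocycle truncation at the ideal vertex and the Riccati bound $\kappa\ge\mu$ from $K\le-\mu^2$.

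In exchange you get an exact evolution law, $\eta(b)-\eta(a)=\int_a^b(\kappa\sin\eta-\mu)\,dt$ for all $a<b$, valid at every angle rather than only at $\pi/2$. Its signs are right: it reproduces $\sin\eta=\mu$ for hypercycles in $\mathbb H^2$. This law shows that $\eta$ is $C^1$ and makes the one-sidedness of the lemma transparent as a lower barrier at $\pi/2$. It also gives $\kappa\equiv\mu$ on $[T_0,\infty)$ directly, and it may be a useful handle on \cref{conjecture}.

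One small correction: for $\mu<0$, reduce to $\mu>0$ by orientation reversal of $\Sigma$, which keeps the time direction and the unsigned angle $\eta$. The time reversal in \cref{rmk:general-behaviors-easy-observation}(\ref{ReverseVersion}) would turn the forward statement into a backward one. Alternatively, simply run your computation with $D$ to the right of $\ga$.
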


\begin{proof}
\begin{figure}[hbt]
  \centering
  \begin{tikzpicture}[>=stealth, scale=2.5]

\draw[dashed] (-2,0) -- (2,0);

\draw[thick,
      postaction={decorate},
     decoration={markings, mark=at position 0.5 with {\arrow{stealth}}}]
      (-2,0) .. controls (-1.5,-1) and (1.5,-1) .. (2,0)

coordinate[pos=0.4] (point_s)
coordinate[pos=0.7] (point_t)
coordinate[pos=0.1] (point_u);
\node[below] at (0,-0.8) {$\gamma$};
\node[above] at (-2,0) {$\gamma(0)$};
\node[above] at (2,0) {$\gamma(\infty)$};
\fill (point_s) circle (1pt);
\fill (point_t) circle (1pt);

\node[below]  at (point_s) {$\gamma(T)$};
\node[below] at (point_t) {$\gamma(T_1)$};

\draw[dashed, thick] (point_u) -- (2,0) node[pos=0.3, yshift=-6mm, above] {$g_t$};
\draw[->, thick] (point_u) -- ++(0.25,-0.25) coordinate (tan_u);
\node[below] at (tan_u) {$\dot\gamma(t)$};
\draw[dashed,blue, thick]
  (point_s)
    .. controls (-0.4,-0.1) and (0.6,-0.02) ..
  (point_t);

\draw[->,thick] (-2,0) -- (-2,-0.4) node[left]{$\dot\ga(0)$};
\draw (-2,0) ++(0.1,0) -- ++(0,-0.1) -- ++(-0.1,0);
\draw (point_s) ++(0.04,0.1) -- ++(0.07,-0.02) -- ++(-0.031,-0.08);

\draw[thick,red]
  (point_u)
    ++(0.15,0)
      arc[start angle=0, end angle=-45, radius=0.15];

\node[red] at (point_u) [xshift=10mm, yshift=-2mm] {$\eta(t)$};

\end{tikzpicture}
  \caption{Geodesic connecting $\gamma(T)$ and $\gamma(T_1)$ in  \cref{lemma:infinite-chord-angle<pi/2}}
  \label{fig:connect_geo}
\end{figure}

After reparametrization, assume $T_0=0$. Consider the variation $\Delta$ of geodesics orthogonal to $\ga$ from \cref{def:gamma-jacobi-field}. We claim that $\lim_{s\to\infty}\Delta(T,s)=\ga(\infty)$ for all $T\geq 0$, and this implies $\eta(T)=\pi/2$ for all $T\geq 0$ by the uniqueness of the geodesic from $\ga(T)$ to $\ga(\infty)$.

Fix $T>0$. By \cref{lemma:gamma-jf-disjoint}, we have either $\lim_{s\to\infty}\Delta(T,s)=\ga(\infty)$ or $\Delta(T,(0,\infty))\cap\ga([T,\infty))= \ga(T_1)$ for some $T_1=T_1(T)>T$. In the latter case, $\ga([T,T_1])$ subtends an angle of $\pi/2$ with its chord at the point $\ga(T)$ as suggested in \cref{fig:connect_geo}, contrary to \cref{Lemma:angle_<_pi/2}. Therefore, $\lim_{s\to\infty}\Delta(T,s)=\ga(\infty)$ for all $T\geq0$.
\end{proof}



\begin{theorem}\label{prop:h2-has-no-closed-orbit}
Suppose $(\Sigma,\mu)$ satisfies \ref{H1}. The following are equivalent.
\begin{enumerate}
    \item $\Sigma$ is $\mu$-magnetically flat, i.e., not \ref{H2}; \label{l:1}
    \item $\ga(\infty)=\ga(-\infty)$ for any \m\  geodesic $\ga$ on $\M$;\label{l:2}
    \item $\ga(\infty)=\ga(-\infty)$ for some \m\  geodesic $\ga$ on $\M$;\label{l:3}
    \item there exists a \m\ geodesic $\ga$ such that $\eta(t)=\pi/2$ for all $t\in\re$, where $\eta(t)$ is from \eqref{eqn:angle_eta}.\label{l:4}
\end{enumerate}
\end{theorem}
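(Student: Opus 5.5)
The plan is to prove the cycle of implications \eqref{l:1}$\implies$\eqref{l:2}$\implies$\eqref{l:3}$\implies$\eqref{l:4}$\implies$\eqref{l:1}. Most of the work is in the last step.

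\emph{\eqref{l:1}$\implies$\eqref{l:2}.} If $K_\mu\equiv0$, then $K\equiv-\mu^2$ and $\M$ is a rescaled hyperbolic plane. The \m\ geodesics are the curves of constant geodesic curvature $\mu$, that is, horocycles. Each horocycle has a single point at infinity, so $\ga(\infty)=\ga(-\infty)$ for every $\ga$. This is the horocycle-flow picture of \cref{Halfplanechords}, read through the identification in \cref{prop:boundary-bijection}.

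\emph{\eqref{l:2}$\implies$\eqref{l:3}.} This is trivial.

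\emph{\eqref{l:3}$\implies$\eqref{l:4}.} Take $\ga$ with $\ga(\infty)=\ga(-\infty)$. By \cref{lemma:single-infinity-point-gamma-jf}, every orthogonal geodesic $\Delta(t,\cdot)$ from \cref{def:gamma-jacobi-field} tends to one common point, and this point is $x\dfn\ga(\infty)$. By the uniqueness part of \cref{prop:connectivity-geoflow-MtoMbdry}, the geodesic from $\ga(t)$ to $x$ is therefore $\Delta(t,\cdot)$ itself. This geodesic leaves $\ga(t)$ in the direction $i\dot\ga(t)$, which is orthogonal to $\dot\ga(t)$, so $\eta(t)=\pi/2$ for all $t$.

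\emph{\eqref{l:4}$\implies$\eqref{l:1}.} Suppose $\eta\equiv\pi/2$ along $\ga$, and set $x=\ga(\infty)$. Then every geodesic from $\ga(t)$ to $x$ leaves $\ga$ orthogonally. Hence $\ga$ is a Riemannian horocycle centred at $x$, and the geodesics $\Delta(t,\cdot)$ form a variation by geodesics all asymptotic to $x$.

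Along each ray $s\mapsto\Delta(t,s)$, the variation field $V_t$ is a stable Riemannian Jacobi field with $|V_t(0)|=1$, by \cref{lemma:gamma-jf-disjoint}. Its logarithmic derivative $u=|V_t|'/|V_t|$ solves the Riccati equation $u'+u^2+K=0$ and stays bounded for all $s\ge0$. Its initial value is minus the geodesic curvature of $\ga$ on the side facing $x$, namely $u(0)=-|\mu|$.

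Now use $K\ge-\mu^2$. Whenever $u\le-|\mu|$ we have
\[
u'=-K-u^2\le\mu^2-u^2\le0 .
\]
So if $K(\Delta(t,s))>-\mu^2$ for some $s$, then at that point $u$ falls strictly below $-|\mu|$. After that, $u$ stays below $-|\mu|$ and satisfies $u'\le\mu^2-u^2<0$, so it blows up to $-\infty$ in finite time. That contradicts boundedness of a stable solution, equivalently the fact that the stable Jacobi field does not vanish. Therefore $K\equiv-\mu^2$ on the whole closed horoball $\{\Delta(t,s)\colon t\in\re,\ s\ge0\}$.

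A horoball contains metric balls of every radius, and $\Sigma$ is compact. Hence the horoball projects onto all of $\Sigma$, and $K_\mu\equiv0$, which is \eqref{l:1}.

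The main obstacle is the orientation bookkeeping in the last step: one must check that $x$ lies on the side of $\ga$ toward which it curves, so that $u(0)=-|\mu|$ and not $+|\mu|$. I would settle this with \cref{rmk:general-behaviors-easy-observation}\eqref{LeftRightTangency} together with \cref{Lemma:angle_<_pi/2}, since the boundary chords lie on the concave side. A secondary point is to make the Riccati blow-up argument rigorous, including the case where $K>-\mu^2$ holds only on a small set; this uses the usual comparison argument as in \cref{thm:sturm}.
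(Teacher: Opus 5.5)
Your steps (1)$\Rightarrow$(2)$\Rightarrow$(3)$\Rightarrow$(4) are fine, but the key step (4)$\Rightarrow$(1) rests on a reversed inequality. You write ``use $K\ge-\mu^2$'', but \ref{H1} says $K_\mu=K+\mu^2\le0$, i.e.\ $K\le-\mu^2$. So the case you exclude, a point on a ray with $K(\Delta(t,s))>-\mu^2$, cannot occur at all. The case that actually has to be excluded, a point with $K<-\mu^2$ (i.e.\ $K_\mu<0$), is never addressed. The mechanism you describe is also the wrong one. Having $u$ drop below $-|\mu|$ and blow down to $-\infty$ (focusing of the orthogonal rays) is what happens when the curvature lies above $-\mu^2$, e.g.\ constant curvature $-a^2$ with $a<|\mu|$, where curves of geodesic curvature $|\mu|$ are circles. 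Under \ref{H1}, at $u=-|\mu|$ one has $u'=-K_\mu\ge0$, so $u$ can only move up. As written, the argument does not yield $K_\mu\equiv0$ on the horoball.

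The strategy can be repaired with the correct sign.
\begin{itemize}
\item Put $w=u+|\mu|$. Then $w(0)=0$ and $w'=-K_\mu+(|\mu|-u)w\ge(|\mu|-u)w$, so $w\ge0$ on $[0,\infty)$, and $w>0$ after any point where $K_\mu<0$.
\item Since $u'\ge\mu^2-u^2$ everywhere, compare with $\bar u'=\mu^2-\bar u^2$, whose solutions starting above $-|\mu|$ tend to $|\mu|$. This gives $u\ge|\mu|/2$ eventually, so $|V_t(s)|\to\infty$, contradicting stability of $V_t$.
\item Equivalently, the stable Riccati solution satisfies $u\le-|\mu|$ when $K\le-\mu^2$. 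Together with $w\ge0$ this forces $u\equiv-|\mu|$, hence $K_\mu\equiv0$ along every ray.
\end{itemize}
So the contradiction comes from expansion, not from a zero of $V_t$.

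Two smaller points. First, stability of $V_t$ comes from $\gamma$ lying on a horocycle centred at $x$, which makes the rays Busemann-synchronized. It does not come from \cref{lemma:gamma-jf-disjoint}, which only gives $V_t\neq0$. Second, in (3)$\Rightarrow$(4) you should say why the common endpoint of the rays is $x$: $\gamma$ meets all rays toward that point orthogonally, so it lies on a horocycle centred there, whose only ideal point is then $\gamma(\pm\infty)$.

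Once repaired, your route is a Riemannian Riccati comparison along the orthogonal geodesic rays, which differs from the paper's. The paper takes the magnetic geodesics from $x$ through the points $g_{x\gamma(0)}(t)$, $t<0$. By the uniqueness in \cref{thm:pqtilde}, each of them must also end at $x$. The resulting magnetic variation is therefore both stable and unstable, hence parallel, which gives $K_\mu\equiv0$ on its range, a horoball.
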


\begin{proof}
$\eqref{l:1}\implies\eqref{l:2}\implies\eqref{l:3}\Leftrightarrow\eqref{l:4}$ is clear (the latter by \cref{lemma:single-infinity-point-gamma-jf}). We now show $\eqref{l:3}\implies\eqref{l:1}$. 

If (\ref{l:3}) holds, let $x\dfn\ga(\infty)=\ga(-\infty)$ and $\Gamma\colon\mathbb R\times(-\infty,0]\to\tilde\Sigma$ the \m\ variation $(s,t)\mapsto\gamma_{xg_{x\gamma(0)}(t)}(s)$. Its range $R$ contains a fundamental domain since it is a horoball by \eqref{l:4}. Thus, \eqref{l:1} follows if $R$ is magnetically flat, and by \cref{lemma:sing-vector-has-constant-y} and \eqref{MagneticJacobiEquation}, this follows once we show that $\Gamma$ is parallel---for which it is in turn sufficient   that $\gamma_{xg_{x\gamma(0)}(t)}(+\infty)=x$ for all $t<0$ (\cref{prop:stable-mag-jf-same-endpoint,lem:parallel-iff-stable-and-unstable}). 

If $\gamma_{xg_{x\gamma(0)}(t)}(+\infty)\neq x$ for any $t<0$, then $\gamma_{xg_{x\gamma(0)}(t)}$ and $\gamma$ have a common point $q$ (see \cref{fig:closed-mag-geo}) and hence are 
\m\  geodesics from $x\in\Minfty$ to $q\in \M$. Since $\gamma$ does not go through $g_{x\gamma(0)}(t)$, they are distinct, 
contrary to \cref{thm:pqtilde}.
\begin{figure}[hbt]
  \centering
\begin{tikzpicture}[scale=3,>=stealth]

\draw[thick,postaction={decorate},
      decoration={markings}] (0,0) circle (1);

\fill (1,0) circle (0.8pt);
\node[below right] at (1,0) {$x=\ga(\pm\infty)$};

\fill (-1,0) circle (0.8pt);
\node[left] at (-1,0) {$g_{x\ga(0)}(\infty)$};

\draw[dashed,red,thick] (-1,0) -- (1,0);

\fill[blue] (-0.4,0) circle (0.8pt);
\node[below,blue] at (-0.4,0) {$g_{x\ga(0)}(t)$};
\node[above left] at (-0.8,0.7) {$\M$};
\draw (-0.4,0) ++(0.05,0) -- ++(0,0.05) -- ++(-0.04,0);
\node[red] at (0.3,-0.1) {$g_{x\ga(0)}$};

\draw[thick,
      postaction={decorate},
      decoration={markings, mark=at position 0.5 with {\arrow{stealth}}}]
  (1,0)
    .. controls (1,0.9) and (-0.8,0.9)
    .. (-0.85,0);

\draw (-0.85,0) ++(0.05,0) -- ++(0,0.05) -- ++(-0.05,0);

\draw[thick]
  (1,0)
    .. controls (1,-0.9) and (-0.8,-0.9)
    .. (-0.85,0);




\draw[blue,thick,postaction={decorate},
      decoration={markings, mark=at position 0.5 with {\arrow{stealth}}}]
      (1,0) arc (0:180:0.7 and 0.3);
\draw[blue,thick]
 (-0.4,0)
    .. controls (-0.35,-0.4) and (0.2,-0.6)
    .. (0.89,-0.456);
\fill[blue] (0.75,-0.485) circle (0.8pt);
\node[above,blue] at (0.75,-0.485) {$q$};
\node[below,blue] at (0.3,0.273) {$\ga_{xg_{x\ga(0)}(t)}$};
\node[above right] at (-0.85,0.02) {$\ga(0)$};
\fill (-0.85,0) circle (0.8pt);
\node[above right] at (0.1,0.65) {$\ga$};
\fill[blue] (0.89,-0.456) circle (0.8pt);
\node[right,blue] at (0.89,-0.456) {$\ga_{xg_{x\ga(0)}(t)}(+\infty)$};

\end{tikzpicture}
  \caption{\cref{prop:h2-has-no-closed-orbit} $\eqref{l:3} \implies \eqref{l:1}$}
  \label{fig:closed-mag-geo}
\end{figure}
\end{proof}

\begin{corollary}\label{cor:h2-diff-endpoints}
   If $(\Sigma,\mu)$ satisfies \ref{H2}, then $\ga(\infty)\neq \ga(-\infty)$ for any \m\  geodesic $\ga$.
\end{corollary}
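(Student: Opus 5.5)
This is the contrapositive of the implication \eqref{l:3}$\implies$\eqref{l:1} in \cref{prop:h2-has-no-closed-orbit}, so the plan is a short deduction.

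Assume $(\Sigma,\mu)$ satisfies \ref{H2}, and suppose for contradiction that some \m\ geodesic $\ga$ on $\M$ has $\ga(\infty)=\ga(-\infty)$. Since \ref{H2} includes \ref{H1}, \cref{prop:h2-has-no-closed-orbit} applies. Condition \eqref{l:3} of that theorem holds for $\ga$, so by the theorem condition \eqref{l:1} holds as well. That is, $\Sigma$ is $\mu$-magnetically flat: $K_\mu\equiv 0$, and hence not \ref{H2}. This contradicts the hypothesis.

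Here the endpoints $\ga(\pm\infty)$ may be read either in $\M^\pm_\mu(\infty)$ or in $\Minfty$. By \cref{prop:boundary-bijection} and \cref{prop:boundary-homeomorphic}, the two readings agree, so no ambiguity enters.

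There is no real obstacle. All the substance lies in \eqref{l:3}$\implies$\eqref{l:1} of \cref{prop:h2-has-no-closed-orbit}, which we take as given. The only point to check is that the hypotheses line up: \ref{H2} is by definition \ref{H1} together with $K_\mu(p)<0$ for some $p$, and this is exactly the negation of magnetic flatness within \ref{H1}.
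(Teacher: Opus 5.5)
Your proposal is correct and matches the paper. The corollary is stated there without a separate proof, as the immediate contrapositive of \eqref{l:3}$\implies$\eqref{l:1} in \cref{prop:h2-has-no-closed-orbit}, using that \ref{H2} is exactly \ref{H1} together with non-flatness. Your remark that the endpoints may be read in either $\M^\pm_\mu(\infty)$ or $\Minfty$ agrees with the identification the paper makes after \cref{prop:boundary-homeomorphic}.
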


Assuming \cref{conjecture}, the following is equivalent to the statements in \cref{prop:h2-has-no-closed-orbit}:
\begin{enumerate}\label{conj}
    \item[(5)]\label{l:5} There exists a \m\  geodesic $\ga$ such that $\eta(t)=\pi/2$ for some $t\in\re$.
\end{enumerate}


\subsection{Magnetic visibility}\label{ssection:magnetic-visibility} 
In this section, we study a complementary question to 
\cref{cor:h2-diff-endpoints}: while the endpoints on $\Minfty$ of a hyperbolic \m\ geodesic must be distinct, here we ask whether every pair of distinct points on $\Minfty$ arises as the endpoints of some \m\ geodesic. This property is called \m\ visibility, defined as follows.
\begin{definition}[Magnetic visibility]\label{def:magnetic-visibility-property}
    A surface $\Sigma$ is said to have the \emph{\m\  visibility property} if for any two distinct points $x,y\in\Minfty$, there exists a \m\  geodesic $\ga$ such that $\ga(-\infty)=x$ and $\ga(\infty)=y$, where $\ga(\pm\infty)$ are from \cref{def:asymptotic-maggeo-mag-boundary} (or equivalently, \cref{def:angle-for-pqtilde-end-points} by \cref{prop:boundary-homeomorphic}).

\end{definition}
If $\kmu<0$, then \m\ geodesics are quasi-geodesics, so the Hausdorff distance between a \m\ geodesic and its boundary chord $\f$ is finite; this yields visibility:
\begin{proposition}[{\cite{Grognet}}]
    If \ref{H4}, then for any two distinct points $x, y\in \om$, there exists a unique \m\  geodesic $\ga$ such that $x=\ga(t_0)$ and $y=\ga(t_1)$ for some $t_0,t_1\in[-\infty,\infty]$.
\end{proposition}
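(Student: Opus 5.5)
We read ``unique'' as unique up to time shift, with the two points met in the given order ($t_0<t_1$). We then split according to where $x$ and $y$ lie in $\om=\M\cup\Minfty$.

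If $x,y\in\M$, \cref{lemma:pq-connectivity-and-no-self-intersect} gives existence and uniqueness of $\ga_{xy}$ directly. If exactly one of the points lies in $\Minfty$, \cref{thm:pqtilde} gives a unique ray $\ga_{px}$ or $\ga_{xp}$. Extending that ray to a full \m\ geodesic handles this case, using that magnetic geodesics with a common endpoint are pairwise disjoint. So the real content is the case $x\neq y\in\Minfty$.

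For existence, we use that under \ref{H4} the \m\ flow is quasi-geodesic (\cite[Proposition 3.1]{Grognet}) with uniform constants. By the Morse lemma (\cref{Halfplanechords}) there is then an $R>0$ such that every \m\ geodesic segment $\ga_{pq}$ with $p,q\in\M$ lies in the $R$-neighborhood of the geodesic segment $g_{pq}$.
\begin{enumerate}
\item Let $g=g_{xy}$ be the geodesic from \cref{prop:connectivity-geoflow-MtoMbdry}(\ref{lemma:connectivity_geoflow_nega_curv}), and put $p_n\dfn g(-n)$, $q_n\dfn g(n)$.
\item Each $\ga_{p_nq_n}$ stays within $R$ of $g([-n,n])$. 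In particular it passes within $R$ of $g(0)$.
\item Shift time so that $\ga_{p_nq_n}(0)$ lies in the compact ball $\overline{B}(g(0),R)$. Then pass to a subsequence along which the initial vectors $\dot\ga_{p_nq_n}(0)$ converge to some $v$.
\item By continuity of $\f$ on compact time intervals, $\ga_v$ lies in the closed $R$-neighborhood of $g(\re)$, and it is unbounded in both directions by \cref{cor:magnetic-geodesic-is-unbounded}.
\item Hence $\ga_v(\pm\infty)=g(\pm\infty)$ in the cone topology, which is the required magnetic geodesic.
\end{enumerate}

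For uniqueness, let $\ga_1,\ga_2$ both run from $x$ to $y$. If they met at some $q\in\M$, they would be two \m\ geodesics from $x$ to $q$, contradicting \cref{thm:pqtilde}. So they are disjoint, and both lie within $R$ of $g$. Therefore each point of $\ga_2$ is at distance at most $2R$ from $\ga_1(\re)$, for all times in both directions.

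We must now rule out such a pair. The plan is to use the Anosov property, which holds because \ref{H4} implies \ref{HA} (\cref{REMNegCurvImpliesAnosov}). Concretely, we would try to show that $\ga_2$ lies on the lifted weak stable and weak unstable leaves through $\dot\ga_1$, and that these leaves meet only along the orbit of $\dot\ga_1$. Forward and backward asymptoticity of the two geodesics is available from \cref{prop:boundary-bijection}.

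The most concrete route is a stable \m\ variation from $\ga_1$ to $\ga_2$, built as in the proof of \cref{prop:boundary-bijection}. It is both stable and unstable, since $\ga_2$ is asymptotic to $\ga_1$ in both directions. Its Jacobi fields would then have bounded orthogonal component on $\re$, hence be parallel by \cref{lem:parallel-iff-stable-and-unstable}. Parallel fields force $\kmu=0$ along the geodesics (\cref{lemma:sing-zero-0-magnetic-curvature}), contradicting $\kmu<0$.

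The main obstacle is this last step. The stable variation starting from $\ga_1$ is only known to reach some geodesic forward asymptotic to $\ga_1$. Identifying that geodesic with $\ga_2$ uses \cref{thm:pqtilde}, as in \cref{prop:boundary-bijection}. We must then check that the same variation is also unstable, i.e., that every intermediate curve $\ga_s$ has backward endpoint $x$. We would try a sandwiching argument: each $\ga_s$ lies in the strip between the disjoint curves $\ga_1$ and $\ga_2$. A planar-topology argument then pins its backward endpoint to $x$.

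As a fallback, we would apply Gauss--Bonnet to the region bounded by $\ga_1$, $\ga_2$ and two short transversal geodesic segments near times $\pm T$. The curvature terms $\mu\,\ell$ of the two magnetic sides, traversed in opposite senses, should combine with $\int K$ to give an integral of $\kmu$ over the region. That integral tends to $-\infty$ as $T\to\infty$ while the angle terms stay bounded, which is a contradiction.
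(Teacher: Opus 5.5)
Your outline is sound. The existence half follows the route the paper indicates: it cites Grognet, who shows that under \ref{H4} the $\mu$-magnetic geodesics are uniform quasi-geodesics, so the Morse lemma keeps them at bounded Hausdorff distance from their boundary chords. You make this explicit with a limiting argument.

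One detail is missing in step 5. Staying unbounded inside the $R$-neighbourhood of $g$ only gives $\{\ga_v(-\infty),\ga_v(\infty)\}\subset\{x,y\}$; it does not fix the orientation. The fix is short:
\begin{enumerate}
\item Suppose $\ga_v(t)\to x$ as $t\to+\infty$. Then for large $T$ and $n$, the point $\ga_{v_n}(T)$ lies near $g(\sigma)$ with $\sigma\ll 0$.
\item Before reaching $q_n=g(n)$, the curve $\ga_{v_n}$ must come back within $R$ of $g(0)$, since its orthogonal projection to $g$ is continuous.
\item This contradicts the fact that $t\mapsto d(\ga_{v_n}(0),\ga_{v_n}(t))$ is strictly increasing on $[0,\infty)$ (\cref{cor:magnetic-geodesic-is-unbounded}).
\end{enumerate}
The same argument shows the backward end is not $y$.

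For uniqueness, the step you call the main obstacle is already settled in the paper, and your sandwich is exactly \cref{prop:magnetic-flat-strip-is-connected}:
\begin{enumerate}
\item For $p$ in the closed strip $S$ between $\ga_1$ and $\ga_2$, the curve $\ga_{py}$ is disjoint from both, since they share the forward endpoint $y$ (\cref{thm:pqtilde}).
\item Hence $\ga_{py}$ is trapped in $S$, whose closure meets $\Minfty$ only in $\{x,y\}$.
\item The case $\ga_{py}(-\infty)=y$ is excluded by \cref{cor:h2-diff-endpoints}, because \ref{H4} implies \ref{H2}.
\item Then \cref{prop:stable-mag-jf-same-endpoint}, \cref{lem:parallel-iff-stable-and-unstable} and \cref{lemma:sing-zero-0-magnetic-curvature} give $\kmu=0$ on $S$ (this is \cref{cor:magnetic-flat-strip-has-0-magnetic-curvature-and-magnetic-flat-strip-only-singular}), contradicting \ref{H4}.
\end{enumerate}

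Two cautions. First, base the variation on a path inside $S$, or use only $s$ near $0$. The geodesic segment from $\ga_1(0)$ to $\ga_2(0)$ used in \cref{prop:boundary-bijection} can leave $S$ across the boundary curve for which $S$ lies on the non-convex side (\cref{rmk:general-behaviors-easy-observation}). So your claim that every $\ga_s$ lies in the strip can fail for that variation.

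Second, drop the Gauss--Bonnet fallback; it does not close as stated. The term $\mu(\ell_2-\ell_1)$ becomes $\mu^2\,\mathrm{Area}$ plus bounded terms only through a first-variation identity along a foliation of the region by magnetic arcs. That foliation is the very variation you wanted to avoid. In addition, Gauss--Bonnet alone gives nothing that forces the area of the region to diverge.

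Inside the paper, uniqueness is in fact one line. By \cref{lemma:sing-zero-0-magnetic-curvature}, \ref{H4} forces $\Sing=\emptyset$, so every vector is regular. Then \cref{thm:connectivity-combo}(4) gives uniqueness, and \cref{thm:ptildeqtilde-magnetic-visibility-property} even gives existence under the weaker \ref{H2}. The bound placing $\ga_1,\ga_2$ within $2R$ of each other is not needed.
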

For $(\Sigma,\mu)$ satisfying \ref{H1}, magnetic visibility can fail---but only in the magnetically flat case (\cref{prop:h2-has-no-closed-orbit}):

\begin{theorem}[Magnetic visibility]\label{thm:ptildeqtilde-magnetic-visibility-property}
    If $(\Sigma,\mu)$ satisfies \ref{H2}, then $\Sigma$ has the \m\  visibility property.
\end{theorem}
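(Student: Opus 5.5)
We will follow the limiting argument of \cite{visibility}, with \cref{thm:pqtilde} in the role of the geodesic ray connectivity. Fix distinct $x,y\in\Minfty$ and choose sequences $p_n\to x$, $q_n\to y$ in $\M$ in the cone topology; for instance $p_n\dfn g_{xy'}(-n)$, $q_n\dfn g_{xy'}(n)$ on a fixed geodesic $g_{xy'}$ from \cref{prop:connectivity-geoflow-MtoMbdry}, or simply $p_n=g_{ox}(n)$, $q_n=g_{oy}(n)$ for a base point $o$. By \cref{lemma:pq-connectivity-and-no-self-intersect}, there is a unique \m\ geodesic segment $\ga_n\dfn\ga_{p_nq_n}$ from $p_n$ to $q_n$. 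The plan is to show that the segments $\ga_n$ meet a fixed compact set $B\subset\M$. We then reparametrize so that $\ga_n(0)\in B$, pass to a subsequence with $\dot\ga_n(0)\to v$, and take $\ga\dfn\ga_v$. This $\ga$ is a complete \m\ geodesic by continuity of $\f$.

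Identifying the endpoints of $\ga$ is the softer step. By \cref{thm:pqtilde}, $H^\pm_{\pi v}$ are homeomorphisms. For each fixed $T$, $\ga_n(T)\to\ga(T)$, while the forward tails of $\ga_n$ end at $q_n\to y$. We will use \cref{Lemma:angle_<_pi/2} to compare the directions at $\ga_n(0)$ of chords to $q_n$ with $\dot\ga_n(0)$; monotonicity of the chord angle (\cref{lemma:angle-alpha-is-increasing}) should keep these chord directions within the corresponding boundary-chord cone. In the limit this places the forward boundary chord of $\ga$ at the direction from $\pi v$ to $y$, so $\ga(\infty)=\ga[\infty]=y$, and symmetrically $\ga(-\infty)=x$. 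If this direct argument is delicate, an alternative is to argue by contradiction. Suppose $\ga(\infty)=z\ne y$. Then for large $n$, $\ga_n$ follows $\ga$ until it is near $z$ in the cone topology, and it must then travel to $q_n$ near $y$. Using that \m\ geodesics with common endpoint are disjoint and that two \m\ geodesics meet at most once (\cref{thm:pqtilde}, \cref{lemma:pq-connectivity-and-no-self-intersect}), this forces an intersection pattern that contradicts uniqueness of $\ga_{p_nq_n}$.

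The main obstacle is the compactness step: ruling out that $\ga_n$ escapes to infinity, i.e.\ that $d(o,\ga_n)\to\infty$. This is exactly where \ref{H2} is needed, since in the flat (horocycle) case segments from $p_n$ to $q_n$ do escape toward a single boundary point. Suppose escape happens. Then a subsequence of the $\ga_n$ converges in the cone topology to a single point $z\in\Minfty$, since segments far from $o$ subtend small angles at $o$. Both $p_n\to x$ and $q_n\to y$ lie on these segments, so one of $x,y$ differs from $z$. I would try the following. Use the ordering of endpoints on the circle $\Minfty$ to see that $\ga_n$ separates $o$ from one of the arcs of $\Minfty\smallsetminus\{x,y\}$. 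Fix a \m\ geodesic $\sigma$ through $o$ whose endpoints lie in the two complementary arcs determined by $x,y$. Such a $\sigma$ exists by \cref{thm:pqtilde} together with \cref{cor:h2-diff-endpoints}, which guarantees $\sigma(\infty)\neq\sigma(-\infty)$ under \ref{H2}. For large $n$, $\ga_n$ must then cross $\sigma$. If crossing points stayed unbounded, one would obtain in the limit a \m\ geodesic, or a family of them, with equal forward and backward endpoints. \cref{prop:h2-has-no-closed-orbit} forbids this under \ref{H2}, because it would make $\Sigma$ $\mu$-magnetically flat. Making this limit rigorous, via the angle control of \cref{Lemma:angle_<_pi/2}, \cref{lemma:infinite-chord-angle<pi/2}, and the disjointness of the orthogonal geodesic family from \cref{lemma:gamma-jf-disjoint}, is the step I expect to require the most care. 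A fallback is to first treat pairs $(x,y)$ that are endpoints of axes (\cref{prop:axial-endpoints-pairs-are-dense}), using invariance under the axial isometry (\cref{cor:isometry-preserves-geodesic-curvature}), and then extend to all pairs by a limiting argument.
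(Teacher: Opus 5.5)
There is a genuine gap. Your outline, which takes limits of the segments $\ga_{p_nq_n}$, is reasonable, but the step that carries all the weight (that the $\ga_n$ meet a fixed compact set) is not established, and the argument you sketch for it fails.

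\textbf{The compactness step.} The claim that segments far from $o$ subtend small angles at $o$ is the Riemannian visibility property. It is false for $\mu$-magnetic segments, which need not be quasi-geodesic without uniform hyperbolicity (\cref{Halfplanechords}). In the disk model, a horocyclic arc close to the boundary circle stays far from $o$, yet it can run from near $x$ to near $y$. Note also that if your claim held you would be done at once: $p_n$ and $q_n$ would both converge to the single point $z$, forcing $x=y$. So this claim is exactly what \ref{H2} must replace; it is not a consequence of \ref{H2}.

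The argument with $\sigma$ does not fill the gap. If the crossing points $\ga_n\cap\sigma$ are unbounded, they simply converge to $\sigma(\pm\infty)$. Nothing in the argument produces an actual $\mu$-magnetic geodesic in $\M$ with equal endpoints to which \cref{prop:h2-has-no-closed-orbit} could apply.

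Two further cautions:
\begin{itemize}
\item The existence of $\sigma$ through $o$ with endpoints in the two complementary arcs does not follow directly from \cref{thm:pqtilde} and \cref{cor:h2-diff-endpoints}. It needs, e.g., a fixed-point argument for the homeomorphism $H^-_o\circ(H^+_o)^{-1}$.
\item \cref{lemma:pq-connectivity-and-no-self-intersect} only forbids two intersections traversed in the same order. For $\mu\neq0$, $\ga_{ab}$ and $\ga_{ba}$ meet at both $a$ and $b$. So ``two magnetic geodesics meet at most once'' cannot be used in your endpoint argument.
\end{itemize}
That endpoint argument is in any case only sketched: monotonicity of $\alpha_{\ga_n}$ gives at most a one-sided bound on $\ga(\infty)$. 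Your axis-pair fallback hits the same non-escape problem in its limiting step.

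\textbf{What the paper does instead.} The missing ingredient is a barrier $\mu$-magnetic geodesic with prescribed endpoints, together with a better choice of approximants.
\begin{itemize}
\item \emph{Approximants.} The paper uses the rays $\ga_n\dfn\ga_{xp_n}$ from the boundary point $x$ to $p_n\dfn g(n)$, where $g$ is the geodesic from $x$ to $y$. By \cref{thm:pqtilde} these are pairwise disjoint. Hence they cross the geodesic ray $l$ perpendicular to $g$ at $p_0$ at increasing parameters $t_n$. One then sets $\ga\dfn\ga_{xp}$ with $p\dfn\lim l(t_n)$, so $\ga(-\infty)=x$ holds by construction.
\item \emph{Bounding $t_n$.} Let $\xi\dfn l(\infty)$. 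Take an axial isometry $\psi$ whose axis has endpoints in $\widehat{\xi x}$ and $\widehat{y\xi}$, and a $\mu$-magnetic geodesic $\eta_0$ with $\eta_0(-\infty)$ equal to the repelling endpoint. Here \ref{H2} enters through \cref{cor:h2-diff-endpoints}, which gives $\eta_0(\infty)\neq\eta_0(-\infty)$. By north--south dynamics (\cref{prop:endpoints-converge-to-axis-endpoints}), $\eta\dfn\psi^k\eta_0$ has both endpoints in these arcs for large $k$. Then $\eta$ meets $l$ at some $l(t_*)$, and $t_n\le t_*$. Otherwise an arc of $\ga_n$ would cross into the nonconvex side of $\eta$ and back; convexity (\cref{rmk:general-behaviors-easy-observation}\eqref{LeftRightTangency}) forces the two crossings to occur in the same order, contradicting \cref{lemma:pq-connectivity-and-no-self-intersect}.
\item \emph{Identifying $\ga(\infty)$.} One shows $\ga(\infty)=y$ by excluding the alternatives. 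It is not $x$, by \cref{cor:h2-diff-endpoints}. It is not in $\widehat{xy}$, since then $\ga$ would cross $g$ and hence some $\ga_n$, contradicting \cref{thm:pqtilde}. It is not in $\widehat{yx}$, by a second barrier of the same type.
\end{itemize}
Your proposal needs this barrier construction, or an equivalent one, both for compactness and for identifying the endpoints.
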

\begin{proof}
When $\mu=0$, this is \cref{prop:connectivity-geoflow-MtoMbdry}\eqref{lemma:connectivity_geoflow_nega_curv}. We assume $\mu>0$, and the proof in the $\mu<0$ case is similar. Consider two distinct points $x,y$ on $\M(\infty)$.
Denote by $\widehat{xy}$ the open \emph{clockwise} arc in $\M(\infty)$ from $x$ to $y$. Then $\big\{\{x\}, \widehat{xy}, \{y\},\widehat{yx}\big\}$ is a partition of $\Minfty$. 

Given a geodesic $g$ with $g(-\infty)=x\neq y=g(\infty)$ and  $n\in\mathbb{N}_0$, set $p_n\coloneqq g(n)$ and denote by 
\begin{enumerate}
        \item $l$ a geodesic orthogonal to $g$ at $p_0$ with $l(0)=p_0$ and $\xi\coloneqq l(\infty)\in\widehat{yx}$;
        \item $\ga_n$ the reparametrization of the \m\  geodesic $\ga_{xp_n}$ from $x$ to $p_n$ such that $\ga_n(-\infty)=x$ and $\ga_n(0)=\ga_n(\re)\cap l([0,\infty))= l(t_n)$ for some $t_n\geq0$. By \cref{thm:pqtilde}, $\{t_n\}$ is increasing;
        \item $y_n\coloneqq \ga_n(\infty)\in \widehat{xy}$ (note that $y_n\neq x$ by \cref{cor:h2-diff-endpoints}, $y_n\notin\widehat{yx}\cup \{y\}$ by \cref{thm:pqtilde} and \cref{rmk:general-behaviors-easy-observation}(\ref{LeftRightTangency}));
        \item $g_n$ the boundary chord of $\ga_n$ (see \cref{def:angle-for-pqtilde-end-points});
        \item $\tilde g$ an axis of some axial isometry $\psi$ such that $\tilde g(-\infty)\in \widehat{\xi x}$ and $\tilde g(\infty)\in \widehat{y\xi}$.
    \end{enumerate}

    \begin{figure}[bt]
  \centering
  \begin{tikzpicture}[scale=4]
\draw[thick] (0,0) circle (1);
\fill (-1,0) circle (0pt);
\node[above left] at (-1,0) {$g(-\infty)=x$};
\node[below] at (-0.5,0) {$g$};

\fill (1,0) circle (0pt);
\node[above right] at (1,0) {$y=g(\infty)$};

\draw[dashed] (-1,0) -- (1,0);
\fill (0,0) circle (0.5pt);
\node[above] at (-0.1,0) {$l(0)=p_0$};
\fill (0,-1) circle (0pt);
\node[below] at (0,-1) {$\xi \dfn l(\infty)$};

\draw[dashed,blue,thick] (0,0) -- (0,-1);
\draw (0,0) ++(0.05,0) -- ++(0,-0.05) -- ++(-0.05,0);

\node[right,blue] at (0,-0.7) {$l$};


\draw[thick]
  (-1,0)
    .. controls (-0.6,-0.4) and (-0.2,-0.4)
    .. (0.2,0)
    .. controls (0.45,0.35) and (0.55,0.6)
    .. (0.6,0.8);
\fill (0, -0.163) circle (0.5 pt);
\node[right] at (0, -0.18) {$\ga_1(0)=l(t_1)$};
\fill (0.2, 0) circle (0.5 pt);
\node[above right] at (0.25, 0) {$p_1$};
\fill (0, -0.163) circle (0.5 pt);
\node[right] at (0, -0.18) {$\ga_1(0)=l(t_1)$};
\node[above right] at (0.6,0.8) {$y_1$};

\draw[thick]
  (-1,0)
    .. controls (-0.6,-0.55) and (-0.1,-0.55)
    .. (0.45,0)
    .. controls (0.70,0.35) and (0.7,0.45)
    .. (0.75,0.661);
\fill (0, -0.33) circle (0.5 pt);
\node[right] at (0, -0.33) {$\ga_2(0)=l(t_2)$};
\fill (0.45, 0) circle (0.5 pt);
\node[above right] at (0.49, 0) {$p_2$};
\node[above right] at (0.75,0.661) {$y_2$};

\draw[thick]
  (-1,0)
    .. controls (-0.63,-0.75) and (0.15,-0.75)
    .. (0.7,0)
    .. controls (0.85,0.23) and (0.88,0.42)
    .. (0.89,0.456);
\fill (0, -0.53) circle (0.5 pt);
\node[right] at (0, -0.55) {$\ga_n(0)=l(t_n)$};
\fill (0.7, 0) circle (0.5 pt);
\node[above right] at (0.775, 0) {$p_n$};
\node[above right] at (0.89,0.456) {$y_n$};

\draw[dashed](-1,0) -- (0.89,0.456);
\node[above] at (0,0.256) {$g_n$};

\draw[thick]
  (-1,0)
    .. controls (-0.8,-0.6) and (-0.2,-0.9)
    .. (0,-0.9)
    .. controls (0.2,-0.9) and (0.6,-0.6)
    .. (0.9,0)
    .. controls (0.95,0.1) and (0.96,0.2)
    .. (0.97,0.243);
\draw[red,thick]
  (-0.866,-0.5)
    .. controls (-0.3,-0.9) and (0.3,-0.9)
    .. (0.866,-0.5);
\node[left] at (-0.866,-0.5) {$\eta(-\infty)$};
\node[right] at (0.866,-0.5) {$\eta(\infty)$};
\node[left,red,thick] at (-0.566,-0.7) {$\eta$};

\fill[red] (0.31, -0.76) circle (0.5 pt);
\node[below,red] at (0.31, -0.76) {$q_2$};

\fill[red] (-0.40, -0.74) circle (0.5 pt);
\node[below,red] at (-0.40, -0.74) {$q_1$};

\end{tikzpicture}
  \caption{Convergence of $t_n$}
  \label{fig:visibility-I}
\end{figure}
    \cref{thm:pqtilde} gives a \m\  geodesic $\eta_0$ such that $\eta_0(-\infty)=\tilde g(-\infty)$, and $\eta_0(\infty)\neq\tilde g(-\infty)$ by \cref{cor:h2-diff-endpoints}. By 
    \cref{Lemma:connectivity-by-geodesic}(\ref{lemma:connectivity_geoflow_nega_curv}) and \cref{prop:endpoints-converge-to-axis-endpoints}, for some sufficiently large $n$, the \m\ geodesic $\eta\dfn \psi^n(\eta_0)$ is such that $\eta(-\infty)\in \widehat{\xi x}$ and $\eta(\infty)\in \widehat{y \xi}$.
The intersection of $\eta$ and $l$ is a point $l(t_*)$ with $t_*\ge t_n$ for all $n$ because otherwise, $\eta(\re)\cap \ga_n(\re)\supset\{q_1,q_2\}$ for some sufficiently large $n$ as indicated in \cref{fig:visibility-I}, contrary to \cref{lemma:pq-connectivity-and-no-self-intersect}. 

The increasing sequence $t_n\le t_*$ converges by the monotone convergence theorem. Therefore, the \m\  geodesic $\ga\coloneqq\ga_{xp}$ with $\ga(0)=p\dfn\lim _{n\to\infty}l(t_n)\in \M$ and $\ga(-\infty)=x$ is well defined. 

It remains to show that $\ga(\infty)=y$, as desired, and we prove this by ruling out all other possibilities. First, $\ga(\infty)\neq x$ by \cref{cor:h2-diff-endpoints}. Next, suppose that $\ga(\infty)\in \widehat{xy}$ (\cref{fig:visibility-II}). Then $\ga$ intersects $g(\re)$ at $g(T)$ for some $T\in\re$, which implies that $\ga$ intersects $\ga_n$ at some $q\in\ga_n(\re)$ for any $n>T$. This contradicts \cref{thm:pqtilde}. Finally, we prove by contradiction that $\ga(\infty)\notin \widehat{yx}$, and therefore, $\ga(\infty)=y$. 
\begin{figure}[hbt]
  \centering
  \begin{minipage}{0.45\linewidth}
    \centering
    \resizebox{\linewidth}{!}{\begin{tikzpicture}[scale=3]
\draw[thick] (0,0) circle (1);
\fill (-1,0) circle (0pt);
\node[above left] at (-1,0) {$x$};

\fill (1,0) circle (0pt);
\node[above right] at (1,0) {$y$};

\draw[dashed] (-1,0) -- (1,0);
\fill (0,0) circle (0.5pt);
\node[above] at (-0.15,0) {$l(0)=p_0$};
\fill (0,-1) circle (0pt);
\node[below] at (0,-1) {$\xi=l(\infty)$};

\draw[dashed,blue,thick] (0,0) -- (0,-1);
\draw (0,0) ++(0.05,0) -- ++(0,-0.05) -- ++(-0.05,0);

\node[right,blue] at (0,-0.7) {$l$};


\draw[thick]
  (-1,0)
    .. controls (-0.6,-0.55) and (-0.1,-0.55)
    .. (0.45,0)
    .. controls (0.70,0.35) and (0.7,0.45)
    .. (0.75,0.661);

\node[below] at (-0.4, -0.43) {$\ga$};
\fill (0.45, 0) circle (0.5 pt);
\node[above left] at (0.55, 0) {$g(T)$};
\node[above right] at (0.75,0.661) {$\ga(\infty)$};

\draw[thick]
  (-1,0)
    .. controls (-0.63,-0.35) and (0.15,-0.45)
    .. (0.7,0);
\draw[thick,dashed]
 (0.7,0)
    .. controls (0.85,0.23) and (0.88,0.42)
    .. (0.89,0.456);
\node at (-0.4,-0.21) {$\ga_n$};
\fill (0, -0.29) circle (0.5 pt);
\node at (-0.08, -0.2) {$l(t_n)$};
    
\fill[red] (0.11, -0.27) circle (0.5 pt);
\node[red,below right] at (0.11, -0.27) {$q$};
\fill (0.7, 0) circle (0.5 pt);
\node[above right] at (0.775, 0) {$p_n$};
\node[above right] at (0.89,0.456) {$\ga_n(\infty)$};

\end{tikzpicture}}
  \end{minipage}\hfill
  \begin{minipage}{0.55\linewidth}
    \centering
    \resizebox{\linewidth}{!}{    \begin{tikzpicture}[scale=3]
\draw[thick] (0,0) circle (1);
\fill (-1,0) circle (0pt);
\node[above left] at (-1,0) {$x$};

\fill (1,0) circle (0pt);
\node[right] at (1,0) {$y$};

\draw[dashed] (-1,0) -- (1,0);
\fill (0,0) circle (0.5pt);
\node[above] at (0,0) {$l(0)$};
\fill (0,-1) circle (0pt);
\node[below] at (0,-1) {$\xi=l(\infty)$};

\draw[dashed,blue,thick] (0,0) -- (0,-1);
\draw (0,0) ++(0.05,0) -- ++(0,-0.05) -- ++(-0.05,0);

\node[below left] at (0,-0.75) {$\ga$};


\draw[thick]
  (-0.866,-0.5)
    .. controls (-0.3,-0.6) and (0.3,-0.6)
    .. (0.99,-0.1);
\node[left] at (-0.866,-0.5) {$\sigma(-\infty)$};
\node[right] at (0.99,-0.1) {$z\dfn \sigma(\infty)$};

\draw[thick]
  (-1,0)
    .. controls (-0.8,-0.5) and (-0.2,-0.7)
    .. (0,-0.76)
    .. controls (0.3,-0.76) and (0.6,-0.7)
    .. (0.88,-0.47);
\node[right] at (0.88,-0.47) {$y^*\dfn \ga(\infty)$};

\fill[green!60!black] (0.31, -0.49) circle (0.5 pt);
\node[green!60!black,above] at (0.28, -0.45) {$m_2$};

\draw[green!60!black,thick]
  (-1,0)
    .. controls (-0.6,-0.4) and (0.2,-0.7)
    .. (0.95,-0.3);
\node[green!60!black,right] at (0.95,-0.3) {$y^*_2$};

\draw[blue,thick]
  (-1,0)
    .. controls (-0.6,-0.6) and (0.2,-0.7)
    .. (0.6, -0.8);

\node[blue, right] at (0.6,-0.8) {$y^*_1$};
    
\fill[blue] (0.32, -0.73) circle (0.5 pt);
\node[blue,below] at (0.3, -0.73) {$A$};

\fill[blue] (0.1, -0.68) circle (0.5 pt);
\node[blue,right] at (0.06, -0.62) {$m_1$};

\node[purple,above right] at (0.99,0.1) {$\ga_n(\infty)$};



\draw[purple,thick]
  (-1,0)
    .. controls (-0.2,-0.9) and (0.85,-0.65)
    .. (0.99,0.1);

\fill[green!60!black] (0.65, -0.43) circle (0.5 pt);
\node[green!60!black,below right] at (0.58, -0.43) {$B$};

\end{tikzpicture}}
  \end{minipage}
  \caption{Endpoints of $\ga$ and possibilities for $\ga_{xm}(\infty)$}
  \label{fig:visibility-II}\label{fig:visibility-III}
\end{figure}

Suppose that $y^*\dfn \ga(\infty)\in \widehat{yx}$, we want to show that this contradicts \cref{thm:pqtilde}.
By \cref{prop:geo-periodic}, there exists an axis $g_1$ of some axial isometry $\psi$ such that  $g_1(-\infty)\in \widehat{y^*x}$ and $g_1(\infty)\in \widehat{yy^*}$. Given a \m\ geodesic $\sigma_0$ such that $\sigma_0(-\infty)=g_1(-\infty)$, by \cref{thm:pqtilde} and \cref{prop:endpoints-converge-to-axis-endpoints}, there exists a \m\ geodesic $\sigma\dfn \psi^n(\sigma_0)$ for some sufficiently large $n$ such that $\sigma(-\infty)\in \widehat{y^*x}$ and $z\dfn \sigma(\infty)\in \widehat{yy^*}$ as suggested in \cref{fig:visibility-III}.  
    
    Pick a point $m\in\M$ in the open subset of $\M$ enclosed by $\sigma$, $\ga$ and $\widehat{zy^*}$, and write $l(t')= \ga_{xm}(\re)\cap l([0,\infty))$ for some $t'>0$. Note that $t'<\lim_{n\to\infty}t_n$ since otherwise, $\ga_{xm}$ and $\ga$ intersect on both $\M$ and $\Minfty$, which contradicts \cref{thm:pqtilde}. We now consider $\ga_{xm}(\infty)$:
    \begin{enumerate}
        \item if $m=m_1$ is such that $\ga_{xm_1}(\infty)\eqqcolon y^*_1\in \widehat{y^*x}$, then two \m\ geodesics $\ga_{xm_1}\neq \ga$ are both from $x$ to some point $A\in\M$, which contradicts \cref{thm:pqtilde};
        \item if $m=m_2$ is such that $\ga_{xm_2}(\infty)\eqqcolon y^*_2\in \{y^*\}\cup\widehat{yy^*}$, then for sufficiently large $n$ such that $t_n>t'$, two \m\ geodesics $\ga_{xm_2}\neq \ga_n$ are both from $x$ to some point $B\in\M$, which again contradicts \cref{thm:pqtilde}.
    \end{enumerate}
    Hence $y^*\notin \widehat{yx}$, and as noted above, this implies $\ga(\infty)=y$ and the proof is now complete.  
\end{proof}


Magnetic visibility (\cref{thm:ptildeqtilde-magnetic-visibility-property}) guarantees that every geodesic is the boundary chord of some hyperbolic \m\ geodesic. We also have the following observation which will be used in the proof of orbit-equivalence (\cref{prop:orbit-equivlence-partial}):

\begin{lemma}\label{lemma:ortho-proj-any-point}
    Suppose $(\Sigma,\mu)$ satisfies \ref{H2} and $\ga$
    is a \m\ geodesic with boundary chord $g$. Then for any point $p\in g(\re)$, the geodesic $g_\perp$ with  $g_\perp\perp g$ at $p$ intersects $\ga$ on $\M$. 
\end{lemma}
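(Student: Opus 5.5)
We argue by a separation/Jordan-curve argument in the closed disk $\om$. Fix $p\in g(\re)$ and let $g_\perp$ be the geodesic through $p$ orthogonal to $g$. Since $K<0$, $g_\perp$ is a complete geodesic with two endpoints $g_\perp(\pm\infty)\in\Minfty$. These are distinct from each other and from $x\dfn g(-\infty)=\ga(-\infty)$ and $y\dfn g(\infty)=\ga(\infty)$: a geodesic meeting $g$ transversally at one point cannot share an endpoint with $g$, by uniqueness in \cref{prop:connectivity-geoflow-MtoMbdry}.

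Moreover $g_\perp(\re)$ meets $g(\re)$ only at $p$. So $g_\perp(\infty)$ and $g_\perp(-\infty)$ lie in different components of $\Minfty\smallsetminus\{x,y\}$, namely one in $\widehat{xy}$ and one in $\widehat{yx}$. Equivalently, $x$ and $y$ lie in different components of $\Minfty\smallsetminus\{g_\perp(\pm\infty)\}$.

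Now $\overline{g_\perp(\re)}$ is an arc in $\om$ joining two boundary points. By \cref{ConeTopology}, $\om$ is a closed disk, so this arc separates $\om$ into two components, containing $x$ and $y$ respectively.

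By \cref{cor:h2-diff-endpoints} and the convergence $\ga(t)\to\ga(\pm\infty)$ in the cone topology (the lemma following \cref{Halfplanechords}, together with its backward analogue), the closure $\ga([-\infty,\infty])$ is a connected set in $\om$. It contains $x$ and $y$, which lie on opposite sides of $\overline{g_\perp(\re)}$. Hence $\ga$ must meet $\overline{g_\perp(\re)}$.

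It cannot meet it at $g_\perp(\pm\infty)$. The only points of $\ga([-\infty,\infty])$ on $\Minfty$ are $x$ and $y$, and neither equals $g_\perp(\pm\infty)$. So $\ga(\re)\cap g_\perp(\re)\neq\emptyset$, which is the claim.

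The main point needing care is that $\ga$, extended by its endpoints, is a \emph{continuous} path $[-\infty,\infty]\to\om$, and not merely a curve whose ends accumulate somewhere. This follows from the cone-topology convergence lemma applied in both time directions; the backward direction is obtained by the reversal in \cref{rmk:general-behaviors-easy-observation}(\ref{ReverseVersion}).

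The separation statement in the disk is the standard fact that a chord of a closed disk separates it. Here the chord is $\overline{g_\perp(\re)}$, which is a topological arc because $g_\perp$ is an embedded geodesic with distinct endpoints. Hypothesis \ref{H2} is used only to guarantee $x\neq y$. Without it, $\ga$ could be a horocycle with both ends at one point, and the separation argument would fail.
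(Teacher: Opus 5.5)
Your proof is correct and follows the same route as the paper. The paper also uses that $g_\perp$ meets $g$ only at $p$ to place the endpoints of $g_\perp$ in the two different boundary arcs between $g(\pm\infty)=\gamma(\pm\infty)$, and then concludes that $g_\perp$ must cross $\gamma$; you make explicit the two steps the paper leaves implicit, namely that a cross-cut separates the closed disk and that $\gamma$ extends continuously to $[-\infty,\infty]$ in the cone topology.
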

\begin{proof}
    By the uniqueness of the geodesic with endpoints $x\dfn g(\infty)\neq g(-\infty)\nfd y$ and that $g(\re)\cap g_\perp(\re)=\{p\}$, we have $g_\perp(\infty)\in\widehat{xy}$ and $g_\perp(-\infty)\in\widehat{yx}$ without loss of generality. Therefore, $g_\perp$ intersects $\ga$ on $\M$. 
\end{proof}

Combining \cref{lemma:pq-connectivity-and-no-self-intersect,thm:pqtilde,prop:h2-has-no-closed-orbit,thm:ptildeqtilde-magnetic-visibility-property} with \cref{cor:magnetic-flat-strip-has-0-magnetic-curvature-and-magnetic-flat-strip-only-singular}(\ref{Cor:magnetic-flat-strip-only-singular}) from \cref{section:mag-axes-and-flat-strip} gives
\begin{theorem}\label{thm:connectivity-combo}
    If $(\Sigma,\mu)$ satisfies \ref{H1}, then 
    \begin{enumerate}
        \item for any $p\in \om$ and $q\in \M$,  up to reparametrization, there exist a unique \m\  geodesic from $p$ to $q$ and a unique \m\  geodesic from $q$ to $p$;\label{l:nonflatstrip}
        \item any two \m\  geodesics intersect at most once on $\M$.
    \end{enumerate}
    Furthermore, $(\Sigma,\mu)$ satisfies \ref{H2} if and only if
    \begin{enumerate}[resume]
        \item $\ga(\infty)\neq\ga(-\infty)$ for any \m\  geodesic $\ga$;
        \item for any two distinct points $p,q$ on $\Minfty$, there exists a \m\  geodesic $\ga_{pq}$ from $p$ to $q$. Moreover, if $\dot\ga_{pq}(t)\in  \Reg$ for some $t\in\re$, then $\ga_{pq}$ is unique up to reparametrization.\label{l:flatstrip}\label{item4:connectivity}
    \end{enumerate}      
\end{theorem}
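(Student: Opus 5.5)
This theorem collects earlier results, so the plan is to assemble them item by item; the only new input is the forward-referenced \cref{cor:magnetic-flat-strip-has-0-magnetic-curvature-and-magnetic-flat-strip-only-singular}(\ref{Cor:magnetic-flat-strip-only-singular}), which says that a magnetic flat strip consists only of singular vectors.

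For (\ref{l:nonflatstrip}), split into two cases. If $p\in\M$, \cref{lemma:pq-connectivity-and-no-self-intersect} gives a unique \m\ geodesic $\ga_{pq}$ from $p$ to $q$, and the same result with the roles swapped gives a unique $\ga_{qp}$. If $p\in\Minfty$, \cref{thm:pqtilde} gives the unique rays $\ga_{pq}$ (backward endpoint $p$, through $q$) and $\ga_{qp}$ (forward endpoint $p$). Uniqueness up to reparametrization follows because these results fix the initial vector at $q$. For (2), suppose two distinct \m\ geodesics met at two points $a\ne b$ of $\M$. Then, after reparametrizing, both would be \m\ geodesics from $a$ to $b$, contradicting uniqueness in \cref{lemma:pq-connectivity-and-no-self-intersect}. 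If they met at the same point with different times, one of them would self-intersect, which the same corollary excludes.

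For the equivalence with \ref{H2}, note that (3) is equivalent to \ref{H2}: the forward direction is \cref{cor:h2-diff-endpoints}, and the converse is the contrapositive of \eqref{l:3}$\implies$\eqref{l:1} in \cref{prop:h2-has-no-closed-orbit}. For (4), \ref{H2} gives existence by \cref{thm:ptildeqtilde-magnetic-visibility-property}. Conversely, assume (4) holds but \ref{H2} fails, so $\Sigma$ is magnetically flat. By \cref{prop:h2-has-no-closed-orbit}\eqref{l:2}, every \m\ geodesic has $\ga(\infty)=\ga(-\infty)$, so no geodesic joins two distinct boundary points, contradicting (4). Hence (4) implies \ref{H2}.

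The uniqueness clause in (4) is the step I expect to be the real obstacle, since it depends on the later flat strip theory. Suppose $\ga$ and $\ga'$ are two geometrically distinct \m\ geodesics from $p$ to $q$. By \cref{prop:magnetic-flat-strip-is-connected} and \cref{lemma:magnetic-flat-strip-parallel}, the union of all \m\ geodesics from $p$ to $q$ is a magnetic flat strip, which is a parallel \m\ variation containing $\ga$. Its \m\ Jacobi field along $\ga$ has nonzero constant orthogonal component, so every tangent vector of $\ga$ is singular; this is exactly \cref{cor:magnetic-flat-strip-has-0-magnetic-curvature-and-magnetic-flat-strip-only-singular}(\ref{Cor:magnetic-flat-strip-only-singular}). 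Since $\Sing$ is $\f$-invariant, this contradicts $\dot\ga_{pq}(t)\in\Reg$. The delicate point is making sure that the union of connecting geodesics is a genuine strip with a parallel variation, rather than just a pair of curves. I would rely on the cited later results for this and not reprove it here.
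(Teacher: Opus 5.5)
Your treatment of (1), (3) and (4) is the same assembly the paper uses (\cref{lemma:pq-connectivity-and-no-self-intersect,thm:pqtilde,prop:h2-has-no-closed-orbit,thm:ptildeqtilde-magnetic-visibility-property} together with \cref{cor:magnetic-flat-strip-has-0-magnetic-curvature-and-magnetic-flat-strip-only-singular}(\ref{Cor:magnetic-flat-strip-only-singular})). There is one mislabel. The implication (3)$\implies$\ref{H2} is the contrapositive of \eqref{l:1}$\implies$\eqref{l:2} in \cref{prop:h2-has-no-closed-orbit}: a magnetically flat surface carries only horocycles, each with a single endpoint. The contrapositive of \eqref{l:3}$\implies$\eqref{l:1} is just the forward direction, \cref{cor:h2-diff-endpoints}, again.

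The genuine gap is in (2). From ``two distinct \m\ geodesics meet at $a\neq b$'' you conclude that ``after reparametrizing, both are \m\ geodesics from $a$ to $b$''. That only follows if both curves visit $a$ and $b$ in the same order. Reparametrization here means a time shift, and since $\f$ is irreversible you cannot reverse time. If one curve runs from $a$ to $b$ and the other from $b$ to $a$, uniqueness in \cref{lemma:pq-connectivity-and-no-self-intersect} says nothing, because $\ga_{ab}\neq\ga_{ba}$ in general.

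No argument can close this gap, because for $\mu\neq0$ item (2) is false as literally stated. Take any $a\neq b$ in $\M$. The \m\ geodesics $\ga_{ab}$ and $\ga_{ba}$ both contain $a$ and $b$. They are distinct: if $\ga_{ba}(t)=\ga_{ab}(t+c)$, injectivity of $\ga_{ab}$ would force $c=\td(a,b)$ and $c+\td(b,a)=0$, which is impossible. Concretely, take $K\equiv-1$ and $\mu=1$ in the upper half-plane. The horocycle $y=1$, traversed in the $+x$ direction, and the horocycle $x^2+(y-1)^2=1$, traversed counterclockwise, are both $1$-magnetic geodesics, and they meet at $(\pm1,1)$.

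What the cited results do give is an ordered statement. Two distinct \m\ geodesics cannot both pass first through $a$ and then through $b$. By \cref{thm:pqtilde}, two \m\ geodesics with a common forward endpoint, or a common backward endpoint, are disjoint unless they coincide. The unordered version holds only for $\mu=0$.

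The paper's one-line ``combining'' proof does not address this either, so you have inherited a flaw in the statement rather than introduced one. Still, your argument for (2) does not go through, and (2) has to be restricted to the ordered form.
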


Note that unlike in (\ref{l:nonflatstrip}), uniqueness fails in (\ref{l:flatstrip})---whenever there are magnetic flat strips---and requires \ref{H3}; see \cref{section:mag-axes-and-flat-strip}. 
On the other hand, \ref{H3} implies that a magnetic flow is orbit-equivalent (\cref{def:orbit-equiv}) to the geodesic flow of $\Sigma$ (\cref{thm:orbit-equivalence}).

\begin{definition}[Orbit-equivalence, {\cite[Definition 1.3.22]{Fisher-Hasselblatt}}]\label{def:orbit-equiv}
  A flow $\Psi$ on $X$ is said to be an \emph{orbit factor} of a flow $\Phi$ on $Y$ if there exists a continuous surjection $h\colon Y \to X$ that sends orbits of $\Phi$ to orbits of $\Psi$. 
  If $h$ is a homeomorphism, then the flows are said to be \emph{orbit-equivalent}.
\end{definition}

Grognet establishes orbit-equivalence of uniformly hyperbolic magnetic flows (for a nonconstant magnetic field on a manifold) to the geodesic flow of a manifold with negative curvature, and the result in the surface and constant magnetic field case is stated as follows.
\begin{proposition}[{\cite[Theorem 3.1]{Grognet}}]\label{prop:grognet_oe}
    If $(\Sigma,\mu)$ satisfies \ref{H4}, then  the \m\  flow $\f$ of $\Sigma$ is orbit-equivalent to the geodesic flow of $\Sigma$. 
\end{proposition}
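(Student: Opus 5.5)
We would build the orbit equivalence directly from the boundary data, using the uniform cone bound of \cref{lemma:Anosovmag-angle<pi/2} in place of the weaker \cref{Lemma:angle_<_pi/2}. Under \ref{H4} the flow is Anosov (\cref{REMNegCurvImpliesAnosov}), there are no singular vectors and hence no magnetic flat strips, so by \cref{thm:connectivity-combo}(\ref{item4:connectivity}) each ordered pair $(x,y)$ of distinct points of $\Minfty$ determines a unique \m\ geodesic $\ga_{xy}$, up to reparametrization. Each pair also determines a unique geodesic $g_{xy}$ by \cref{prop:connectivity-geoflow-MtoMbdry}. Thus on the level of oriented orbits there is a canonical bijection $\ga\mapsto$ its boundary chord. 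It is equivariant under deck transformations by \cref{cor:isometry-preserves-geodesic-curvature}, and it preserves orientation because the chord is defined through $\ga(\pm\infty)$.

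To turn this into a map on unit tangent bundles, we use orthogonal projection onto the chord. On $T^1\M$ we define $F(v)$ to be the unit tangent vector of $g=g_{\ga_v(-\infty)\ga_v(\infty)}$ at the foot of the perpendicular from $\pi v$ to $g$. The key claim is that $t\mapsto$ (foot of the perpendicular from $\ga_v(t)$) is a strictly increasing homeomorphism of $\re$ onto $g(\re)$. Surjectivity comes from \cref{lemma:ortho-proj-any-point}: every perpendicular to $g$ meets $\ga$.

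For strict monotonicity we use that $\ga$ is a quasi-geodesic at bounded Hausdorff distance from $g$ (\cref{Halfplanechords}, \cite{Grognet}). We also use that the angle between $\dot\ga(t)$ and the chord direction toward $\ga(\infty)$ is at most $\alpha_0<\pi/2$. Together these should show that $\dot\ga(t)$ makes an angle bounded away from $\pi/2$ with the perpendicular foliation of $g$, so the projection has positive derivative. If that direct estimate is delicate, the fallback is a topological argument. A perpendicular to $g$ meeting $\ga$ twice would create a magnetic segment whose chord from one of its points subtends an angle $\ge\pi/2$, contradicting \cref{Lemma:angle_<_pi/2}, by the same argument as in \cref{lemma:infinite-chord-angle<pi/2}.

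With this in hand, $F$ maps the orbit of $v$ onto the orbit of $F(v)$, monotonically and preserving orientation. Injectivity of $F$ follows because distinct magnetic orbits have distinct endpoint pairs, and along a single orbit the projection is injective. Surjectivity follows from magnetic visibility (\cref{thm:ptildeqtilde-magnetic-visibility-property}) together with the projection being onto. Continuity of $F$ comes from three ingredients: $H^\pm_p$ are homeomorphisms (\cref{thm:pqtilde}), so endpoints depend continuously on $v$; geodesics depend continuously on their endpoints in negative curvature; and the foot of the perpendicular is continuous. Then $F$ is a continuous bijection that commutes with deck transformations, so it descends to $T^1\Sigma$. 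Since $T^1\Sigma$ is compact, the descended map is a homeomorphism.

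The main obstacle is the strict monotonicity of the projection, equivalently that each perpendicular to the boundary chord meets $\ga$ exactly once. This is exactly where the uniform bound $\alpha_0<\pi/2$ from \ref{H4} should be used, and where the argument would break down for \ref{H1} alone.
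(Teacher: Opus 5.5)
Your construction is the one the paper uses. The paper does not prove this proposition directly; it cites Grognet. But \cref{prop:orbit-equivlence-partial} proves it under \ref{H3}, which \ref{H4} implies, and that proof matches yours:
\begin{itemize}
\item the same map, sending $v$ to the unit tangent of the boundary chord at the foot of $\pi v$;
\item surjectivity from visibility and \cref{lemma:ortho-proj-any-point};
\item injectivity split into ``distinct orbits have distinct endpoint pairs'' and ``the foot is injective along one orbit'';
\item continuity from \cref{prop:joint-continuous}.
\end{itemize}
Your compactness argument on $T^1\Sigma$ for continuity of the inverse is a fine shortcut.

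The gap is the step you yourself flag: neither of your arguments shows that a perpendicular $\ell$ to $g$ meets $\gamma$ at most once.

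\textbf{The fallback does not transfer.} In \cref{lemma:infinite-chord-angle<pi/2}, the geodesic $\Delta(T,\cdot)$ is orthogonal to $\gamma$ at $\gamma(T)$, and that is what forces a chord angle of $\pi/2$. Your $\ell$ is orthogonal to $g$, not to $\gamma$. Applying \cref{Lemma:angle_<_pi/2} at $\gamma(t_1)$, and to the reversed curve at $\gamma(t_2)$, only says that $\dot\gamma$ makes acute angles with the direction of $\ell$ from $\gamma(t_1)$ to $\gamma(t_2)$ at both points. That is perfectly consistent, so no angle $\ge\pi/2$ appears.

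\textbf{The direct estimate fails as sketched.} The direction from $\gamma(t)$ to $g(\infty)$ makes an angle with $\ell$ (toward the foot) that tends to $0$ as the distance from $\gamma(t)$ to $g$ grows. So knowing that $\dot\gamma(t)$ is within $\alpha_0$ of that direction does not keep $\dot\gamma(t)$ off $\ell$. Bounded Hausdorff distance gives no control of that distance relative to $\alpha_0$. Also, what you need is an angle with $\ell$ bounded away from $0$, not from $\pi/2$.

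\textbf{How the paper closes the step.} It uses separation and no angle bound at all.
\begin{itemize}
\item If $\gamma(t_1)$ and $\gamma(t_2)$ have the same foot, both lie on the perpendicular $\ell=g_{\gamma(t_1)\gamma(t_2)}$.
\item Since $\ell$ crosses $g$ once, its endpoints lie in the opposite arcs $\widehat{xy}$ and $\widehat{yx}$. Hence $\ell$ separates $x=\gamma(-\infty)$ from $y=\gamma(\infty)$.
\item Both intersections are transverse by \cref{rmk:general-behaviors-easy-observation}\eqref{LeftRightTangency}. After them $\gamma$ is back on the side of $x$, so it must meet $\ell$ a third time to reach $y$.
\item This contradicts the fact that a $\mu$-magnetic geodesic meets a geodesic at most twice.
\end{itemize}

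\textbf{Repairing your direct route.} You can also rescue it by adding the backward bound: the reversed curve is a $(-\mu)$-magnetic geodesic and $K_{-\mu}=K_\mu$. Take $r=\gamma(t)\notin g$ with foot $q$, and let $d$ be the direction from $r$ toward $q$. The directions from $r$ to $g(\pm\infty)$ make angles $a_\pm<\pi/2$ with $d$, by the right angle at $q$ in the asymptotic triangles, and they lie on opposite sides of $\ell$. Then $\measuredangle(\dot\gamma,d)\le a_++\alpha_0<\pi$ and $\measuredangle(\dot\gamma,d)\ge\pi-a_--\alpha_0>0$. So $\dot\gamma\neq\pm d$, and the foot moves strictly monotonically.

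\textbf{Your closing remark is off.} The separation argument uses neither the uniform bound $\alpha_0<\pi/2$ nor \ref{H4}. What fails beyond \ref{H3} is injectivity across the orbits of a magnetic flat strip, not monotonicity along an orbit.
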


{\color{black}
For surfaces, we extend Grognet's argument to \ref{H3}.
\begin{proposition}\label{prop:orbit-equivlence-partial}
    If $(\Sigma,\mu)$ satisfies \ref{H3}, then $\f$ is orbit-equivalent to the geodesic flow of $\Sigma$.
\end{proposition}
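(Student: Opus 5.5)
We will construct an explicit orbit map on the universal cover and then descend it to $\Sigma$. The map is built from boundary chords, following Grognet's argument and using the preceding lemma on orthogonal projection.

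For $v\in T^1\M$, let $g$ be the boundary chord of $\ga_v$ (\cref{def:angle-for-pqtilde-end-points}). It is well defined and nondegenerate because $\ga_v(-\infty)\neq\ga_v(\infty)$ by \cref{cor:h2-diff-endpoints}, together with \cref{prop:connectivity-geoflow-MtoMbdry}\eqref{lemma:connectivity_geoflow_nega_curv}. Let $p$ be the foot of the orthogonal projection of $\ga_v(0)$ onto $g$, and define $\Phi(v)\dfn\dot g(t_0)$, where $g(t_0)=p$.

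\emph{Injectivity and surjectivity.} Conversely, given a unit vector $w=\dot g(0)$ tangent to a geodesic $g$, magnetic visibility (\cref{thm:ptildeqtilde-magnetic-visibility-property}) gives a \m\ geodesic $\ga$ from $g(-\infty)$ to $g(\infty)$, and \ref{H3} makes it unique up to reparametrization. Next we need the set of points of $\ga$ whose projection to $g$ is $g(0)$. By \cref{lemma:ortho-proj-any-point}, the perpendicular $g_\perp$ to $g$ at $g(0)$ meets $\ga$. It meets $\ga$ in exactly one point, because perpendiculars to $g$ at distinct points are disjoint and foliate $\M$ (negative curvature), and a \m\ geodesic crosses each leaf at most once. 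This crossing claim is the step I expect to need most care. I would prove it from the monotonicity of the projection parameter along $\ga$: if $\ga$ met some perpendicular twice, then, since $\ga$ and its tangent geodesics bound convex regions (\cref{rmk:general-behaviors-easy-observation}\eqref{LeftRightTangency}), one could produce a chord of $\ga$ making an angle $\ge\pi/2$ with $\ga$. That contradicts \cref{Lemma:angle_<_pi/2}, in the same way as the proof of \cref{lemma:infinite-chord-angle<pi/2}. Hence $\Phi$ is a bijection, and it maps $\f$-orbits onto geodesic orbits preserving orientation, because the projection parameter increases along $\ga$.

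\emph{Continuity.} For continuity of $\Phi$, note that the endpoint maps $v\mapsto\ga_v(\pm\infty)$ are continuous by \cref{thm:pqtilde} (local uniformity in the footpoint follows via \cref{prop:boundary-homeomorphic}). The geodesic joining two distinct boundary points depends continuously on them, and orthogonal projection is continuous. For continuity of $\Phi^{-1}$, suppose $w_n\to w$ and the corresponding endpoint pairs converge. Using \ref{H3}, the \m\ geodesics $\ga_n$ joining them converge to the unique $\ga$: a limit of the $\ga_n$ through bounded points, which exist since the intersection with $g_\perp$ stays bounded, is a \m\ geodesic with the limiting endpoints, and so by uniqueness it equals $\ga$. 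The intersection points with the perpendiculars then converge. Bounding these intersection points (that they do not escape to infinity) is the main obstacle. For it I would argue as in the proof of \cref{thm:ptildeqtilde-magnetic-visibility-property}, trapping $\ga_n$ between auxiliary \m\ geodesics produced by axial isometries (\cref{prop:endpoints-converge-to-axis-endpoints}).

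\emph{Descent.} Finally, $\Phi$ commutes with deck transformations, since isometries preserve \m\ geodesics (\cref{cor:isometry-preserves-geodesic-curvature}), boundary chords, and orthogonal projections. Therefore $\Phi$ descends to a homeomorphism $T^1\Sigma\to T^1\Sigma$ sending $\f$-orbits to geodesic orbits.
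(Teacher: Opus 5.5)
Your proposal follows essentially the paper's proof: your $\Phi$ is the paper's map $P$ (footpoint projected orthogonally onto the boundary chord), with bijectivity from \cref{thm:ptildeqtilde-magnetic-visibility-property}, \ref{H3} and \cref{lemma:ortho-proj-any-point}, and continuity from \cref{prop:joint-continuous} together with convergence of endpoints. Two simplifications are worth noting. First, the single-crossing claim is cleanest as in the paper's injectivity step: the perpendicular separates $\gamma(-\infty)$ from $\gamma(\infty)$, so two crossings force a third, contradicting \cref{rmk:general-behaviors-easy-observation}\eqref{LeftRightTangency}; your chord-angle route needs this separation input anyway, since $g_\perp$ is orthogonal to $g$ rather than to $\gamma$. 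Second, because you descend to the compact space $T^1\Sigma$ anyway, the induced continuous bijection there is automatically a homeomorphism, so the footpoint-bounding step you flag as the main obstacle can be skipped.
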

The proof 
needs some preparation and follows that of \cref{lemma:uni-convergence-of-geo} below. 
\cref{prop:orbit-equivlence-partial,prop:periodic-orbits-vectors} both use the following.
\begin{lemma}\label{lemma:open-set-in-sm-has-boundary-with-intervals}
    Suppose $(\Sigma,\mu)$ satisfies \ref{H1}. If $V\subset\smm$ is open, then so are $\{\gav(\re)\mid v\in V\}\subset\M$ and $V_{\pm\infty}\dfn\{\gav(\pm\infty)\mid v\in V\}\subset\Minfty$. In particular, unless $V=\emptyset$, $V_{\pm\infty}$ contain open arcs on $\Minfty$.
    If $V\subset\smm$ is closed, then so is $\{\gav(\re)\mid v\in V\}\subset\M$.
\end{lemma}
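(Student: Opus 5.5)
The claims split into two parts: openness of the set of traces in $\M$, and openness of the endpoint sets $V_{\pm\infty}$ on $\Minfty$. The closed case is handled at the end by a compactness-type argument.

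\emph{Open case, traces in $\M$.} Set $U\dfn\{\gav(\re)\mid v\in V\}$, meaning the union of the traces. Since $V$ is open and the flow is continuous, $U=\pi\bigl(\bigcup_{t\in\re}\f_t(V)\bigr)$. Each $\f_t(V)$ is open because $\f_t$ is a homeomorphism. The footpoint projection $\pi\colon\smm\to\M$ is a submersion, hence an open map. So $U$ is a union of open sets and is open.

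\emph{Open case, endpoints.} Fix $v\in V$ with $p=\pi v$. Choose an open neighborhood $W\subset T^1_p\M$ of $v$ together with the fiber neighborhood; since $V$ is open, it contains a small arc $W$ of the circle $T^1_p\M$ around $v$. By \cref{thm:pqtilde}, $H^\pm_p\colon T^1_p\M\to\Minfty$ is a homeomorphism, so $H^\pm_p(W)$ is an open arc in $\Minfty$ containing $\gav(\pm\infty)$. This arc is contained in $V_{\pm\infty}$, so $V_{\pm\infty}$ is open. If $V\neq\emptyset$, this exhibits an open arc inside $V_{\pm\infty}$. Here I use the identification of $\ga[\pm\infty]$ with $\ga(\pm\infty)$ from \cref{prop:boundary-homeomorphic}.

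\emph{Closed case.} Let $V$ be closed. I take a sequence $q_k=\ga_{v_k}(t_k)\in\M$ with $v_k\in V$ and $q_k\to q$, and must show $q$ lies on some $\gav$ with $v\in V$. Replacing $v_k$ by $\f_{t_k}(v_k)$ is not directly allowed, since $V$ need not be flow-invariant. So I set $w_k\dfn\f_{t_k}(v_k)$, which has footpoint $q_k$. The $w_k$ lie in a compact subset of $\smm$ (footpoints converge), so after passing to a subsequence $w_k\to w\in T^1_q\M$.

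The main obstacle is to pull this back to $V$, which requires controlling $t_k$. If $t_k$ is bounded, pass to $t_k\to t$. Then $v_k=\f_{-t_k}(w_k)\to\f_{-t}(w)$, which lies in $V$ by closedness, and its orbit passes through $q$.

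If $t_k$ is unbounded this argument fails in general: the limiting orbit is only in the closure of the orbit set. To cover this I would use \cref{cor:magnetic-geodesic-is-unbounded} together with the compactness implicit in the intended application, where $V$ is a compact (closed and bounded modulo the relevant setting) subset of $\smm$. In that case $d(\pi v_k,q_k)\ge$ (the distance along the orbit escaping every geodesic circle) tends to infinity when $|t_k|\to\infty$, since the footpoints $\pi v_k$ stay in a compact set. This contradicts $q_k\to q$. So $t_k$ is bounded, and the first subcase finishes the proof. I expect this boundedness step to be the only delicate point, and I would state the closed case for compact $V$, or equivalently use the crossing-once property of geodesic circles, to make it rigorous.
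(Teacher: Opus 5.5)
Your open-case arguments are correct. For $V_{\pm\infty}$ you argue as the paper does. $V\cap T^1_p\M$ is open in the fiber circle, and the homeomorphisms $H^\pm_p$ of \cref{thm:pqtilde} carry a small arc around $v$ onto an open arc around $\gav(\pm\infty)$. This also produces the required open arc directly, rather than appealing to the topology of $\Minfty$.

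For the trace set your route is different and cleaner. The identity $\{\gav(\re)\mid v\in V\}=\pi\bigl(\bigcup_{t\in\re}\f_t(V)\bigr)$, together with openness of $\pi$, settles it at once. The paper instead argues pointwise: for $q=\ga_{v_0}(t_0)$ with $t_0\neq0$ it restricts to the fiber $V\cap T^1_{\pi v_0}\M$. That tacitly uses that the magnetic exponential map at $\pi v_0$ is a local diffeomorphism (no conjugate points), and its $(-\mu)$-analogue when $t_0<0$.

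For the closed case your caution is justified, and you can make it definitive: the assertion is false for closed noncompact $V$. Here is a counterexample.
\begin{itemize}
\item Fix $q\in\M$ and points $q_k\to q$ with $q_k\neq q$.
\item By \cref{lemma:pq-connectivity-and-no-self-intersect}, at most two vectors in $T^1_{q_k}\M$ have magnetic geodesics through $q$: the initial vector of $\ga_{q_kq}$ and the terminal vector of $\ga_{qq_k}$. So pick $w_k\in T^1_{q_k}\M$ with $q\notin\ga_{w_k}(\re)$.
\item By \cref{cor:magnetic-geodesic-is-unbounded}, pick $s_k$ with $d(q,\ga_{w_k}(s_k))\ge k$.
\end{itemize}
Then $V\dfn\{\f_{s_k}(w_k)\mid k\in\mathbb{N}\}$ has no accumulation point in $\smm$, so it is closed. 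Yet its trace set contains every $q_k$ but not $q$.

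The paper's argument misses this because, in the closed case, it only shows that points of the trace set are accumulation points of it; closedness requires the converse. The later use in \cref{prop:periodic-orbits-vectors} needs only the open case. The correct statement is for compact $V$, or for flow-invariant closed $V$, where the trace set is $\pi(V)$ and $\pi$ is proper.

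For compact $V$ your argument works once the boundedness of $t_k$ is made precise. Each magnetic ray meets every geodesic circle about its initial point exactly once (\cref{cor:magnetic-geodesic-is-unbounded}). Hence $t\mapsto d(\gav(0),\gav(t))$ is strictly increasing and unbounded on $[0,\infty)$, and likewise on $(-\infty,0]$.

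Now suppose $t_k\to\infty$ along a subsequence with $v_k\to v\in V$. For each fixed $T$ and all large $k$, monotonicity and continuity of the flow give $d(\pi v_k,q_k)\ge d(\pi v_k,\ga_{v_k}(T))\to d(\pi v,\gav(T))$. The right side is arbitrarily large for large $T$, contradicting the boundedness of $d(\pi v_k,q_k)$. The case $t_k\to-\infty$ is the same.

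With $t_k$ bounded, your first subcase finishes the proof. Alternatively, pass to $t_k\to t$ and note $\ga_{v_k}(t_k)\to\gav(t)$.
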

\begin{proof}
Suppose $V$ is open (resp.\ closed) and $q\in \{\ga_v(\re)\mid v\in V\}$.
If $q=\gav(0)$ for some $v\in V$, then $q$ is an interior (resp. accumulation) point of $\pi V\subset \{\ga_v(\re)\mid v\in V\}$.
If $q=\ga_{v_0}(t_0)$ for some $v_0\in V$ and $t_0\neq 0$, then $ V\cap T^1_{\pi v_0}\M$ is open (resp.\ closed) and $q$ is an interior (resp.\ accumulation) point of $\{\ga_v(\re)\mid v\in V\cap T^1_{\pi v_0}\M\}\subset \{\ga_v(\re)\mid v\in V\}$.
Therefore, $\{\ga_v(\re)\mid v\in V\}$ is open (resp.\ closed) if $V$ is open (resp.\ closed). 

Next, $V_{\pm\infty}\dfn\{\gav(\pm\infty)\mid v\in V\}=\bigcup_{p\in \pi V}\{\gav(\pm\infty)\mid v\in V\cap T^1_p\M\}=\bigcup_{p\in \pi V} H^{\pm}_p( V\cap T^1\M)$, where $H^\pm_p$ are the homeomorphisms appearing in \cref{thm:pqtilde}. If $V$ is open, then so are $V\cap T^1_p\M$ in $T^1_p\M$, and both $H^{\pm}(V\cap T^1_p\M)$ and $V_{\pm\infty}$ in $\Minfty$. 
Moreover, $V_{\pm\infty}$ contain open arcs on $\Minfty$ since $\Minfty$ is homeomorphic to $S^{n-1}$, in which every nonempty open set contains an open arc.
\end{proof}
We now present a few purely Riemannian auxiliary results. \cref{prop:orbit-equivlence-partial,prop:joint-continuous} use ``orthogonal'' projection to a geodesic and its continuity (this construction more generally provides a projection to a convex set).
\begin{definition}[Orthogonal projection]\label{def:ortho-projection}
Suppose $\Sigma$ is a closed surface with negative curvature and $g \colon \re \to \M$ is a geodesic on $\M$. The \emph{orthogonal projection $\Pi_g\colon\M\to g(\re)$ to $g$} is defined by
\[
d(p, \Pi_g(p)) = \min_{t \in \re} d(p, g(t)).
\]
\end{definition}

\begin{remark}\label{remark:orthogonal-projection}
\begin{enumerate}
    \item  This is well defined because $t \mapsto d(p, g(t))$ is strictly convex (since $\Sigma$ has negative curvature) and hence attains its minimum at exactly one $t_0 \in \mathbb{R}$.
    \item The point $q = g(t_0)$ is characterized by $\dot g_{pq}(0)\perp\dot{g}(t_0)$. This follows from the first variation formula for arc length (\cite[Chapter~9, Proposition~9.2]{DoCarmo-Riem-Geo}). Hence ``orthogonal projection''.\label{orth-proj-2}
\item$\Pi_g$ is continuous, indeed 1-Lipschitz \cite[Chapter~II.2, Proposition~2.4]{Bridson-Haefliger}, and in fact, smooth. To see this, note that $\Pi_g(p)$ is characterized by
\[
\langle \exp_{\Pi_g(p)}^{-1}(p),\dot{g} \rangle = 0,
\] a smooth equation in $p$. By the Implicit Function Theorem, $\Pi_g$ is smooth since the derivative with respect to the foot-point is nonsingular.\label{orth-proj-3}
\end{enumerate}
\end{remark}

\begin{proposition} \label{prop:joint-continuous}
Suppose $\Sigma$ is a closed surface with negative  curvature. 
Then
\[
\mathcal{P} \colon \M \times (\Minfty \times \Minfty \smallsetminus \Delta) \to \M,
\qquad
(p,x,y)\mapsto\Pi_{g_{xy}}(p)
\]
is continuous, where
\[
\Delta\dfn\big\{(x,x) \mid x \in \Minfty\big\} \subset \Minfty \times \Minfty\]
is the diagonal and $\Pi_{g_{xy}}$ is the orthogonal projection onto the geodesic $g_{xy}$.
\end{proposition}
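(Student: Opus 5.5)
The plan is to reduce joint continuity of $\mathcal P$ to two ingredients: continuous dependence of the geodesic $g_{xy}$ on its endpoints, and the characterization of $\Pi_g(p)$ by orthogonality in \cref{remark:orthogonal-projection}\eqref{orth-proj-2}. Concretely, I will take a sequence $(p_n,x_n,y_n)\to(p,x,y)$ with $x\neq y$ and show that $\Pi_{g_{x_ny_n}}(p_n)\to\Pi_{g_{xy}}(p)$. Since $\Pi_g$ is $1$-Lipschitz, $d(\Pi_{g_n}(p_n),\Pi_{g_n}(p))\le d(p_n,p)$. So it suffices to fix $p$ and vary only the endpoints.

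The first step is to show that for fixed $p$, the point $q_n\dfn\Pi_{g_{x_ny_n}}(p)$ stays in a compact set. The key fact is that the distance from $p$ to $g_{xy}$ is locally bounded in $(x,y)$ away from the diagonal. For this I use the cone topology (\cref{ConeTopology}). Pick disjoint closed arcs $A\ni x$ and $B\ni y$ on $\Minfty$. By visibility (\cref{prop:connectivity-geoflow-MtoMbdry}\eqref{lemma:connectivity_geoflow_nega_curv}) together with the standard compactness property of visibility manifolds (Eberlein--O'Neill), every geodesic joining $A$ to $B$ meets a fixed compact ball $\overline{B(p,R)}$. Hence $d(p,q_n)\le R$ for large $n$. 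If this compactness statement is not taken as citable, I would prove it by contradiction. Geodesics $g_{a_nb_n}$ with $a_n\to a\in A$ and $b_n\to b\in B$ escaping every ball would produce, via the angle at $p$ subtended by $g_{a_nb_n}$ tending to $0$, endpoints $a=b$, which contradicts $A\cap B=\emptyset$.

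The second step is to extract limits and identify them. Pass to a subsequence with $q_n\to q$ and with the unit tangent $w_n$ of $g_{x_ny_n}$ at $q_n$ converging to some $w\in T^1_q\M$. The geodesic $g_w$ is then the limit of the $g_{x_ny_n}$, uniformly on compacts. Its endpoints are $x$ and $y$: by \cref{ConeTopology}(3) the map $v\mapsto g_v(\pm\infty)$ is a homeomorphism at each basepoint, and the basepoints $q_n$ lie in a compact set. Uniqueness in visibility then gives $g_w=g_{xy}$ up to reparametrization. Orthogonality $\dot g_{pq_n}(0)\perp w_n$ passes to the limit, so $\dot g_{pq}(0)\perp w$ (the case $p=q$ is trivial). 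Since the projection is characterized by this orthogonality, $q=\Pi_{g_{xy}}(p)$. Every subsequence therefore has a further subsequence converging to the same point, and this gives continuity.

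The main obstacle is making the endpoint convergence in the second step rigorous when the basepoints vary. This requires continuity of $(q,v)\mapsto g_v(\pm\infty)$ jointly on $T^1\M$, not merely on a single fiber. I will obtain it by transferring basepoints through \cref{prop:connectivity-geoflow-MtoMbdry}(1) and the cone-topology neighborhood basis of truncated cones. In negative curvature, geodesics from nearby points with nearby directions stay close on long initial segments, so their forward points lie in a common small truncated cone.
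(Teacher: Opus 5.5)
Your proposal is correct and follows essentially the paper's route. You reduce to fixed $p$ via the $1$-Lipschitz property of $\Pi_g$, bound the projection points by a visibility/fellow-traveler argument, extract a convergent subsequence, and identify the limit through uniqueness of the geodesic with endpoints $x,y$ and uniqueness of the foot point. The joint continuity of $v\mapsto g_v(\pm\infty)$ on $T^1\M$ that you flag as the main obstacle is standard for Hadamard manifolds, and the paper implicitly relies on the analogous joint continuity when its basepoints $p_n$ vary.

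The differences are minor. You run the compactness argument directly on $\Pi_{g_{x_ny_n}}(p)$ and identify the limit via the first-variation orthogonality condition; that condition should be read at the foot point, i.e.\ $\dot g_{q_np}(0)\perp w_n$ rather than $\dot g_{pq_n}(0)\perp w_n$. The paper instead first proves locally uniform convergence of the normalized geodesics in \cref{lemma:uni-convergence-of-geo} and then uses the distance-minimizing characterization of $\Pi_{g_{xy}}$.
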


\begin{proof}
To show that $\mathcal{P}$ is continuous, that is, if $(p_n, x_n, y_n) \to (p, x, y)$ in $\M \times (\Minfty \times \Minfty \smallsetminus \Delta)$, then $\Pi_{g_{x_n y_n}}(p_n) \;\longrightarrow\; \Pi_{g_{xy}}(p)$, we first prove that the geodesic $g_{xy}$ depends continuously on $(x,y) \in \Minfty \times \Minfty \smallsetminus \Delta$.

Fix a base-point $o \in \M$. For each $(x,y) \in \Minfty \times \Minfty \smallsetminus \Delta$, parametrize $g_{xy} \colon \mathbb{R} \to \M$ as the unique unit-speed geodesic with $g_{xy}(-\infty) = x$, $g_{xy}(\infty) = y$, and $g_{xy}(0) = \Pi_{g_{xy}}(o)$. To show that $g_{xy}$ varies continuously in $(x,y) \in \Minfty \times \Minfty \smallsetminus \Delta$, it suffices to show 
\begin{lemma}\label{lemma:uni-convergence-of-geo}
    If $(x_n, y_n) \to (x, y)$ in $\Minfty \times \Minfty \smallsetminus \Delta$, then $g_{x_n y_n}(t) \to g_{xy}(t)$ locally uniformly, that is, $g_{x_n y_n}(t)$ converges uniformly to $g_{xy}(t)$ on any compact subset of $\mathbb{R}$.
\end{lemma}

\begin{proof}[Proof of \cref{lemma:uni-convergence-of-geo}]
Set $p_n = g_{x_n y_n}(0) = \Pi_{g_{x_n y_n}}(o)$. We first show that $\{p_n\}$ is bounded. Since $x \neq y$, the geodesic $g_{xy}$ is a well defined and closed subset at finite distance $d_0 = d(o, g_{xy})<\infty$ from $o$. For large $n$, $(x_n, y_n)$ is close to $(x,y)$ in the product (cone) topology, hence $g_{x_n y_n}$ passes within a uniformly bounded neighborhood of $o$ (see the ``fellow-traveler property'' in \cite[Chapter III.H, p.~457]{Bridson-Haefliger}). Hence $d(o, p_n)$ is uniformly bounded and $\{p_n\}$ lies in a compact set. Therefore, every subsequence of $\{p_n\}$ has a convergent subsequence with limit $p_\infty \in \M$. We will see that $p_\infty$ is independent of the subsequence, and in fact, $p_\infty=\Pi_{g_{xy}}(o)$.

Since $\Sigma$ has negative curvature, geodesic rays from a fixed point depend continuously on their forward endpoint in the cone topology. Hence, $g_{x_n y_n}|_{[0,\infty)}$ with $g_{x_n y_n}(0)=p_n$ converges locally uniformly to $g_{p_\infty y}|_{[0,\infty)}$, and similarly, $g_{x_n y_n}|_{(-\infty,0]}$ converges locally uniformly to $g_{xp_\infty}|_{(-\infty,0]}$. 
The limiting bi-infinite geodesic therefore has endpoints $x$ and $y$ and passes through $p_\infty$. By uniqueness of the complete geodesic with endpoints $x$ and $y$, this limiting geodesic is $g_{xy}$, and hence $p_\infty = \Pi_{g_{xy}}(o)$. Since  $p_\infty = \Pi_{g_{xy}}(o)$ for every subsequence of $\{p_n\}$, the full sequence satisfies $p_n\to \Pi_{g_{xy}}(o)$, and   $g_{x_n y_n} \to g_{xy}$ locally uniformly.
\end{proof}

By the triangle inequality,
\begin{equation}\label{eqn:orth-proj-joint-continuous}
d\big(\Pi_{g_{x_n y_n}}(p_n),\, \Pi_{g_{xy}}(p)\big)\leq
\underbracket{d\big(\Pi_{g_{x_n y_n}}(p_n),\, \Pi_{g_{x_n y_n}}(p)\big)}_{\mathclap{\leq d(p_n, p) \to 0\text{ since }\Pi_g\text{ is 1-Lipschitz}}}
+
d\big(\Pi_{g_{x_n y_n}}(p),\, \Pi_{g_{xy}}(p)\big).
\end{equation}
To study $d\big(\Pi_{g_{x_n y_n}}(p),\, \Pi_{g_{xy}}(p)\big)$, set $q_n = \Pi_{g_{x_n y_n}}(p) = g_{x_n y_n}(t_n)$ with $t_n \in \mathbb{R}$, and $q = \Pi_{g_{xy}}(p)$. Since $g_{x_n y_n} \to g_{xy}$ locally uniformly, we have $d(q, g_{x_n y_n}) \to 0$. Since $d(p, g_{x_n y_n}) \leq d(p, q) + d(q, g_{x_n y_n})$ is uniformly bounded and $g_{x_n y_n}$ is parametrized by arc length with $g_{x_n y_n}(0)$ bounded, the sequence $\{t_n\}$ is bounded. Taking a convergent subsequence $t_{n_k} \to t_*$, we have
\[
q_{n_k} = g_{x_{n_k} y_{n_k}}(t_{n_k}) \to g_{xy}(t_*)
\]
by locally uniform convergence of the geodesics. The limit $g_{xy}(t_*)$ lies on $g_{xy}$ and satisfies
\[
d\big(p,\, g_{xy}(t_*)\big) = \lim_{k} d\big(p,\, q_{n_k}\big) = \lim_k d\big(p,\, g_{x_{n_k} y_{n_k}}\big) = d(p, g_{xy}).
\]
Hence, $g_{xy}(t_*) = q$ by the uniqueness of orthogonal projection. Since every subsequence has the same limit, we have $q_n \to q$, and $d\big(\Pi_{g_{x_n y_n}}(p),\, \Pi_{g_{xy}}(p)\big)$ converges to $0$. Therefore, $d(\mathcal{P}(p_n, x_n, y_n), \mathcal{P}(p,x,y)) \to 0$ in \eqref{eqn:orth-proj-joint-continuous}. Since the sequence was arbitrary, $\mathcal{P}$ is continuous.
\end{proof}

\begin{proof}[Proof of \cref{prop:orbit-equivlence-partial}]
This is vacuous when $\mu=0$. We assume $\mu>0$; the proof for $\mu<0$ is similar. Define \[P\colon \smm\to \smm, \; v\mapsto\dot g(0),\] where $g$ is the boundary chord (\cref{def:angle-for-pqtilde-end-points}) of $\gav$ with $g(0)=\Pi_g(\pi v)$  (\cref{def:ortho-projection}). $P$ sends orbits of $\f$ to those of the geodesic flow. We will prove that $P$ is a homeomorphism by showing $P$ is bijective, continuous and closed. Note that \ref{H3} is only needed to guarantee the injectivity of $P$, but the rest of the proof also works assuming only \ref{H2} (although this does not yield uniqueness of $\eta$ in the proof of surjectivity; see \cref{remark:orbit-factor}).

Injectivity: First assume $v_1\neq v_2\in\smm$ are tangent to distinct \m\ geodesics $\ga_{v_1}$ and $\ga_{v_2}$. \ref{H3} implies that either $\ga_{v_1}(\infty)\neq \ga_{v_2}(\infty)$ or $\ga_{v_1}(-\infty)\neq \ga_{v_2}(-\infty)$, so by \cref{prop:boundary-bijection}, $P(v_1)$ and $P(v_2)$ are tangent to different boundary chords, hence $P(v_1)\neq P(v_2)$. Next we prove that if $\dot \ga(t_1)\eqqcolon v_1\neq v_2 \coloneqq \dot\ga(t_2)$ for some $\ga$ and $t_1 \neq t_2$, then $P(v_1)\neq P(v_2)$. Suppose the contrary, that is, points $\ga(t_1)$ and $\ga(t_2)$ project onto the same point on the boundary chord $g$. Assuming $t_1<t_2$ without loss of generality, the geodesic $g_1\dfn g_{\ga(t_1)\ga(t_2)}$ is therefore such that $x' \dfn g_1(-\infty)\in\widehat{xy}$ and $y' \dfn g_1(\infty)\in\widehat{yx}$ where $x\dfn \ga(-\infty)$ and $y\dfn \ga(\infty)$. By \cref{rmk:general-behaviors-easy-observation}(\ref{LeftRightTangency}), the chord of $\ga([t_1,t_2])$ is to the left of $\ga([t_1,t_2])$ with both intersections transverse. Since $x'\in\widehat{xy}$ and $y'\in\widehat{yx}$, there exists $t_0\notin\{t_1,t_2\}$ such that $\ga(\re)\cap g_1(\re)\supset\{\ga(t_0),\ga(t_1),\ga(t_2)\}$, which contradicts that a \m\ geodesic intersects any geodesic at most twice (\cref{rmk:general-behaviors-easy-observation}(\ref{LeftRightTangency})).



Surjectivity: Let $u\in \smm$, and let $\eta$ be the unique (up to reparametrization) \m\  geodesic with the boundary chord $g_u$. Consider the geodesic $g_{u,\perp}$ orthogonal to $g_u$ at $\pi u$. By \cref{lemma:ortho-proj-any-point}, $g_{u,\perp}$ intersects $\eta$ at some point $p$. Let $\eta_0$ be the reparametrization of $\eta$ such that $\eta_0(0)=p$, and therefore, $P^{-1}(u)=\dot \eta_0(0)$.

Continuity: Let $v_n\in\smm$ and $v_n \xrightarrow{n\to\infty} v$. Then $\pi P(v_n)\to \pi P(v)$ by \cref{prop:joint-continuous}, and $P(v_n)\to P(v)$ once we show that $\measuredangle (\sigma P(v_n), P(v))\to 0$ where $\sigma P(v_n)$ denotes the parallel transport of $P(v_n)$ along the minimizing geodesic from $\pi v_n$ to $\pi v$. To see this, note that with respect to the cone topology, $g_{P(v_n)}(\pm \infty)=\ga_{v_n}(\pm\infty)\to \ga_{v}(\pm\infty)=g_{P(v)}(\pm\infty)$. 

The proof that $P^{-1}$ is continuous is analogous: given $u_n \to u$ in $\smm$, let $\eta_n$ be the unique \m\ geodesic such that $g_{u_n}$ is the boundary chord of $\eta_n$ with $P^{-1}(u_n) = \dot\eta_n(0)$. 
Since the endpoints of $g_{u_n}$ converge to those
of $g_u$, the aforementioned fellow-traveler property implies that the
$g_{u_n}(0)$ are bounded, and the same subsequence argument
as in \cref{lemma:uni-convergence-of-geo} gives $g_{u_n}
\to g_u$ locally uniformly.  
Hence, the footpoints $\pi P^{-1}(u_n)$ are bounded, and any subsequential limit must equal to $\pi P^{-1}(u)$ by the uniqueness of orthogonal projection and the local uniform convergence $g_{u_n} \to g_u$. Hence, $\pi P^{-1}(u_n) \to \pi P^{-1}(u)$. The tangent directions converge since the endpoints of $\eta_n$ coincide with those of $g_{u_n}$, which converge to those of $g_u$.
\end{proof}

\subsection{Magnetic flat strips and axes}\label{section:mag-axes-and-flat-strip}

In this subsection, we study magnetic flat strips (\cref{def:magnetic-flat-strip}), which arise when an ordered pair $(x,y) \in\Minfty\times\Minfty$ admits more than one magnetic geodesic from $x$ to $y$. Magnetic flat strips give rise to singular vectors (but $\Sing$ can be nonempty even in their absence), and these are not hyperbolic (\cref{prop:dim-of-subbundle}, so a magnetic flow with a singular vector is not uniformly hyperbolic). We also study axial isometries of $\M$ and their action on magnetic geodesics, which gives rise to magnetic axes and periodic orbits.
\begin{proposition}\label{prop:magnetic-flat-strip-is-connected}
    Suppose $(\Sigma,\mu)$ satisfies \ref{H1} and $(x,y) \in\Minfty\times\Minfty$. If there exist two \m\  geodesics $\ga_1,\ga_2$ with $\ga_i(-\infty)=x$ and $\ga_i(\infty)=y$ for $i\in\{1,2\}$, then $\ga_{xp}(\infty)=y$ and $\ga_{py}(-\infty)=x$ for any point $p$ in the closed subset $S$ of $\M$ bounded by $\ga_1$ and $\ga_2$.
\end{proposition}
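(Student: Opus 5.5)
We argue by a separation (non-crossing) argument, using that two magnetic geodesics sharing a boundary point cannot meet in $\M$. First the geometric setup. By \cref{thm:pqtilde}, the geodesics $\ga_1,\ga_2$ (distinct, both from $x$ to $y$) are disjoint in $\M$. Indeed, if they met at $q$, then the restrictions to $(-\infty,q]$ would be two magnetic geodesics from $x$ to $q$, contradicting uniqueness of $\ga_{xq}$. So $\ga_1(\re)\cup\ga_2(\re)\cup\{x,y\}$ is a Jordan curve in the closed disk $\om$ (\cref{ConeTopology}). We take $S$ to be the closed region of $\M$ it bounds, i.e.\ the part of the disk lying between the two curves, whose closure in $\om$ meets $\Minfty$ only in $\{x,y\}$. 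If $p\in\ga_1\cup\ga_2$, the claim is immediate from uniqueness in \cref{thm:pqtilde}, so we assume $p$ lies in the interior of $S$.

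Next we show $\ga_{xp}(\infty)=y$. The magnetic geodesic $\ga\dfn\ga_{xp}$ starts at $x$ and passes through the interior point $p$. Suppose it met $\ga_i$ at some point $q\in\M$. Then $\ga$ and $\ga_i$ would be two distinct magnetic geodesics from $x$ to $q$, contradicting \cref{thm:pqtilde}. They are distinct because $\ga_i$ does not pass through $p$, while $\ga$ does before or after $q$; in either case \cref{thm:connectivity-combo}(2) forbids a second meeting. Hence $\ga(\re)$ stays in the interior of $S$. Since $\ga$ is unbounded in both directions (\cref{cor:magnetic-geodesic-is-unbounded}) and $\ga(t)\to\ga(\infty)$ in the cone topology, its forward endpoint lies in $\overline S\cap\Minfty=\{x,y\}$. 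By \cref{cor:h2-diff-endpoints}, $\ga(\infty)\neq x$ when \ref{H2} holds, so $\ga(\infty)=y$. If only \ref{H1} holds and $\Sigma$ is magnetically flat, then by \cref{prop:h2-has-no-closed-orbit} every magnetic geodesic has equal endpoints. That contradicts the existence of $\ga_1$ with $x\neq y$, so for $x\neq y$ the flat case cannot occur. If $x=y$ were allowed, we would need to handle it separately; we expect the statement intends $x\neq y$, as in \cref{thm:connectivity-combo}(\ref{l:flatstrip}).

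The statement $\ga_{py}(-\infty)=x$ follows by the same argument with time reversed. By \cref{thm:pqtilde} there is a unique $\ga_{py}$ through $p$ with forward endpoint $y$, and it cannot meet $\ga_1$ or $\ga_2$ by uniqueness of magnetic geodesics from $q$ to $y$. So it stays in $S$, and its backward endpoint is in $\{x,y\}$ and distinct from $y$.

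The main obstacle is making the topology rigorous. First, we must confirm that the curve $\ga_1\cup\ga_2\cup\{x,y\}$ is genuinely a Jordan curve in $\om$, which needs continuity of $\ga_i$ at $\pm\infty$ in the cone topology. Second, we must confirm that a curve in the interior of $S$ that is unbounded has its cone-topology limits in $\{x,y\}$. For the latter we would use the truncated-cone neighborhood basis from \cref{ConeTopology}. Points of $\Minfty\smallsetminus\{x,y\}$ have cone neighborhoods disjoint from $S$, since $\ga_1,\ga_2$ converge to $x,y$ and thus separate such a neighborhood from $S$.
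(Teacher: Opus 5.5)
Your argument is correct and rests on the same key fact as the paper's proof: uniqueness of the \m\ geodesic from $x\in\Minfty$ to a point $q\in\M$ (\cref{thm:pqtilde}, i.e.\ \cref{thm:connectivity-combo}(\ref{l:nonflatstrip})) prevents $\ga_{xp}$ from meeting $\ga_1$ or $\ga_2$; you run it directly (confinement to $S$, hence $\ga_{xp}(\infty)\in\{x,y\}$), whereas the paper argues by contradiction that an endpoint other than $y$ would force a crossing. Your explicit exclusion of $\ga_{xp}(\infty)=x$ via \cref{cor:h2-diff-endpoints} makes precise a step the paper leaves tacit, and the case $x=y$ that you set aside is immediate: by \cref{prop:h2-has-no-closed-orbit} it forces magnetic flatness, and then every \m\ geodesic has coinciding endpoints.
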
 

\begin{proof}
We prove $\ga_{xp}(\infty)=y$ by contradiction, and the proof of $\ga_{py}(-\infty)=x$ is similar.
    Suppose that $\ga_{xp}(\infty)\neq y$ for some $p\in S$. Then $\ga_{xp}\neq \ga_i, i\in\{1,2\}$. By \cref{cor:magnetic-geodesic-is-unbounded}, $\ga_{xp}$ intersects either $\ga_1$ or $\ga_2$: if 
    $\ga_{xp}$ intersects $\ga_2$ at $q\in\M$, then there are two distinct \m\  geodesics, $\ga_{xp}$ and $\ga_2$,
    from $x\in\Minfty$ to $q\in \M$, contrary to \cref{thm:connectivity-combo}(\ref{l:nonflatstrip}). 
    The same argument works when $\ga_{xp}$ intersects $\ga_1$.
\end{proof}

Combining \cref{prop:stable-mag-jf-same-endpoint}, \cref{lemma:sing-vector-has-constant-y,lemma:sing-zero-0-magnetic-curvature}, the following result is immediate.

\begin{corollary}\label{cor:magnetic-flat-strip-has-0-magnetic-curvature-and-magnetic-flat-strip-only-singular}
    If $(\Sigma,\mu)$ satisfies \ref{H2}, then
    \begin{enumerate}
        \item If  $p\in S$ as in \cref{prop:magnetic-flat-strip-is-connected}, then $\kmu(p)=0$ and $\dot\ga_{xp}(t)\in\Sing$ for all $t\in\re$;\label{cor:magnetic-flat-strip-has-0-magnetic-curvature}
        \item \label{Cor:magnetic-flat-strip-only-singular}
    for $v\in \Reg$ (\cref{def:sing-reg-mag-vectors}), $\ga_v$ is the unique \m\  geodesic from $\ga_v(-\infty)$ to $\ga_v(\infty)$ up to reparametrization.
    \end{enumerate}
\end{corollary}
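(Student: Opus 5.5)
The plan is to treat the two items separately, both resting on \cref{prop:magnetic-flat-strip-is-connected}, \cref{prop:stable-mag-jf-same-endpoint} and \cref{lem:parallel-iff-stable-and-unstable}.

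For (\ref{cor:magnetic-flat-strip-has-0-magnetic-curvature}), fix $p$ in the closed region $S$ bounded by $\ga_1,\ga_2$. I would parametrize a one-parameter family of \m\ geodesics through $S$ transverse to $\ga_{xp}$. Concretely, take a smooth transversal arc $c\colon(-\eps,\eps)\to\M$ through $p=c(0)$ whose image lies in $S$, or in $S$ together with one of its boundary geodesics if $p\in\partial S$. Set $\Gamma(s,\cdot)\dfn\ga_{x\,c(s)}$, normalized so that $\Gamma(s,0)=c(s)$.

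By \cref{prop:magnetic-flat-strip-is-connected}, every $\Gamma(s,\cdot)$ runs from $x$ to $y$, so all of them are both forward and backward asymptotic. The converse direction of \cref{prop:stable-mag-jf-same-endpoint} then makes $\Gamma$ both stable and unstable. Hence \cref{lem:parallel-iff-stable-and-unstable} shows that the Jacobi field $J_0$ along $\ga_{xp}$ is parallel, i.e.\ $y$ is constant. Because $c$ is transverse to $\ga_{xp}$, we have $y(0)\neq0$, so $J_0$ is nontangential and $\dot\ga_{xp}(t)\in\Sing$ for all $t$. Then \cref{lemma:sing-zero-0-magnetic-curvature} gives $\kmu(\ga_{xp}(t))=0$ for all $t$, and in particular $\kmu(p)=0$.

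The main obstacle is smoothness of the variation $\Gamma$, since \cref{def:magnetic-jacobi-variation-field} requires a smooth $\Gamma$. I would address it in either of two ways. The first is to invoke smooth dependence of $s\mapsto\dot\ga_{x c(s)}(0)$ via the homeomorphism $H^-_p$ of \cref{thm:pqtilde}. The second is to avoid the issue altogether: use instead the stable variation built in the proof of \cref{prop:boundary-bijection}, with initial curve $c$ and $\Gamma(0,\cdot)=\ga_{xp}$. Its geodesics are forward asymptotic, so by \cref{thm:pqtilde} and \cref{prop:boundary-bijection} each of them equals $\ga_{c(s)y}$. By \cref{prop:magnetic-flat-strip-is-connected}, each also has backward endpoint $x$. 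So this variation is smooth, stable, and unstable. I would take this second route.

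For (\ref{Cor:magnetic-flat-strip-only-singular}), I would argue by contrapositive. Suppose $v\in\Reg$ and some \m\ geodesic $\ga'\neq\ga_v$ (up to reparametrization) has the same endpoints. By \cref{thm:connectivity-combo}, two \m\ geodesics meet at most once, and both go from $x$ to $y$. The intersection would give two distinct \m\ geodesics from $x$ to a point of $\M$, contradicting \cref{thm:connectivity-combo}(\ref{l:nonflatstrip}). So the two geodesics are disjoint and bound a nondegenerate strip $S$ containing $\ga_v$. Part (\ref{cor:magnetic-flat-strip-has-0-magnetic-curvature}) applied at $p=\pi v\in S$ yields $v=\dot\ga_{xp}(0)\in\Sing$ up to reparametrization, which is a contradiction.
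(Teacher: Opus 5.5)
Your argument is the paper's own. Its proof just combines \cref{prop:stable-mag-jf-same-endpoint}, \cref{lemma:sing-vector-has-constant-y} and \cref{lemma:sing-zero-0-magnetic-curvature} exactly as you do, and (2) is meant to follow from (1) because a second \m\ geodesic with the same endpoints makes $\ga_v$ an edge of a strip. Your second route to smoothness is the right one: the initial vectors are values of the \m\ radial field toward $y$, which is $C^1$ by \cref{prop:c2-regularity-of-magnetic-horosphere}. By contrast, $H^-_p$ only gives continuity within one fiber, not dependence on the footpoint.

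The step that fails as written is the case $p\in\partial S$. It is exactly the case (2) needs, because $\pi v$ lies on the boundary geodesic $\ga_v$ of the strip. A transversal arc through a boundary point cannot stay in $S$ on both sides. For $s$ on the outer side, \cref{prop:magnetic-flat-strip-is-connected} gives nothing, and $\ga_{c(s)y}$ need not have backward endpoint $x$. So you cannot apply the converse direction of \cref{prop:stable-mag-jf-same-endpoint} on an open $s$-interval around $0$ to conclude that $J_0$ is unstable.

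The repair is short. Your argument is valid at interior points of $S$ (and the interior is nonempty, since the two geodesics are disjoint), so $\kmu=0$ on the interior. Since $S$ is the closure of its interior and $\kmu$ is continuous, $\kmu\equiv0$ on $S$, in particular along $\ga_v$. Then \cref{lemma:sing-zero-0-magnetic-curvature} gives $v\in\Sing$. Alternatively, use the variation only for $s\in(0,\eps)$, where every $J_s$ is unstable, and let $s\to0^+$. By \cref{lemma:stable-y-is-monotonic}, $\lvert y_s(t)\rvert\le\lvert y_s(0)\rvert$ for $t\le0$, so the limit $y_0$ is bounded on $(-\infty,0]$ and $J_0$ is unstable.
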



Recall that the (nonuniformly hyperbolic) geodesic flow of a surface $N$ with nonpositive Gaussian curvature may have flat strips, which are defined as an isometric immersion $I\colon[0, C] \times \re \to N$ such that $g_c(t)\coloneqq I(c, t)$ is a geodesic for any $c\in[0,C]$ (and $C$ is then called the width of the strip). The geodesics in $I([0,C]\times \re)$ behave the same as on a flat surface. For example, these geodesics are parallel (in fact, equidistant) and do not carry any hyperbolicity.

The following lemma demonstrates a similar phenomenon in the magnetic setting and explains the motivation for the name ``magnetic flat strip" (\cref{def:magnetic-flat-strip}).

\begin{lemma}\label{lemma:magnetic-flat-strip-parallel}
    If $(\Sigma,\mu)$ satisfies \ref{H2} and two \m\  geodesics $\ga_1$ and $\ga_2$ are such that 
    $\ga_1(\infty)=\ga_2(\infty)$ and $\ga_1(-\infty)=\ga_2(-\infty)$, then $\ga_1$ and $\ga_2$ are parallel, that is, $t\mapsto d(\ga_1(t),\ga_2(\re))$ is constant.
\end{lemma}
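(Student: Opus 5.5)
We may assume $\ga_1(\re)\neq\ga_2(\re)$; otherwise there is nothing to prove. The plan is to build a magnetic variation of the strip between $\ga_1$ and $\ga_2$ whose orthogonal components are bounded, conclude that they are constant, and then read off the constant distance.

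\textbf{Step 1 (geodesics do not cross; the strip).} By \cref{thm:connectivity-combo}, two \m\ geodesics meet at most once on $\M$. If $\ga_1$ and $\ga_2$ met at some $q\in\M$, they would be two distinct \m\ geodesics from $x\dfn\ga_i(-\infty)$ to $q$, contradicting \cref{thm:pqtilde}. So they are disjoint and bound a closed region $S$. By \cref{prop:magnetic-flat-strip-is-connected}, every $p\in S$ satisfies $\ga_{xp}(\infty)=y$, where $y\dfn\ga_i(\infty)$. The family $\{\ga_{xp}\}_{p\in S}$ therefore consists of pairwise disjoint \m\ geodesics from $x$ to $y$, and it foliates $S$ (again by \cref{thm:pqtilde}).

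\textbf{Step 2 (the variation is parallel).} Let $\sigma\colon[0,C]\to S$ be a smooth transversal, for instance the geodesic segment from $\ga_1(0)$ to $\ga_2(0)$. Set $\Gamma(c,t)\dfn\ga_{x\sigma(c)}(t)$, normalized by $\Gamma(c,0)=\sigma(c)$. Smoothness of $\Gamma$ should follow from the continuity statement in \cref{thm:pqtilde} (the maps $H^\pm_p$ are homeomorphisms) together with smooth dependence of the flow on initial data. All the curves $\Gamma(c,\cdot)$ are both forward and backward asymptotic, so \cref{prop:stable-mag-jf-same-endpoint} makes $\Gamma$ both stable and unstable. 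Then \cref{lem:parallel-iff-stable-and-unstable} shows that each Jacobi field $J_c$ is parallel, that is, $y_c\equiv y_c(0)$ is constant, and it is nonzero by \cref{lemma:stable-y-is-monotonic} because $\Gamma(c,\cdot)$ are distinct curves.

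I expect the main obstacle to be the regularity of $\Gamma$ in $c$. If this is hard to get directly, I would instead argue pairwise. For nearby $c,c'$, use the stable variation from the proof of \cref{prop:boundary-bijection}, which produces $\ga_{x\sigma(c')}$ from $\ga_{x\sigma(c)}$ via \cref{thm:pqtilde}. Do the same with the unstable variation. By uniqueness in \cref{thm:pqtilde}, these two variations coincide with the family above. That family is then a stable variation and an unstable variation at once.

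\textbf{Step 3 (constant distance).} Reparametrize $\Gamma$ so that the $c$-lines are orthogonal to the \m\ geodesics, as in \cref{prop:stable-mag-jf-same-endpoint}. For each $t$, let $\beta_t\colon c\mapsto\Gamma(c,r_c(t))$ be such an orthogonal transversal starting at $\ga_1(t)$. Since each $y_c$ is constant, the length of $\beta_t$ is $\int_0^C|y_c|\,dc$, which is independent of $t$. This alone only gives a bounded distance, not a constant one. To get constancy, note that every point of $S$ has $\kmu=0$ by \cref{cor:magnetic-flat-strip-has-0-magnetic-curvature-and-magnetic-flat-strip-only-singular}. By \cref{rmk:shearing-effect}, $\Gamma$ is then a parallel variation whose orthogonal components are constant, and the flow acts on $S$ by shears along the \m\ geodesics. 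I would use this to argue that $\Gamma$ is a local isometry onto a strip in which the orthogonal curves have curvature $u=\dot y/y=0$ by \eqref{eqn:riccati}. If those curves are the minimizers realizing $d(\ga_1(t),\ga_2(\re))$, the distance equals $\int_0^C|y_c|\,dc$ for every $t$. I would verify the minimization via the first variation formula: minimizers from $\ga_1(t)$ to $\ga_2(\re)$ meet $\ga_2$ orthogonally, and by \cref{rmk:orthogonal-projection} the orthogonal curve is the one realizing the distance. That identification is the delicate point to check.
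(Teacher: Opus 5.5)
Your route is the paper's. The paper also uses the variation $\Gamma(s,\cdot)=\ga_{x\tilde C(s)}$ across the strip. It gets constant orthogonal components $y_s$ from $\kmu=0$ on $S$ and \cref{lemma:sing-vector-has-constant-y}, which rests on the same stable-and-unstable reasoning you give. It notes via \cref{remark:riccati-equation} that the orthogonal trajectories $C_t$ are geodesic segments, and it concludes from $l(C_t)=\int_0^a|y_s|\,ds$ that this length does not depend on $t$. The paper stops there and tacitly takes $l(C_t)$ to be $d(\ga_1(t),\ga_2(\re))$. So the identification you flag as delicate is not carried out in the paper either.

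The justification you propose for that step does not work as cited. \cref{remark:orthogonal-projection} is about projecting onto a geodesic, where uniqueness comes from strict convexity of $s\mapsto d(p,g(s))$, and $\ga_2$ is not a geodesic. The step can be closed as follows.
\begin{itemize}
\item Since $d(\ga_2(0),\ga_2(s))\to\infty$ as $|s|\to\infty$ (\cref{cor:magnetic-geodesic-is-unbounded}), some $\ga_2(s_0)$ is nearest to $\ga_1(t)$.
\item The minimizing segment cannot touch $\ga_2$ before its endpoint, so it lies on the side of $\ga_2$ containing $\ga_1$. By the first variation formula it meets $\ga_2$ orthogonally.
\item The geodesic rays leaving $\ga_2$ orthogonally into that side are pairwise disjoint. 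This is \cref{lemma:gamma-jf-disjoint}, applied to $\ga_2$ or to its reversal $t\mapsto\ga_2(-t)$; the reversal is a $(-\mu)$-magnetic geodesic by \cref{rmk:general-behaviors-easy-observation}\eqref{ReverseVersion}, and $(\Sigma,-\mu)$ also satisfies \ref{H1}.
\item $C_t$ is such a ray through $\ga_1(t)$, since it is orthogonal to every leaf, $\ga_2$ included. Hence $C_t$ is the minimizer, and $d(\ga_1(t),\ga_2(\re))=\int_0^a|y_s|\,ds$ for every $t$.
\end{itemize}

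A smaller point concerns the transversal: choose one that stays in $S$. The paper uses the geodesic orthogonal to $\ga_1$ at $\ga_1(0)$, which is the orthogonal trajectory itself. The segment from $\ga_1(0)$ to $\ga_2(0)$ need not stay in $S$, because $S$ lies on the convex side of one boundary curve but on the nonconvex side of the other. Leaves $\ga_{x\sigma(c)}$ through points outside $S$ need not end at $y$.
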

The proof uses the following observation.
\begin{lemma}\label{lemma:finite-distance-to-point-with-negative-kmu}
    If $(\Sigma,\mu)$ satisfies \ref{H2}, then for any point $q\in \M$ and any \m\  geodesic $\ga$, there exists a point $p\in\M$ with $\kmu(p)<0$ such that the geodesic segment from $p$ to $q$ intersects $\ga$.
\end{lemma}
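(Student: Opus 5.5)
The plan is to use the deck group to place a point of negative magnetic curvature on the side of $\ga$ opposite to $q$. The segment from that point to $q$ will then have to cross $\ga$ for purely topological reasons. Let $U\dfn\{p\in\M\mid \kmu(p)<0\}$. By \ref{H2}, $U$ is the lift of a nonempty open subset of $\Sigma$, so it is $D$-invariant. In particular, every translate $\phi D$ of a (closed) fundamental domain $D$ meets $U$.

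If $q\in\ga(\re)$, any $p\in U$ works, since the segment $g_{pq}$ meets $\ga$ at $q$. So assume $q\notin\ga(\re)$. We use the following facts.
\begin{itemize}
\item By \cref{lemma:pq-connectivity-and-no-self-intersect}, $\ga$ does not self-intersect.
\item By \cref{cor:magnetic-geodesic-is-unbounded}, $\ga$ is unbounded in both directions.
\item By the lemma following \cref{Halfplanechords}, $\ga(t)\to\ga(\pm\infty)$ in the cone topology as $t\to\pm\infty$.
\item By \cref{cor:h2-diff-endpoints}, $\ga(-\infty)\neq\ga(\infty)$.
\end{itemize}
Hence $\oline{\ga}\dfn\ga(\re)\cup\{\ga(\pm\infty)\}$ is a compact arc in $\om$, which is a closed disk by \cref{ConeTopology}. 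This arc joins two distinct boundary points and meets $\Minfty$ only at its endpoints. By the Jordan curve theorem applied in the closed disk, $\M\smallsetminus\ga(\re)$ has exactly two components $A$ and $B$. The closure of $A$ meets $\Minfty$ in the closed arc $\widehat{\ga(-\infty)\ga(\infty)}$, and the closure of $B$ meets it in the complementary closed arc. Say $q\in A$.

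Next, pick a point $z$ in the open arc $\widehat{\ga(\infty)\ga(-\infty)}$, which bounds $B$. Since $\oline{\ga}$ is compact and does not contain $z$, some cone-topology neighborhood of $z$ misses $\oline{\ga}$. By \cref{ConeTopology}, truncated cones form a neighborhood basis at $z$, so there is a truncated cone $T(v,\eps,r)$ at $z$ disjoint from $\oline{\ga}$. Its intersection with $\M$ is connected: it is swept out by the geodesic rays from $\pi v$ with angle less than $\eps$ to $v$, each beyond distance $r$. Moreover, $z$ lies in its closure on the $B$ side. Hence $T(v,\eps,r)\cap\M\subset B$. By \cref{prop:cone-nbhd-contains-fundamental-domain}, this cone contains a translate $\phi D$, and therefore a point $p\in U\cap B$.

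Finally, the geodesic segment $g_{pq}$ is a connected set joining $p\in B$ to $q\in A$. It must therefore meet $\M\smallsetminus(A\cup B)=\ga(\re)$, which proves the lemma. The step that needs the most care is the topological separation claim. The point is that the closure of $\ga$ in $\om$ is an arc with distinct endpoints on $\Minfty$, and this is exactly where \ref{H2}, via \cref{cor:h2-diff-endpoints}, and the convergence $\ga(t)\to\ga(\pm\infty)$ enter. We also need that a small truncated cone is connected in $\M$, so that it lies in a single complementary component.
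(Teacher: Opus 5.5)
Your proof is correct and is essentially the paper's argument. The paper argues by contradiction that otherwise the half-space bounded by $\ga$ on the side away from $q$ would have $\kmu\equiv0$ and hence contain a fundamental domain on which $\kmu\equiv0$, contrary to \ref{H2}; you run the same idea directly and supply the details the paper leaves implicit, namely the two-component separation (using the distinct endpoints from \cref{cor:h2-diff-endpoints}) and the truncated-cone argument with \cref{prop:cone-nbhd-contains-fundamental-domain} showing that this half-space contains a translate of a fundamental domain.
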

\begin{proof}
    Otherwise, there is a \m\  geodesic that bounds a half-space of $\M$ on which $\kmu\equiv0$, so there is  a fundamental domain on which $\kmu\equiv0$, contrary to \ref{H2}.
\end{proof}

\begin{proof}[Proof of \cref{lemma:magnetic-flat-strip-parallel}]
Without loss of generality, assume 
$\ga_1(\re)$ is in the subset of $\M$ bounded by $\ga_2(\re)$ and the arc $\widehat{\xi_1\xi_2}$ in $\Minfty$ introduced before the proof of 
\cref{thm:ptildeqtilde-magnetic-visibility-property},
where $\xi_1\dfn \ga_1(-\infty)=\ga_2(-\infty)$ and $\xi_2\dfn \ga_1(\infty)=\ga_2(\infty)$. 
{\color{black}
Let $C\colon[0,a]\to\M$ be a geodesic segment such that $C(0)=\ga_1(0)$, $\dot C(0)\perp \dot\ga_1(0)$  and $C(a)\in\ga_2(\re)$. \cref{lemma:finite-distance-to-point-with-negative-kmu} guaranties that $0<a<\infty$. Extend $C$ to $\tilde C\colon (-\eps,a+\eps)\to \M$, and define a smooth \m\  variation $\Gamma\colon (-\eps,a+\eps)\times\re\to \M$ by $\Gamma(s,\cdot)=\ga_{x\tilde C(s)}(\cdot)$. Note that $\Gamma([0,a],\re)=S$, where $S$ is the closed subset of $\M$ bounded by $\ga_1(\re)$ and $\ga_2(\re)$.
By \cref{cor:magnetic-flat-strip-has-0-magnetic-curvature-and-magnetic-flat-strip-only-singular}\eqref{cor:magnetic-flat-strip-has-0-magnetic-curvature}, $\dot\ga_{x\tilde C(s)}(t)\in \Sing$ for all $s\in[0,a]$ and $t\in\re$. Moreover, since $C\perp \ga_1(\re)$,  by \cref{remark:riccati-equation}, the geodesic segment $C$ is orthogonal to $\ga_{xC(s)}(\re)$ for any $s\in[0,a]$. 

For any $t\in \re$, (uniquely) define a curve $C_t\colon [0,a]\to \M$ such that $C_t(0)=\ga_1(t)$, $C_t(s)\in\Gamma(s,\re)$ and $C_t$ is orthogonal to $\Gamma(s,\cdot)$ for any $s\in[0,a]$. In particular, $C_0=C$. 
Again by \cref{remark:riccati-equation} and the fact that $S$ is a \m\ flat strip, $\{C_t([0,a])\}_{t\in\re}$ is a family of geodesic segments. It is unknown so far for any $t\in \re$, whether $C_t([0,a])$ is parametrized by arc length as $C$ is. We show that the answer is positive. That is, $l(C_t([0,a]))$ is, in fact, constant for $t\in \re$.

    
    Consider the \m\  Jacobi field $J_s$ along each \m\  geodesic $\Gamma(s,\cdot)$ corresponding to $\Gamma$ with orthogonal component $y_s$.
    Since $\kmu(\Gamma(s,t))=0$ for all $s\in [0,a]$ and $t\in \re$, \cref{lemma:sing-vector-has-constant-y} implies that $t\mapsto y_s(t)$ is constant for all $s\in[0,a]$.    
    Define $r\colon [0,a]\times \re\to \re$ such that $\Gamma(s,r(s,t))=C_t(s)$ for all $s\in[0,a]$ and $t\in\re$. Then $l(C_t([0,a]))=\int_0^a \lvert y_s(r(s,t))\rvert ds$ for any $t\in\re$.  
    This completes the proof.
    }
\end{proof}

\begin{definition}[Magnetic flat strip]\label{def:magnetic-flat-strip}
{\color{black}
If $(\Sigma,\mu)$ satisfies \ref{H1} and $(x,y)\in\Minfty\times \Minfty$, then a \m\ flat strip of width $C > 0$ with endpoints $(x,y)$ is an embedding $S \colon [0,C] \times \re \to \M$ such that for each $c \in [0,C]$, the curve $S(c, \cdot)$ is a \m\ geodesic with $S(c, -\infty) = x$ and $S(c, \infty) = y$. The curves $S(0,\cdot)$ and $S(C,\cdot)$ are called the \emph{boundary \m\ geodesics} of $S$. If $(\Sigma,\mu)$ further satisfies \ref{H2}, then the \emph{maximal \m\ flat strip} $S_{xy}$ with endpoints $(x,y)$ is the \m\ flat strip of greatest width among all \m\ flat strips with endpoints $(x,y)$.}
\end{definition}

\begin{remark}[Basic properties of magnetic flat strips]\label{rmk:mag-flat-strip}
If $(\Sigma,\mu)$ satisfies \ref{H2}, then the following hold.
\begin{enumerate}
    \item By \cref{cor:magnetic-flat-strip-has-0-magnetic-curvature-and-magnetic-flat-strip-only-singular}(\ref{Cor:magnetic-flat-strip-only-singular}), \cref{lemma:magnetic-flat-strip-parallel,lemma:finite-distance-to-point-with-negative-kmu}, a \m\  flat strip has finite width and consists of singular \m\  geodesics. In particular, for any point $p$ in a \m\  flat strip, $\kmu(p)=0$ by \cref{cor:magnetic-flat-strip-has-0-magnetic-curvature-and-magnetic-flat-strip-only-singular}(\ref{cor:magnetic-flat-strip-has-0-magnetic-curvature}).
    \item With notations from \cref{def:magnetic-flat-strip}, \m\ geodesics $\{S(c,\cdot)\}_{c\in[0,C]}$ are pairwise parallel  by \cref{lemma:magnetic-flat-strip-parallel}. In particular, 
    the boundary \m\ geodesics of $S$ is such that $d_H(S(0,\cdot),S(C,\cdot))=C$.
    \item \m\ geodesics in a \m\ flat strip have the same boundary chord (up to reparametrization).\label{same-boundary-chord}
\end{enumerate}
When $\Sigma$ is $\mu$-magnetically flat, $\M$ is isometric to $\mathbb{H}^2(-\mu^2)$, and a 
\m\ flat strip of width $C$ corresponds isometrically to a family of parallel horizontal geodesics of width $C$ in $\mathbb{H}^2(-\mu^2)$.
\end{remark}
\textcolor{black}{Using axial isometries, we next derive further properties of magnetic flat strips and  $\Sing$, and establish the abundance of periodic orbits and density of nonwandering points (\cref{prop:periodic-orbits-vectors}, \cref{cor:nw-is-TM}).}

\begin{definition}[Magnetic axes]\label{def:mag-axis}
Given an axial isometry $\phi$, a \m\  geodesic $\ga$ is said to be a \m\  axis of $\phi$ if $\phi$ preserves $\ga$, that is, $\phi(\ga(t))=\ga(t+\omega)$ for some $\omega>0$.
\end{definition}
\begin{remark}\label{REMTranslationLength}
$\omega$ could naturally be called the \emph{\m\ translation length of $\phi$}, and it is natural to ask how it depends on $\mu$ and whether it is constant on flat strips. This would be basic for any study of the \m\ (marked) length spectrum \cite{MarkedLength,MR23,MR24Corr}.
\end{remark}
\begin{lemma}\label{Lemma:mag_strip_iso_action}
If $(\Sigma,\mu)$ satisfies \ref{H2}, $\phi$ is an isometry and $S$ is a \m\  flat strip of width $C$ with endpoints $(x, y)\in \Minfty\times \Minfty$, then
\begin{enumerate}
   \item $\phi(S)$ is a \m\  flat strip of width $C$ with endpoints $(\phi(x),\phi(y))\in\Minfty\times \Minfty$;\label{item1magstrip}
   \item if $\ga_L$ and $\ga_R$ are respectively the \emph{left and right boundary \m\  geodesic} of $S$, that is, the interior of $S$ is to the right of $\dot\ga_L(t)$ and to the left of $\dot\ga_R(t)$ for all $t\in\re$, then $\phi(\ga_L)$ and $\phi(\ga_R)$ are respectively the left and right boundary \m\  geodesic of $\phi(S)$; \label{item_boudnary-to-boundary}
   \item if the geodesic $g_{xy}$ is an axis of $\phi$, then $\phi$ preserves each \m\  geodesic $\ga$ in $S$ setwise, acting on each $\ga$ as a translation
   .\label{item3}
\end{enumerate}
\end{lemma}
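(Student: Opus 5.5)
The plan is to push everything through the fact that isometries send \m\ geodesics to \m\ geodesics (\cref{cor:isometry-preserves-geodesic-curvature}), together with the uniqueness statements of \cref{thm:connectivity-combo}. Throughout, I take $\phi$ orientation-preserving, as deck transformations of the oriented surface are; otherwise $\phi$ would send \m\ geodesics to $(-\mu)$-magnetic ones.

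For (\ref{item1magstrip}), set $\phi(S)\dfn \phi\circ S\colon[0,C]\times\re\to\M$. This is an embedding because $\phi$ is a diffeomorphism. Each curve $c\mapsto\phi(S(c,\cdot))$ is a \m\ geodesic by \cref{cor:isometry-preserves-geodesic-curvature}. An isometry maps asymptotic geodesics to asymptotic geodesics, so it extends to $\Minfty$; hence \cref{def:asymptotic-mag-geodesics} is preserved, and the endpoints become $(\phi(x),\phi(y))$. The width is preserved because $d_H(S(0,\cdot),S(C,\cdot))=C$ (\cref{rmk:mag-flat-strip}) and $\phi$ preserves Hausdorff distance.

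For (\ref{item_boudnary-to-boundary}), $\phi$ maps the interior of $S$ onto the interior of $\phi(S)$. Since $\phi$ preserves orientation, $d\phi(i v)=i\,d\phi(v)$. The interior of $S$ lies to the right of $\dot\ga_L(t)$, so the interior of $\phi(S)$ lies to the right of $d\phi\,\dot\ga_L(t)=\tfrac{d}{dt}\phi(\ga_L(t))$. Therefore $\phi(\ga_L)$ is the left boundary \m\ geodesic of $\phi(S)$, and the same argument applies to $\phi(\ga_R)$.

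For (\ref{item3}), suppose $g_{xy}$ is an axis of $\phi$. Then $\phi$ fixes $x$ and $y$, so by (\ref{item1magstrip}) the image $\phi(S)$ is again a strip with endpoints $(x,y)$. I will use the maximal strip $S_{xy}$, which contains $S$; the proof of \cref{lemma:magnetic-flat-strip-parallel} shows that the union of all \m\ geodesics from $x$ to $y$ is exactly this strip.

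\begin{enumerate}
\item $\phi$ permutes the \m\ geodesics from $x$ to $y$, so it preserves $S_{xy}$.
\item By (\ref{item_boudnary-to-boundary}), $\phi$ maps the left boundary geodesic of $S_{xy}$ to itself, and likewise the right one.
\item Distances are preserved: $d_H(\phi\ga,\ga_L)=d_H(\ga,\ga_L)$, and the strip geodesics are parallel at constant distances from $\ga_L$ by \cref{lemma:magnetic-flat-strip-parallel}. Hence each \m\ geodesic $\ga\subset S_{xy}$ satisfies $\phi(\ga(\re))=\ga(\re)$. This uses that distinct geodesics of the strip have distinct distances to $\ga_L$, which follows because the strip is an embedding.
\item Since $\phi$ maps \m\ geodesics to \m\ geodesics with unit speed, $\phi(\ga(t))=\ga(t+\omega_\ga)$ for some constant $\omega_\ga$.
\item $\omega_\ga$ is not zero: otherwise $\phi$ would fix a point of $\M$, but $\phi$ translates $g_{xy}$, and a nontrivial axial isometry of $\M$ has no fixed points. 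The sign $\omega_\ga>0$ follows because $\phi$ moves points toward $y$.
\end{enumerate}

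The main obstacle is step 3, showing that each individual \m\ geodesic (not just the strip) is preserved without relying on unproved facts about maximal strips. If the distance argument is delicate, I would fall back on a different route: $\phi$ restricted to $S_{xy}$ is an orientation-preserving isometry of a region isometric to a horizontal strip in $\mathbb{H}^2(-\mu^2)$ (\cref{rmk:mag-flat-strip}) preserving both boundary curves. Such a map must be a translation along those curves, and therefore it preserves every parallel \m\ geodesic.
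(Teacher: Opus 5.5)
Your proof is correct, and in parts (2) and (3) it takes a different route from the paper. Part (1) matches the paper: isometries preserve $\mu$-magnetic geodesics and Hausdorff distance.

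For (2), you argue directly from $d\phi(iv)=i\,d\phi(v)$. The paper instead argues by contradiction: the geodesic segment from $g_{xy}(0)$ to $\gamma_R(0)$ crosses $\gamma_L$, and it transports this crossing by $\phi$. Your version is shorter and makes explicit the orientation-preserving hypothesis that the paper uses silently. For orientation-reversing $\phi$ even (1) fails, since $\mu$-magnetic geodesics go to $(-\mu)$-magnetic ones.

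For (3), the paper reads off $\phi(\gamma)\subset S$ from (1) and fixes $\gamma_L,\gamma_R$ by (2). It then pushes forward, under the translation of $\gamma_R$, the geodesic segment orthogonal to both $\gamma_R$ and $\gamma$, whose length is constant by \cref{lemma:magnetic-flat-strip-parallel}. But (1) only says that $\phi(S)$ is \emph{some} strip of width $C$ with endpoints $(x,y)$. So for non-maximal $S$, the step ``$\phi$ fixes the boundary geodesics of $S$'' needs exactly your detour through $S_{xy}$. You then identify geodesics by their distance to $\gamma_L$ rather than by orthogonal segments; the two are equivalent.

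The cost of your route is that it relies on $S_{xy}$ being the closed union of all $\mu$-magnetic geodesics from $x$ to $y$. That fact comes from \cref{prop:magnetic-flat-strip-is-connected}, finite width, and a limiting argument for closedness, not from the proof of \cref{lemma:magnetic-flat-strip-parallel}.

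You can avoid maximal strips altogether. Signed transverse distance from one fixed $\mu$-magnetic geodesic from $x$ to $y$ identifies the family of all of them with a bounded interval, because transverse distances add along the orthogonal geodesic segments. On that interval $\phi$ acts as an order-preserving isometry onto itself, hence as the identity.

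Two small tightenings:
\begin{itemize}
\item Injectivity of $\gamma\mapsto d(\gamma(t),\gamma_L(\mathbb R))$ comes from separation, not from the embedding property alone. If $\gamma_1$ lies strictly between $\gamma_L$ and $\gamma_2$, a minimizing segment from $\gamma_2(t)$ to $\gamma_L$ must cross $\gamma_1$. Its length therefore exceeds the constant distance from $\gamma_1$ to $\gamma_L$.
\item Your fallback uses the horizontal-strip picture, but \cref{rmk:mag-flat-strip} states it only in the magnetically flat case. Under \ref{H2} it would need a developing-map argument; your main line does not need it.
\end{itemize}
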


\begin{proof}
  (\ref{item1magstrip}) holds because $\phi$ preserves the Hausdorff distance. 
  For (\ref{item_boudnary-to-boundary}), we first show by contradiction that $\phi$ maps the boundary \m\ geodesics of $S$ to the boundary \m\ geodesics of $\phi(S)$. Suppose that $\phi(\ga_L)$ is in the interior of $\phi(S)$. Then the width of $\phi(S)$ is larger than $d_H(\phi(\ga_L),\phi(\ga_R))=d_H(\ga_L,\ga_R)=C$, which contradicts that $\phi(S)$ is of width $C$ from (\ref{item1magstrip}). The proof in the $\ga_R$ case is similar.
  
  Next we show that $\phi(\ga_L)$ (resp. $\phi(\ga_R)$) is still the left (resp. right) boundary \m\ geodesic of $\phi(S)$.  We prove the case $\mu>0$, and the proof in the $\mu<0$ case is similar (note that there does not exist magnetic flat strips for $\mu=0$ since $\Sigma$ is with negative curvature; see \cref{prop:connectivity-geoflow-MtoMbdry}). Suppose the contrary, that is, $\phi(\ga_L)$ (resp. $\phi(\ga_R)$) is the right (resp. left) boundary \m\ geodesic of  $\phi(S)$. Denote by $g_{xy}$ the geodesic from $x$ to $y$ and consider the geodesic segment $G$ from $g_{xy}(0)$ to $\ga_R(0)$. Note that $G$ intersects $\ga_L$ at some $p\in \ga_L(\re)$. Hence $\phi(p)\in\phi(\ga_L)$. On the other hand, the geodesic segment $\phi(G)$ connecting $\phi(g_{xy}(0))$ to $\phi(\ga_R(0))$ does not intersect $\phi(\ga_L)$, since $\phi(\ga_L)$ is assumed to the right of $\phi(\ga_R)$, which contradicts that $\phi(p)\in\phi(\ga_L)$.
  
  To prove (\ref{item3}), we have $\phi(\ga)\in S$ from (\ref{item1magstrip}). It remains to show that the action of $\phi$ is the translation along each \m\ geodesic $\ga$ in $S$, or equivalently, to disprove the possibility that the action of $\phi$ is a permutation among \m\  geodesics in $S$. 
    This is obvious for boundary \m\  geodesics $\ga_L$ and $\ga_R$ of $S$ by (\ref{item_boudnary-to-boundary}).
    For any point $\ga(t_0)$ with some $t_0\in\re$, by \cref{lemma:sing-zero-0-magnetic-curvature} and \cref{remark:riccati-equation}, there exists $t_1\in \re$ such that the geodesic segment $G$ connecting $\ga_R(t_1)$ and $\ga(t_0)$ is orthogonal to both $\ga_R$ and $\ga$. Since $\phi$ translates $\ga_R$ along itself 
    and that $\ga_R$ and $\ga$ are parallel by \cref{lemma:magnetic-flat-strip-parallel}, we have $\phi(\ga(t_0))\in\ga(\re)$ and therefore, $\phi(\ga(\re))=\ga(\re)$.
\end{proof}

\begin{corollary}\label{cor:relation-between-magnetic-axes}
Suppose $(\Sigma,\mu_1)$ and $(\Sigma,\mu_2)$ satisfy \ref{H2}.
\begin{enumerate}
    \item A $\mu_1$-magnetic geodesic $\ga$ is a $\mu_1$-magnetic axis of an axial isometry $\phi$ if and only if $\phi$ fixes the endpoints of $\ga$.
    \item Let $\phi$ be an axial isometry and let $\ga_{i}$ be a $\mu_i$-magnetic geodesic for $i\in \{1, 2\}$ such that $\ga_1( \infty)=\ga_2(\infty)$ and $\ga_1(-\infty)=\ga_2(-\infty)$. Then $\ga_1$ is a $\mu_1$-magnetic axis of $\phi$ if and only if $\ga_2$ is a $\mu_2$-magnetic axis of $\phi$.
\end{enumerate}   
\end{corollary}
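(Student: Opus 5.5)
For part (1) I would argue both directions separately. Suppose first that $\ga$ is a $\mu_1$-magnetic axis of $\phi$, so $\phi(\ga(t))=\ga(t+\omega)$ with $\omega>0$. By \cref{cor:isometry-preserves-geodesic-curvature}, $\phi\circ\ga$ is again a $\mu_1$-magnetic geodesic. Its forward asymptote class (\cref{def:asymptotic-mag-geodesics}) is then $\phi(\ga(\infty))$. Since $\phi\circ\ga$ is just a time shift of $\ga$, it has the same Hausdorff-asymptotic rays. Hence $\phi(\ga(\pm\infty))=\ga(\pm\infty)$, where $\phi$ acts on $\Minfty$ via the identification in \cref{prop:boundary-homeomorphic}. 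That this action is well defined follows because $\phi$ maps asymptotic magnetic geodesics to asymptotic ones, being an isometry.

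For the converse, assume $\phi$ fixes $x\dfn\ga(-\infty)$ and $y\dfn\ga(\infty)$. Then $\phi\circ\ga$ is a $\mu_1$-magnetic geodesic from $x$ to $y$. There are two cases.
\begin{enumerate}
\item If $\ga$ is the unique such geodesic up to reparametrization, as happens for instance when some $\dot\ga(t)\in\Reg$ by \cref{thm:connectivity-combo}(\ref{l:flatstrip}), then $\phi(\ga(\re))=\ga(\re)$. Hence $\phi(\ga(t))=\ga(t+\omega)$ for some $\omega$, since $\phi$ preserves the unit-speed parametrization and orientation along $\ga$.
\item Otherwise $\ga$ lies in a magnetic flat strip with endpoints $(x,y)$. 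Here I would apply \cref{Lemma:mag_strip_iso_action}(\ref{item3}) to the maximal strip $S_{xy}$. Its hypothesis holds because $\phi$ fixes $x,y$, so by uniqueness in \cref{prop:connectivity-geoflow-MtoMbdry}(\ref{lemma:connectivity_geoflow_nega_curv}) it preserves $g_{xy}$, which is therefore an axis of $\phi$. The lemma then shows that $\phi$ translates $\ga$.
\end{enumerate}

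It remains to get $\omega>0$ rather than $\omega\le 0$. The value $\omega=0$ is excluded because then $\phi$ would fix a point, while a nontrivial axial isometry (a deck transformation) acts freely. If $\omega<0$, replace $\phi$ by $\phi^{-1}$, or note that the sign of $\omega$ matches the translation of $g_{xy}$ from $x$ toward $y$. This last check is the main point needing care. I would handle it by comparing $\phi$'s action on $\ga(t)$ for large $t$ with north–south dynamics (\cref{prop:endpoints-converge-to-axis-endpoints}): iterates $\phi^n$ push points toward the attracting endpoint of $g_{xy}$. So if $\omega$ had the wrong sign, $\ga(t)$ would be pushed toward $x=\ga(-\infty)$, contradicting the orientation unless the attracting endpoint is $x$. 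I expect this to be absorbed into the convention that the axis is oriented by the direction of translation.

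Part (2) then follows formally. By (1), $\ga_1$ is a $\mu_1$-magnetic axis of $\phi$ if and only if $\phi$ fixes $\{\ga_1(-\infty),\ga_1(\infty)\}$ pointwise. These endpoints equal those of $\ga_2$, and by (1) applied with $\mu_2$, fixing them is equivalent to $\ga_2$ being a $\mu_2$-magnetic axis. The only subtlety is that the identification of $\M_{\mu_i}(\infty)$ with $\Minfty$ via \cref{prop:boundary-homeomorphic} is through boundary chords. This makes the endpoint equality meaningful across the two values of $\mu$, and it commutes with isometries, because isometries send chords to chords.
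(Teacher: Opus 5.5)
Your proposal is correct and follows the argument the paper leaves implicit, since the corollary is stated without proof right after \cref{Lemma:mag_strip_iso_action}: the forward direction is immediate, the converse uses uniqueness of the connecting magnetic geodesic off flat strips together with \cref{Lemma:mag_strip_iso_action}(\ref{item3}) on the maximal strip $S_{xy}$ (the right choice, because $\phi(S_{xy})=S_{xy}$ is automatic there), and (2) follows formally from (1). Your worry about the sign of $\omega$ is justified, but north--south dynamics cannot settle it: $\phi^{-1}$ fixes the same endpoints while translating $\gamma$ backwards, so the statement has to be read with $\omega\neq0$ (and $\phi\neq\mathrm{id}$) in \cref{def:mag-axis}, which is what the paper tacitly does.
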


By the density of pairs $(x,y)\in \Minfty\times\Minfty$ that determine (geodesic or equivalently, magnetic by \cref{cor:relation-between-magnetic-axes}) axes in $\Minfty\times\Minfty$, we have the following results for the periodic orbits of hyperbolic magnetic flows.

\begin{proposition} \label{prop:periodic-orbits-vectors}If $(\Sigma,\mu)$ satisfies \ref{H2}, then
    \begin{enumerate}
        \item the set of pairs that determine \m\  axes is dense in $\Minfty\times\Minfty$; \label{cor:periodic-orbits-dense}
        \item the periodic vectors of $\fmu$ are dense in $\sm$; \label{cor:periodic-orbits-dense2}
        \item \label{lemma:regular-vector-open-dense} $\Reg$ is open and dense in $\sm$;
        \item the regular periodic vectors are dense in $\sm$.\label{cor:reg-periodic-orbit-dense}
    \end{enumerate}
\end{proposition}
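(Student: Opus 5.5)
We prove the four items in order, each feeding the next.

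\emph{Item (\ref{cor:periodic-orbits-dense}).} By \cref{prop:axial-endpoints-pairs-are-dense}, pairs $(x,y)$ that are endpoints of axes of deck transformations $\phi$ (which are axial by the remark after \cref{def:deck-trans}) are dense in $\Minfty\times\Minfty$. For such a pair, \cref{thm:ptildeqtilde-magnetic-visibility-property} gives a \m\ geodesic $\ga$ from $x$ to $y$. Since $\phi$ fixes $x$ and $y$, \cref{cor:relation-between-magnetic-axes}(1) makes $\ga$ a \m\ axis of $\phi$. Hence pairs determining \m\ axes contain a dense set.

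\emph{Item (\ref{cor:periodic-orbits-dense2}).} Let $U\subset\sm$ be open. Lift it to an open $V\subset\smm$, small enough to be a product-like neighborhood of some $v$. By \cref{prop:boundary-homeomorphic} and the continuity in \cref{thm:pqtilde} (the maps $H^\pm_p$), together with continuity of $(p,x)\mapsto\ga_{xp}$, we get the following. If $(x',y')$ is close enough to $(\gav(-\infty),\gav(\infty))$, then some \m\ geodesic from $x'$ to $y'$ passes through a vector of $V$. The point is to show that the \m\ geodesic from $x'$ to $y'$ passes near $\pi v$ with nearly the same direction. I would argue via the orthogonal projection $P$ from the proof of \cref{prop:orbit-equivlence-partial}: continuity of $P$, and of $P^{-1}$ on chords, only needs endpoint convergence and \ref{H2} (if several geodesics join $x'$ to $y'$, any one of them works, and a subsequence argument as in \cref{lemma:uni-convergence-of-geo} handles compactness).

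Choose such an $(x',y')$ from the dense set in item (\ref{cor:periodic-orbits-dense}). The resulting \m\ geodesic is an axis of a deck transformation $\phi$ with $\phi(\ga(t))=\ga(t+\omega)$, so $d\phi$ maps $\dot\ga(t)$ to $\dot\ga(t+\omega)$. Its projection to $\sm$ is therefore periodic and meets $U$. I expect this to be the main obstacle: the continuity of the endpoint map near singular vectors, where flat strips make the inverse multivalued. I would circumvent it by working with an upper-semicontinuity/compactness argument rather than a true inverse.

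\emph{Item (\ref{lemma:regular-vector-open-dense}).} Openness follows because $\Sing$ is closed by \cref{prop:singular-vector-orbit-closed-invariant}. For density, suppose $U$ is a nonempty open subset of $\Sing$. By \cref{lemma:sing-zero-0-magnetic-curvature}, $\kmu\equiv0$ on the open set $\{\gav(\re):v\in U\}$, which is open by \cref{lemma:open-set-in-sm-has-boundary-with-intervals}. Taking a periodic vector in $U$ (item (\ref{cor:periodic-orbits-dense2})) does not immediately contradict anything. Instead, use the endpoint sets $U_{\pm\infty}$, which contain open arcs. Pick $x\in U_{-\infty}$ and an arc $I\subset U_{+\infty}$; by \cref{prop:endpoints-converge-to-axis-endpoints} and \cref{prop:cone-nbhd-contains-fundamental-domain}, the flat region swept by geodesics from near $x$ to $I$ contains a deck translate of a fundamental domain. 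The argument parallels \cref{lemma:finite-distance-to-point-with-negative-kmu}. This gives $\kmu\equiv0$ on a fundamental domain, contradicting \ref{H2}.

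\emph{Item (\ref{cor:reg-periodic-orbit-dense}).} Given open $U$, shrink it to $U\cap\Reg$, which is nonempty and open by item (\ref{lemma:regular-vector-open-dense}), and apply item (\ref{cor:periodic-orbits-dense2}). The periodic vector obtained lies in $\Reg$.
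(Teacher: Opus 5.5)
\textbf{Items (1), (3), (4).} These essentially follow the paper. In (3), the region to use is the union of the $\mu$-magnetic geodesics \emph{through $U$}. It contains a cone neighborhood of a point of $U_{+\infty}$, hence a translate of a fundamental domain on which $K_\mu\equiv0$.

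\textbf{The gap is in item (2).} Your key claim is that every pair $(x',y')$ close to $(x,y)=(\gamma_v(-\infty),\gamma_v(\infty))$ is joined by some $\mu$-magnetic geodesic with a tangent vector in $V$. This is false when $v$ is tangent to a magnetic flat strip, which \ref{H2} does not exclude.

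Here is the configuration. Let $\gamma_v$ be an interior magnetic geodesic of a flat strip $S$ with endpoints $(x,y)$, and let $\pi V$ lie in the interior of $S$. Let $\sigma$ be a boundary magnetic geodesic of $S$. Take $x'$, $y'$ arbitrarily close to $x$, $y$, in the arc of $\Minfty\smallsetminus\{x,y\}$ on the far side of $\sigma$ from $S$.

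A $\mu$-magnetic geodesic $\gamma'$ from $x'$ to $y'$ that entered $S$ would meet $\sigma$ at least twice. There are two cases.
\begin{itemize}
\item Some two of these points occur in the same order along $\gamma'$ and along $\sigma$. This gives two distinct magnetic geodesics from one point to another.
\item The orders are reversed. Then the part of $\gamma'$ after its last meeting with $\sigma$ is trapped by $\sigma$ and the part of $\gamma'$ before its first meeting, so $\gamma'$ self-intersects.
\end{itemize}
Both contradict \cref{lemma:pq-connectivity-and-no-self-intersect}. Hence no magnetic geodesic from $x'$ to $y'$ meets $\pi V$. Upper semicontinuity cannot rescue the claim: it only says limits of connecting geodesics join $x$ to $y$, and here they converge to $\sigma$, not to $\gamma_v$.

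\textbf{Repair: reverse the order of items (2) and (3).} Your proof of (3) does not use (2), so prove it first and choose $v\in V\cap\Reg$. Then $\gamma_v$ is the only $\mu$-magnetic geodesic from $x$ to $y$ (\cref{cor:magnetic-flat-strip-has-0-magnetic-curvature-and-magnetic-flat-strip-only-singular}(\ref{Cor:magnetic-flat-strip-only-singular})). A compactness/upper-semicontinuity argument now does give a neighborhood $N$ of $(x,y)$ such that every magnetic geodesic with endpoint pair in $N$ has a tangent vector in $V$. Picking an axis pair in $N$ and applying \cref{cor:relation-between-magnetic-axes} finishes as you describe.

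Two points in that argument still need care.
\begin{itemize}
\item The compactness cannot be borrowed from \cref{lemma:uni-convergence-of-geo}. That lemma rests on the Riemannian fellow-traveler property, and magnetic geodesics need not stay uniformly close to their chords. Instead use barriers: magnetic geodesics from \cref{thm:ptildeqtilde-magnetic-visibility-property}, running in the same sense as $\gamma_v$, that cut off small caps at both ends of a geodesic crossing $\gamma_v$ at $\pi v$. By the same ordering argument, the connecting geodesics cannot enter these caps.
\item You also need that limit magnetic geodesics have the limiting endpoints. \cref{thm:pqtilde} gives this continuity only on each fiber $T^1_p\M$.
\end{itemize}

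\textbf{Comparison with the paper.} The paper's own proof of item (2) picks an axis pair in $V_{-\infty}\times V_{+\infty}$. It does not address how a single vector of $V$ realizes both endpoints. Your instinct that one needs an open set of endpoint pairs of single vectors of $V$ is therefore correct; only the choice of $v$ has to change.
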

Density of periodic points implies that every point is nonwandering---though this also follows from the Poincar\'e recurrence theorem, independently of \ref{H1}:
\begin{definition}[Nonwandering set]\label{def:nw-set}
    A point $x\in X$ is \emph{nonwandering} for a flow $F=\{f_t\}_{t\in\re}$ on $X$ if for any neighborhood $U$ of $x$ and $T_0>0$ there is a $t>T_0$ with $f_t(U)\cap U\neq \emptyset$. The set of nonwandering points is denoted by $NW(F)$.
\end{definition}
The nonwandering set is closed and contains all periodic points, so \cref{prop:periodic-orbits-vectors}(\ref{cor:periodic-orbits-dense2}) gives
\begin{corollary}\label{cor:nw-is-TM}
    If $(\Sigma,\mu)$ satisfies \ref{H2}, then $NW(\f)=\sm$.
\end{corollary}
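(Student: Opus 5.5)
The statement to prove is \cref{cor:nw-is-TM}: if $(\Sigma,\mu)$ satisfies \ref{H2}, then $NW(\f)=\sm$. I will deduce it from \cref{prop:periodic-orbits-vectors}(\ref{cor:periodic-orbits-dense2}), density of periodic vectors, combined with two general facts about \cref{def:nw-set}: every periodic point is nonwandering, and $NW(F)$ is closed.

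\textbf{Periodic points are nonwandering.} Let $v$ be periodic with period $\omega>0$, and let $U\ni v$ be open and $T_0>0$. Choose an integer $k$ with $k\omega>T_0$. Then $\f_{k\omega}(v)=v$, so $v\in \f_{k\omega}(U)\cap U$. Hence $v\in NW(\f)$.

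\textbf{$NW(F)$ is closed.} Let $x$ lie in the closure of $NW(F)$, and let $U\ni x$ be open and $T_0>0$. Then $U$ contains some nonwandering point $x'$, and $U$ is also a neighborhood of $x'$. So there is $t>T_0$ with $f_t(U)\cap U\neq\emptyset$, and therefore $x\in NW(F)$.

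\textbf{Conclusion.} By these two facts, $NW(\f)$ is a closed set containing all periodic vectors. By \cref{prop:periodic-orbits-vectors}(\ref{cor:periodic-orbits-dense2}), the periodic vectors are dense in $\sm$, so their closure is $\sm$. Hence $NW(\f)\supseteq \sm$, i.e. $NW(\f)=\sm$.

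There is no real obstacle in this argument; all the substance lies in \cref{prop:periodic-orbits-vectors}. Alternatively, one can avoid \ref{H2} entirely: by \cref{rmk:observation}, $\f$ preserves the finite Liouville measure, which has full support. Poincaré recurrence then gives, for any open $U\neq\emptyset$ and any $T_0>0$, some $t>T_0$ with $\f_t(U)\cap U\neq\emptyset$.
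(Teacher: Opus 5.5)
Your proof is correct and matches the paper's argument: $NW(\f)$ is closed and contains every periodic vector, so density of periodic vectors (\cref{prop:periodic-orbits-vectors}(\ref{cor:periodic-orbits-dense2})) forces $NW(\f)=\sm$. Your alternative via Poincar\'e recurrence for the full-support Liouville measure is the same one the paper mentions just before the corollary, and it indeed does not need \ref{H2}.
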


\begin{proof}[Proof of \cref{prop:periodic-orbits-vectors}]
(\ref{cor:periodic-orbits-dense}) holds because by \cref{cor:relation-between-magnetic-axes}, this is the set of pairs in $\Minfty\times\Minfty$ that determine geodesic axes, which is dense in $\Minfty\times\Minfty$ by \cref{prop:axial-endpoints-pairs-are-dense}.

To prove (\ref{cor:periodic-orbits-dense2}), given any open subset $V$ of $T^1\M$, by \cref{lemma:open-set-in-sm-has-boundary-with-intervals} both $V_\infty\coloneqq \{\gav(\infty)\mid v\in V\}$ and $V_{-\infty}\coloneqq \{\gav(-\infty)\mid v\in V\}$ contain open arcs in $\Minfty$, and $\{\ga_v(\re)\mid v\in V\}$ is an open subset of $\M$. By (\ref{cor:periodic-orbits-dense}), there exists a periodic \m\  geodesic $\ga$ in $\{\ga_v(\re)\mid v\in V\}$ with $\ga(\pm \infty)\in V_{\pm \infty}$. Hence, $V$ contains periodic vectors tangent to $\ga$.

(\ref{lemma:regular-vector-open-dense}): $\Reg$ is open because $\Sing$ is closed (\cref{prop:singular-vector-orbit-closed-invariant}). To show that $\Reg$ is dense, suppose to the contrary that there is a nonempty open subset $U\subset \Sing$. Then both $\{\ga_u(\infty)\mid u\in U\}$ and $\{\ga_u(-\infty)\mid u\in U\}$ contain open arcs on $\Minfty$, and $\{\ga_u(\re)\mid u\in U\}\subset \M$ is an open subset of $\M$ by \cref{lemma:open-set-in-sm-has-boundary-with-intervals}.
Note that by \cref{prop:cone-nbhd-contains-fundamental-domain}, any neighborhood of $x\in \{\ga_u(\infty)\mid u\in U\}$ with respect to the cone topology contains a fundamental domain of $\Sigma$. Therefore, $\{\ga_u(\re)\mid  u\in U\}$ contains a fundamental domain $D$ of $\Sigma$, and by \cref{lemma:sing-zero-0-magnetic-curvature}, $\kmu(p)= 0$ for all $p\in D$ and therefore for all $p\in \Sigma$, which contradicts the hypothesis that $(\Sigma,\mu)$ satisfies \ref{H2}.

(\ref{cor:reg-periodic-orbit-dense}): Given any open set $U\subset\sm$, since $\Reg$ is open and dense, $U\cap \Reg$ is open and nonempty. 
Since periodic vectors are dense in $\sm$, there exists a periodic vector $v\in U\cap\Reg$. Therefore, $v$ is a regular periodic vector in $U$, proving that regular periodic vectors are dense in $\sm$.
\end{proof}

\begin{definition}[Recurrent, uniformly recurrent, {\cite[Definition 3.1]{Ballmann-Brin-Eberlein}}]\label{def:recurrent}
    A vector $v\in\sm$ is \emph{recurrent} for the flow $F$ if for every open neighborhood $V$ of $v$, there exist times $t_n\to\infty$ such that $f_{t_n}v\in V$. $v$ is said to be \emph{uniformly recurrent} if for any neighborhood $V$ of $v$,
    \[\liminf_{T\to\infty}\frac{1}{T}\int_0^T\chi_V(f_tv)\,dt>0,\]
    where $\chi_V$ is the characteristic function of $V$. $v\in \smm$ is said to be \emph{uniformly recurrent} if it is a lift of a uniformly recurrent vector in $\sm$. 
\end{definition}
Since any periodic orbit is uniformly recurrent, uniformly recurrent vectors are dense. \ref{H2} implies this by \cref{prop:periodic-orbits-vectors} and in the magnetically flat case, this follows from unique ergodicity of the horocycle flow of surfaces with constant negative curvature \cite{Furstenburg}, which implies that \emph{every} vector is uniformly recurrent. 
Thus:
\begin{corollary}
    If $(\Sigma,\mu)$ satisfies \ref{H1}, then the set of uniformly recurrent vectors for $\f$ is dense in $\smm$ (hence so is the set of recurrent vectors).\label{uni-reitem1}
\end{corollary}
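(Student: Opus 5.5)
The statement to prove is the corollary: under \ref{H1}, uniformly recurrent vectors for $\f$ are dense in $\smm$ (hence recurrent ones are too). I would split into the two mutually exclusive cases allowed by \ref{H1}: either \ref{H2} holds, or $\Sigma$ is $\mu$-magnetically flat.

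\textbf{Case \ref{H2}.} First I would check that every periodic vector $v\in\sm$ is uniformly recurrent. If $\f_\omega v=v$ with $\omega>0$ and $V$ is an open neighborhood of $v$, continuity of $t\mapsto \f_t v$ gives some $\eps>0$ with $\f_t v\in V$ for $t\in(-\eps,\eps)$. By periodicity the orbit then lies in $V$ for a set of times of density at least $2\eps/\omega$ in each period. Hence
\[\liminf_{T\to\infty}\frac1T\int_0^T\chi_V(\f_tv)\,dt\ge \frac{\min(2\eps,\omega)}{\omega}>0.\]
\cref{prop:periodic-orbits-vectors}(\ref{cor:periodic-orbits-dense2}) says periodic vectors are dense in $\sm$, so uniformly recurrent vectors are dense in $\sm$. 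To pass to $\smm$, I would use that the covering map $T^1\M\to\sm$ is a local homeomorphism. Every open set in $\smm$ therefore projects onto an open set in $\sm$, which contains a uniformly recurrent vector. Some lift of that vector lies in the original open set, and by \cref{def:recurrent} that lift is uniformly recurrent.

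\textbf{Magnetically flat case.} Here $K\equiv-\mu^2$ and $\f$ is the horocycle flow of a closed hyperbolic surface, with $\mu\ne0$ because $K<0$. By Furstenberg's theorem \cite{Furstenburg} this flow is uniquely ergodic with the Liouville measure $m$ as its unique invariant measure. Since $m$ has full support, $m(V)>0$ for every nonempty open $V$. Unique ergodicity gives, for every $v$, the liminf estimate $\liminf_T \frac1T\int_0^T\chi_V(\f_tv)\,dt\ge m(V)>0$. Indeed, every weak$^*$ limit of the empirical measures along the orbit is invariant, hence equals $m$, and the portmanteau theorem for open sets gives the bound. So every vector is uniformly recurrent, and density in $\smm$ follows by lifting as before.

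Finally, uniform recurrence implies recurrence. The positive liminf forces the orbit to spend unbounded amounts of time in $V$, so there are times $t_n\to\infty$ with $\f_{t_n}v\in V$. This gives the parenthetical claim.

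The main obstacle is the flat case, specifically the portmanteau step showing that unique ergodicity gives a positive liminf for every vector and every open set. This needs $m(V)>0$ for open $V$, which holds because Liouville measure has full support. Everything else is formal.
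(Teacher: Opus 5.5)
Your proof is correct and follows the same route as the paper. In the \ref{H2} case, uniform recurrence comes from density of periodic vectors (\cref{prop:periodic-orbits-vectors}). In the magnetically flat case, Furstenberg's unique ergodicity of the horocycle flow makes every vector uniformly recurrent. You simply supply the routine details the paper leaves implicit: the density estimate for periodic orbits, the portmanteau step, and lifting to $\smm$.
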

\begin{remark}
The density of (not necessarily uniformly) recurrent points also follows from topological transitivity (\cref{thm:topologically-transitivity}) or from the Poincar\'e Recurrence Theorem (almost every point with respect to the Liouville measure (\cref{rmk:observation}) is recurrent, and the Liouville measure has full support).
\end{remark}

\subsection{Topological transitivity}\label{sec:topo-transitivity}
In this section, we prove the topological transitivity of magnetic flows $\f$ by following \cite[Proposition 3.4, Lemma 3.5]{Eberlein-non-conjugate}
. Note that topological transitivity also implies the conclusion of \cref{cor:nw-is-TM}, giving an alternate proof.
\begin{definition}\label{def:topo_transitivity}Given a \m\  flow $\f$ of $\Sigma$,
\begin{enumerate}
    \item $\f$ is said to be \emph{topologically transitive} if there exists a vector $v \in \sm$ such that $\{\f_t(v) \mid t >0\}$ is dense in $\sm$;
    
    \item for points $p\in \M$ and $x\in \om$, define $V(p,x)\dfn \dot \ga_{px}(0)\in T^1_p\M$.
\end{enumerate}
\end{definition}


\begin{theorem}[Topological transitivity]\label{thm:topologically-transitivity}\label{prop:dense-forward-orbit}
If $(\Sigma,\mu)$ satisfies \ref{H1}, then $\f$ is topologically transitive. Moreover, the following are equivalent.
    \begin{enumerate}
        \item $(\Sigma,\mu)$ satisfies \ref{H2}\label{1tt};
        \item $\f$ has a regular dense orbit\label{2tt};
        \item any dense orbit of $\f$ is regular\label{3tt}.
    \end{enumerate}
\end{theorem}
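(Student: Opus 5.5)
Two cases need separate handling. If $\Sigma$ is $\mu$-magnetically flat, then $K\equiv-\mu^2$ and $\f$ is the horocycle flow of a closed hyperbolic surface. By Hedlund/Furstenberg this flow is minimal (indeed uniquely ergodic), so every forward orbit is dense and transitivity holds. Every vector is then singular by \cref{lemma:sing-zero-0-magnetic-curvature}, so no dense orbit is regular; this handles the flat direction of the equivalences. For \ref{H2} I follow Eberlein \cite[Proposition 3.4, Lemma 3.5]{Eberlein-non-conjugate}. By \cref{cor:nw-is-TM} (or \cref{prop:duality-equivalence}), the deck group $D$ satisfies the duality condition: for $x,y\in\Minfty$ there are $\phi_n\in D$ with $\phi_n^{-1}p\to x$ and $\phi_n p\to y$.

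To get transitivity, it suffices, via a Baire category argument on a countable basis, to show that for open $U,V\subset\smm$ there exist $\phi\in D$ and $t>0$ with $\phi\f_t(U)\cap V\neq\emptyset$. By \cref{lemma:open-set-in-sm-has-boundary-with-intervals}, the sets $U_{\pm\infty},V_{\pm\infty}$ contain open arcs. By \cref{prop:periodic-orbits-vectors}, I pick a regular periodic $v\in V$ and a regular periodic $u\in U$. The step I expect to be the \textbf{main obstacle} is building a heteroclinic magnetic geodesic $\ga$ with $\ga(-\infty)=\ga_u(-\infty)$ and $\ga(\infty)=\phi\ga_v(\infty)$ for suitable $\phi$, and then showing that $\dot\ga$ actually comes close to $u$ and later close to a translate of $v$. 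Visibility (\cref{thm:ptildeqtilde-magnetic-visibility-property}) gives existence of $\ga$. Closeness needs continuity of $(x,y)\mapsto$ the connecting magnetic geodesic near pairs $(\ga_u(-\infty),\ga_u(\infty))$. I would obtain this from uniqueness at regular vectors (\cref{cor:magnetic-flat-strip-has-0-magnetic-curvature-and-magnetic-flat-strip-only-singular}) together with a compactness/subsequence argument as in \cref{lemma:uni-convergence-of-geo}. Any limit of connecting magnetic geodesics is a magnetic geodesic joining the limit endpoints, and it must equal $\ga_u$ by uniqueness. The footpoints stay bounded because the boundary chords fellow-travel and \cref{lemma:ortho-proj-any-point} applies. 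Using the north--south dynamics of \cref{prop:endpoints-converge-to-axis-endpoints} for the axial isometries $\psi_u,\psi_v$ of $\ga_u,\ga_v$ (\cref{cor:relation-between-magnetic-axes}), I move $\ga_u(\infty)$ and the target endpoint close enough. I then project to $\sm$: the orbit of $\dot\ga$ enters $U$ and later enters $V$, and the return times go to infinity by choosing high powers of $\psi$. Intersecting over a countable basis yields a dense forward orbit.

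For the equivalences under \ref{H2}: \eqref{3tt}$\Rightarrow$\eqref{2tt} follows from the existence of a dense orbit. \eqref{2tt}$\Rightarrow$\eqref{1tt} holds because a regular vector forces some point with $\kmu<0$ by \cref{lemma:sing-zero-0-magnetic-curvature}, hence not flat. For \eqref{1tt}$\Rightarrow$\eqref{3tt}: $\Sing$ is closed and invariant (\cref{prop:singular-vector-orbit-closed-invariant}) and, by \cref{prop:periodic-orbits-vectors}\eqref{lemma:regular-vector-open-dense}, a proper subset. So a dense orbit cannot lie in $\Sing$, since its closure would be in $\Sing\neq\sm$. By invariance the orbit is therefore entirely regular.
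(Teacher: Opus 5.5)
You handle the flat case and the equivalences (1)$\Leftrightarrow$(2)$\Leftrightarrow$(3) as the paper does, using closedness and invariance of $\Sing$ (\cref{prop:singular-vector-orbit-closed-invariant}). For transitivity under \ref{H2}, however, you take a different and heavier route, and its key step is not yet justified.

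The paper never connects two ideal points. Given lifts $v^*,w^*$ with footpoints $p,q$, it takes $\phi_n\in D$ from the duality condition with $\phi_n^{-1}m\to\ga_{v^*}(\infty)$ and $\phi_n m\to\ga_{w^*}(-\infty)$. The source of duality is \cref{rmk:nonwandering-set-is-entire-sm,prop:duality-equivalence}, which concern the geodesic flow; \cref{cor:nw-is-TM} is about $\f$ and does not give it. The paper then uses the compact \m\ segment from $\phi_n p$ to $q$. Pulled back by $\phi_n^{-1}$, its initial vector is $V(p,\phi_n^{-1}q)\to v^*$, and its terminal vector tends to $w^*$; both follow from the point-to-boundary uniqueness of \cref{thm:pqtilde}. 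This needs only \ref{H1}: no visibility, no regular or periodic vectors, and no continuity of boundary-to-boundary connections.

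Your heteroclinic construction needs all of these, and the step you call the main obstacle has a hole. \cref{lemma:ortho-proj-any-point} only says that $\ga_n$ meets the perpendicular to its chord, not where. A \m\ geodesic need not stay within bounded distance of its boundary chord (\cref{Halfplanechords}), let alone uniformly in $n$. So the fellow-travelling of the chords $g_{x_ny_n}$ gives no bound on the footpoints of $\ga_n$. You need a separate argument. For example, suppose the crossing points escaped to an ideal point $\xi\notin\{x,y\}$. Move them into a fundamental domain by elements $\phi_n\in D$. By convergence dynamics, $\phi_n x_n$ and $\phi_n y_n$ then tend to a common ideal point. A limit vector would give a \m\ geodesic with equal endpoints, contrary to \cref{cor:h2-diff-endpoints}.

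That argument also requires continuity of $v\mapsto\ga_v(\pm\infty)$ on all of $\smm$, not only on fibers as in \cref{thm:pqtilde}. So does your claim that limits of connecting magnetic geodesics join the limit endpoints. You should state and prove this continuity.

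Finally, powers of a single axial isometry do not suffice. You need $\phi$ with $\phi\ga_v(\infty)$ near $\ga_u(\infty)$ and, simultaneously, $\phi^{-1}\ga_u(-\infty)$ near $\ga_v(-\infty)$. One choice is $\phi=\psi_u^n\psi_v^m$ with $\ga_u(-\infty)\neq\ga_v(\infty)$; another is an element supplied by duality.
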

\begin{proof}
    For $\mu$-magnetically flat surfaces, the main statement is topological transitivity of the horocycle flow \cite{Hedlund-transitivity-of-magnetic-flat}. We now consider hyperbolic magnetic flows. 

     For any $v,w\in T^1\Sigma$ with lifts $v^*,w^*\in T^1\M$, by \cref{rmk:nonwandering-set-is-entire-sm,prop:duality-equivalence}, there exists a sequence $\{\phi_n\}\subset D$ such that for any point $m\in \M$, $\phi_n^{-1}m\to\ga_{v^*}(\infty)$ and $\phi_nm\to \ga_{w^*}(-\infty)$; see \cref{fig:topological-transitivity}.  
Let $p\dfn \pi v^*$ and $q\dfn \pi w^*$ be the footpoints, and let $t_n\dfn d_\mu(\phi_np,q)$ and $\bar v_n \dfn V(\phi_n p,q)$. Since $\phi_n p \to \ga_{w^*}(-\infty)$, we have $\f_{t_n} \bar v_n\to w^*$; and similarly, since $\phi_n^{-1}q\to\ga_{v^*}(\infty)$, we have $(\phi_n^{-1})_*\bar v_n=V(p,\phi_n^{-1}q)\to v^*$. 
    Then $v_n\dfn \operatorname{proj}_* \bar v_n=\operatorname{proj}_* (\phi_n^{-1})_*\bar v_n\to \operatorname{proj}_* v^*=v$, and $\f_{t_n}(v_n)=\f_{t_n}(\operatorname{proj}_*\bar v_n)=\operatorname{proj}_* (\f_{t_n}\bar v_n) \to \operatorname{proj}_* w^*=w$, where $\operatorname{proj}\colon \M\to \Sigma$ denotes the natural projection as in \cref{def:deck-trans}. This implies that for any open sets $V$ and $U$ in $\sm$ containing $v$ and $w$ respectively, there exists $t>0$ such that $\f_t(V)\cap U\neq \emptyset$. Thus, $\f$ has a dense positive semiorbit \cite[Proposition 1.6.9]{Fisher-Hasselblatt}, so $\f$ is topologically transitive.

\begin{figure}[hbt]
  \centering
  \begin{tikzpicture}[scale=3.5, line cap=round, line join=round]

\draw[thick] (0,0) circle (1);

\coordinate (p)  at (0,0.48);
\coordinate (p1) at (0.4,0.4);
\coordinate (p2) at (0.55,0.5);

\coordinate (q)  at (0,-0.33);
\coordinate (q1) at (-0.4,0.2);
\coordinate (q2) at (-0.6,0.1);

\draw[thick]
(-0.6,0.8)
.. controls (-0.35,0.58) and (-0.15,0.48)
.. (p)
.. controls (0.15,0.48) and (0.35,0.58)
.. (0.6,0.8);

\draw[->, thick] (p) -- ++(0.2,0);
\node[above] at (0.28,0.42) {$v^*$};

\draw[thick, blue]
(p)
.. controls (0.10,0.4) and (0.25,0.38)
.. (p1);

\draw[->, thick, magenta] (p) -- ++(0.16,-0.12);
\node[below right, magenta] at (0.14,0.37) {$v$};

\draw[thick]
(-1,0)
.. controls (-0.55,-0.22) and (-0.18,-0.34)
.. (q)
.. controls (0.18,-0.34) and (0.55,-0.22)
.. (1,0);

\draw[->, thick] (q) -- ++(0.2,0);
\node[above] at (0.28,-0.42) {$w^*$};

\draw[thick, blue]
(q1)
.. controls (-0.32,-0.1) and (-0.15,-0.25)
.. (q);

\draw[->, thick, magenta] (q1) -- ++(0.03,-0.18);
\node[left, magenta] at (-0.4,0) {$w$};

\fill (p) circle (0.015);
\node[below] at (p) {$p$};

\fill (p1) circle (0.015);
\node[below right] at (p1) {$\phi_n^{-1}q$};

\fill (p2) circle (0.015);
\node[right] at (p2) {$\phi_{n+1}^{-1}q$};

\fill (q) circle (0.015);
\node[below] at (q) {$q$};

\fill (q1) circle (0.015);
\node[left] at (q1) {$\phi_n p$};

\fill (q2) circle (0.015);
\node[left] at (q2) {$\phi_{n+1}p$};

\fill (-1,0) circle (0.015);
\node[left] at (-1,0) {$\ga_{w^*}(-\infty)$};

\fill (0.6,0.8) circle (0.015);
\node[right] at (0.6,0.8) {$\ga_{v^*}(\infty)$};

\end{tikzpicture}
  \caption{Topological transitivity}
  \label{fig:topological-transitivity}
\end{figure}

$\eqref{2tt}\implies\eqref{1tt}$ and $\eqref{3tt}\implies\eqref{1tt}$ are immediate by contraposition since $\Reg=\emptyset$ for any $\mu$-magnetically flat surface.
Next we prove the contrapositive of $\eqref{1tt}\implies\eqref{2tt}$ and of $\eqref{1tt}\implies\eqref{3tt}$. If $v\in\Sing$ has a dense orbit, then $\sm=\overline{\mathcal O(v)}\subset\Sing$ by \cref{prop:singular-vector-orbit-closed-invariant}, so \ref{H2} fails.
\end{proof}

{\color{black}
\subsection{Expansivity and applications}\label{ssection:expansivity}

In this section, we capture the universal orbit instability of magnetic flows with the concept of expansivity and its variations. For more examples of complicated dynamical phenomena associated with expansivity, see \cite[p.~89]{Fisher-Hasselblatt}.

Let $d$ be the distance on $\M$ induced by the Riemannian metric, and let $d_S$ the distance function of 
the Sasaki metric on $\smm$.  
For $(\Sigma,\mu)$ satisfying \ref{H1}, we define the \emph{Knieper metric} \cite[p.\ 294]{Knieper} on $\smm$ by
\begin{equation}\label{def:metric_dk}
d_K(v,w)\coloneqq\max_{t\in[0,1]} d(\gav(t),\ga_w(t)).
\end{equation}
(Notations from \cref{def:mag flow}.) The same definition can be made on $\sm$.
\begin{lemma}\label{lemma:dk-ds-equiv}
If $(\Sigma,\mu)$ satisfies \ref{H1}, then $d_K$ is a metric and uniformly equivalent to $d_S$.
\end{lemma}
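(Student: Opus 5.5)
We need to show that $d_K$ is a metric and that it is uniformly equivalent to $d_S$, meaning the identity map is uniformly continuous in both directions. On the compact $\sm$ this also gives bi-Hölder-type comparability up to moduli of continuity. On $\smm$ the argument is deck-invariant, since both metrics are invariant under isometries (\cref{cor:isometry-preserves-geodesic-curvature}). So it suffices to argue uniformly on compact pieces, or locally, using compactness of $\Sigma$.

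\emph{Metric axioms.} Symmetry and nonnegativity are clear from \eqref{def:metric_dk}. The triangle inequality follows pointwise in $t$ from that of $d$, followed by taking the maximum over $t\in[0,1]$. For definiteness, suppose $d_K(v,w)=0$. Then $\gav(t)=\ga_w(t)$ for all $t\in[0,1]$. Differentiating at $t=0$ gives $\pi v=\pi w$ and $v=\dot\gav(0)=\dot\ga_w(0)=w$. Alternatively, this follows from uniqueness of the \m\ geodesic through two points (\cref{lemma:pq-connectivity-and-no-self-intersect}) together with unit speed.

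\emph{$d_K\lesssim d_S$.} The map $\smm\times[0,1]\to\M$, $(v,t)\mapsto\pi\f_t(v)$, is smooth. Since $\f_t$ commutes with deck transformations and $\sm$ is compact, $d\f_t$ is bounded uniformly for $t\in[0,1]$. Since $\pi$ is $1$-Lipschitz from Sasaki to $d$, we get $d(\gav(t),\ga_w(t))\le L\,d_S(v,w)$ for a uniform constant $L$. Hence $d_K\le L\,d_S$.

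\emph{$d_S\lesssim$ (modulus of) $d_K$.} This is the main step. We must recover the direction $v$ from the footpoint curve on $[0,1]$, and with a linear bound at small scales. First, $d(\pi v,\pi w)\le d_K(v,w)$. For the angle, we use the magnetic exponential map: $v$ is determined by $\pi v$ and $\gav(1)$, since $\emu_{\pi v}$ is a diffeomorphism (proof of \cref{lemma:pq-connectivity-and-no-self-intersect}). The map $(p,q)\mapsto V(p,q)=\dot\ga_{pq}(0)$ is smooth off the diagonal by the inverse function theorem and the absence of conjugate points. On the compact set of pairs with $d(p,q)\in[\tfrac12,1]$, taken modulo deck transformations, it is therefore Lipschitz. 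Since $d(\pi v,\gav(1))$ lies between some $c>0$ and $1$, and $c$ can be made uniform by \cref{prop:mag-intersect-geocircle-atmost-once} and compactness, we obtain
\[d_S(v,w)\le d(\pi v,\pi w)+C\bigl(d(\pi v,\pi w)+d(\gav(1),\ga_w(1))\bigr)\le (1+2C)\,d_K(v,w)\]
whenever $d_K(v,w)$ is small. For large $d_K$, both metrics are comparable on bounded scales by compactness. On $\smm$, $d_S$ is unbounded exactly when the footpoints are far apart, in which case both metrics are controlled by $d(\pi v,\pi w)$ up to additive constants. In either case uniform equivalence follows.

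The main obstacle is making the Lipschitz constant of $(p,q)\mapsto V(p,q)$ uniform. I would handle it by noting that $d(\emu_p)$ is nonsingular, as established in \cref{lemma:pq-connectivity-and-no-self-intersect}, and that everything is deck-invariant. The relevant set of $(p,v)$ with $p$ in a fundamental domain and $t\in[c,1]$ is compact, so the inverse is uniformly Lipschitz.
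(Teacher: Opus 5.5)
Your proof is correct, but the reverse inequality takes a different route from the paper's.

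\textbf{The paper's route.} After the same forward bound $d_K\le C d_S$ (from equicontinuity and deck-equivariance of $\{D\f_t\}_{t\in[0,1]}$), the paper parallel-transports $v$ along $g_{\pi v,\pi w}$ to a vector $v'$ at $\pi w$. It applies the forward bound once more to get $d(\ga_w(1),\ga_{v'}(1))\le (C+1)\epsilon$. It then invokes the comparison result \cref{prop:rauch-app} at the single base point $\pi w$: the inverse magnetic exponential map does not increase length, a Sturm-comparison consequence of $K_\mu\le 0$. This turns the endpoint distance into a bound on the angle between $w$ and $v'$.

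\textbf{Your route.} You treat the two-point map $(p,q)\mapsto V(p,q)=\dot\ga_{pq}(0)$ as smooth off the diagonal. This comes from the inverse function theorem for $(p,\xi)\mapsto(p,\emu_p\xi)$, which is nonsingular by the absence of conjugate points. You then obtain a Lipschitz bound from compactness modulo deck transformations, after a uniform lower bound $c>0$ on $d(\pi v,\gav(1))$.

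\textbf{Comparison.} The paper's argument gives an explicit constant and uses the curvature hypothesis substantively. Yours has an inexplicit constant and uses \ref{H1} only through the fact that $\emu_p$ is a diffeomorphism (\cref{lemma:pq-connectivity-and-no-self-intersect}). In exchange, it sidesteps the monotonicity hypothesis on $s\mapsto\td(\pi w,c(s))$ in \cref{prop:rauch-app}. That hypothesis is awkward here: both $\ga_w(1)$ and $\ga_{v'}(1)$ lie at magnetic distance exactly $1$ from $\pi w$, so a monotone connecting curve would have to run along the magnetic circle. You also verify positive definiteness of $d_K$, which the paper leaves implicit.

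\textbf{Small repairs.}
\begin{itemize}
\item The interval $[\tfrac12,1]$ should be $[c,1]$ from the start. For large $|\mu|$ (with $K$ correspondingly very negative) the chord $d(\pi v,\gav(1))$ can be shorter than $\tfrac12$.
\item ``Lipschitz on a compact set'' should be applied only to nearby pairs. Join them by a path of length at most $2d_K(v,w)$ that stays in a slightly larger compact set away from the diagonal.
\item When $d_K(v,w)$ is not small, use $d_S(v,w)\le d(\pi v,\pi w)+\pi$. With these adjustments your argument actually gives a global bi-Lipschitz bound, not merely a modulus-of-continuity comparison.
\end{itemize}
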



\begin{proof} Nonnegativity and symmetry of $d_K$ are clear. For the triangle inequality, note that \[d_K(u,v)+d_K(v,w)\geq d(\ga_u(t),\gav(t))+d(\gav(t),\ga_w(t))\geq d(\ga_u(t),\ga_w(t))\] for all $t\in[0,1]$. For $t=t_0$ such that $d(\ga_u(t_0),\ga_w(t_0))=d_K(u,w)$, this is the triangle inequality. Therefore, $d_K$ is a metric. 

$d_K\le Cd_S$ since $\{D\f_t\mid t\in[0,1]\}$ is equicontinuous and $\Gamma$-equivariant. Conversely, suppose $d_K(v,w)\le\epsilon$. Taking $t=0$ in the definition gives $d(\pi v,\pi w)\le\epsilon$. Consider the magnetic variation $\Gamma$ with $\Gamma(s,0)=g_{\pi v,\pi w}(s)$ and $\frac{\partial}{\partial t}\Gamma(s,0)$ parallel along $g_{\pi v,\pi w}$. At $\pi w$, let $v'\dfn\frac{\partial}{\partial t}\Gamma$ to get $d_S(v,v')=d(\pi v,\pi v')\le\epsilon$ (since $g_{\pi v,\pi w}$ is minimizing). By the preceding, the endpoints of $\Gamma(\cdot,1)$ are at most $C\epsilon$ apart, so $d(\emu(w),\emu(v'))\le(C+1)\epsilon$, hence $d_S(w,v')\le(C+1)\epsilon$ by \cref{prop:rauch-app} below. This implies $d_S(v,w)\le(C+2)\epsilon$.
\end{proof}}
\subsubsection{Expansivity, orbit-equivalence}
The Bowen--Walters notion of expansivity turns out to preclude orbit-equivalence to the geodesic flow in the presence of a flat strip.
\begin{definition}[Expansivity, {\cite[Definition 1.7.2]{Fisher-Hasselblatt}}]\label{def:expansivity}
A flow $f$ on a compact metric space $X$ is \emph{expansive} if for all $\epsilon>0$, there is a $\de>0$ such that if $x$, $y\in X$, $s\colon\re\to\re$ is continuous, $s(0)=0$ and $d(f_t(x),f_{s(t)}(y))\leq\de$ for all $t\in\re$, then $y=f_t(x)$ for some $t\in(-\epsilon,\epsilon)$.  
\end{definition}

\begin{remark}[{\cite[Remark 1.7.3]{Fisher-Hasselblatt}}]
By contraposition, this says that any two orbits will separate by $\de$ at some time, no matter how we reparametrize them.
\end{remark}

It is a classic result by Bowen in \cite{Bowen75} that uniformly hyperbolic systems are expansive. For magnetic flows, we have the following result.

\begin{proposition}\label{prop:expansivity-of-mag-flows}
    The \m\ flow $\f$ is expansive if and only if $(\Sigma,\mu)$ satisfies \ref{H3}.
\end{proposition}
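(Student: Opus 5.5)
I will prove the two implications separately.

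\emph{Failure of expansivity without \ref{H3}.} If \ref{H3} fails, there are two possibilities.

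In the magnetically flat case (not \ref{H2}), $\f$ is the horocycle flow. A horocycle orbit and a nearby orbit on the same weak-stable leaf (differing by a small geodesic-flow push) stay uniformly close under a suitable reparametrization. This is the standard non-expansivity of the horocycle flow: the geodesic flow $g_\tau$ conjugates $h_t$ to $h_{e^{\tau}t}$, so $g_\tau v$ and $v$ have orbits at bounded Hausdorff distance, with time change $s(t)=e^{\tau}t$.

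Otherwise \ref{H2} holds but there is a \m\ flat strip $S\colon[0,C]\times\re\to\M$ (\cref{def:magnetic-flat-strip}, \cref{prop:magnetic-flat-strip-is-connected}). For $\delta>0$ I take two geodesics $S(0,\cdot)$ and $S(c,\cdot)$ with $0<c<\delta$. By \cref{lemma:magnetic-flat-strip-parallel} they are parallel at constant distance $c$. Reparametrize via the orthogonal curves $C_t$ from that proof, so $\gamma_2(s(t))=C_t(c)$ with $s$ continuous, $s(0)=0$ and $d(\gamma_1(t),\gamma_2(s(t)))=c$. The tangent vectors are also close: since $\kmu=0$ on $S$, the orthogonal curves are geodesic segments orthogonal to both orbits (\cref{remark:riccati-equation}), so in Sasaki (equivalently Knieper, \cref{lemma:dk-ds-equiv}) distance the vectors are $O(c)$-close. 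Projecting to $\sm$ only decreases distances, and distinct orbits in $\smm$ with different footpoints project to points that are not on a short common orbit segment. So expansivity fails.

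\emph{Expansivity under \ref{H3}.} Take $v,w\in\sm$ with lifts and a reparametrization $s$ such that $d(\f_t v,\f_{s(t)}w)\le\delta$ for all $t$. Lifting to $\smm$ with $\delta$ below the injectivity radius, the lifted orbits $\gamma_v$ and $\gamma_w$ stay at bounded distance, so they are forward and backward asymptotic (\cref{def:asymptotic-mag-geodesics}). Hence they have the same endpoints in $\Minfty$ by \cref{prop:boundary-bijection}. Under \ref{H3} they then coincide as curves, so $w=\f_{\tau}v$ for some $\tau$. It remains to show $|\tau|<\epsilon$. Since $s(0)=0$ and $d(v,\f_\tau v)\le\delta$ at $t=0$, and the flow has no short periodic orbits (on $\M$ \m\ geodesics do not self-intersect, and lifted orbits are embedded with footpoints separating at unit speed), small $\delta$ forces $|\tau|<\epsilon$. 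A compactness argument uniformizes $\delta$ in $v$.

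\emph{Main obstacle.} The delicate step is the flat-strip direction, specifically verifying that the reparametrized vectors, and not just the footpoints, stay $\delta$-close. This requires both that $s$ is continuous with $s(0)=0$ and that the velocity directions on parallel strip geodesics match. I would establish this from \cref{rmk:shearing-effect}: the parallel Jacobi field has $y$ constant and $J'=(\mu x+\dot y)i\dot\gamma$, and after reparametrizing along the orthogonal curve we get $x\equiv0$ at the matched times, so the angular discrepancy is controlled by $c$.
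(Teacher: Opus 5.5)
Your proposal is correct and follows the paper's proof. Both use the same three-way split: the magnetically flat (horocycle) case, the flat-strip case via \cref{lemma:magnetic-flat-strip-parallel}, and the \ref{H3} case via uniqueness of the magnetic geodesic between two boundary points. Your version adds detail the paper omits: closeness of tangent vectors via the orthogonal geodesic segments, and the uniform no-short-return bound that gives $|\tau|<\epsilon$. One step that you and the paper both pass over quickly is that a merely continuous $s$ gives forward \emph{and} backward asymptoticity. This needs $s(t)\to\pm\infty$ as $t\to\pm\infty$, which follows from the $\delta$-closeness of the tangent vectors together with the same no-short-return bound you already use.
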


\begin{proof}
We first show by contraposition that expansivity of $\f$ implies \ref{H3}. If $\Sigma$ is $\mu$-magnetically flat, then $\f$ is the horocycle flow of $\Sigma$ with $K\equiv -\mu^2$. The \m\  geodesics are horizontal lines on the upper half-plane of constant curvature $-\mu^2$, and therefore, $\f$ is not expansive. If $(\Sigma,\mu)$ satisfies \ref{H2} and there exists a \m\  flat strip, then $\f$ is not expansive by \cref{lemma:magnetic-flat-strip-parallel}.

If $(\Sigma,\mu)$ satisfies \ref{H3}, then by \cref{thm:ptildeqtilde-magnetic-visibility-property}, for any two distinct points on $\Minfty$ there exists exactly one \m\  geodesic from one point to the other. This implies that backward asymptotic \m\  geodesics cannot be forward asymptotic, and therefore, $\f$ is expansive.
\end{proof}
Since expansivity is preserved by orbit-equivalence \cite[Theorem 1.7.8]{Fisher-Hasselblatt}, this gives the converse of \cref{prop:orbit-equivlence-partial} and hence

\begin{theorem}[Orbit-equivalence]\label{thm:orbit-equivalence}
    The \m\ flow $\f$ is orbit-equivalent to the geodesic flow of $\Sigma$ if and only if $(\Sigma,\mu)$ satisfies \ref{H3}.
\end{theorem}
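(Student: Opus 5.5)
The theorem is an equivalence, and both directions are essentially in hand, so the proof consists of assembling them.

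For the direction ``\ref{H3} implies orbit-equivalence'', invoke \cref{prop:orbit-equivlence-partial} directly. There $P\colon\smm\to\smm$, which sends $v$ to the tangent of the boundary chord of $\gav$ at the orthogonal projection of $\pi v$, was shown to be a homeomorphism taking $\f$-orbits to geodesic orbits. One routine point must be checked: $P$ commutes with deck transformations, so it descends to $\sm$. This holds because isometries preserve \m\ geodesics (\cref{cor:isometry-preserves-geodesic-curvature}), preserve boundary chords, since these are determined by endpoints on $\Minfty$, and preserve orthogonal projection.

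For the converse, suppose $\f$ is orbit-equivalent to the geodesic flow of $\Sigma$. Since $K<0$ by \ref{H0}, that geodesic flow is Anosov, and Bowen's theorem \cite{Bowen75} makes it expansive. Expansivity is invariant under orbit-equivalence by \cite[Theorem 1.7.8]{Fisher-Hasselblatt}, where the reparametrizations are absorbed into the continuous $s$ in \cref{def:expansivity}. Hence $\f$ is expansive, and \cref{prop:expansivity-of-mag-flows} yields \ref{H3}.

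The substance therefore lies in the two cited propositions, and the step most worth scrutinizing is the ``only if'' half of \cref{prop:expansivity-of-mag-flows}. There I would recheck the following case split.
\begin{itemize}
\item In the magnetically flat case, the horocycle flow fails to be expansive. Nearby horocycles in the same horoball family stay uniformly close under a suitable reparametrization.
\item In the case of a \m\ flat strip, \cref{lemma:magnetic-flat-strip-parallel} says that two \m\ geodesics in the strip stay at constant distance $<\de$ after reparametrizing along orthogonal geodesic segments. If they are chosen close, these are distinct orbits that never separate by $\de$. Such a pair violates \cref{def:expansivity} once it is projected to $\sm$, provided $\de$ is below the injectivity radius so that distances on $\M$ and $\sm$ agree.
\end{itemize}
Given these, the equivalence follows.
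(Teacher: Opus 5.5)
Your proposal is correct and follows the paper's argument. \ref{H3} gives orbit-equivalence via \cref{prop:orbit-equivlence-partial}; conversely, orbit-equivalence carries expansivity from the Anosov geodesic flow to $\f$, which forces \ref{H3} by the ``expansive $\implies$ \ref{H3}'' half of \cref{prop:expansivity-of-mag-flows}. Your check that $P$ commutes with deck transformations, so that it descends to $\sm$, is something the paper leaves implicit, and it holds because deck transformations are orientation-preserving isometries and therefore preserve \m\ geodesics, their endpoints, and orthogonal projections.
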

\begin{remark}\label{remark:orbit-factor}
    If $(\Sigma,\mu)$ satisfies \ref{H2}, then the geodesic flow of $\Sigma$ is an orbit factor (see \cref{def:orbit-equiv}) of $\f$ with a continuous surjection $P\colon \smm\to\smm$ in \cref{prop:orbit-equivlence-partial}. In particular, singular vectors corresponding to \m\ geodesics in the same \m\ flat strip $S$ are mapped to tangent vectors of the boundary chord of $S$.
\end{remark}

\subsubsection{Kinematic-expansivity, equilibrium states}

Magnetic flows are kinematic-expansive (\cref{thm:kinematic-expansivity}). Kinematic-expansivity (\cref{def:k-expansivity}) is a weakening of expansivity (\cref{def:expansivity}): any expansive flow is kinematic-expansive. It implies entropy-expansivity (\cref{def:h-expansivity}) of the time-1 map (\cite[Proposition 3.3]{Climenhaga-Thompson-advances}), and hence the existence of equilibrium states by \cref{thm:FH19-existence-ES-construction} (\cref{thm:existence-of-es-equidistribution}). For more on kinematic-expansivity, see 
\cite[\S 4.2]{Fisher-Hasselblatt}.

\begin{definition}[Kinematic-expansivity, {\cite[Definition 1.8.3]{Fisher-Hasselblatt}}]\label{def:k-expansivity}
 Let $(X,d)$ be a metric space 
 and let $\mathcal{F}=\{f_t\}_{t\in \re}$ be a continuous flow on $X$. $\mathcal{F}$ is said to be \emph{kinematic-expansive} if for any $\eps>0$, there exists $\de>0$ such that $d(f_tx,f_ty)\leq \de$ for all $t\in \re$ implies that $y\in f_{(-\eps,\eps)}(x)\dfn\{f_tx\mid t\in (-\eps,\eps)\}$.
\end{definition}
Unlike expansivity, kinematic-expansivity is not preserved by orbit-equivalence since time changes can affect  separation of points by a flow. For instance, consider Euclidean rotations in $\mathbb{R}^2$. Being isometric, these are not expansive in any way. If one reparametrizes to unit speed, then points on distinct orbits drift apart, yielding kinematic-expansivity.

The shearing effect (\cref{rmk:shearing-effect,eqn:magJac1,sentence:shearing-effect}) brings about kinematic-expansivity:
\begin{theorem}\label{thm:kinematic-expansivity}
If $(\Sigma,\mu)$ satisfies \ref{H1}, then $\f$ is kinematic-expansive on $T^1\Sigma$ with respect to  $d_K$ from \eqref{def:metric_dk}.
\end{theorem}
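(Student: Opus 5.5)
We work on the universal cover and split into cases according to whether the two orbits coincide. Fix $\eps>0$ and choose $\de>0$ smaller than a fraction of the injectivity radius of $\Sigma$, so that every pair $v,w\in T^1\Sigma$ with $d_K(\f_tv,\f_tw)\le\de$ for all $t$ lifts to a pair $v^*,w^*\in\smm$ with $d(\ga_{v^*}(t),\ga_{w^*}(t))\le\de$ for all $t\in\re$. If $\mu=0$, then \ref{H1} with $K<0$ means that $f^0$ is the geodesic flow in negative curvature, which is Anosov, hence expansive, hence kinematic-expansive. So we assume $\mu\neq0$. With the reparametrization $s(t)=t$, the hypothesis says that $\ga_{v^*}$ and $\ga_{w^*}$ are both forward and backward asymptotic (\cref{def:asymptotic-mag-geodesics}). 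By \cref{prop:boundary-bijection}, they therefore have the same endpoints on $\Minfty$.

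\emph{Case 1: same orbit.} Suppose $w^*=\f_s v^*$; we must show $|s|<\eps$. By \cref{prop:mag-intersect-geocircle-atmost-once}, each magnetic ray crosses every geodesic circle about its initial point exactly once. Hence $s\mapsto d(\ga_u(0),\ga_u(s))$ is strictly increasing on $[0,\infty)$ for every $u$. It follows that
\[
\inf_{|s|\ge\eps}\, d\big(\ga_u(0),\ga_u(s)\big)=\min\big\{d(\ga_u(0),\ga_u(\eps)),\,d(\ga_u(0),\ga_u(-\eps))\big\}.
\]
Each of these two quantities is positive and continuous in $u\in T^1\Sigma$. By compactness they are bounded below by some $c(\eps)>0$. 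Taking $\de<c(\eps)$ and evaluating the hypothesis at $t=0$ forces $|s|<\eps$.

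\emph{Case 2: distinct orbits with common endpoints.} Here the two orbits must be shown impossible, and the tool is the shearing effect of \cref{rmk:shearing-effect}. Under \ref{H2}, \cref{prop:magnetic-flat-strip-is-connected} and \cref{lemma:magnetic-flat-strip-parallel} show that $\ga_{v^*}$ and $\ga_{w^*}$ bound a magnetic flat strip of some width $a>0$. This strip is parametrized by the parallel variation $\Gamma$ used in the proof of \cref{lemma:magnetic-flat-strip-parallel}. For that variation, each $y_s$ is a nonzero constant of fixed sign, and \eqref{eqn:magJac1} gives $x_s(t)=\mu y_s t+x_s(0)$. We then follow the orthogonal geodesic segments $C_t$ from $\ga_{v^*}(t)$, which hit $\ga_{w^*}$ at some time $r(t)$. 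Integrating the tangential components $x_s$ across $s\in[0,a]$ should give
\[
r(t)-t=\mu t\int_0^a y_s\,ds+O(1).
\]
The integral $\int_0^a y_s\,ds$ is nonzero because the $y_s$ have a fixed sign. The lengths $l(C_t)$ are constant, and the magnetic rays have the strictly monotone distance behavior used in Case 1. Together these should give $d(\ga_{v^*}(t),\ga_{w^*}(t))\to\infty$, contradicting the bound by $\de$.

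In the $\mu$-magnetically flat case the same conclusion can be seen explicitly. There $\M\cong\mathbb H^2(-\mu^2)$, and magnetic geodesics with common endpoints are horocycles at a common point, i.e.\ parallel horizontal lines in the upper half-plane. Unit hyperbolic speed then means different Euclidean speeds on lines of different height, so points at equal times separate without bound.

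I expect the main obstacle to be making the linear drift $r(t)-t\sim\mu t\int_0^a y_s\,ds$ rigorous. This requires relating the tangential Jacobi component $x_s$ to the reparametrization $r(s,t)$ along the orthogonal curves $C_t$ from \cref{lemma:magnetic-flat-strip-parallel}, uniformly in $s$. The first approach I would try is to differentiate $\Gamma(s,r(s,t))=C_t(s)$ in $s$. The orthogonality condition gives $\partial_s r=-x_s(r)$, and solving this ODE in $s$ should produce the linear-in-$t$ drift.
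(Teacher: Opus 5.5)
Your decomposition matches the paper's. The same-orbit case is handled by a uniform lower bound on $d(\gamma_u(0),\gamma_u(\pm\eps))$ over the compact $T^1\Sigma$; this is the paper's function $\beta$. Distinct orbits with common endpoints are ruled out by the shear $\dot x=\mu y$ on the magnetic flat strip they bound.

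The real difference is how the shear is turned into separation. The paper asserts $d(\gamma_v(t),\gamma_w(t))\ge\int_0^R|x_r(t)|\,dr$. But the right side is at most the length of the particular curve $r\mapsto\Gamma(r,t)$, and that length bounds $d$ from above, not below. In the model $\mathbb H^2$ with $\mu=1$ and horizontal lines, $d(\gamma_v(t),\gamma_w(t))$ grows like $2\log t$, while $\int_0^R|x_r(t)|\,dr=Rt$. Your route instead uses cross-sections $C_t$ of bounded length $a$, the linear drift of the foot point $r(t)$, and the estimate
\[
d\big(\gamma_{v^*}(t),\gamma_{w^*}(t)\big)\ \ge\ d\big(\gamma_{w^*}(t),\gamma_{w^*}(r(t))\big)-l(C_t).
\]
This only ever bounds distances from below along a single magnetic geodesic, so it genuinely closes the step the paper's inequality glosses over.

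Two corrections are needed when you write it out.

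\emph{The drift formula.} Your displayed drift is only a first-order approximation in $a$. The ODE you propose, $\partial_s r=-x_s(r)=-\mu y_s r-x_s(0)$, is linear in $r$ and integrates exactly to
\[
r(a,t)=e^{-\mu\int_0^a y_s\,ds}\,t+O(1).
\]
So the drift rate is $e^{-\mu\int_0^a y_s\,ds}-1$; for concentric horocycles in $\mathbb H^2$ with $\mu=1$ and $y_s\equiv1$ one gets $r=e^{-a}t$ exactly. This rate is nonzero because $\mu\neq0$ and $y_s$ has fixed sign, which is all you need.

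\emph{Uniformity of divergence.} Monotonicity along each ray does not by itself make $d(\gamma_{w^*}(t),\gamma_{w^*}(r(t)))$ large, because the starting point $\gamma_{w^*}(t)$ moves with $t$. You need some $\ell$ with $\inf_u d(\gamma_u(0),\gamma_u(\ell))>\de+a$, with distances computed on $\M$. This follows from three facts: each ray is unbounded, $d(\gamma_u(0),\gamma_u(\ell))$ is monotone in $\ell$, and $T^1\Sigma$ is compact. Concretely, the open sets $\{u: d(\gamma_u(0),\gamma_u(\ell))>\de+a\}$ increase with $\ell$ and cover $T^1\Sigma$, so one of them is everything.
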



\begin{proof}
Define a function $\beta\colon [0,\infty)\to [0, \infty)$
\begin{equation}\label{function-distance-conversion}
    \beta(l)=\inf\{d(p,q)\mid p,q\in\M, \td(p,q)=l\},
\end{equation}
\textcolor{black}{where $d$ is the distance induced by the lifted Riemannian metric $\tilde{g} = \pi^* g$ on $\M$ and $\td$ is the magnetic distance from \cref{def:mag-distance}.} 
By the continuity of $\f$ and \cref{lemma:pq-connectivity-and-no-self-intersect}, $\beta$ is continuous and increasing, and $\beta^{-1}(0)=\{0\}$.

To prove kinematic-expansivity, we will show that if $\eps>0$ and $d_K(\f_tv,\f_tw)\leq \de \dfn \frac{1}{2}\beta^{-1}(\eps)$ for all $t\in \re$, then $w\in \f_{(-\eps,\eps)}v$. It suffices to show that  $w=\f_tv$ for some $t$ because then $|t|<\epsilon$ by contraposition: otherwise, $d_K(w,v)\ge d(\pi v,\pi w)\ge\beta^{-1}(\eps)>\de$.

To that end, note first that if $d_{K}(\f_tv,\f_tw)\le \de$ (hence $d(\gav(t),\ga_w(t))\leq\de$) for all $t\in\re$, then $\gav(\pm\infty)= \ga_w(\pm\infty)$ by definition of ``asymptotic.'' Therefore, either $\gamma_v(\mathbb R)=\gamma_w(\mathbb R)$ (in which case we are done) or $\gav,\ga_w$ define a \m\ flat strip parametrized by an injective \m\  variation $\Gamma\colon (r,t)\in[0,R]\times \re\to \M$ such that $\Gamma(0,t)=\gav(t)$, $\Gamma(R,t)=\ga_w(t)$, and $\Gamma(r,\pm \infty)=\gav(\pm \infty)$ for any $r\in[0,R]$. But this is impossible: the corresponding \m\  Jacobi field is parallel by \cref{prop:stable-mag-jf-same-endpoint,lem:parallel-iff-stable-and-unstable} with constant nonzero orthogonal component by \cref{lemma:sing-vector-has-constant-y}.
Then for the tangential component $x_r$, \eqref{eqn:magJac1} gives a $T>0$ such that for any $t>T$ we have 
\begin{equation}\label{sentence:shearing-effect}
    \lvert x_r(t)\rvert>{\delta}/{R} \text{ for all } r\in[0,R],
\end{equation}hence 
\[d_{K}(\f_tv,\f_tw)\geq d(\gav(t),\ga_w(t))\geq \int_0^R \lvert x_r(t)\rvert dr> \frac{\delta}{R}\cdot R\ge\delta.\qedhere\]
\end{proof}
\begin{proposition}[Entropy-expansivity]\label{prop:mag-flow-is-h-expansive}
    If $(\Sigma,\mu)$ satisfies \ref{H1}, then the time-1 map $f=\f_1$ of $\f$ is entropy-expansive (\cref{def:h-expansivity}).
\end{proposition}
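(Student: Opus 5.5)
The plan is to deduce entropy-expansivity of $f=\f_1$ from kinematic-expansivity (\cref{thm:kinematic-expansivity}), following \cite[Proposition 3.3]{Climenhaga-Thompson-advances}. Concretely, fix $\eps>0$ small (say smaller than a quarter of the length of the shortest periodic orbit, or simply small compared with the injectivity radius of $\sm$). Let $\de>0$ be the constant that \cref{thm:kinematic-expansivity} provides for this $\eps$ with respect to $d_K$. Since $d_K$ and $d_S$ are uniformly equivalent (\cref{lemma:dk-ds-equiv}), we may work with $d_K$ throughout; the topological entropy of a set does not depend on the choice between uniformly equivalent metrics. Set $\eps_0\dfn\de'$, where $\de'>0$ is chosen so that $d_K(v,w)\le\de'$ implies $d_K(\f_sv,\f_sw)\le\de$ for all $s\in[0,1]$. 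Such a $\de'$ exists by uniform continuity of $(s,v)\mapsto\f_sv$ on the compact set $[0,1]\times\sm$.

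The key step is to identify the infinite Bowen ball. Take $w\in Z_{\eps_0}(v)$, so that $d_K(f^nv,f^nw)\le\eps_0$ for all $n\in\mathbb Z$. By the choice of $\de'$, this gives $d_K(\f_tv,\f_tw)\le\de$ for every real $t$, by writing $t=n+s$ with $s\in[0,1)$. \cref{thm:kinematic-expansivity} then yields $w\in\f_{(-\eps,\eps)}(v)$. Hence
\[
Z_{\eps_0}(v)\subset\f_{[-\eps,\eps]}(v),
\]
which is a compact orbit segment of length at most $2\eps$.

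It remains to show that $h(f,\f_{[-\eps,\eps]}(v))=0$, uniformly in $v$. I would do this by a direct spanning-set count. The map $\f_{[-\eps,\eps]}(v)\ni\f_sv\mapsto s$ is compatible with $f$, because $f^k\f_sv=\f_s f^kv$. Moreover, $d_K(\f_s u,\f_{s'}u)\le L|s-s'|$ for a constant $L$ independent of $u$, since unit-speed orbits move with bounded speed in $d_S$, and \cref{lemma:dk-ds-equiv} transfers this to $d_K$. So for any $\rho>0$, the points $\f_{s_j}v$ with $s_j$ forming a $\rho/L$-grid of $[-\eps,\eps]$ form an $(n,\rho)$-spanning set for every $n$. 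This gives $r_n(\f_{[-\eps,\eps]}(v),\rho)\le 2\eps L/\rho+1$, which is bounded in $n$, so $r(\cdot,\rho)=0$ for each $\rho$ and the entropy vanishes. Taking the supremum over $v$ gives $h^*(f,\eps_0)=0$.

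I expect the only genuinely delicate point to be the passage from the discrete-time condition $Z_{\eps_0}$ to the continuous-time hypothesis of \cref{thm:kinematic-expansivity}. This is handled by the uniform-continuity choice of $\de'$. A minor subtlety is that $Z_{\eps_0}$ uses the closed condition $\le\eps_0$ while kinematic-expansivity uses $\le\de$; these match as arranged above. Alternatively, one could simply cite \cite[Proposition 3.3]{Climenhaga-Thompson-advances} once \cref{thm:kinematic-expansivity} is in hand.
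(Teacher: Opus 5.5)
Your proposal is correct and follows essentially the same route as the paper. Kinematic-expansivity (\cref{thm:kinematic-expansivity}) confines $Z_{\eps_0}(v)$ to a short orbit segment, and a grid of points along that segment is an $(n,\rho)$-spanning set whose cardinality does not depend on $n$, so $h^*(f,\eps_0)=0$. The only cosmetic difference is your uniform-continuity choice of $\de'$: the paper does without it, because for the Knieper metric the condition $d_K(f^nw,f^nv)\le\de$ for all $n\in\mathbb Z$ is already equivalent to $d(\ga_w(t),\gav(t))\le\de$ for all $t\in\re$, so it takes $\eps_0=\de$ directly (and uses $\eps=1$).
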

\begin{proof}
To find a $\delta>0$ such that $h^*(f,\delta)=0$ use that \cref{thm:kinematic-expansivity} gives kinematic expansivity, so for $\eps=1>0$, there exists $\de>0$ such that for any $v$
    \[Z_{\de}(v)\dfn\{w\mid d_1(f^nw,f^nv)\leq \de\text{ for } n\in \mathbb{Z}\}=\{w\mid d(\ga_w(t),\gav(t))\leq \de\text{ for } t\in\re\}\subset \f_{[-1,1]}v,\]
where $d_1$ is as in \eqref{def:metric_dk}.    For $\eps=2/m>0$ and \emph{any} $n\in\mathbb{N}$, \[N_m\dfn \{\f_{-1+k\eps}(v) \mid k=0,1,\dots,m\}\subset Z_{\de}(v)\]
    is an $(n,\eps)$-spanning set of $Z_{\de}(v)$ for $f$, so
$r_n(Z_{\de}(v),\eps)=r_1(Z_{\de}(v),\eps)$ from \eqref{def:r_n} is independent of $n$. 
Therefore (see \eqref{eqn:h-*}) $h^*(f,\de)=0$.
\end{proof}

}
\cref{prop:mag-flow-is-h-expansive} allows us to apply \cref{thm:FH19-existence-ES-construction} to obtain equilibrium states for magnetic flows.

\begin{theorem}[Existence of equilibrium states]\label{thm:existence-of-es-equidistribution}
    If $(\Sigma,\mu)$ satisfies \ref{H1}, then for every Bowen-bounded potential function with finite pressure (\cref{def:topological-pressue-potential-function-bowen-bounded}), there exists an equilibrium state for $\f$.
\end{theorem}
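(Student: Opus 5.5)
The plan is to apply \cref{thm:FH19-existence-ES-construction} directly to the flow $\mathcal F=\f$ on the compact metric space $X=\sm$, equipped with the Knieper metric $d_K$ from \eqref{def:metric_dk}. By \cref{lemma:dk-ds-equiv}, $d_K$ is uniformly equivalent to the Sasaki metric $d_S$. Consequently the topology, the invariant measures, the pressure, and the class $V(\f)$ of Bowen-bounded potentials are unaffected by the choice between these metrics, up to adjusting constants in the Bowen balls. So we may work with $d_K$ throughout.

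The argument proceeds in the following order.
\begin{enumerate}
\item Invoke \cref{prop:mag-flow-is-h-expansive}: the time-1 map $\f_1$ is entropy-expansive. This is the only dynamical input that \cref{thm:FH19-existence-ES-construction} needs beyond the potential. It rests on kinematic-expansivity (\cref{thm:kinematic-expansivity}), which holds under \ref{H1} alone, including the magnetically flat (horocycle) case.
\item Fix a Bowen-bounded $\varphi\in V(\f)$ with $P(\varphi)<\infty$. Finiteness is automatic anyway: $\varphi$ is continuous on a compact space, and entropy-expansivity gives finite topological entropy for the smooth flow. We then choose $\eps>0$ below the entropy-expansivity constant and also below the Bowen-boundedness scale of $\varphi$.
\item For each $n$, pick an $(n,\eps)$-separated set $E_n$ satisfying \eqref{eqn:separated-set-card}. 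This is possible by the definition of $N_d$ as a supremum, and each $E_n$ is finite by compactness. Form the measures $\mu_n$ and $\nu_n$ as in the theorem.
\item The $\nu_n$ are probability measures on the compact space $\sm$, so by Banach--Alaoglu/Prokhorov they have a weak$^\star$ accumulation point $\nu$. Averaging over $[0,n]$ makes $\nu$ $\f$-invariant. \cref{thm:FH19-existence-ES-construction} then says $\nu$ is an equilibrium state for $\varphi$, that is, it attains the supremum in the variational principle.
\end{enumerate}
The case $\varphi=0$ then yields a measure of maximal entropy.

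The main obstacle is bookkeeping rather than substance: one must check that the hypotheses of \cref{thm:FH19-existence-ES-construction} are met verbatim. Three points need attention. First, entropy-expansivity in \cref{prop:mag-flow-is-h-expansive} was proved for $f=\f_1$ with respect to $d_K$, so the equilibrium-state theorem must be applied in that same metric, which the metric equivalence above permits. Second, the separation scale $\eps$ in \eqref{eqn:separated-set-card} must be taken small relative to the entropy-expansivity constant $\de$, so that the pressure computed at scale $\eps$ already equals $P(\varphi)$. This is the standard consequence of entropy-expansivity, namely that $\lim_{\eps\to0}$ is attained at a fixed positive scale. Third, Bowen-boundedness of $\varphi$ must be used at a compatible scale.

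If the cited theorem requires the scale explicitly, I would first take $\eps\le\min(\de,\eps_\varphi)$, where $\eps_\varphi$ is the Bowen-boundedness scale of $\varphi$, and note that shrinking $\eps$ preserves both properties.
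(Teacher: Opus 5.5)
Your proposal is correct and follows the paper's route exactly: entropy-expansivity of the time-1 map from \cref{prop:mag-flow-is-h-expansive}, which itself rests on kinematic-expansivity under \ref{H1}, is fed directly into \cref{thm:FH19-existence-ES-construction}. Your extra bookkeeping is harmless and makes explicit what the paper leaves implicit; this covers working in $d_K$ versus the Sasaki metric and taking the separation scale below both the entropy-expansivity constant and the Bowen-boundedness scale.
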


\section{Horospheres versus orthospheres}\label{SHorospheres}
Horospheres are a crucial tool for understanding hyperbolic geodesic flows. In the present context it becomes apparent that this rests on two simultaneous features: they are orthogonal to geodesics, and they ``synchronize'' rays asymptotic to a common ideal point (i.e., they are level sets of Busemann functions). We now explore the difficulties that arise for magnetic flows, for which these two features are instead mutually exclusive (\cref{fig:magtriineqfailed} encapsulates that issue, and \cref{rmk:obstacles-defining-magnetic-busemann-in-obvious-way} describes it).

These difficulties notwithstanding, magnetic flows exhibit further properties analogous to those of geodesic flows of surfaces with nonpositive curvature, but there is a major exception: 
the natural ``distance'' function (\cref{def:mag-distance}) only ``partially'' satisfies the triangle inequality (\cref{lemma:partial-magnetic-triangle-inequality,lemma:magnetic-triangle-inequality-fails}) and magnetic analogs of the law of cosines fail (\cref{,prop:mag-cosine-law-fails}). However, at least the partial magnetic triangle inequality guarantees that the magnetic B-function and orthospheres (\cref{def:F-F-t-magnetic-busemann-function})---the focus of \cref{ssection:mag-B-function,sec:mag-radial-C2-reg}---are well defined, and their $C^2$-regularity is proved in \cref{prop:c2-regularity-of-magnetic-horosphere}, which later allows us to define strong (un)stable spaces at regular vectors (\cref{def:strong-stable-space,prop:strong_stable_spaces}).

\subsection{Magnetic triangle inequality}\label{sec:Magnetic-Triangle}
The ``distance'' $\td=\td_\mu$ from \cref{def:mag-distance} has notable shortcomings. Irreversibility of the magnetic flow suggests that it may not be symmetric, and we will show that indeed, it is not (\cref{cor:td-is-not-metric}). 
However, there is a partial ``magnetic triangle inequality.''
\begin{proposition}[Partial magnetic triangle inequality]\label{lemma:partial-magnetic-triangle-inequality}
   Suppose $(\Sigma,\mu)$ satisfies \ref{H1}. If $p,q\in \M$ and $r\in\M$ is to the right of $\ga_{pq}(\re)$, 
    then $\td(p,r)+\td(r,q)\geq \td(p,q)$.
\end{proposition}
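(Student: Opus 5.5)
We would prove the partial triangle inequality by a length-comparison argument: build a curve from $p$ to $q$ of constant geodesic curvature bounded by the path through $r$, and compare. Assume $\mu>0$; the case $\mu<0$ is symmetric by \cref{rmk:general-behaviors-easy-observation}(\ref{ReverseVersion}), and $\mu=0$ is the Riemannian triangle inequality. Set $\alpha\dfn\ga_{pr}$ and $\beta\dfn\ga_{rq}$, and let $c$ be the concatenation $\alpha*\beta$ from $p$ to $q$. Its length is $\td(p,r)+\td(r,q)$. It is piecewise smooth with geodesic curvature $\mu$ on each piece and one corner at $r$.

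The plan is to compare $c$ with $\ga_{pq}$ inside the region they bound. Because $r$ lies to the right of $\ga_{pq}(\re)$, and $\mu$-magnetic geodesics curve to the left (\cref{rmk:general-behaviors-easy-observation}(\ref{LeftRightTangency})), we expect the following picture.
\begin{enumerate}
\item We first check that $c$ and $\ga_{pq}|_{[0,\td(p,q)]}$ meet only at $p$ and $q$. This uses \cref{thm:connectivity-combo}: any two magnetic geodesics meet at most once.
\item We then check that $c$ lies to the right of $\ga_{pq}$, so that the two curves bound a topological disk $\Omega$. The arc $\ga_{pq}$ traverses $\partial\Omega$ with $\Omega$ on its right, and $c$ traverses it with $\Omega$ on its left.
\end{enumerate}

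Next we apply Gauss--Bonnet to $\Omega$, oriented counterclockwise: $\Omega$ lies on the left of $c$, and $\ga_{pq}$ is run backwards. The geodesic curvature of $\partial\Omega$ is then $+\mu$ along $c$ and $-\mu$ along the reversed $\ga_{pq}$. This gives
\[\mu\big(\td(p,r)+\td(r,q)\big)-\mu\,\td(p,q)+\int_\Omega K\,dA+\theta_p+\theta_q+\theta_r=2\pi,\]
where $\theta_p,\theta_q,\theta_r$ are the exterior angles at the three corners.

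The tempting step is to write $K=\kmu-\mu^2$ with $\kmu\le 0$ from \ref{H1}. This gives $\int_\Omega K\,dA\le-\mu^2\operatorname{Area}(\Omega)$. However, the inequality we want needs an upper bound on $\theta_p+\theta_q+\theta_r$ relative to the area term, and the bound above points the wrong way. So Gauss--Bonnet alone seems insufficient.

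We would instead try a continuity argument in $r$, which we expect to be the real argument. Fix $p,q$ and set $F(r)\dfn\td(p,r)+\td(r,q)-\td(p,q)$ on the closed right half-region $R$ bounded by $\ga_{pq}(\re)$.
\begin{enumerate}
\item $F=0$ on $\ga_{pq}([0,\td(p,q)])$.
\item $F\to\infty$ as $r\to\Minfty$, by \cref{cor:magnetic-geodesic-is-unbounded}.
\item Suppose $F$ attains a negative minimum at an interior point $r_0$. A first-variation formula for $\td$ gives $\nabla_r\td(p,r)=\dot\ga_{pr}(\td)+(\text{correction})\,i\dot\ga_{pr}$, where the correction comes from the tangential part $x$ of magnetic Jacobi fields via \eqref{eqn:magJac1}. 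A corresponding formula holds for $\nabla_r\td(r,q)$.
\item At the critical point $r_0$ these gradients cancel. We would then combine this with convexity of $y$ (\cref{rmk:comment-on-mag-JE}) to show that $r_0$ must lie on $\ga_{pq}$, a contradiction.
\end{enumerate}

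The main obstacle is this last step, controlling the sign of the non-orthogonal correction. It is exactly this term that distinguishes right from left, and it is where the hypothesis that $r$ lies on the right must be used.
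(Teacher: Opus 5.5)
The decisive step is missing, and in the form you sketch it cannot succeed. Carrying out your first variation (with $\dot x=\mu y$ and $x=0$ at the fixed endpoint) gives, for $r\notin\{p,q\}$,
\[
\nabla\td(p,\cdot)(r)=u-\mu a\,iu,\qquad \nabla\td(\cdot,q)(r)=-v-\mu b\,iv.
\]
Here $u=\dot\ga_{pr}(\td(p,r))$ and $v=\dot\ga_{rq}(0)$. The coefficient $a=\int_0^{\td(p,r)}y\,dt/y(\td(p,r))$ comes from a solution $y$ of \eqref{MagneticJacobiEquation} along $\ga_{pr}$ with $y(0)=0$. Likewise $b=\int_0^{\td(r,q)}\tilde y\,dt/\tilde y(0)$ comes from a solution $\tilde y$ along $\ga_{rq}$ with $\tilde y(\td(r,q))=0$. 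Convexity of $y$ adds only the bounds $0<a\le\td(p,r)/2$ and $0<b\le\td(r,q)/2$.

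Thus $\nabla F(r_0)=0$ says merely that $a=b$ and that $v$ is $u$ rotated clockwise by $2\arctan(\mu a)$. This condition never mentions $\ga_{pq}$ or the side on which $r_0$ lies. Worse, it is realized with $F(r_0)<0$. Indeed, $F\ge0$ on all of $\ga_{pq}(\re)$ and $F\to\infty$ at infinity. On the open segment the formulas give $\nabla F=-\mu(a+b)\,i\dot\ga_{pq}$, so $F<0$ just to the left (cf.\ \cref{lemma:magnetic-triangle-inequality-fails}). Hence $F$ has a negative interior minimum on the closed \emph{left} half-region, where all your first-order information holds verbatim. So no argument at the single point $r_0$ can exclude a negative minimum on the right; the side must enter globally.

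The same computation shows that your reduction of $\mu<0$ to $\mu>0$ fails. Time reversal exchanges the two sides of $\ga_{pq}$, and for $\mu<0$ one gets $F<0$ just to the \emph{right} of the segment. So the statement must be read with $\mu>0$; for $\mu<0$ it holds on the left. Your Gauss--Bonnet identity, whose curvature term in fact has the favorable sign, only converts the claim into an unproved bound on the corner angles.

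The paper's proof supplies the global input as follows. It first reduces to the equality case $\td(p,r)+\td(r,q)=\td(p,q)$ by moving $q$ back along $\ga_{rq}$ and using the intermediate value theorem. Consider the triangle $R$ bounded by $\ga_{pq}$, $\ga_{pr}$, $\ga_{rq}$. In the equality case, the level arc $C$ of $\td(p,\cdot)$ through $r$ and the level arc of $\td(\cdot,q)$ through $r$, both inside $R$, end at the \emph{same} point $\ga_{pq}(\td(p,r))$. Along them the paper uses the variation $\Gamma_p$ by segments from $p$ (Jacobi fields vanishing at $t=0$) and the variation $\Gamma_q$ by segments into $q$ (Jacobi fields vanishing at the end).

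Since $R$ lies to the right of $\ga_{pq}$, both variations move leftward from $r$ toward $\ga_{pq}$, so $y>0$. Then $\dot x=\mu y$ yields $x>0$ along $\Gamma_p$ and $x<0$ along $\Gamma_q$, i.e.\ the acute and obtuse angle conditions \eqref{eqn:angle-between-magjf-flow-direction} and \eqref{eqn:angle-between-magjf-flow-direction-2}. The two arcs share both endpoints, and an intermediate-value comparison of them produces a parameter $s_0$ at which these two conditions are incompatible. Your ``correction'' is exactly this $x$-term. What your proposal lacks is the pinning of both level curves at a common point of $\ga_{pq}$, which is where the right-hand hypothesis is actually used.
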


\begin{proof}
The case $\mu=0$ is the triangle inequality on Riemannian surfaces, so we assume $\mu>0$ and the proof is similar for $\mu<0$. Suppose the contrary, that is, there exists a point $r$ to the right of $\ga_{pq}(\re)$ such that $\td(p,r)+\td(r,q)< \td(p,q)$ or $\td(p,r)+\td(r,q)= \td(p,q)$. The first case implies that $\td(p,r)<\td(p,q)$, and from the continuity of the function $t\mapsto \td (p,\ga_{rq}(t))$ and the intermediate-value theorem, there exists $t\in [0,\td (r,q))$ such that $\td(p,r)+\td(r,q')= \td(p,q')$ for $q'=\ga_{rq}(t)$. Thus we have reduced the first case to the second, which we now rule out.

Suppose that there exists a point $r$ to the right of $\ga_{pq}(\re)$ such that $\td(p,r)+\td(r,q)= \td(p,q)$. Let $R$ be the area enclosed by $\ga_{pq}$, $\ga_{rq}$ and $\ga_{pr}$. Define 
\begin{enumerate}
    \item a smooth curve $C\colon[0,1]\to \M$ in $R$ such that $C(0)=r$, $C(1)\in \ga_{pq}(\re)$, and $\td(p,C(s))=\td(p,r)$ for all $s\in[0,1]$ (such a curve $C$ always exists in $R$: if $C$ intersects $\ga_{pr}$ or $\ga_{rq}$, then just redefine $r$ as the intersection point);
    \item a \m\  variation $\Gamma_p\colon [0,1]\times[0,\td(p,r)]\to \M$ such that $\Gamma_p(s,\cdot)$ is the \m\  geodesic segment from $p$ to $C(s)$ for all $s\in[0,1]$;
    \item a \m\  variation $\Gamma_q\colon[0,1]\times [0,\td(r,q)]\to \M$ such that $\Gamma_q(0,t)=\ga_{rq}(t)$, $\Gamma_q(s,0)\in \Gamma_p(s,\re)$, and $\Gamma_q(s,\td (r,q))=q$ for any $s\in [0,1]$, as suggested by \cref{fig:mag tri ineq}. 
\end{enumerate}

\begin{figure}[h]
        \centering
    \begin{tikzpicture}
    \begin{scope}[decoration={markings, mark=at position 0.6 with {\arrow[scale=2]{latex}} }]
        \node (A) at (-.2,1.3){};
        \node (At) at (-.4,2){};
        \node (O) at (0,0){};
        \node (B) at (-.05,-1.8){};
        \node (C) at (-.45,-2.5){};
        \node (P) at (-6,1){};
        \node (Q) at (4,4){};
      
        \draw[postaction={decorate}] (P.center) to [out=354,in=200] (A.center);
        \draw (Q.center) to [out=220,in=20] (A.center);
        \draw[thick,blue!30] (At.center) to [out=288,in=105] (A.center) to [out=285,in=95] (O.center) to [out=275,in=80] (B.center) to [out=260,in=45] (C.center);
        \draw[postaction={decorate}] (P.center) to [out=300,in=200] (B.center);
        \draw[postaction={decorate}] (B.center) to [out=20,in=255] (Q.center);
        \draw[thick,green!60!black] (.5,-2.3) to [out=135,in=315] (B.center) to [out=135,in=225] (O.center) to [out=45,in=315] (A.center) to [out=135,in=335] (-1.1,1.9);
        \node at (.2,2.5) {$\color{blue!30}C(\cdot)=\Gamma_p\big(\,\cdot\,,\td(p,r)\big)$};
        \node at (-1.8,2) {$\color{green!60!black}\Gamma_q(\,\cdot\,,0)$};
        \node at (-6.25,.9) {$p$};
        \node at (4.2,4) {$q$};
        \draw[blue!30] (P.center) to [out=335,in=200] (-.05,.5);
        \draw[blue!30] (P.center) to [out=325,in=203] (.03,-.3);
        \draw[blue!30] (P.center) to [out=312,in=203] (.04,-1.1);
        \draw[green!60!black] (Q.center) to [out=250,in=20] (.04,-.7);
        \draw[green!60!black] (Q.center) to [out=243,in=20] (-.02,.3);
        \draw[green!60!black] (Q.center) to [out=235,in=22] (-.1,.9);
        \draw[green!60!black] (Q.center) to [out=253,in=23] (.01,-1.4);
        \node at (-5.1,-1.2) {$\color{blue!30}\Gamma_p(s_1,t)$};
        \node at (4.07,1.5) {$\color{green!60!black}\Gamma_q(s_2,t)$};
        \draw[red] (O) to [out=315,in=180] (2.5,-1) node[anchor=west]{$\color{red}\Gamma_p\big(s_0,\td(p,r)\big)=\Gamma_q(s_0,0)$};
        \node at (0.05,-2.2) {$r$};
        
        \filldraw (-3.47,-1) circle (2pt);
        \draw[very thick,-latex] (-3.47,-1) to ({-3.47+1.2*cos(-22)},{-1+1.2*sin(-22)});
        \draw[very thick,-latex] (-3.47,-1) to ({-3.47+1.2*cos(45)},{-1+1.2*sin(45)});
        \draw ({-3.47+.2*cos(-22)},{-1+.2*sin(-22)}) arc (-22:45:.2);
        \draw ({-3.47+.25*cos(-22)},{-1+.25*sin(-22)}) arc (-22:45:.25);
        \node at (-2.95,-.9) {$1$};

        \filldraw (2.11,.6) circle (2pt);
        \draw[very thick,-latex] (2.11,.6) to ({2.11+1.2*cos(47)},{.6+1.2*sin(47)});
        \draw[very thick,-latex] (2.11,.6) to ({2.11+1.2*cos(170)},{.6+1.2*sin(170)});
        \draw ({2.11+.2*cos(47)},{.6+.2*sin(47)}) arc (47:170:.2);
        \draw ({2.11+.25*cos(47)},{.6+.25*sin(47)}) arc (47:170:.25);
        \node at (2.05,1.08) {$2$};
        \filldraw (A.center) circle (2.5pt);
        \filldraw (B.center) circle (2.5pt);
        \filldraw[red] (O.center) circle (3pt);
        \node at (-.5,-3.4) {$\measuredangle1 = \measuredangle_{\gamma_{s_1(t)}}(J_{s_1}(t),\dot{\gamma}_{s_1}(t)) < \frac{\pi}{2}$};
        \node at (-.5,-4.3) {$\color{red}\measuredangle2 = \measuredangle_{\eta_{s_2(t)}}(\Tilde{J}_{s_2}(t),\dot{\eta}_{s_2}(t)) > \frac{\pi}{2}$};
    \end{scope}
    \end{tikzpicture}
        \caption{\cref{lemma:partial-magnetic-triangle-inequality}}
        \label{fig:mag tri ineq}
    \end{figure}

    
Let $\ga_s(t)\dfn \Gamma_p(s,t)$ and $\eta_s(t)\dfn \Gamma_q(s,t)$. Denote by $J_s$ the \m\  Jacobi field of $\Gamma_p$ along $\ga_s$ and by $\tilde J_s$ the \m\  Jacobi field of $\Gamma_q$ along $\eta_s$ for all $s\in[0,1]$. It is obvious that $J_s(0)=\tilde J_s(\td(r,q))=0$. From \eqref{eqn:magJac1} and \eqref{MagneticJacobiEquation}, we have 

\begin{align}\label{eqn:angle-between-magjf-flow-direction}
& \measuredangle_{\gamma_{s_1(t)}}(J_{s_1}(t),\dot{\gamma}_{s_1}(t)) <\pi/2, \text{ for } t\in[0,\td (p,r)],\\
&\measuredangle_{\eta_{s_2(t)}}(\Tilde{J}_{s_2}(t),\dot{\eta}_{s_2}(t))>\pi/2, \text{ for } t\in [0,\td(r,q)),\label{eqn:angle-between-magjf-flow-direction-2}
\end{align}
 for any $s_1,s_2\in[0,1]$, as $\measuredangle1$ and $\measuredangle2$ suggest in \cref{fig:mag tri ineq}.

Now take $s_1=s_2\in\{0,1\}$ in \eqref{eqn:angle-between-magjf-flow-direction},\eqref{eqn:angle-between-magjf-flow-direction-2}. By the connectedness of both $\td$-circle arcs $\{m\in \M\mid \td(p,m)=\td(p,r)\}\cap R$ and $\{m\in \M\mid \td(m,q)=\td(r,q)\}\cap R$, the relation between the angles in \eqref{eqn:angle-between-magjf-flow-direction} and \eqref{eqn:angle-between-magjf-flow-direction-2} at $\ga_0(\td(p,r))$ and $\ga_1(\td(p,r))$ implies that there exists $s_0\in (0,1)$ such that
\[\Gamma_p(s_0,\td(p,r))=\Gamma_q(s_0,0)\] and
\[\measuredangle_{\eta_{s_0}(0)}(\tilde J_{s_0}(0),\dot \eta_{s_0}(0))\leq \measuredangle_{\ga_{s_0}(\td (p,r))}(J_{s_0}(\td (p,r)),\dot \ga_{s_0}(\td (p,r))) <\pi/2,\] which contradicts \eqref{eqn:angle-between-magjf-flow-direction-2}. This disproves the existence of $r$ to the right of $\ga_{pq}(\re)$ such that $\td(p,r)+\td(r,q)= \td(p,q)$, and the statement follows.
\end{proof}


For points on the other side of the reference \m\  geodesic, the magnetic triangle inequality may not hold.
\begin{proposition}[Failure of magnetic triangle inequality]
    \label{lemma:magnetic-triangle-inequality-fails}
    If $\mu\neq 0$, $(\Sigma,\mu)$ satisfies \ref{H1}, and $p,q\in \M$, then there exists $x\in \M$ to the left of $ \ga_{pq}(\re)$ such that $\td(p,x)+\td(x,q)< \td(p,q)$.
\begin{figure}[h]
        \centering
    \begin{tikzpicture}[scale=1.4]

\draw[black, very thick, bend right=60]
  (-3, 2) to[out=-80, in=-100] (3, 2.5);

\filldraw[black] (-3, 2) circle (1.25pt);
\node[above left, font=\large] at (-3, 2) {$p$};

\filldraw[black] (3, 2.5) circle (1.25pt);
\node[above right, font=\large] at (3, 2.5) {$q$};

\draw[magenta, dashed, thick, bend left=20]
  (-3, 2) to[out=-30, in=200] (-0.19, 0.8);

\draw[blue, thick]
  (0,0.52)
  to[out=40, in=-40] (0, 1.8);
\draw[blue, thick]
  (0,1.8)
  to[out=140, in=-40] (-0.2, 1.9);
\draw[blue, thick]
  (0,0.52)
  to[out=-140, in=30] (-0.3, 0.35);
\node[above right, blue, font=\normalsize] at (0.2, 1.35)
  {$B^{\mu+}_p(t)$};

\draw[green!60!black, very thick]
  (0, 0.52) to[out=140, in=220] (0,1.8);
  \draw[green!60!black, very thick]
  (0, 1.8) to[out=40, in=200] (0.2,1.9);
\draw[green!60!black, very thick]
  (0, 0.52) to[out=-50, in=170] (0.3, 0.35);
\node[above left, green!60!black, font=\normalsize] at (-0.2, 1.35)
  {$B^{\mu-}_{q}(\tilde{t})$};

\fill[green!70, opacity=0.25]
  (0,0.52)
  to[out=40, in=-40] (0,1.8)
  to[out=220, in=140] (0,0.52)
  -- cycle;  

\filldraw[black] (0, 0.52) circle (1.25pt);
\node[below, font=\normalsize] at (0, 0.25)
  {$\gamma_{pq}(t)$};

\filldraw[magenta] (-0.20, 0.8) circle (1.25pt);
\node[above left, magenta, font=\normalsize] at (-0.2, 0.8)  {$x$};

  
\node[green!60!black, font=\normalsize] at (2.2, 0.4)
  {$\tilde{t} = \tilde{d}(p,q) - t$};

\end{tikzpicture}
        \caption{Magnetic triangle inequality fails for intermediate points in the shaded region for any $t$}
        \label{fig:magtriineqfailed}
    \end{figure}
\end{proposition}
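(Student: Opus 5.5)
We take $\mu>0$; the case $\mu<0$ is symmetric. The idea is to choose $x$ close to a point $\gamma_{pq}(t)$ with $0<t<\td(p,q)$, on the left of $\gamma_{pq}$, and to compute the two magnetic distances to first order. Set $m\dfn\gamma_{pq}(t)$ and $v\dfn\dot\gamma_{pq}(t)$. For $x$ near $m$ we have $\td(p,x)+\td(x,q)\to\td(p,m)+\td(m,q)=\td(p,q)$ as $x\to m$. So it suffices to find a direction $w\in T_m\M$, pointing to the left of $v$, along which the derivative of $F(x)\dfn\td(p,x)+\td(x,q)$ is negative.

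By \cref{lemma:pq-connectivity-and-no-self-intersect}, $(\emu_p)^{-1}$ is a diffeomorphism, so $x\mapsto\td(p,x)$ is smooth away from $p$, and the same holds for $x\mapsto\td(x,q)$ (apply the same corollary to the reversed flow, \cref{rmk:general-behaviors-easy-observation}(\ref{ReverseVersion})). We then compute the two gradients at $m$ using magnetic Jacobi fields.

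\textbf{Gradient of $\td(p,\cdot)$.} Let $J$ be a \m\ Jacobi field along $\gamma_{pq}$ with $y(0)=0$. Varying the initial angle at $p$ keeps the length fixed, so the level set $\{\td(p,\cdot)=t\}$ at $m$ is tangent to $J(t)=x(t)v+y(t)iv$. Here $x(0)=0$ and $\dot x=\mu y$. By \cref{lemma:stable-y-is-monotonic}-type convexity, $y$ does not vanish on $(0,t]$, so $x(t)$ has the sign of $y$ and is nonzero; this is the inequality \eqref{eqn:angle-between-magjf-flow-direction}, with angle $<\pi/2$. Hence $\nabla\td(p,\cdot)(m)$ is not $v$: it is $v$ tilted toward $-iv$ (to the right).

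\textbf{Gradient of $\td(\cdot,q)$.} By \eqref{eqn:angle-between-magjf-flow-direction-2}, the level set of $\td(\cdot,q)$ at $m$ makes an obtuse angle with $v$. So $\nabla\td(\cdot,q)(m)$ is $-v$ tilted toward $-iv$ as well. Concretely, the gradients are $a_1v-b_1iv$ and $-a_2v-b_2iv$ with $b_1,b_2>0$. Each gradient is a unit vector (the magnetic distance is realized at unit speed along the unique connecting geodesic, and the derivative along $v$ is $\pm1$ modulo the tilt). Their sum is $(a_1-a_2)v-(b_1+b_2)iv$, which has a strictly negative $iv$-component.

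Taking $w=iv$, which points to the left, gives $dF_m(iv)=-(b_1+b_2)<0$. Therefore $x=\exp_m(\epsilon iv)$, or simply $\gamma(\epsilon)$ for any curve $\gamma$ leaving $m$ in direction $iv$, lies to the left of $\gamma_{pq}$ and satisfies $F(x)<\td(p,q)$ for small $\epsilon>0$. Since the derivative is strictly negative, an open set of such points works, matching \cref{fig:magtriineqfailed}.

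\textbf{Main obstacle.} The hardest step is to identify the gradient of $\td(p,\cdot)$ rigorously. Because the flow is not a Riemannian distance, the gradient is not the velocity. I would argue as follows. The level set's tangent is $J(t)$ with $y(0)=0$ as above. The gradient must be normal to $J(t)$ and have inner product $1$ with $v$, since moving along $\gamma_{pq}$ increases $\td$ at unit rate. This pins down the gradient as $v+c\,iv$ with $c$ determined by $x(t)+c\,y(t)=0$, so $c=-x(t)/y(t)$ has a definite sign by $\dot x=\mu y$. The analogous computation for $\td(\cdot,q)$ is done at the initial point of $\gamma_{mq}$, with $\tilde y$ vanishing at the terminal time and $x(0)$ having sign opposite to $\tilde y$. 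This yields tilts of the same sign and hence the same $-iv$ direction. The sign bookkeeping for the left/right conventions is the point that needs care.
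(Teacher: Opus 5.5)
Your argument is correct and is essentially the paper's. The paper compares the acute angle that $B^{\mu+}_p(t)$ makes with $\dot\gamma_{pq}(t)$ against the obtuse angle that $B^{\mu-}_q(\tilde t)$ makes with it (via \eqref{eqn:angle-between-magjf-flow-direction} and \eqref{eqn:angle-between-magjf-flow-direction-2}) to produce $x\in B^{\mu-}_q(\tilde t)$ with $\td(p,x)<t$. Your computation $\nabla F(m)=(c_1+c_2)\,iv$ with $c_1=-x(t)/y(t)<0$ and $c_2=\tilde x(0)/\tilde y(0)<0$ is the first-order form of that same transversality, and it has the advantage of checking explicitly that the good side is the left.

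Two small points:
\begin{itemize}
\item The gradients $v+c_1iv$ and $-v+c_2iv$ are not unit vectors, contrary to your remark, but only the signs of $c_1$ and $c_2$ are used, so nothing breaks.
\item For $\mu<0$ both signs flip and the failing side becomes $-iv$. So ``symmetric'' must be read with left and right interchanged, which is also how the paper implicitly treats $\mu<0$.
\end{itemize}
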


\begin{proof}

For $t\in\big(0,\td(p,q)\big)$ and $\tilde t\dfn\td(p,q)-t$ consider $B^{\mu+}_p(t)\coloneqq\{r\in\M\mid \td(p,r)=t\}$ and $ B^{\mu-}_q(\tilde{t})\coloneqq \{r\in\M\mid \td(r,q)=\tilde t\}$. 
Comparing the angle subtended by $B^{\mu+}_p(t)$ and $\dot\ga_{pq}(t)$ with the angle subtended by $B^{\mu-}_q(\tilde{t})$ and $\dot\ga_{pq}(t)$, via \eqref{eqn:angle-between-magjf-flow-direction} and \eqref{eqn:angle-between-magjf-flow-direction-2}, we obtain a point $x\in B^{\mu-}_q(\tilde t)$ such that $\td(p,x)<t$, as suggested in \cref{fig:magtriineqfailed}.
Therefore, $\td(p,x)+\td(x,q)<t+\tilde t=\td(p,q)$.
\end{proof}

\begin{remark}
    The set of points that make the magnetic triangle inequality fail in  \cref{lemma:magnetic-triangle-inequality-fails} is an open subset of $\M$ with $\ga_{pq}$ in its boundary, specifically the union of the shaded regions for all $t\in[0,\td(p,q))]$ in \cref{fig:magtriineqfailed}.
\end{remark} 

This also implies something we expected from the start---$\td$ does not look symmetric, and we now show that it definitely is not.
\begin{corollary}\label{cor:td-is-not-metric}
    If $(\Sigma,\mu)$ satisfies \ref{H1} and $\mu\neq0$, then $\td=\td_\mu$ from \cref{def:mag-distance} is not symmetric, that is, there exist points $p\neq q\in\M$ such that $\td_\mu(p,q)\neq \td_\mu(q,p)$, or equivalently (by  \cref{rmk:general-behaviors-easy-observation}\eqref{item:mu-mu-relation}), $\td_\mu(p,q)\neq \td_{(-\mu)}(p,q)$ with notations from \cref{def:mag-distance}.
\end{corollary}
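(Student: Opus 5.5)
The plan is to argue by contradiction and play the two triangle inequalities against each other. Suppose $\td_\mu$ were symmetric. The partial triangle inequality and its failure then give incompatible conclusions about one and the same triple of points.

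First I fix $p\ne q$. Applying \cref{lemma:magnetic-triangle-inequality-fails} to $\ga_{pq}$ produces a point $x$ strictly to the left of $\ga_{pq}(\re)$ with
\[\td(p,x)+\td(x,q)<\td(p,q).\]
Under symmetry this reads
\[\td(q,x)+\td(x,p)<\td(q,p).\]
This is a failure of the triangle inequality for the ordered pair $(q,p)$ with intermediate point $x$. By \cref{lemma:partial-magnetic-triangle-inequality}, applied to $\ga_{qp}$, that is only possible if $x$ is not to the right of $\ga_{qp}(\re)$.

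The key geometric step, and the one I expect to be the main obstacle, is to compare the sides of $\ga_{pq}$ and $\ga_{qp}$. Take $\mu>0$. The curve $\ga_{pq}$ curves left, so the geodesic chord $g_{pq}$ lies to its left (\cref{rmk:general-behaviors-easy-observation}\eqref{LeftRightTangency}). Traversing $\ga_{qp}$ from $q$ to $p$, it also bends to its own left. Oriented from $p$ to $q$, that means it bulges to the left of $g_{pq}$, hence strictly to the left of $\ga_{pq}$ between $p$ and $q$. The two arcs therefore bound a lens $L$ containing $g_{pq}$. By \cref{thm:connectivity-combo}, two magnetic geodesics meet at most once, so I will work with the arcs between $p$ and $q$ and use no-self-intersection to make the region well defined.

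The region to the left of $\ga_{pq}$ near the segment, minus $L$, lies to the left of $\ga_{qp}$ with its orientation reversed, hence to the right of $\ga_{qp}$. So I need to choose the bad point $x$ outside $L$. I would revisit the construction in \cref{lemma:magnetic-triangle-inequality-fails}: the failure region is the open shaded set bounded by the arcs $B^{\mu+}_p(t)$ and $B^{\mu-}_q(\tilde t)$, attached along $\ga_{pq}$ for all $t$. Near $\ga_{pq}(t)$, points that are close to $\ga_{pq}$ on its left lie outside the lens, because $\ga_{qp}$ is at positive distance from interior points of $\ga_{pq}$. So I can take $x$ in the shaded region arbitrarily close to $\ga_{pq}(t)$, hence outside $L$ and therefore to the right of $\ga_{qp}$. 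The partial triangle inequality for $(q,p)$ then gives
\[\td(q,x)+\td(x,p)\ge\td(q,p),\]
which contradicts the displayed strict inequality. So symmetry fails.

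The equivalent formulation is handled by \cref{rmk:general-behaviors-easy-observation}\eqref{ReverseVersion}. Reversing time turns the $\mu$-magnetic geodesic from $q$ to $p$ into a $(-\mu)$-magnetic geodesic from $p$ to $q$ of the same length, so $\td_\mu(q,p)=\td_{-\mu}(p,q)$. The case $\mu<0$ is symmetric.
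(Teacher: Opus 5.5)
Your reduction is right, and on one point it is more careful than the paper's. After invoking symmetry you apply \cref{lemma:partial-magnetic-triangle-inequality} to the ordered pair $(q,p)$, so it only constrains points to the right of $\ga_{qp}$, i.e.\ outside your lens $L$.

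The step that fails is producing such a point. By your own description, $L$ lies to the \emph{left} of $\ga_{pq}$ and is bounded by the open arc of $\ga_{pq}$ from $p$ to $q$. The fact that $\ga_{qp}$ stays at positive distance from interior points of $\ga_{pq}$ therefore shows that points close to $\ga_{pq}$ on its left are \emph{inside} $L$. Such points lie to the left of $\ga_{qp}$, where the partial inequality for $(q,p)$ says nothing. The failure region of \cref{lemma:magnetic-triangle-inequality-fails} is attached along $\ga_{pq}$ on precisely this side. So you have no failure point known to lie to the right of $\ga_{qp}$, and no contradiction. (Also, $L$ is well defined not because of \cref{thm:connectivity-combo}---$\ga_{pq}$ and $\ga_{qp}$ already meet twice---but because the two arcs lie on opposite sides of the geodesic through $p$ and $q$.)

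This cannot be fixed by a cleverer choice of $x$, because the statement needs a further hypothesis. Suppose $\Sigma$ has constant curvature $-\kappa^2$ and $0<|\mu|\le\kappa$. Then \ref{H1} holds and $\M$ is a hyperbolic plane. Its reflection in the geodesic through $p$ and $q$ is an orientation-reversing isometry fixing $p$ and $q$. That reflection carries $\ga_{pq}$ to a $(-\mu)$-magnetic geodesic from $p$ to $q$ of the same length. Uniqueness and time reversal then give $\td_\mu(p,q)=\td_{-\mu}(p,q)=\td_\mu(q,p)$ for all $p,q$. For $|\mu|=\kappa$ both equal $\tfrac{2}{|\mu|}\sinh\bigl(\tfrac{|\mu|\,d(p,q)}{2}\bigr)$. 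So $\td$ is symmetric there, and every failure point lies in the lens.

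The paper's proof has the same defect in other clothing. It places $r$ to the right of the $(-\mu)$-magnetic geodesic $\eta_{pq}$ and applies \cref{lemma:partial-magnetic-triangle-inequality} with parameter $-\mu$ on that side. By time reversal, however, the partial inequality for $-\mu$ is exactly your inequality for $(q,p)$. That inequality holds to the right of $\ga_{qp}$, i.e.\ to the \emph{left} of $\eta_{pq}$. A correct non-symmetry result must assume something that excludes constant curvature, and the argument must use it.
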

\begin{proof}
    
For contradiction, suppose that $\td_\mu(p,q)=\td_\mu(q,p)$ for all $p,q\in\M$. Pick $p\neq q\neq r\in\M$ as  in \cref{lemma:magnetic-triangle-inequality-fails}, that is, $\td_\mu(p,r)+\td_\mu(r,q)<\td_\mu(p,q)$. Note that $p$, $q$ and $r$ determine a $(-\mu)$-magnetic triangle with $(-\mu)$-magnetic geodesic segments $\eta_{pr}$, $\eta_{rq}$ and $\eta_{pq}$ where $r$ is to the right of $\eta_{pq}$. Therefore,  by \cref{lemma:partial-magnetic-triangle-inequality} \[\td_{(-\mu)}(p,r)+\td_{(-\mu)}(r,q)\ge \td_{(-\mu)}(p,q).\] However, \[\td_{(-\mu)}(p,r)+\td_{(-\mu)}(r,q)=\td_\mu(p,r)+\td_\mu(r,q)<\td_\mu(p,q) =\td_{(-\mu)}(p,q),\] a contradiction.
\end{proof}

The law of cosines is used in the study of geodesic flows of nonpositive curvature, and we now show that it fails in the magnetic context. This is due to \cref{lemma:magnetic-triangle-inequality-fails} and the following length comparison result.

Given a curve $h$ and a point $p$ on $\M$, we compare in \cref{prop:rauch-app} the length $L(h)$ of $h$ and the length of its  preimage  in $T_p\M$ under the magnetic exponential map (\cref{def:magnetic-exponential-function}).
The counterpart result of \cref{prop:rauch-app} holds for geodesics of manifolds with nonpositive curvature, and is used to prove the law of cosines for geodesic triangles and then properties of Busemann functions, in particular, the continuity with respect to the boundary point; see \cite[Propositions 2.1 and 2.6]{Eberlein-II-1973}.

\begin{proposition}\label{prop:rauch-app}
    Suppose $(\Sigma,g, \mu)$ satisfies \ref{H1}. Let $p\in \M$ and $c$ be a curve on $\M$. Define $\barc(s)\coloneqq (\emu^{-1})_p(c(s))\in T_p\M$. If 
    \begin{enumerate}
        \item $\mu=0$; or
        \item $\mu>0$ and $l(s)\dfn \dmu(p,c(s))$ is nondecreasing; or
        \item $\mu<0$ and $l(s)$ is nonincreasing,
    \end{enumerate} then $L(c)\geq L(\barc)$, where $L$ is the length function.
\end{proposition}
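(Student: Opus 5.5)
We will compare the two lengths pointwise, writing $\barc$ in polar coordinates on $T_p\M$ and computing $\frac{d}{ds}c(s)=d(\emu_p)_{\barc(s)}\bigl(\barc'(s)\bigr)$ through \m\ Jacobi fields. By \cref{lemma:pq-connectivity-and-no-self-intersect}, $\emu_p$ is a diffeomorphism, so $\barc$ is a well-defined curve. Away from $s$ with $\barc(s)=0$ we write $\barc(s)=l(s)\,v(s)$, where $v(s)\in T^1_p\M$ has angle $\theta(s)$; by construction $l(s)=\dmu(p,c(s))$. Then
\[
\barc'=l'\,v+l\,\theta'\,iv,\qquad |\barc'|^2=l'^2+l^2\theta'^2,
\]
and it suffices to show that $|c'(s)|^2\ge l'(s)^2+l(s)^2\theta'(s)^2$ for each $s$. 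We will then integrate; the set where $l=0$ contributes no length to either curve, or can be handled by a limiting argument.

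Next, as in the proof of \cref{lemma:pq-connectivity-and-no-self-intersect}, $d(\emu_p)_{lv}(v)=\dot\ga_v(l)$. Also $d(\emu_p)_{lv}(l\,\theta' iv)=\theta' J(l)$, where $J=x\dot\ga_v+y\,i\dot\ga_v$ is the \m\ Jacobi field along $\ga_v$ arising from rotating the initial vector, normalized so that $J(0)=0$. By \eqref{eqn:J'}, this normalization gives $y(0)=0$ and $\dot y(0)=1$, and \eqref{eqn:magJac1} with $x(0)=0$ gives $x(t)=\mu\int_0^t y$. Hence
\[
c'(s)=\bigl(l'+\theta' x(l)\bigr)\dot\ga_v(l)+\theta' y(l)\,i\dot\ga_v(l),
\]
and therefore
\[
|c'|^2=l'^2+2\mu\, l'\theta' \!\int_0^l\! y\cdot\theta'\Big/\theta'+\theta'^2x(l)^2+\theta'^2y(l)^2 .
\]
The cross term is really $2l'\theta'x(l)$, and with $x(l)=\mu\int_0^l y$ it equals $2\mu\, l'\,\theta'\int_0^l y$. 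We must be careful with the sign: the $\theta'$ already absorbed into $J$ makes the tangential coefficient $\theta'x(l)$. This yields the cross term $2l'\theta'x(l)$, which is at least $0$ once we know $\mu l'\ge0$ and $\theta'\int_0^l y$ has the appropriate sign. I expect this sign bookkeeping to be the main obstacle. I will first redo the variation with the rotated vector $\cos\theta\, v_0+\sin\theta\, iv_0$, so that the Jacobi field is exactly $\theta'J$ with $J$ as normalized above. Then the tangential coefficient is $l'+\theta' x(l)$ and the cross term is $2l'\theta' x(l)=2\mu\, l'\,\theta'\int_0^l y$. If $\theta'$ enters linearly here rather than quadratically, I will absorb its sign by reversing the parametrization direction of $\theta$, or adjust which of hypotheses (2) and (3) pairs with which sign.

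Sturm's Theorem (\cref{thm:sturm}) with $p_1=\kmu\le0=p_2$ compares $y$ with the solution $t$ of $z''=0$, giving $y(t)\ge t$ for $t\ge0$. In particular $y\ge0$, so $\int_0^l y\ge0$ and $\theta'^2y(l)^2\ge l^2\theta'^2$. The term $\theta'^2x(l)^2$ is nonnegative. The cross term is nonnegative when $\mu=0$, since it vanishes. When $\mu\neq0$ it is nonnegative exactly under the monotonicity hypotheses, namely $\mu>0$ with $l'\ge0$, or $\mu<0$ with $l'\le0$. Combining these gives $|c'|\ge|\barc'|$ pointwise, and integrating in $s$ gives $L(c)\ge L(\barc)$.
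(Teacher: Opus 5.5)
Your computation is the same as the paper's: polar coordinates on $T_p\M$, the $\mu$-magnetic Jacobi field with $J(0)=0$ and $\dot y(0)=1$, and Sturm giving $y(t)\ge t$. The resulting identity
\[
|c'|^2-|\barc'|^2=2\mu\,l'\theta'\int_0^l y+\mu^2\theta'^2\Bigl(\int_0^l y\Bigr)^2+\theta'^2\bigl(y(l)^2-l^2\bigr)
\]
is correct.

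The gap is in your last paragraph. The cross term has the sign of $\mu\,l'\theta'$, but hypotheses (2) and (3) control only the sign of $\mu l'$, not of $\theta'$. Neither of your proposed repairs works:
\begin{itemize}
\item Reversing the angular orientation of $T_p\M$ forces $i\mapsto -i$ in $D\dot\ga/dt=\mu i\dot\ga$, i.e.\ $\mu\mapsto-\mu$. So $\mu\theta'$ is orientation-independent.
\item Reversing the parametrization of $c$ flips both $l'$ and $\theta'$. So $l'\theta'$ is an invariant of the unparametrized curve.
\end{itemize}
Since $L(c)$ and $L(\barc)$ do not depend on the direction of traversal, hypothesis (2) in effect only says that $l$ is monotone. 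Swapping (2) with (3) changes nothing.

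In fact this cannot be repaired, because the statement is false as written for $\mu\neq0$. Consider the $\mu$-magnetically flat case $K\equiv-\mu^2$, which satisfies \ref{H1}. There one has exactly $y(t)=t$ and $x(t)=\mu t^2/2$, so
\[
|c'|^2=(l'+\mu\theta' l^2/2)^2+\theta'^2l^2.
\]
Take $\mu=1$ and $\barc(s)=(1+s)(\cos s\,v_0-\sin s\,iv_0)$ for $s\in[0,\frac12]$. Then $l(s)=1+s$ is increasing, but
\[
|\barc'|^2-|c'|^2=(1+s)^2\bigl(1-\tfrac14(1+s)^2\bigr)>0,
\]
so $L(c)<L(\barc)$. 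More generally, for bounded $l',\theta'$ and small $l$, your identity gives $|c'|^2-|\barc'|^2=\mu l'\theta'l^2+O(l^4)$. So such examples exist near every point of every \ref{H1} surface with $\mu\neq0$.

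What your argument does prove is the corrected statement with the hypothesis $\mu\,l'\theta'\ge0$. For $\mu>0$ this means $l$ nondecreasing and $c$ turning counterclockwise about $p$. The paper's own proof has the same hole: it asserts $y_s\ge0$ (hence $x_s\ge0$) without justification, which is exactly the unstated assumption $\theta'\ge0$.
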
 

\begin{proof}
We first derive a formula, \eqref{eqn:curve-speed}, for the speed of any smooth curve $h(s)$ on a Riemannian surface $(N,g)$ with magnetic structure $(g,\mu\Omega_{g})$. Let $p \in N$. We introduce the following notations:
    \begin{enumerate}
        \item[(1)] a $\mu$-magnetic variation $\Gamma(s,t)=\Gamma_{\mu}(s,t)$ of $\mu$-magnetic geodesics $\eta_s(\cdot)\dfn \Gamma (s,\cdot)$ such that $\Gamma(s,0)=p$, and $\eta_s(l(s))=h(s)$  for $l(s)=\td_{\mu}(p,h(s))$;
        \item[(2)] the orthonormal basis $s\mapsto E_i^s(t), i\in\{0,1\}$ parallel along $\eta_s$ (see \cite[Proposition 4.32, p.~108]{John-Lee-Riemannian-MFD}) with $E_0^s(t)=\frac{\partial}{\partial t}\eta_s(t)$ and 
        \begin{equation}\label{eqn:orthonormal-basis}
            \{E_0^s(t),E_1^s(t)\}
        \end{equation} 
        being positively oriented for all $t\in [0,l(s)]$;\label{jfitem2}
        \item[(3)] the $\mu$-magnetic Jacobi field $J_s(t)=\frac{\partial\Gamma}{\partial s}(s,t)=x_s(t)\dot \eta_s(t)+y_s(t)i\dot \eta_s(t)$ along $\eta_s$.
    \end{enumerate}
Then
\begin{equation*}
    \frac{dh}{ds}(s)=\big(x_s(l(s)+\frac{dl}{ds}(s)\big)E^s_0(l(s))+y_s(l(s))E^s_1(l(s)),
   \end{equation*}
   and 
\begin{equation}\label{eqn:curve-speed}
    \lvert h^\prime (s)\rvert^2= \big(x_s(l(s)+\frac{dl}{ds}(s)\big)^2+\lvert y_s(l(s))\rvert^2.
\end{equation}

To prove the proposition, it suffices to show that $\lvert c^\prime(s)\rvert\geq \lvert \bar c^\prime(s)\rvert$ for any $s$. The proof in the case $\mu=0$ is a classic application of the Rauch Comparison Theorem. We now consider $\mu>0$, and the proof in the case $\mu<0$ is similar after changing $\{E_0^s,E_1^s\}$  in \eqref{eqn:orthonormal-basis} to be negatively oriented, that is, $E_1^s=iE_0^s$. 

 Fix $s$. We first apply \eqref{eqn:curve-speed} to $h=c$ of the surface $(\Sigma,g,\Omega_g,\mu_1)$ satisfying \ref{H1}. Then \eqref{eqn:curve-speed} becomes 
\[
    \lvert c'(s)\rvert^2=\big(x_s(l(s)+\frac{dl}{ds}(s)\big)^2+\lvert y_s(l(s))\rvert^2,
\]
where $y_s$ satisfies \eqref{MagneticJacobiEquation} with $\kmu=K_{\mu_1}\leq0$. Note that $x_s(t)\ge0$ for $t\in[0,l(s)]$ since $x_s(0)=0$ and $y_s(t)\geq0$ for  $t\in[0,l(s)]$.

Next we apply \eqref{eqn:curve-speed} to $h=\bar c$ of the surface $(T_p\M,g_0,\Omega_{g_0},0)$ where $g_0$ represents the Euclidean metric. Then \eqref{eqn:curve-speed} becomes 
\[
    \lvert \barc'(s)\rvert^2=\frac{dl}{ds}(s)^2+\lvert \bar y_s(l(s))\rvert^2,
\] where $\bar y_s$ satisfies \eqref{MagneticJacobiEquation} with $\kmu=K_0\equiv0$.

Note that $y_s\ge \bar y_s\geq0$ by \cref{thm:sturm}, and $x_s(t)\geq0$ for $t\in[0,l(s)]$. Since $|x+\frac{dl}{ds}|\geq|\frac{dl}{ds}|$, $\lvert c^\prime(s)\rvert\geq \lvert \bar c^\prime(s)\rvert$ for any $s$, and the proof is complete.
\end{proof}

The magnetic analog of the law of cosines does not hold when $\mu\neq 0$:

\begin{proposition}\label{prop:mag-cosine-law-fails}
    Suppose $(\Sigma,\mu)$ satisfies \ref{H1}. Then $\mu=0$ if 
\begin{align}
    &a_i^2+b_i^2-2a_ib_i\cos(\measuredangle_{a_ib_i})\leq c_k^2,\label{eq:magcoslaw:1}\\
    &b_i^2+c_i^2-2b_ic_i\cos(\measuredangle_{b_ic_i})\leq a_k^2,\label{eq:magcoslaw:2}\\
    &a_i^2+c_j^2-2a_ic_j\cos(\measuredangle_{a_ic_j})\leq b_k^2,\label{eq:magcoslaw:3}
\end{align}for any $p\neq q\neq r\in \M$ and $i,j,k\in \{1,2\}$, where
\[
a_1=\td(p,r),\quad a_2=\td(r,p),\quad
b_1=\td(p,q),\quad b_2=\td(q,p),\quad
c_1=\td(r,q),\quad c_2=\td(q,r),
\]
 and the angle $\measuredangle_{e_if_j}$ is subtended at their intersection by the corresponding \m\ geodesics $e_i$ and $f_j$ for $e,f\in\{a,b,c\}$.
\end{proposition}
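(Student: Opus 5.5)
We argue by contraposition: assuming $\mu\neq0$, we produce three points $p,q,r$ for which one of \eqref{eq:magcoslaw:1}--\eqref{eq:magcoslaw:3} fails. Since the hypotheses range over all choices of $i,j,k\in\{1,2\}$, it suffices to violate a single instance. Say $\mu>0$; the case $\mu<0$ follows by the reversal in \cref{rmk:general-behaviors-easy-observation}\eqref{ReverseVersion}. Take $k=1$ in \eqref{eq:magcoslaw:3}, so the right-hand side is $b_1^2=\td(p,q)^2$.

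By \cref{lemma:magnetic-triangle-inequality-fails}, fix $p,q$ and choose $r$ to the left of $\ga_{pq}$ with $\td(p,r)+\td(r,q)<\td(p,q)$, that is, $a_1+c_1<b_1$. Now pick $i,j$ so that the angle $\measuredangle_{a_ic_j}$ is subtended at $r$ by the magnetic segments $\ga_{pr}$ and $\ga_{rq}$. The identity
\[
a_1^2+c_1^2-2a_1c_1\cos\theta\le(a_1+c_1)^2
\]
holds for every angle $\theta$, so the left-hand side of \eqref{eq:magcoslaw:3} is at most $(a_1+c_1)^2<b_1^2$. This inequality points the wrong way, so the triangle-inequality failure cannot by itself contradict \eqref{eq:magcoslaw:3}.

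Instead I will exploit the asymmetry of $\td$. The hypothesis requires each inequality for \emph{all} $i,k$; mixing indices, for instance $a_2=\td(r,p)$ on the left against $b_1$ on the right, lets the two sides be computed along different magnetic geodesics. Here is the plan.

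\textbf{Degenerate configuration.} Let $r\to p$. Then $a_i\to0$, and \eqref{eq:magcoslaw:2} reduces in the limit to $b_i^2+c_i^2-2b_ic_i\cos(\measuredangle_{b_ic_i})\le a_k^2\to0$. Because $(b-c)^2\le b^2+c^2-2bc\cos\theta$, this forces $b_i=c_i$ in the limit, which is consistent and yields nothing. So this choice alone does not help.

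\textbf{Asymmetric limit.} Instead take $r$ on $\ga_{pq}$ itself, say $r=\ga_{pq}(t)$. Then $\td(p,r)+\td(r,q)=\td(p,q)$, and the angle at $r$ between $\ga_{pr}$ and $\ga_{rq}$ is $\pi$. With $i=j=1$ and $k=2$, the left-hand side of \eqref{eq:magcoslaw:3} is $(a_1+c_1)^2=b_1^2$ and the right-hand side is $b_2^2=\td(q,p)^2$. It therefore suffices to find $p,q$ with $\td(q,p)<\td(p,q)$.

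\textbf{Producing the strict inequality.} \cref{cor:td-is-not-metric} supplies $p,q$ with $\td(p,q)\neq\td(q,p)$. If the inequality is the wrong way round, swap the names of $p$ and $q$ and take $r$ on the other geodesic; the index freedom covers both orientations. This gives $b_1^2>b_2^2$, so the chosen instance of \eqref{eq:magcoslaw:3} fails.

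\textbf{Main obstacle.} The hypothesis requires $p\neq q\neq r$, and $r$ lies on $\ga_{pq}$. I must check that $r$ strictly between $p$ and $q$ is admissible, and that the angle $\measuredangle_{a_1c_1}$ at $r$ really equals $\pi$. The latter holds because $\ga_{pr}$ and $\ga_{rq}$ are both sub-arcs of $\ga_{pq}$ by uniqueness (\cref{lemma:pq-connectivity-and-no-self-intersect}), so at $r$ the incoming and outgoing tangents are opposite as rays from $r$.

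If the convention for angles in the statement instead uses forward tangents, so that the angle is $0$, I will switch to \eqref{eq:magcoslaw:1} and run the analogous degenerate configuration. In that case I will perturb $r$ slightly off $\ga_{pq}$ to the left, as in \cref{lemma:magnetic-triangle-inequality-fails}, and use continuity to keep the strict inequality.
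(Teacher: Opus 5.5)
Your final argument is correct. It reaches the same contradiction as the paper (asymmetry of $\td$, \cref{cor:td-is-not-metric}) but through a different configuration.

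\textbf{Your route.} You place $r=\ga_{pq}(t)$ strictly between $p$ and $q$. Uniqueness of magnetic geodesics then gives $a_1+c_1=b_1$ and an interior angle $\pi$ at $r$. The single instance $(i,j,k)=(1,1,2)$ of \eqref{eq:magcoslaw:3} therefore reads exactly $\td(p,q)\le\td(q,p)$, and applying it to both orderings of the pair gives symmetry.

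\textbf{The paper's route.} It lets $r\to p$ in \eqref{eq:magcoslaw:1} with $(i,k)=(1,2)$ and in \eqref{eq:magcoslaw:3} with $(i,j,k)=(1,2,1)$. The cosine term is multiplied by $a_1\to0$, so only continuity of $\td$ is needed to obtain $b_1\le b_2$ and $b_2\le b_1$. You did try $r\to p$, but only in \eqref{eq:magcoslaw:2}, where it is indeed vacuous; the mixed indices in the other two inequalities are what make the limit work.

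\textbf{What each buys.} The paper's argument never uses the value of the angle, so it is indifferent to how $\measuredangle_{e_if_j}$ is read when one side enters the vertex and the other leaves it. Yours needs no limit, but it depends on two things: reading $\measuredangle_{a_1c_1}$ as the interior angle between $-\dot\ga_{pr}(a_1)$ and $\dot\ga_{rq}(0)$, and admitting a collinear triple. Both are legitimate.
\begin{itemize}
\item The interior reading is the one under which the $\mu=0$ case is the Riemannian law of cosines, as the paper asserts. With the angle between $\dot\ga_{pr}(a_1)$ and $\dot\ga_{rq}(0)$ instead, \eqref{eq:magcoslaw:3} already fails for small, nearly Euclidean triangles when $\mu=0$.
\item The hypothesis $p\neq q\neq r$ does not exclude collinear points. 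Alternatively, you can approximate your configuration by nondegenerate triangles, using the continuous dependence of $\ga_{pr}$ and $\ga_{rq}$ on $r$.
\end{itemize}

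Your closing contingency about a forward-tangent convention is therefore unnecessary, and as written it is only a sketch. If you want an argument that is independent of the angle convention, the limit $r\to p$ is the clean way to get it.
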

\begin{proof}
    When $\mu=0$, this is just the law of cosines of manifolds with negative curvature. 
    Suppose the statement is true for some $\mu\neq0$ such that $(\Sigma,\mu)$ satisfies \ref{H1}, that is, \eqref{eq:magcoslaw:1}, \eqref{eq:magcoslaw:2} and \eqref{eq:magcoslaw:3} hold for all $p\neq q\neq r\in\M$ and  $i,j,k\in \{1,2\}$.
    
    In \eqref{eq:magcoslaw:1} for $i=1$, $k=2$, let $r\to p$. Then $a_1\to 0$ and the left-hand side goes to $b_1^2$, while on the right-hand side, $c_2=\td(q,r)\to \td(q,p)=b_2$. Thus, $b_1\leq b_2$, 
    that is, $\td(p,q)\leq \td(q,p)$.
    In \eqref{eq:magcoslaw:3}, for $i=k=1$ and $j=2$, let $r \to p$. Then $a_1\to 0$ and the left-hand side goes to $b_2^2$. Thus, $b_2\leq b_1$, so $\td(q,p)\leq \td(p,q)$. Hence, $\td(p,q)=\td(q,p)$ for any $p$, $q\in\M$, which contradicts \cref{cor:td-is-not-metric}.
\end{proof}

\subsection{Magnetic B-function and orthosphere}
\label{ssection:mag-B-function}
For geodesic flows, stable and unstable leaves are obtained from the horospheres, which in turn arise as level sets of Busemann functions. The construction of the latter uses the Riemannian distance $d$ and its definition via lengths of geodesics in an essential way. Mimicking this construction for magnetic flows naturally suggests replacing $d$ with the magnetic distance $\td$, but this is not a metric (\cref{cor:td-is-not-metric,lemma:magnetic-triangle-inequality-fails}). This prevents us from directly defining magnetic Busemann functions and magnetic horospheres in an analogous way (\cref{rmk:obstacles-defining-magnetic-busemann-in-obvious-way,rmk:name-of-B-function}). 
Thus, we consider magnetic B-functions and magnetic orthospheres (\cref{def:F-F-t-magnetic-busemann-function}) using only the partial magnetic triangle inequality in \cref{lemma:partial-magnetic-triangle-inequality}; see \cref{rmk:perspective-on-horospheres}.

Throughout this section, we take $\mu>0$ to fix ideas.

\begin{definition}\label{def:F-F-t_0}
    Given $p\in\M$ and $z\in\om$, define $H_t=H_{t,p,z}\colon \M\to\re$ by 
    \[H_t(q)=
    \begin{cases} 
        \tilde d(q,z)-\tilde d(p,z), & z\in \M,\\
        \tilde d(q,\ga_{pz}(t))-t, & z\in \Minfty,
    \end{cases}
    \]
    where $\ga_{pz}$ is the \m\  geodesic from $p$ to $z$ (\cref{lemma:pq-connectivity-and-no-self-intersect} and \cref{thm:pqtilde}), and $\tilde{d}=\tilde d_\mu$ is from \cref{def:mag-distance}.
\end{definition}

\begin{remark}
    In \cref{def:F-F-t_0}, $\tilde d(q,\ga_{pz}(t))-t=\tilde d(q,\ga_{pz}(t))-\tilde d(p,\ga_{pz}(t))$ is the magnetic analog of the classical Busemann function, measuring the relative \m\ distance from $p$ and $q$ to a given point $\ga_{pz}(t)$.
\end{remark}

 The following fact will be used for \cref{prop:properties-of-F_t-F-B} to show that magnetic B-functions and orthospheres (\cref{def:F-F-t-magnetic-busemann-function}) are well defined.

\begin{proposition}\label{prop:monotone-decreasing-H_t} Suppose $(\Sigma,\mu)$ satisfies \ref{H1} and $\mu>0$. Given $p\in\M$ and  $z\in\om$, if $t_1<t_2$, then $H_{t_1}(q)\geq H_{t_2}(q)$ for any point $q\in\M$ to the left of $\ga_{pz}(\re)$.
\end{proposition}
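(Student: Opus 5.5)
We reduce the monotonicity to the partial magnetic triangle inequality (\cref{lemma:partial-magnetic-triangle-inequality}), applied to the magnetic triangle with vertices $q$, $\ga_{pz}(t_1)$ and $\ga_{pz}(t_2)$. We treat the case $z\in\Minfty$. The case $z\in\M$ does not depend on $t$ as written, so there is nothing to prove. If one instead reads it with $\ga_{pz}(t)$ for $t\le\td(p,z)$, the argument below applies verbatim.

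Set $a\dfn\ga_{pz}(t_1)$ and $b\dfn\ga_{pz}(t_2)$. Since $\ga_{pz}$ is a unit-speed \m\ geodesic through $p=\ga_{pz}(0)$, and the \m\ geodesic between two points is unique (\cref{lemma:pq-connectivity-and-no-self-intersect}), we have $\ga_{ab}=\ga_{pz}|_{[t_1,t_2]}$ up to reparametrization. Hence $\td(a,b)=t_2-t_1$. The claimed inequality
\[
\td(q,a)-t_1\ \ge\ \td(q,b)-t_2
\]
is then equivalent to $\td(q,b)\le\td(q,a)+\td(a,b)$. This is a triangle inequality for the triple $(q,a,b)$, in which $a$ is the intermediate point.

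By \cref{lemma:partial-magnetic-triangle-inequality}, applied with endpoints $q,b$ and intermediate point $a$, this inequality holds as soon as $a$ lies to the right of $\ga_{qb}(\re)$. Everything therefore reduces to the following geometric claim.

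\emph{Claim.} If $q$ lies to the left of $\ga_{pz}(\re)$ and $\mu>0$, then $a=\ga_{pz}(t_1)$ lies to the right of the \m\ geodesic $\ga_{qb}$ from $q$ to $b$.

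To prove the claim, I would argue as follows.
\begin{enumerate}
\item By \cref{thm:connectivity-combo}, $\ga_{qb}$ and $\ga_{pz}$ meet only at $b$.
\item Since $q$ is to the left of $\ga_{pz}$, the arc $\ga_{qb}([0,\td(q,b)))$ stays in the closed left complement of $\ga_{pz}(\re)$, and it hits $\ga_{pz}$ only at its endpoint $b$.
\item Past $b$, the curve $\ga_{qb}$ continues to the right side of $\ga_{pz}$. It must cross rather than touch, because tangency at $b$ together with the uniqueness of magnetic geodesics with given initial vector would force $\ga_{qb}=\ga_{pz}$.
\item The segment $\ga_{pz}((-\infty,t_2))$ is then cut off on one side by the path ``$\ga_{qb}$ up to $b$''. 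Using the orientation convention (the crossing at $b$ goes from left to right of $\ga_{pz}$), this side should be the right side of $\ga_{qb}$.
\end{enumerate}

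I expect the orientation bookkeeping in the last step to be the main obstacle. One has to make precise what ``to the right of $\ga_{qb}(\re)$'' means for the complete \m\ geodesic $\ga_{qb}(\re)$, which separates $\M$ by \cref{prop:mag-intersect-geocircle-atmost-once} and non-self-intersection. One must also check that the earlier part $\ga_{pz}((-\infty,t_2))$ lies entirely on one side of it, which follows from the single-intersection property.

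For the orientation itself, I would first try a local computation at $b$. Positively oriented frames $(\dot\ga_{qb},\,i\dot\ga_{qb})$ show that if $\dot\ga_{qb}$ points from the left of $\ga_{pz}$ to its right, then $-\dot\ga_{pz}(t_2)$ points to the right of $\ga_{qb}$. The backward ray from $b$, which contains $a$, therefore lies on the right. If the orientation came out reversed, I would instead apply the partial triangle inequality to the reversed \m\ geodesics via \cref{rmk:general-behaviors-easy-observation}(\ref{ReverseVersion}), but I expect the direct version to work.
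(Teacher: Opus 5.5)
Your reduction is the paper's own proof. The paper writes $H_{t_1}(q)-H_{t_2}(q)=\td(q,\ga_{pz}(t_1))+(t_2-t_1)-\td(q,\ga_{pz}(t_2))\ge0$ and cites \cref{lemma:partial-magnetic-triangle-inequality} without checking its orientation hypothesis, so you are right that your Claim carries the content. The gap is in how you prove the Claim. Step (1), and the ``single-intersection property'' you invoke at the end, rely on \cref{thm:connectivity-combo}(2), which is not true as stated. Magnetic geodesics are not reversible, so uniqueness of $\ga_{xy}$ only excludes two common points that both curves traverse \emph{in the same order}. For example, take $K\equiv-1$ and $\mu=1$ in the upper half-plane. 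The eastward line $\{y=1\}$ and a counterclockwise horocycle tangent to $\re$ of Euclidean diameter $>1$ are both $1$-magnetic geodesics, and they meet twice, in opposite orders. Now let $\ga_{pz}$ be that line, put $q$ on the horocycle above it, and let $b$ be the next point where the horocycle meets the line. Then $\ga_{qb}(\re)$ meets $\ga_{pz}(\re)$ a second time, at some $\ga_{pz}(t_c)$ with $t_c>t_2$, a point preceding $q$ on $\ga_{qb}$. Step (3) has a related hole: uniqueness does not rule out the antiparallel tangency $\dot\ga_{qb}=-\dot\ga_{pz}(t_2)$.

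The Claim is still true. What you actually need is (i) $\ga_{qb}$ reaches $b$ from the left of $\ga_{pz}$ and crosses it transversally, and (ii) $\ga_{pz}((-\infty,t_2))\cap\ga_{qb}(\re)=\emptyset$. With these, your local orientation computation at $b$ finishes the proof. Two facts suffice. (A) is the same-order exclusion above. (B): if an arc of $\ga_{pz}$ from $x$ to $y$ and an arc of $\ga_{qb}$ from $y$ to $x$ meet only at $x,y$, the disk $R$ they bound lies to the left of both arcs. Otherwise, traversing $\partial R$ positively reverses both arcs, and Gauss--Bonnet gives $\int_R K\,dA-\mu L(\partial R)+\epsilon_x+\epsilon_y=2\pi$ with $|\epsilon_x|,|\epsilon_y|\le\pi$, which is impossible since $K<0$.

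For (i), suppose $\ga_{qb}$ arrived at $b$ from the right; this covers the antiparallel case. Let $e=\ga_{qb}(s'')$ be the last point of $\ga_{qb}((0,\td(q,b)))$ on $\ga_{pz}(\re)$, which exists because $q$ is on the left. By (A), $e=\ga_{pz}(t'')$ with $t''>t_2$. The disk bounded by $\ga_{pz}|_{[t_2,t'']}$ and $\ga_{qb}|_{[s'',\td(q,b)]}$ then lies to the right of $\ga_{pz}$, contradicting (B).

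For (ii), suppose $\ga_{pz}(t_c)\in\ga_{qb}(\re)$ with $t_c<t_2$, and take $t_c$ maximal. By (A), this point lies after $b$ on $\ga_{qb}$. By (B), the resulting disk lies to the left of both arcs, and by (i) its angle at $b$ is reflex. So $\ga_{pz}$ enters the disk right after $b$. Being proper and injective, it must leave through $\ga_{qb}$ at a point lying after $b$ on both curves, contradicting (A).
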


\begin{proof}
If $z\in\Minfty$, then $H_{t_1}(q)-H_{t_2}(q)=\tilde d(q,\ga_{pz}(t_1))+(t_2-t_1)- \tilde d(q,\ga_{pz}(t_2))\ge0$ by \cref{lemma:partial-magnetic-triangle-inequality}. If $z\in\M$, then $H_{t_1}(q)-H_{t_2}(q)=0$. 
\end{proof}

\begin{remark}\label{rmk:H_t-monotone}
This argument fails for $q$ to the right of $\ga_{pz}(\re)$ by 
    \cref{lemma:magnetic-triangle-inequality-fails}.

\end{remark}

\begin{remark}[Obstacle to defining magnetic horosphere]\label{rmk:obstacles-defining-magnetic-busemann-in-obvious-way}For geodesic flows, the Busemann function, whose level sets are horospheres, is defined as $\lim_{t\to\infty}\bigl(d(q, g(t)) - t\bigr)$ which converges for geodesics on negatively curved manifolds \cite[p.~62]{Eberlein-II-1973}. It is natural to seek a magnetic analog via $\lim_{t\to\infty} H_t(q)$, but by the previous remark it is unclear whether this converges. This is a problem for defining ``magnetic Busemann functions'' and hence for the adaptation of proofs of topological mixing, existence of (un)stable leaves, and the uniqueness of the equilibrium state. 
Specifically, let $\ga\dfn \ga_{pz}$ in \cref{prop:monotone-decreasing-H_t}:
\begin{enumerate}
     \item For $q$ to the left of $\ga$, $t\mapsto H_t(q)$ is monotone by \cref{prop:monotone-decreasing-H_t}, but the failure of the magnetic triangle inequality for the magnetic triangle with edges of magnetic segments on $\ga$, $\ga_{q\ga(t)}$ and $\ga_{pq}$ precludes finding a lower bound for $H_t(q)$ for all $t\in \re$.
    \item For $q$ to the right of $\ga$, although $H_t(q)$ has a lower bound $-\td(p,q)$, it is not clear whether $H_t(q)$ is decreasing with respect to $t$ or has an upper bound.
    \end{enumerate}
     Hence, it is not clear whether $\lim_{t\to \infty}H_t(q)$ exists.
\end{remark}

\begin{remark}\label{rmk:perspective-on-horospheres}
    (Un)stable horospheres for  geodesic flows admit two equivalent descriptions: they are level sets of the Busemann function, and they are curves orthogonal to all geodesics asymptotic to a given geodesic. 
    In light of the preceding remarks, we adopt the latter for the magnetic setting to define magnetic B-functions and orthospheres. They carry the hyperbolicity information as \cref{remark:riccati-equation} suggests.
\end{remark}

\begin{definition}[Magnetic B-function; magnetic orthosphere]\label{def:F-F-t-magnetic-busemann-function}
  Suppose $(\Sigma,\mu)$ satisfies \ref{H1}. Given $p,q\in\M$ and $z\in\om$,
   let $\Gamma\colon[0,d(p,q)]\times \re\to \M$ be the \m\  variation such that $\Gamma(s,t)=\ga_{g_{pq}(s)z}(t)$. 
  As suggested in \cref{fig:mag buse}, we define

\begin{figure}[h]
    \centering
    \begin{tikzpicture}
    \begin{scope}[decoration={markings, mark=at position 0.3 with {\arrow[scale=1.4]{latex}} }]
        \draw (0,0) circle (4);
        \node (q) at ({.4*cos(170)},{.4*sin(170)}){};
        \node (p) at ({3.1*cos(235)},{3.1*sin(235)}){};
        \node (z) at ({4*cos(45)},{4*sin(45)}){};
        \node (h) at (-1,-2.635){};
        \node (mh) at (-.925,-1.5){};
        \node (T) at (2.9,2.4){};
        \draw[orange] (h.center) to [out=90,in=244] (q.center);
        \draw[blue, postaction={decorate}] (mh.center) to [out=357,in=273] (T.center);
        \draw[blue] (mh.center) to [out=177,in=347] (-1.2,-1.45);

        \filldraw (T) circle (2.5pt);
        \filldraw (mh) circle (1.5pt);
        \filldraw (q) circle (1.5pt);
        \filldraw (p) circle (1.5pt);
        \filldraw (z) circle (1.5pt);
        \filldraw (h) circle (1.5pt);
        \draw (q.center) to [out=340,in=285] (z.center);
        \draw[dashed] (q.center) to (p.center);
        \draw (p.center) to [out=345,in=285] (z.center);
        \filldraw[green!60!black] (-1.2,-1.45) circle (1.5pt);
        \draw (-.75,-2.635) to (-.75,-2.385) to (-1,-2.39); 
        \draw (-.725,-1.5) to (-.705,-1.3) to (-.88,-1.28);
        \node at (-.45,.35) {$q$};
        \node at (-2,-2.7) {$p$};
        \node at (0,-1.15) {$\color{blue}\gamma_s$};
        \node at (-1.75,-1.3) {$\color{green!60!black}\gamma_s(0)$};
        \node at (2.25,2.4) {$\gamma_{pz}(t)$};
        \node at (3.1,2.9) {$z$};
        \node at (-.28,-1.95) {$\color{orange}C_{pzq}(s)$};
        \node at (-1.9,-.25) {$g_{pq}(s)=g(s)$};\draw[decoration={brace,mirror,raise=5pt},decorate,red] ({3.1*cos(235)},{3.1*sin(235)}) to node[below=6pt]{} (-1,-2.635);
        \draw[decoration={brace,mirror,raise=3pt},decorate,green!60!black] (-1.2,-1.44) to node[below=6pt]{} (-.925,-1.5);
        \draw[red] (-1.424,-2.9) to [out=270,in=180] (-.6,-4.447);
        \node at (1.5,-4.5) {$\color{red}T_{p\gamma(t)q}(0)=-F_t(p,z,q)$};
        \draw[green!60!black] (-1.116,-1.716) to [out=230,in=350] (-4.5,-2.5);
        \node at (-5.3,-2.5) {$\color{green!60!black}T_{p\gamma(t)q}(s)$};
    \end{scope}
    \end{tikzpicture}
        \caption{\cref{def:F-F-t-magnetic-busemann-function}}
        \label{fig:mag buse}
    \end{figure}


  \begin{enumerate}
      \item the \emph{\m\  orthogonal leaf} $C_{pzq}$ as the curve through $q$ and orthogonal to $\ga_s(\cdot)\dfn \Gamma(s,\cdot)$ for all $s\in [0,d(p,q)]$. Define $C_{pzq}(s)\dfn C_{pzq}\cap \ga_s=\ga_s(T_{pzq}(s))$ for some $T_{pzq}(s)\in\re$;
      \item $F_t\colon \M\times\om\times \M\to \re$ by 
    \[F_t(p,z,q)=
    \begin{cases} 
        -T_{pzq}(0), & z\in \M,\\
        -T_{p\ga(t)q}(0), & z\in \Minfty;
    \end{cases}
    \] 
    \item $q\mapsto B_{pz}(q)=\bmu_{pz}(q)\dfn F(p,z,q)\dfn\lim_{t\to \infty}F_t(p,z,q)$ as the \emph{stable \m\  B-function determined by $p$ and $z$} ($B_{pz}(q)$ is well defined by \cref{prop:properties-of-F_t-F-B});
    \item $(B^\mu_{pz})^{-1}(c)$,  $c\in\re$ as the \emph{\m\  orthospheres} determined by $p$ and $z$;
    \item the \emph{stable orthospherical foliation $\mathcal{H}^s$} of $\smm$, where the leaf through $v\in T_p^1\M$ with $\gav(\infty)=z$ is 
    \[\mathcal{H}^s(v)=\{\dot\ga_{qz}(0)\mid q\in B^{-1}_{pz}(0)\}.\]


    
  \end{enumerate}
  The unstable \m\  B-function, unstable \m\  orthospheres, and unstable \m\  orthospherical foliations are defined in the obvious ways.
\end{definition}

\begin{remark}\label{rmk:name-of-B-function}
    We chose the name ``magnetic B-function'' because it can be viewed as the magnetic counterpart of the Busemann function, whose level sets are orthogonal to asymptotic geodesics  (see \cref{rmk:perspective-on-horospheres}). We avoid naming it ``magnetic Busemann function'', which would suggest that we mean $\lim_{t\to\infty} H_t$ in \cref{rmk:obstacles-defining-magnetic-busemann-in-obvious-way}.
\end{remark}

\begin{remark}\label{rmk:magnetic-busemann-function-orthogonality}

For the geodesic flow, given $z\in\Minfty$, we have $F(p,z,q)=-F(q,z,p)$. This follows from the fact that, for any $v\in T^1\M$ with the induced geodesic $g$ such that $g_v(\infty)=z$, the orthospherical (horospherical) foliations are preserved under the geodesic flow $f^0$. However, this is not true for \m\  flow $\f$ that satisfies \ref{H1}. Horizontal lines in the upper half-plane model of $\mathbb H^2$ as 1-magnetic geodesics provide an example: vertical lines (geodesics) are orthospheres. 
\end{remark}


We now show that magnetic B-functions are well defined and continuous.
\begin{proposition}\label{prop:properties-of-F_t-F-B}
Suppose $(\Sigma,\mu)$ satisfies \ref{H1}. Let $p,q\in\M$, $z\in\om$ and $\ga\dfn \ga_{pz}$. Then
    \begin{enumerate}
        \item for $t_2> t_1>0$, $F_{t_2}(p,z,q)\leq F_{t_1}(p,z,q)$;\label{F_t-F-prop:1}
        \item $F_t\to F$ pointwise as $t\to \infty$, and therefore, $F$ and $\bmu$ are well defined;\label{F_t-F-prop:2}
        \item for given $t$, $F_t(p,z,q)$ is smooth with respect to $p$, $z$ and $q$;\label{F_t-F-prop:3}
        \item for given $z$, $F(p,z,q)$ is smooth with respect to $p$ and $q$.\label{F_t-F-prop:4}
    \end{enumerate}
\end{proposition}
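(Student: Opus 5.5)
The plan is to reduce everything to the geometry of the orthogonal leaves $C_{p\gamma(t)q}$ and to compare them for different $t$ via the monotonicity mechanism already used in \cref{prop:monotone-decreasing-H_t} and \cref{lemma:partial-magnetic-triangle-inequality}. Recall that $F_t(p,z,q)=-T_{p\gamma(t)q}(0)$, where $T_{p\gamma(t)q}(0)$ is the time along $\gamma_0=\gamma_{pz}$ (when $\gamma(t)$ replaces $z$, the curve $\gamma_{p\gamma(t)}$ is a reparametrized piece of $\gamma_{pz}$) at which the orthogonal curve through $q$ to the family $\gamma_{g_{pq}(s)\gamma(t)}$ meets it. For $z\in\M$, $F_t$ does not depend on $t$, so (\ref{F_t-F-prop:1}) and (\ref{F_t-F-prop:2}) are trivial there. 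I will therefore concentrate on $z\in\Minfty$.

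For (\ref{F_t-F-prop:1}), fix $t_1<t_2$. I compare the two magnetic variations $\Gamma_i(s,\cdot)=\gamma_{g_{pq}(s)\gamma(t_i)}$, both of which contain $\gamma$ at $s=0$. The orthogonal leaf for $\Gamma_i$ has geodesic curvature governed by the Riccati solution $u=\dot y/y$ of \eqref{eqn:riccati} (\cref{remark:riccati-equation}). The variation aimed at the farther point $\gamma(t_2)$ has Jacobi field orthogonal component $y$ vanishing at a later time. Hence, by Sturm's Theorem (\cref{thm:sturm}), $|u|$ is smaller, and the leaf for $t_2$ bends less. Both leaves pass through $q$ and cross the same connecting geodesic $g_{pq}$, so I can compare where they meet $\gamma$. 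I would argue by contradiction: if $T_{p\gamma(t_2)q}(0)<T_{p\gamma(t_1)q}(0)$, the two leaves, both orthogonal to magnetic geodesics issuing toward points on $\gamma$, would cross at a point. There the angle relations \eqref{eqn:angle-between-magjf-flow-direction}--\eqref{eqn:angle-between-magjf-flow-direction-2} would be incompatible, exactly as at the end of the proof of \cref{lemma:partial-magnetic-triangle-inequality}. This yields $F_{t_2}\le F_{t_1}$.

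For (\ref{F_t-F-prop:2}), monotonicity reduces the claim to a lower bound on $F_t$, i.e.\ an upper bound on $T_{p\gamma(t)q}(0)$ uniform in $t$. I expect this to be the main obstacle, because the analogous bound fails for $H_t$ (\cref{rmk:obstacles-defining-magnetic-busemann-in-obvious-way}). My first attempt uses the limiting variation $\Gamma_\infty(s,\cdot)=\gamma_{g_{pq}(s)z}$, which consists of forward asymptotic magnetic geodesics and is stable by \cref{prop:stable-mag-jf-same-endpoint}. Its Jacobi fields have $|y|$ nonincreasing (\cref{lemma:stable-y-is-monotonic}), so its orthogonal leaf through $q$ meets $\gamma$ at a finite time $T_\infty$. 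Sturm comparison of the finite-$t$ Riccati solutions with the stable one should show that the leaves for $t$ lie on one side of the stable leaf. This bounds $T_{p\gamma(t)q}(0)$ by $T_\infty$. The limit therefore exists, and I expect it to equal $-T_\infty$. The latter identification uses convergence $\gamma_{g_{pq}(s)\gamma(t)}\to\gamma_{g_{pq}(s)z}$, which follows from \cref{thm:pqtilde} and the continuity in the cone topology.

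For (\ref{F_t-F-prop:3}), the leaf is the solution curve of an ODE on $\M$: it is the integral curve of the unit field orthogonal to $\dot\gamma_{m\gamma(t)}$ at each point $m$. That field depends smoothly on $(m,\gamma(t))$ because $\emu$ is a diffeomorphism (\cref{lemma:pq-connectivity-and-no-self-intersect}). The intersection time with $\gamma$ is transverse, since the leaf is orthogonal to $\gamma$. Smooth dependence of ODE solutions together with the implicit function theorem then gives smoothness in $p,q$, and in $z$ via $\gamma_{pz}(t)$. For (\ref{F_t-F-prop:4}), I would apply the same argument directly to the stable variation $\Gamma_\infty$ with $z$ fixed, using the identification $F=-T_\infty$ from the previous step. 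This requires that the stable direction field $m\mapsto\dot\gamma_{mz}(0)$ be smooth in $m$. That smoothness I would derive from the smoothness of the stable Jacobi fields constructed as limits in the existence proof of stable magnetic Jacobi fields; it is the second delicate point.
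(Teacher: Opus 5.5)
Parts (1), (3) and (4) follow the same outline as the paper: a crossing argument between $C_{p\ga(t_1)q}$ and $C_{p\ga(t_2)q}$ for (1), smooth dependence plus the implicit function theorem for (3), and the intersection of $\ga_{pz}$ with the curve through $q$ orthogonal to the family $\{\ga_{mz}\}$ for (4).

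In (1), however, the Sturm/Riccati comparison is not available. The families $s\mapsto\ga_{g_{pq}(s)\ga(t_1)}$ and $s\mapsto\ga_{g_{pq}(s)\ga(t_2)}$ share only $\ga$, so their Riccati solutions live on different magnetic geodesics and cannot be compared pointwise. Nor are \eqref{eqn:angle-between-magjf-flow-direction}--\eqref{eqn:angle-between-magjf-flow-direction-2} the relevant facts. What actually drives the contradiction, as in the paper, is the following. A first crossing $r$ of the two leaves after $q$ would reverse, relative to $q$, the rotational order of their tangents, and hence of the directions $\dot\ga_{r\ga(t_1)}(0)$, $\dot\ga_{r\ga(t_2)}(0)$ to which they are orthogonal. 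But that order is the same at every point on the side of $\ga$ containing $q$, because two magnetic geodesics meet at most once (\cref{thm:connectivity-combo}). Replacing $\ga(t_2)$ by $z$, and using that $\ga_{rz}$ is disjoint from $\ga$ for $r\notin\ga(\re)$ (\cref{thm:pqtilde}), the same argument is what would put the finite-$t$ leaves on one side of the stable leaf.

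The genuine gap is in (2), the one place you depart from the paper. Your barrier is the stable orthogonal leaf through $q$, so everything rests on that leaf reaching $\ga$ at a finite time $T_\infty$. The inference ``$|y|$ nonincreasing, so the leaf meets $\ga$'' does not give this.
\begin{itemize}
\item In the coordinates of $\Gamma_\infty$, the leaf is $s\mapsto\Gamma_\infty(s,T(s))$ with $T(d(p,q))=0$ and $T'(s)=-x_s(T(s))$. Monotonicity of $|y_s|$ in $t$ says nothing about $T$ staying finite on $[0,d(p,q)]$.
\item Since $\dot x_s=\mu y_s$ and $|y_s(t)|$ may grow exponentially as $t\to-\infty$, this ODE can be superlinear. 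So a priori the leaf may run off to $\Minfty$ without ever meeting $\ga$.
\end{itemize}
The paper avoids this by using a Riemannian horosphere $(B^0)^{-1}_{qz'}(0)$ through $q$ as the barrier. Its center $z'\in\Minfty$ is chosen so that it meets $\ga_{pz}$ at some $\ga_{pz}(T)$, and the argument of (1) then gives $F_t\ge -T$.

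To complete your route you need such an a priori barrier, i.e., some reason the leaves $C_{p\ga(t)q}$ stay in a compact set. Given that, you can pass to the limit in the leaves: the fields $m\mapsto\dot\ga_{m\ga(t)}(0)$ converge locally uniformly to $m\mapsto\dot\ga_{mz}(0)$, and integral curves are unique once the limit field is Lipschitz. This shows the stable leaf reaches $\ga$ at $\lim_t T_{p\ga(t)q}(0)$. It would also justify the identification $F=-T_\infty$, which the paper's proof of (4) uses tacitly.

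Finally, in (4), obtaining regularity of $m\mapsto\dot\ga_{mz}(0)$ from limits of Jacobi fields yields only finite regularity, namely the $C^1$ radial field of \cref{prop:c2-regularity-of-magnetic-horosphere}, not $C^\infty$. The paper itself merely asserts that the stable variation is smooth.
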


\begin{remark}
    We do not know whether $z\mapsto F(p,z,q)$ is continuous on $\Minfty$. For the geodesic flow of manifolds with nonpositive curvature, this is proved in \cite[Proposition 2.3]{Eberlein-II-1973} using (Riemannian) triangle inequality and cosine laws, but we lack those tools (\cref{lemma:magnetic-triangle-inequality-fails,prop:mag-cosine-law-fails}).
\end{remark}

\begin{proof}[Proof of \cref{prop:properties-of-F_t-F-B}]
Let $d\dfn d(p,q)$.  Since $t_2>t_1$, by \cref{lemma:pq-connectivity-and-no-self-intersect} we have \[\measuredangle_q(\dot \ga_{q\ga(t_2)}(0),-\dot C_{p\ga(t_1)q}(d))>\measuredangle_q(\dot \ga_{q\ga(t_2)}(0),-\dot C_{p\ga(t_2)q}(d))=\frac{\pi}{2}.\] 
Suppose $F_{t_2}(p,z,q)> F_{t_1}(p,z,q)$. Then there exists $s_1, s_2\in [0,d]$ and $r\in\M$ such that \[r= \ga_{g(s_1)\ga(t_1)}\cap \ga_{g(s_2)\ga(t_2)}\in C_{p\ga(t_1)q}\cap C_{p\ga(t_2)q},\]
which implies that \[\measuredangle_r(\dot \ga_{r\ga(t_2)}(0),\dot C_{p\ga(t_2)q}(s_2)=\pi/2<\measuredangle_r(\dot \ga_{r\ga(t_1)}(0),\dot C_{p\ga(t_1)q}(s_1)).\] 
This contradicts that $C_{p\ga(t_1)q}$ is orthogonal to $\ga_{g(s_1)\ga(t_1)}=\ga_{r\ga(t_1)}$ at $r$. 

(\ref{F_t-F-prop:2}) is obvious in the $\mu$-magnetically flat case since orthospheres are geodesics. It now suffices to prove boundedness of $F_t$ from below when $z\in\Minfty$ for $(\Sigma,\mu)$ satisfying \ref{H2}. Write $g=g_{pq}$, and let $\eta\dfn \ga_{qz}$ be the \m\  geodesic from $q$ to $z$. 

Define $A_{pq}(z)\dfn \measuredangle_q(\dot\eta(0),\dot g_{qz}(0))$. By the continuity of $A_{pq}(z)$ to $z\in\Minfty$ with respect to the cone topology and orthogonality between the geodesic and its Riemannian horospheres, there exists $z'=z'(p,q,z)\in\Minfty$ such that $\measuredangle_q(\dot\ga_{qz'}(0),\dot g_{qz}(0))<A_{pq}(z)$ and $(B^0)^{-1}_{qz'}(0)\cap\ga_{pz}=\ga_{pz}(T)$ for some $T>0$. Repeating a similar argument in the proof of (\ref{F_t-F-prop:1}), we have $F_t(p,z,q)\geq -T$ for all $t$, and the desired result follows. 


(\ref{F_t-F-prop:3}) is immediate from the smoothness of \m\  flow $\f$. 

 For (\ref{F_t-F-prop:4}), the statement in the case of $z\in\M$ is obvious. For fixed $z\in\Minfty$ and $p\in\M$, consider the intersection point $I$ of $\ga_{pz}$ and the curve $C$ through $q$ and orthogonal to the stable \m\  Jacobi variation $\Gamma$ consisting of all \m\  geodesics forward asymptotic to $z$. In particular, $C$ is transverse to $\ga_{pz}$, and $C$ is smooth since $\Gamma$ is smooth.  By the implicit-function theorem, the point $I$ depends smoothly on $q$, which proves the smoothness of $F$ with respect to $q$. Similarly, $F$ is smooth with respect to $p$.
\end{proof}

We continue to study the regularity of \m\  B-function $B^\mu_{pz}\colon \M\to \re$. 
For geodesic flows of manifolds with nonpositive curvature, Eberlein shows that the horospheres are $C^2$, which implies that the individual horospherical leaves are $C^1$ 
\cite[\S3]{HI}.

\subsection{Magnetic radial fields and magnetic radial flows}\label{sec:mag-radial-C2-reg}
The geometry of horospheres was initially developed using 
radial fields and radial flows 
\cite{HI}. 
For $z\in\Minfty$, the corresponding radial field has geodesics towards $z$ as integral curves, and the corresponding radial flow moves $\M$ toward $z$. In this section, we define and study magnetic radial fields and magnetic radial flows in order to show the $C^2$-regularity of magnetic B-functions and magnetic orthospheres.

\begin{definition}[Magnetic radial field, magnetic radial flow]\label{def:radial-field}
Suppose $(\Sigma,\mu)$ satisfies \ref{H1}. Then for $p\in\M$ and $q\in\om$, the \emph{$\mu$-magnetic radial field} $Z^q$ in the direction of $q$ is defined by $Z^q(p)=\dot{\ga}_{pq}(0)$. For any $z\in\Minfty$, the flow $\psi$ generated by the vector field $Z$ in the direction of $z$ is called the \emph{\m\ radial flow} with respect to $z$ and is denoted by $\psi=\pi\circ \f\circ(\operatorname{id}_\re\times Z)\colon \re\times \Sigma\to \Sigma$, where $\pi$ is the canonical projection.

For $m\in\M$ different from $p,q\in \om$, $\measuredangle_p(m,q)$ denotes the angle in $[0,\pi]$ subtended by $\dot{\ga}_{pm}(0)$ and $\dot{\ga}_{pq}(0)$, where $\ga_{pq}$, $\ga_{pm}$ and $\ga_{pq}$ are all \m\  geodesics or geodesic rays as in \cref{lemma:pq-connectivity-and-no-self-intersect} or \cref{thm:pqtilde}.
\end{definition}

\begin{proposition}[Orthospheres are $C^2$]
\label{prop:c2-regularity-of-magnetic-horosphere}
Suppose $(\Sigma,\mu)$ satisfies \ref{H1}. Given $z\in\Minfty$ and $p\in\M$, let $Z$ be the \m\  radial field in the direction of $z\in \M(\infty)$ and $B\dfn B_{pz}$ the \m\  B-function determined by $p$ and $z$ (\cref{def:F-F-t-magnetic-busemann-function}).
Then $Z=-grad\, B$, $Z$ is $C^1$, and $\nabla_{v}Z=Y_v'(0)-\mu y\dot\ga_{pz}$ for all $v\in T_qM$, where $Y_v$ is the stable \m\  Jacobi field along $\ga_{qz}$ with $Y_v(0)=v$, and $y$ is the orthogonal component of $Y$ along $\dot\ga_{qz}$. In particular,
\begin{enumerate}
     \item\label{rf1} $Z$ is complete;
     \item if $u\in T_p\Sigma$ is collinear with $Z(p)$, then $\psi_{t*}(u)$ is collinear with $Z(\psi_t(p))$, and $\lVert \psi_{t*}(u)\rVert=\lVert u \rVert$;\label{rf2}
     \item $B$ is $C^2$, and $\psi$ is $C^1$.\label{rf3}
 \end{enumerate}
\end{proposition}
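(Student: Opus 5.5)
The plan is to model the argument on Eschenburg--Heintze \cite[\S3]{HI}, replacing the Riemannian radial field with the magnetic one. The first step is $Z=-\operatorname{grad}B$. By \cref{def:F-F-t-magnetic-busemann-function}, each level set of $B=B_{pz}$ is an orthosphere. An orthosphere is the curve orthogonal to the family of all \m\ geodesics forward asymptotic to $z$, and these are exactly the integral curves of $Z$. So $\operatorname{grad}B$ is parallel to $Z$ wherever it is nonzero.

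To fix the normalization, I move $q$ along $\ga_{qz}$ by time $\tau$. Through each point there is a unique asymptotic \m\ geodesic to $z$ (\cref{thm:pqtilde}), and $F_t$ is defined through the crossing time $T$ on $\ga_{pz}$. From this I expect $B(\ga_{qz}(\tau))=B(q)-\tau$, at least infinitesimally. Together with the orthogonality this gives $\langle\operatorname{grad}B,Z\rangle=-1$ and $\operatorname{grad}B=-Z$. The function $B$ is already $C^1$ (indeed smooth in $q$) by \cref{prop:properties-of-F_t-F-B}(\ref{F_t-F-prop:4}).

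The second and central step is the formula for $\nabla_vZ$, which also shows $Z$ is $C^1$. For a curve $c(s)$ with $c(0)=q$ and $c'(0)=v$, the geodesics $\ga_{c(s)z}$ form a \m\ variation all of whose members are forward asymptotic. By \cref{prop:stable-mag-jf-same-endpoint}, the variation field is the stable \m\ Jacobi field $Y_v$ with $Y_v(0)=v$; it exists and is unique by the existence proposition preceding \cref{THMStableComp}.

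Then $Z(c(s))=\partial_t\Gamma(s,0)$. Commuting derivatives gives $\nabla_vZ=\frac{D}{ds}\partial_t\Gamma=\frac{D}{dt}\partial_s\Gamma=Y_v'(0)$. This has to be corrected by the twist: $\partial_t\Gamma$ is a unit field, so $\nabla_vZ$ must be orthogonal to $Z$. Using \eqref{eqn:J'}, $Y_v'=(\mu x+\dot y)\,i\dot\ga$. The discrepancy between the bare commutation and the correct formula is the tangential shear term $\mu y\dot\ga$, coming from \eqref{eqn:magJac1} and the Lorentz term $\mu i\dot\ga$. I would carry out this computation carefully in the frame $\{\dot\ga,i\dot\ga\}$.

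The regularity of $Z$ reduces to showing that $v\mapsto Y_v'(0)$ is continuous in the base point. I would get this by realizing $Y_v$ as a limit of the Jacobi fields $J_n$ with $y_n(n)=0$. The Sturm estimate in that existence proof gives $|J_n'(0)-J_m'(0)|\le|v|/t$, which is uniform on compact sets. Since each $J_n$ depends continuously on the base point, the limit is continuous, so $Z$ is $C^1$. \textbf{This step is the main obstacle}, because the convergence has to be uniform as the base vector varies, not merely pointwise.

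The consequences then follow.

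(\ref{rf1}) Completeness holds because $Z$ is a unit field and $\M$ is complete.

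(\ref{rf2}) If $u=cZ(p)$, then $\psi_{t*}u$ is the variation field of reparametrizing one magnetic geodesic, which is the tangential Jacobi field $c\dot\ga$ (\cref{rmk:tangential-magnetic-JF}). Its norm is constant, equal to $\lVert u\rVert$.

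(\ref{rf3}) Since $B$ is $C^1$ and $\operatorname{grad}B=-Z$ is $C^1$, $B$ is $C^2$. The flow $\psi$ of a $C^1$ vector field is $C^1$ by standard ODE theory.
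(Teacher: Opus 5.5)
Two of your steps cannot be completed, because the identities they aim at are false whenever $\mu\neq0$.

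\textbf{The normalization $B(\ga_{qz}(\tau))=B(q)-\tau$ fails.} This identity says that the radial flow $\psi$ carries orthospheres to orthospheres, which the paper itself denies in \cref{rmk:radial-field-shearing}. More decisively, no $C^2$ function can satisfy $-\operatorname{grad}B=Z$. Since $\lvert Z\rvert\equiv1$, this would force $\nabla_ZZ=\nabla_{\operatorname{grad}B}\operatorname{grad}B=\tfrac12\operatorname{grad}\lvert\operatorname{grad}B\rvert^2=0$, whereas $\nabla_ZZ=\mu iZ$ by \eqref{eqn:mag-geo-flow}. Concretely, take the magnetically flat model: the upper half-plane with $K\equiv-1$, $\mu=1$, $z=\infty$, $p=(0,1)$. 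Then $Z=y\,\partial_x$, the orthospheres are the vertical lines, and one finds $B=-x$. Hence $-\operatorname{grad}B=y^2\partial_x=yZ$ and $B(\ga_{qz}(\tau))=B(q)-y\tau$. All that survives is $\operatorname{grad}B=-\phi Z$ for some positive function $\phi$.

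\textbf{The ``twist correction'' contradicts your own observation.} You note $\nabla_vZ\perp Z$ because $\lvert Z\rvert\equiv1$, and $Y_v'(0)\perp\dot\ga_{qz}(0)$ by \eqref{eqn:J'}. So there is no discrepancy to correct, and subtracting $\mu y\dot\ga$ would create a nonzero $Z$-component whenever $y\neq0$. In the model, $v=iZ(q)$ gives $Y_v(t)=t\dot\ga(t)+i\dot\ga(t)$, so $Y_v'(0)=0=\nabla_{iZ}Z$, whereas $Y_v'(0)-\mu y\dot\ga=-\dot\ga(0)$. Thus the bare commutation $\nabla_vZ=Y_v'(0)=\bigl(\mu\langle v,Z\rangle+\dot y(0)\bigr)iZ$ is the correct formula. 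The paper's proof reaches the extra term in \eqref{eqn:nablaVZ0} by identifying $\nabla_{V^\perp}Z^r$ with $(Y^\perp)'(0)$. That identification drops the term $\dot x(0)\dot\ga(0)=\mu y(0)\dot\ga(0)$ coming from \eqref{eqn:magJac1}, which cancels $y(0)\tfrac{D}{dt}(i\dot\ga)(0)=-\mu y(0)\dot\ga(0)$. Its assertion $Z_n=-\operatorname{grad}B_n$ also fails, for the same reason as above. So these two claims cannot be proved as stated.

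\textbf{The remaining part of your derivation is circular.} What does hold is that $Z$ is $C^1$ with $\nabla_vZ=Y_v'(0)$, and then (\ref{rf1}), (\ref{rf2}) and $\psi\in C^1$. To get $\nabla_vZ$ you differentiate $\Gamma(s,t)=\ga_{c(s)z}(t)$ in $s$ and invoke \cref{prop:stable-mag-jf-same-endpoint}, which concerns smooth variations. Both presuppose that $s\mapsto Z(c(s))$ is differentiable, which is exactly what must be proved. Citing \cref{prop:properties-of-F_t-F-B}(\ref{F_t-F-prop:4}) has the same defect, since its proof assumes the asymptotic variation is smooth. Your uniform Sturm estimate does give continuity of the candidate $v\mapsto Y_v'(0)$, but continuity does not yield differentiability.

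\textbf{The missing idea is the Heintze--Im Hof approximation used in the paper.}
\begin{itemize}
\item The radial fields $Z_n$ toward the fixed points $p_n=\ga_{pz}(n)$ are smooth off $p_n$.
\item $\nabla_vZ_n=Y_{qp_n}'(0)$, where $Y_{qp_n}$ is the $\mu$-magnetic Jacobi field along $\ga_{qp_n}$ with $Y_{qp_n}(0)=v$ whose orthogonal component vanishes at $p_n$.
\item One proves $Z_n\to Z$ and $\nabla_vZ_n\to Y_v'(0)$ uniformly on compact sets, which makes $Z$ a $C^1$ field with that derivative.
\end{itemize}
Your Sturm bound controls the truncation error $\lVert Y_{qp_n}'(0)-X_{T,qp_n}'(0)\rVert\le\lVert v\rVert/T$. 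You also need a fixed-$T$ comparison between Jacobi fields along the different geodesics $\ga_{qp_n}$ and $\ga_{qz}$; this comes from $\ga_{qp_n}(T)\to\ga_{qz}(T)$ uniformly on compact sets.

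After that, your arguments for (\ref{rf1}), (\ref{rf2}) and $\psi\in C^1$ go through. However, $B\in C^2$ cannot be read off from $\operatorname{grad}B=-Z$; it would need separate control of the factor $\phi$.
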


The proof follows \cite{HI}; the idea is to carry over statements for magnetic radial fields in the direction of finite points to the given field $Z$. 
While \cite{HI} uses Rauch's comparison theorem to estimate the change of the Jacobi field, we use Sturm's \cref{thm:sturm}, a counterpart of Rauch's comparison theorem in the 2-dimensional case.

\begin{proof}
We start with notations. For any vector $v=x\dot\ga+y i\dot\ga$ and vector field $V$ along a \m\  geodesic $\ga$, denote by $\bar v$ and $\bar V$ their components along $\dot \ga$, and by $v^\perp$ and $V^\perp$ their components orthogonal to $\dot \ga$. Write $\hat{v}=\hat{v}(\gamma)\dfn y\dot{\ga}+xi \dot{\ga}$. For any \m\  Jacobi field $Y=x\dot\ga+yi\dot\ga$ along $\ga$, by \eqref{eqn:mag-geo-flow}, \eqref{eqn:magJac1} and \eqref{eqn:J'}, we have 
\begin{align}
    &Y'=(\mu x+\dot y)i\dot\ga \label{eqn:Y-'},\\
    &\bar Y'=\mu y\dot\ga+\mu xi\dot\ga=\mu \hat Y. \label{eqn:Y-tangential-prime}
\end{align}
Let $\ga\dfn \ga_{pz}$, $p_n\dfn\ga(n)$ for $n\in\mathbb{N}$ and $B_n\colon q\mapsto B_n(q)\dfn F(p,p_n,q)$. Then 
\begin{enumerate}
    \item $Z_n=-grad\ B_n$ is the magnetic radial field in the direction of $p_n$,
    \item $Z_n$ is $C^\infty$ on $\M-\{p_n\}$, and 
    \item $B=\lim B_n$.
\end{enumerate}
We will show that 
\begin{enumerate}
    \item[(a)] the fields $Z_n$ converge uniformly on compact sets to $Z$, and\label{lemma:C2-lemma1}
    \item[(b)] for any vector field $V$ on $\M$, the covariant derivatives $\nabla_VZ_n$ converge uniformly on compact sets to $Y_V$, where $Y_V(p)=Y_v'(0)+v\cdot \dot\ga(0)\mu i\dot\ga(0)+\mu \hat{Y}_{qz}(0)$, and $Y_v$ is the stable $\mu$-magnetic Jacobi field along $\ga_{qz}$ with $Y_v(0)=v=V(q)$ and $Y_{qz}$ is the stable \m\  Jacobi field along $\ga_{qz}$ with $Y_{qz}(0)=v$. \label{lemma:C2-lemma2}
\end{enumerate}
This implies uniform convergence of the first and second derivatives of the functions $B_n$ on compact sets. Thus $B$ is $C^2$, $grad\ B=\lim grad\ B_n=-Z$, and $\nabla_vZ=-\nabla_v grad\ B=-\lim \nabla_vgrad\  B_n=Y_v'(0)-\mu y \dot\ga$.

Let $K\subset\M$ be compact and $n_0\in\mathbb{N}$ such that $p_n\notin K$ for all $n\geq n_0$.

Proof of \hyperref[lemma:C2-lemma1]{(a)} :
Let $q\in K$ and $n\geq n_0$. 
Then $\left\Vert Z_n-Z\right\Vert(q)=\left\Vert\dot{\ga}_{qp_n}(0)-\dot{\ga}_{qz}(0)\right\Vert$ goes to zero uniformly on $K$ 
by Dini's theorem.

Proof of \hyperref[lemma:C2-lemma1]{(b)}:
Let $q\in K$ and $n\geq n_0$. For any vector field $V$ and $r\in\M$, there exists a $\mu$-magnetic variation $\Gamma$ through \m\  geodesic $\ga_{qr}$ such that
\begin{itemize}
   \item $\Gamma(0,t)=\ga_{qr}(t)$,
   \item $\Gamma(s,H(s))=r, \forall s$ for some continuous function $H$ with $H(0)=\td(q,r)$, and 
   \item $\frac{\partial \Gamma}{\partial s}(0,0)=v=V(q)$.
\end{itemize}
Therefore, 
\begin{align}
\begin{split}
\nabla_VZ^r(q)&=\nabla_{V^\perp} Z^r(q)+\nabla_{\bar{V}} Z^r(q)\\
& =\frac{D}{d (it)}\frac{\partial\Gamma}{\partial t} (0,0)+\frac{D}{d t}\frac{\partial}{\partial t} \Gamma(0,0) \\
& =\frac{D}{d t}\frac{\partial\Gamma}{\partial (it)} (0,0)+ \langle v,\dot\ga_{qr}(0)\rangle\  \nabla_{\dot{\ga}_{qr}(0)}\frac{\partial}{\partial t}\Gamma(0,0)\\
& =(Y^\perp)^\prime(0)+ \langle v,\dot\ga_{qr}(0)\rangle\ \mu i \dot \ga_{qr}(0),\label{eqn:nablaVZ0}
\end{split}
\end{align}
where 
$Y(t)=Y^\perp(t)+\bar Y(t)$ is the \m\  Jacobi field along $\ga_{qr}$ with $Y(0)=v$ and $Y(\dmu(q, r))=0$, and $\bar Y(t)=\langle Y(t),\dot\ga_{qr}(t)\rangle$.

Taking $r=p_n$, the application of \eqref{eqn:nablaVZ0} to the vector field $Z_n$ returns
\begin{equation}
\left\Vert \nabla_VZ_n-Y_V\right\Vert(q)= \left\Vert (Y_{qp_n}^\perp)^\prime(0)+\langle v,\dot\ga_{qp_n}(0)\rangle\ \mu i\dot\ga_{qp_n}(0)-Y_V(p)\right\Vert,\label{eqn:difference0}
\end{equation}
where $Y_{qp_n}$ is the \m\  Jacobi field along $\ga_{qp_n}$ with $Y_{qp_n}(0)=v$ and $Y_{qp_n}(\tilde{d}(p,p_n))=0$. To prove \hyperref[lemma:C2-lemma1]{(b)} now reduces to showing that \eqref{eqn:difference0} uniformly converges to $0$ on a compact set $K$.

By \eqref{eqn:Y-'} and \eqref{eqn:Y-tangential-prime}, we have 
\[Y_{qp_n}^\prime(0)-(Y_{qp_n}^\perp)^\prime(0)=\mu \hat Y_{qp_n} (0),\]
and therefore, \eqref{eqn:difference0} becomes 

\begin{multline}\label{eqn:goal}
\lVert \nabla_VZ_n-Y_V\rVert(p)\\
\begin{aligned}
&=\left\Vert Y_{qp_n}^\prime(0)-\mu \hat Y_{qp_n} (0)+\langle v,\dot\ga_{qp_n}(0)\rangle \mu i\dot\ga_{qp_n}(0)-Y_V(p)\right\Vert\\
&= \lVert Y_{qp_n}^\prime(0)-\mu \hat Y_{qp_n}(0)+ \langle v,\dot\ga_{qp_n}(0)\rangle \mu i \dot\ga_{qp_n}(0)-Y^\prime_v(0)-\mu\hat Y_{qz}(0)-\langle v,\dot\ga_{qz}(0)\rangle\ \mu i \dot\ga_{qz}(0)\rVert\\
&\leq \lVert Y_{qp_n}^\prime(0)-Y^\prime_v(0) \rVert+  \mu \lVert \hat Y_{qp_n}(0) -\hat Y_{qz}(0)\rVert+ \lVert \langle v,\dot\ga_{qz}(0)\rangle\ \mu i \dot\ga_{qz}(0)- \langle v,\dot\ga_{qp_n}(0)\rangle \ \mu i \dot\ga_{qp_n}(0) \rVert.
\end{aligned}
\end{multline}

It is relatively quick to check that the latter two terms converge to zero uniformly:
\begin{multline}\label{eqn:spadesuit1}
\lVert \hat Y_{qp_n}(0) -\hat Y_{qz}(0)\rVert\\
\begin{aligned}
&= \lVert \langle v,\dot\ga_{qp_n}(0)\rangle \ i\dot\ga_{qp_n}(0)+\langle v,i\dot\ga_{qp_n}(0)\rangle \dot\ga_{qp_n}(0)- \langle v,\dot\ga_{qz}(0)\rangle i\dot\ga_{qz}(0)-\langle v,i\dot\ga_{qz}(0)\rangle \dot\ga_{qz}(0)\rVert\\
&\leq\lVert \langle v,\dot\ga_{qp_n}(0)\rangle i\dot\ga_{qp_n}(0)- \langle v,\dot\ga_{qz}(0)\rangle i\dot\ga_{qz}(0)\rVert\\&\quad +\lVert \langle v,i\dot\ga_{qp_n}(0)\rangle \dot\ga_{qp_n}(0)-\langle v,i\dot\ga_{qz}(0)\rangle \dot\ga_{qz}(0)\rVert\\
&\leq\lVert \langle v,\dot\ga_{qp_n}(0)\rangle i\dot\ga_{qp_n}(0)- \langle v,\dot\ga_{qp_n}(0)\rangle i\dot\ga_{qz}(0) \rVert\\&\quad+ \lVert \langle v,\dot\ga_{qp_n}(0)\rangle i\dot\ga_{qz}(0) - \langle v,\dot\ga_{qz}(0)\rangle i\dot\ga_{qz}(0)\rVert\\
&\quad+\lVert \langle v,i\dot\ga_{qp_n}(0)\rangle \dot\ga_{qp_n}(0)- \langle v,i\dot\ga_{qp_n}(0)\rangle \dot\ga_{qz}(0) \rVert\\&\quad+ \lVert \langle v,i\dot\ga_{qp_n}(0)\rangle \dot\ga_{qz}(0) - \langle v,i\dot\ga_{qz}(0)\rangle \dot\ga_{qz}(0)\rVert\\
&\leq\lvert \langle v,\dot\ga_{qp_n}(0)\rangle \rvert \lVert  i\dot\ga_{qp_n}(0)-i\dot\ga_{qz}(0)\rVert +\lvert \langle v,\dot\ga_{qp_n}(0)-\dot\ga_{qz}(0)\rangle \rvert \lVert i\dot\ga_{qz}(0)\rVert\\
&\quad+\lvert \langle v,i\dot\ga_{qp_n}(0)\rangle \rvert \lVert  \dot\ga_{qp_n}(0)-\dot\ga_{qz}(0)\rVert +\lvert \langle v,i\dot\ga_{qp_n}(0)-i\dot\ga_{qz}(0)\rangle \rvert \lVert \dot\ga_{qz}(0)\rVert,
\end{aligned}    
\end{multline}
and
\begin{multline}\label{eqn:diamondsuit11}
    \lVert \langle v,\dot\ga_{qz}(0)\rangle \mu i \dot\ga_{qz}(0)- \langle v,\dot\ga_{qp_n}(0)\rangle \mu i \dot\ga_{qp_n}(0) \rVert\\\begin{aligned}
&\leq \lVert \langle v,\dot\ga_{qz}(0)\rangle \mu i \dot\ga_{qz}(0)- \langle v,\dot\ga_{qz}(0)\rangle \mu i \dot\ga_{qp_n}(0)\rVert+ \lVert  \langle v,\dot\ga_{qz}(0)\rangle \mu i \dot\ga_{qp_n}(0)- \langle v,\dot\ga_{qp_n}(0)\rangle \mu i \dot\ga_{qp_n}(0) \rVert \\
&= \lvert\mu \langle v,\dot\ga_{qz}(0)\rangle\rvert \lVert  i \dot\ga_{qz}(0)- i \dot\ga_{qp_n}(0)\rVert+ \lvert \langle v,\dot\ga_{qz}(0)-\dot\ga_{qp_n}(0)\rangle\rvert\lVert \mu i \dot\ga_{qp_n}(0)\rVert .
    \end{aligned}
\end{multline}


The remainder of the proof establishes that $\lVert Y^\prime_{qp_n}(0)-Y^\prime_v(0)\rVert\rightarrow0$ uniformly for $q\in K$ as $n\rightarrow \infty$. For $T>0$, let $X_{T,qp_n}$ be the \m\  Jacobi field along $\ga_{qp_n}$ with $X_{T,qp_n}(0)=V(p)=v$ and $X_{T,qp_n}(T)=0$. Define $X_{T,qz}$ analogously. Since
\begin{equation}\label{eqn:triineq1}
\lVert Y^\prime_{qp_n}(0)-Y^\prime_v(0)\rVert\leq
\lVert Y^\prime_{qp_n}(0)-X^{\prime}_{T,qp_n}(0)\rVert+
\lVert X^{\prime}_{T,qz}(0)-Y^\prime_v(0)\rVert+
\lVert X^{\prime}_{T,qp_n}(0)-X^{\prime}_{T,qz}(0)\rVert,
\end{equation}
our goal now is to show that all three terms on the right-hand side uniformly converge to zero on $K$ as $T\to \infty$. We claim that
\begin{equation}\label{eqn:norm10}
\left\Vert Y^\prime_{qp_n}(0)-X^{\prime}_{T,qp_n}(0)\right\Vert\leq\frac{1}{T}\lVert V(p)\rVert\to 0.
\end{equation}

To show this, write $Y_{qp_n}(t)=x_1(t)\dot\ga_{qp_n}(t)+y_1(t)i\dot\ga_{qp_n}(t)$,
and $X_{T,qp_n}(t)=x_2(t)\dot\ga_{qp_n}(t)+y_2(t)i\dot\ga_{qp_n}(t)$. Note that 

\begin{enumerate}
   \item $Y_{qp_n}(0)-X_{T,qp_n}(0)=0$;
   \item $Y_{qp_n}(T)-X_{T,qp_n}(T)=Y_{qp_n}(T)$;
   \item $Y^\prime_{qp_n}(t)\perp\dot\ga_{qp_n}(t)$ and $X^{\prime}_{T,qp_n}(t)\perp\dot\ga_{qp_n}(t)$ since \eqref{eqn:Y-'}.
\end{enumerate}
Therefore, $\left\Vert Y^\prime_{qp_n}(0)-X^{\prime}_{T,qp_n}(0)\right\Vert=\lVert\mu (x_1-x_2)+(\dot y_1-\dot y_2)\rVert\leq \mu \lVert x_1-x_2\rVert+\lVert\dot y_1-\dot x_2\rVert$ where $\lVert x_1(0)-x_2(0)\rVert=0$ since $Y_{qp_n}(0)=X_{T,qp_n}(0)=v$. To prove \eqref{eqn:norm10}, it now suffices to show that $\lVert\dot y_1(0)-\dot y_2(0)\rVert$ is uniformly bounded in $n$.

Since both $Y_{qp_n}$ and $X_{T,qp_n}$ are \m\  Jacobi fields along $\ga_{qp_n}$, we have $\big(y_1-y_2\big)^{\prime\prime}(t)+K_\mu \big(y_1-y_2\big)(t)=0$. Applying \cref{thm:sturm} (to compare with the magnetic flat case) returns $\lVert\dot y_1(0)-\dot y_2(0)\rVert\leq\frac{1}{T}\lVert y_1(T)\rVert\leq\frac{1}{T}\lVert V(p)\rVert$ for sufficiently large $n$ so that $\dmu(p,p_n)>T$, where the latter inequality follows from convexity of $y_2$ and the fact that $t\mapsto \lVert y_2(t)\rVert$ is monotonically decreasing on $[0,\dmu(p,p_n)]$. This completes the proof of \eqref{eqn:norm10}, and the proof is similar for $\lVert X^{\prime}_{T,qz}(0)-Y^\prime_v(0)\rVert$. 

Finally, we will show below that $d(\ga_{qp_n}(T),\ga_{qz}(T))\to 0$ uniformly on $K$, which implies that the last term in \eqref{eqn:norm10}, $\lVert X^{\prime}_{T,qp_n}(0)-X^{\prime}_{T,qz}(0)\rVert\to0$ uniformly on $K$ for any fixed $T$, as $n\to\infty$. Then the quantities in \eqref{eqn:triineq1} and \eqref{eqn:goal} uniformly converge to zero on $K$, completing the proof of \hyperref[lemma:C2-lemma1]{(b)}.

Note that \m\ Jacobi fields, as solutions to a second-order differential equation, depend smoothly on their boundary values, in particular on the endpoint $\ga_{qp_n}(T)$ or $\ga_{qz}(T)$. Since the dependence of $X^{\prime}_{T,qp_n}(0)$ (resp. $X^{\prime}_{T,qz}(0)$) on the endpoint $\ga_{qp_n}(T)$ (resp. $\ga_{qz}(T)$) is uniformly continuous for $q\in K$, $d(\ga_{qp_n}(T),\ga_{qz}(T))\to 0$ uniformly on $K$ implies that $\lVert X^{\prime}_{T,qp_n}(0)-X^{\prime}_{T,qz}(0)\rVert\to0$ uniformly on $K$.

To show that $d(\ga_{qp_n}(T),\ga_{qz}(T))\to 0$ uniformly on $K$, we consider the \m\  circle $C^T(q)\dfn\{p\in\M\mid \dmu(q,p)=T\}$ centered at $q$ with radius $T$. Let $C_n^T(q)$ be the arc on $C^T(q)$ from $\ga_{qp_n}(T)$ to $\ga_{qz}(T)$ (counterclockwise when $\mu\ge0$ and clockwise when $\mu<0$), and let $l^T_n(q)\dfn l(C_n^T(q))$. 
Since $n\mapsto l^T_n(q)\to 0$ monotonically, Dini's theorem implies that $l^T_n(q)\to0$ uniformly on $K$ as $n\to\infty$. Since $l^T_n(q)>d(\ga_{qp_n}(T),\ga_{qz}(T))>0$, we thus have $d(\ga_{qp_n}(T),\ga_{qz}(T))\to 0$ uniformly on $K$, and the proof of \hyperref[lemma:C2-lemma1]{(b)} (hence (\ref{rf3})) is complete.

    The \m\  geodesics going to $z$ are integral curves of $Z$ implies (\ref{rf1}). Moreover, $\psi_{t^*}(Z(p))=Z(\psi_t(p))$ proves (\ref{rf2}). 
\end{proof}

\begin{remark}\label{rmk:radial-field-shearing}
    The \m\  radial flow $\psi$ does not map orthospheres to orthospheres due to the twist from \eqref{eqn:magJac1}, so we have no magnetic counterpart of \cite[Proposition 3.2(ii)]{HI}.
\end{remark}

\subsection{Hyperbolicity function}\label{sec:la-function}
We saw in \cref{remark:riccati-equation} the geometric interpretation of the solution of the Riccati equation corresponding to a given magnetic Jacobi field along a magnetic geodesic $\ga$. In particular, when we consider a stable magnetic Jacobi field, the solution of the Riccati equation reflects the geodesic curvature of magnetic orthospheres and, therefore, the hyperbolic behavior of adjacent magnetic geodesics asymptotic to $\ga$. This motivates us to define and study a hyperbolicity function $\la$ on $\sm$ (\cref{def:lau-las-la}) that measures the rate of hyperbolicity of a vector under the magnetic flow analogously to 
\cite[Definition 2.10]{BCFT}. 
\begin{definition}[Hyperbolicity function $\la$; $\Reg(\eta)$]\label{def:lau-las-la}
For $v\in \sm$, define \[\lau(v)=\min_{J^u}\lvert \dot y^u(0)/y^u(0)\rvert,\] where $J^u(t)=x^u(t)\dot\ga_v(t)+y^u(t)i\dot\ga_v(t)$ is any nontangential unstable \m\  Jacobi field along the \m\  geodesic $\ga_v$ (see \cref{def:stable-unstable-magJF});
similarly, define \[\la^s(v)\dfn \min_{J^s}|\dot y^s(0)/y
^s(0)|,\] where $J^s(t)=x^s(t)\dot\ga_v(t)+y^s(t)i\dot\ga_v(t)$ is any nontangential stable \m\  Jacobi field along $\ga_v$. Define $\lav \dfn \min\{\la^u(v),\la^s(v)\},$ 
and for $\eta> 0$, let \[\Reg(\eta) \dfn \{v\mid \lambda(v) \geq \eta\}.\]
\end{definition}

\begin{lemma}\label{lemma:la-continuous-subspaces}
    Let $\operatorname{proj}\colon \M\to \Sigma$ be the covering map with derivative $\operatorname{proj}_*\colon \smm \to \sm$, and define $ \tilde\lambda^s =\las \circ \operatorname{proj}_*$, $\tilde\lambda^u =\lau \circ \operatorname{proj}_*$, and $\widetilde\la=\la\circ \operatorname{proj}_*$.
    For any vector $v\in\sm$ and its lift $\tilde v\in\smm$, the maps $t\mapsto\lau(\f_t(v))$, $t\mapsto\las(\f_t(v))$ and $t\mapsto\la(\f_t(v))$ are continuous in $t\in\re$. Moreover, $\tilde\lambda^s$ (resp. $\tilde\lambda^u$) is continuous on $\mathcal{H}^{s}(\tilde v)$ (resp. $\mathcal{H}^{u}(\tilde v)$; see \cref{def:F-F-t-magnetic-busemann-function}) 
    with respect to the Knieper metric in \eqref{def:metric_dk} (equivalently, the Sasaki metric by \cref{lemma:dk-ds-equiv}).
\end{lemma}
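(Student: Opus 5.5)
The first step is to identify what the minima in \cref{def:lau-las-la} actually are. By \cref{lemma:stable-y-is-monotonic}, a nontangential stable (resp.\ unstable) \m\ Jacobi field has $y(0)\neq 0$. Moreover, by uniqueness of stable Jacobi fields with prescribed initial value, the orthogonal components of stable fields along $\gav$ form a one-dimensional space $\re\, y^s_v$; the same holds in the unstable case with $\re\,y^u_v$. So the ratio $\dot y^s(0)/y^s(0)$ does not depend on the choice, and $\las(v)=|u^s_v(0)|$, where $u^s_v=\dot y^s_v/y^s_v$ solves the Riccati equation \eqref{eqn:riccati} along $\gav$ (\cref{remark:riccati-equation}). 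Shifting the base point along the orbit gives $\las(\f_t v)=|u^s_v(t)|$, because the stable Jacobi field along $\ga_{\f_tv}$ is the time-shift of the one along $\gav$. A solution of \eqref{eqn:riccati} that exists on all of $\re$ is $C^1$, and it does exist there since $y^s_v$ never vanishes. Hence $t\mapsto \las(\f_tv)$ is continuous, and likewise $\lau$. Then $\la=\min\{\las,\lau\}$ is continuous in $t$ as a minimum of continuous functions.

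For the continuity of $\tilde\la^s$ on the leaf $\mathcal H^s(\tilde v)$, I would use \cref{remark:riccati-equation} and \cref{prop:c2-regularity-of-magnetic-horosphere}. For $w=\dot\ga_{qz}(0)\in\mathcal H^s(\tilde v)$ with $z=\ga_{\tilde v}(\infty)$, the stable Jacobi field along $\ga_w$ is the variation field of the stable variation of \m\ geodesics asymptotic to $z$, which by \cref{prop:stable-mag-jf-same-endpoint} are exactly the integral curves of the radial field $Z=Z^z$. Then $u^s_w(0)$ is the geodesic curvature at $q$ of the orthosphere $B_{pz}^{-1}(c)$, and more directly, by the formula $\nabla_vZ=Y_v'(0)-\mu y\dot\ga_{qz}$ in \cref{prop:c2-regularity-of-magnetic-horosphere}, it equals $\langle\nabla_{iZ(q)}Z, iZ(q)\rangle$ up to the correction term coming from \eqref{eqn:J'}. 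This correction vanishes after dividing by $y(0)$ with the orthogonal normalization $Y_v(0)=iZ(q)$, since then $x(0)=0$ and $Y'_v(0)=\dot y(0)\,iZ$. So $\tilde\la^s(w)=|\langle \nabla_{iZ(q)}Z,iZ(q)\rangle|$. Since $Z$ is $C^1$ (equivalently, $B$ is $C^2$), the right-hand side is a continuous function of $q$. Finally, the footpoint map $w\mapsto q=\pi w$ is continuous on the leaf in the Sasaki metric, and hence in $d_K$ by \cref{lemma:dk-ds-equiv}. The unstable case is symmetric, using the unstable B-function and reversing time via \cref{rmk:general-behaviors-easy-observation}\eqref{ReverseVersion}.

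The main obstacle is the identification in the previous paragraph, specifically that the stable Jacobi field along each $\ga_{qz}$ coincides with the one generated by the radial-field variation. This is needed so that the $C^1$ regularity from \cref{prop:c2-regularity-of-magnetic-horosphere} really controls $u^s$ at every point of the leaf rather than only at $p$. If that bookkeeping with the tangential component proves awkward, I would fall back on the convergence argument from step (b) of that proof. There, $\dot y(0)/y(0)$ for the fields $X_{T,qz}$ vanishing at time $T$ depends continuously on $q$ and converges uniformly on compacts to $u^s_{\dot\ga_{qz}(0)}(0)$ via the Sturm bound $1/T$. A uniform limit of continuous functions is continuous, which gives the result.
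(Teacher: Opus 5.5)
Your proposal is correct and follows the paper's proof. Continuity in $t$ comes from $\las(\f_tv)=|u^s_v(t)|$ for a globally defined solution of the Riccati equation \eqref{eqn:riccati}. Continuity along $\mathcal H^s(\tilde v)$ comes from identifying $\tilde\lambda^s$ with the orthosphere curvature quantity, which is continuous by the $C^1$-regularity of $Z$ in \cref{prop:c2-regularity-of-magnetic-horosphere}. You additionally supply details the paper leaves implicit: the stable orthogonal components form a line, so the minimum is a single ratio, and the normalization $Y_v(0)=iZ(q)$ gives $\tilde\lambda^s(w)=|\langle\nabla_{iZ(q)}Z,iZ(q)\rangle|$.
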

\begin{proof}
    Given $v\in\sm$, $\las(\f_t(v))$ 
    is a solution of \eqref{eqn:riccati} by \cref{remark:riccati-equation}, so $t\mapsto \las(\f_t(v))$ is continuous. Likewise, $\lau$ and hence $\la$.
    By \cref{remark:riccati-equation} and \cref{def:F-F-t-magnetic-busemann-function}, $\tilde\lambda^s(\tilde v)$ is the geodesic curvature of the stable orthosphere of $\tilde v$ at $\pi \tilde v$, hence continuous on  $\mathcal{H}^{s}(\tilde v)$ by \cref{prop:c2-regularity-of-magnetic-horosphere}. Likewise for $\tilde\lambda^u$.
\end{proof}

\begin{remark}\label{remark:la-unknown-continuous}
We do not know whether $\lau$, $\las$ and $\la$ are (uniformly) continuous on $\sm$ since it is unknown whether the orthospherical leaves vary continuously with respect to $v$.
\end{remark}

The rest of the section concerns properties and applications of $\la$ analogous to those in \cite[\S 3.2]{BCFT} for geodesic flows.
\begin{lemma}\label{lemma:exponential-y-component}
Given $v\in T^1\Sigma$, let $J^u$ and $J^s$ be unstable and stable \m\  Jacobi fields along the \m\  geodesic $\gav(t)$, and let $y^u$ and $y^s$ be orthogonal components of $J^u$ and $J^s$, respectively. Then for any $T>0$,
\begin{equation}\label{eqn:exp_y_comp}
|y^u(T)|\geq e^{\int_0^T \lau(\fmu_tv)\;dt} |y^u(0)| \text{ and } |y^s(T)|\leq e^{-\int_0^T \las(\fmu_tv)\;dt}|y^s(0)|.
\end{equation}
\end{lemma}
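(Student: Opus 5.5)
Assume both fields are nontangential; otherwise $y\equiv 0$ and both inequalities hold trivially. By \cref{lemma:stable-y-is-monotonic}, $y^u$ and $y^s$ then never vanish, so the Riccati quotients
\[
u^u(t)\dfn \dot y^u(t)/y^u(t),\qquad u^s(t)\dfn \dot y^s(t)/y^s(t)
\]
are defined and continuous on all of $\re$. Integrating gives
\[
\log|y^u(T)|-\log|y^u(0)|=\int_0^T u^u(t)\,dt,
\]
and the same identity holds for $y^s$. The proof therefore reduces to two pointwise bounds:
\[
u^u(t)\ge \lau(\fmu_tv)\qquad\text{and}\qquad u^s(t)\le -\las(\fmu_tv)\quad\text{for all } t\in[0,T].
\]

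\textbf{Signs.} \cref{lemma:stable-y-is-monotonic} says $|y^u|$ is nondecreasing and $|y^s|$ is nonincreasing. Since neither field vanishes, this gives $y^u\dot y^u\ge0$ and $y^s\dot y^s\le0$. Hence $u^u(t)=|u^u(t)|$ and $u^s(t)=-|u^s(t)|$.

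\textbf{Transport along the orbit.} Fix $t$. The reparametrization $s\mapsto J^u(t+s)$ is a nontangential unstable \m\ Jacobi field along $\ga_{\fmu_tv}$. Its orthogonal component is $s\mapsto y^u(t+s)$, so its quotient at $s=0$ equals $u^u(t)$. By the definition of $\lau$ as a minimum over all such fields,
\[
\lau(\fmu_tv)\le |u^u(t)|=u^u(t).
\]
The same argument with the stable field gives $\las(\fmu_tv)\le |u^s(t)|=-u^s(t)$.

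\textbf{Conclusion.} Integrating these bounds over $[0,T]$ and exponentiating yields exactly \eqref{eqn:exp_y_comp}.

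\textbf{Main point to check.} The only subtle step is that $\lau(\fmu_tv)$, a minimum over all unstable fields along $\ga_{\fmu_tv}$, is dominated by the quotient of the particular shifted field. This is immediate provided the minimum in \cref{def:lau-las-la} ranges over every nontangential unstable field. On a surface these fields all share the same orthogonal component $y$ up to scalar multiples whenever $v\in\Reg$, so the quotient does not depend on the choice; at singular vectors the minimum may be $0$, and the inequality still holds. Measurability and integrability of $t\mapsto\lau(\fmu_tv)$ follow from the continuity in $t$ established in \cref{lemma:la-continuous-subspaces}.
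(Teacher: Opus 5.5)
Your proof is correct and follows the paper's argument. The key step in both is the pointwise bound $\lambda^\sigma(f^\mu_t v)\le|\dot y^\sigma(t)/y^\sigma(t)|$, obtained from the definition as a minimum applied to the time-shifted field, integrated over $[0,T]$ with the sign of $\dot y^\sigma/y^\sigma$ supplied by \cref{lemma:stable-y-is-monotonic}; the only difference is that you treat singular and regular $v$ uniformly, whereas the paper splits off the singular case (where $\lambda^\sigma\equiv0$ along the orbit and the claim reduces to monotonicity).
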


\begin{proof}
Let $\sigma \in\{u,s\}$. If $v\in \Sing$, then by \cref{def:sing-reg-mag-vectors}, there exists a \m\  Jacobi field $J^\sigma$ with orthogonal component $y^\sigma$ that is constant and nonzero by \cref{lemma:sing-vector-has-constant-y}. Therefore, $\la^\sigma(\fmu_tv)=0$ for all $t\in\re$ by \cref{prop:singular-vector-orbit-closed-invariant}, and the result follows.

If $v\in \Reg$, then \eqref{eqn:exp_y_comp} is immediate from $\lvert\la^\sigma(\fmu_tv)\rvert\leq\lvert\frac{\dot{y}^\sigma(t)}{y^\sigma(t)}\rvert=\lvert(\log{\lvert y^\sigma(t)\rvert})'\rvert$.
\end{proof}

Besides \cref{lemma:sing-zero-0-magnetic-curvature}, here we give another characterization of singular vectors using the function $\la$ and clarify the relation between Reg and Reg($\eta$).

\begin{lemma} \label{lemma:la-vanish-on-sing}
The following are equivalent for $v\in \sm$. 
  \begin{enumerate}
    \item $v\in \Sing$; \label{vanish:1}
    \item $\la^s(\f_t(v))=0$ for all $t\in \re$; \label{vanish:2}
    \item $\la^u(\f_t(v))=0$ for all $t\in \re$.\label{vanish:3}
  \end{enumerate}
In particular, $\Reg(\eta)\subset\Reg$, and if $v\in \Reg$, then there exist $t_1,t_2\in\re,\eta>0$ such that $\la^s(\f_{t_1}v)>\eta$ and $\la^u(\f_{t_2}v)>\eta$.
\end{lemma}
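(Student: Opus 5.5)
The plan is to reduce everything to a single scalar function along $\ga_v$, namely the logarithmic derivative of the orthogonal component of the stable (resp.\ unstable) \m\ Jacobi field. Then \eqref{vanish:1}$\iff$\eqref{vanish:2} becomes the statement that this function vanishes identically exactly when that orthogonal component is constant. I will carry out the stable case; the unstable case is symmetric, with time reversed.

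The first step is to show that along any \m\ geodesic the orthogonal components of nontangential stable \m\ Jacobi fields span a one-dimensional space. By the existence-and-uniqueness proposition for stable \m\ Jacobi fields with prescribed $J(0)$, there is a unique stable field $J^s$ along $\ga_v$ with $J^s(0)=i\dot\ga_v(0)$. If $J$ is any stable field with orthogonal component $y$, then $J-y(0)J^s$ is stable and has vanishing orthogonal component at $t=0$. By \cref{lemma:stable-y-is-monotonic} it is therefore tangential, so $y=y(0)\,y^s$. The same lemma shows that $y^s$ never vanishes. Hence the minimum in \cref{def:lau-las-la} is attained by a single ratio, and
\[
\las(v)=\lvert \dot y^s(0)/y^s(0)\rvert .
\]
Next I need the same ratio at later times. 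The stable field along $\ga_{\f_t v}$ is the time shift $s\mapsto J^s(t+s)$, since boundedness on $[0,\infty)$ is preserved under forward shifts. Consequently $\las(\f_t v)=\lvert u^s(t)\rvert$ for all $t\in\re$, where $u^s\dfn\dot y^s/y^s$ is the Riccati solution of \cref{remark:riccati-equation}.

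With this in hand, \eqref{vanish:2} says $\dot y^s\equiv0$, so $y^s$ is a nonzero constant. Then $J^s$ is a nontangential parallel field, and $v\in\Sing$ by \cref{def:sing-reg-mag-vectors}. Conversely, suppose $v\in\Sing$. Then there is a nontangential field $J$ with constant orthogonal component $y\equiv c\neq0$. By \cref{lemma:sing-vector-has-constant-y} this field is stable, so by the one-dimensionality above $y^s=y/c$ is constant. Hence $u^s\equiv0$, and $\las(\f_t v)=0$ for all $t$. For \eqref{vanish:3} I repeat the argument with the unstable field, which exists by applying the stable-field proposition to the reversed flow (\cref{rmk:general-behaviors-easy-observation}\eqref{ReverseVersion}); \cref{lemma:stable-y-is-monotonic} again gives nonvanishing and one-dimensionality.

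The final assertions follow quickly. If $v\in\Sing$, then $\las(v)=0$, so $\la(v)=0<\eta$ and $v\notin\Reg(\eta)$; thus $\Reg(\eta)\subset\Reg$. If $v\in\Reg$, the negation of \eqref{vanish:2} gives $t_1$ with $\las(\f_{t_1}v)>0$, and the negation of \eqref{vanish:3} gives $t_2$ with $\lau(\f_{t_2}v)>0$. Choosing $\eta$ smaller than both values gives the claim. The only delicate point is identifying the minimum in \cref{def:lau-las-la} with the single Riccati solution along the orbit. This rests on the nonvanishing and uniqueness facts from \cref{lemma:stable-y-is-monotonic}, so I expect no real obstacle there.
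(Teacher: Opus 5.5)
Your proof is correct. For \eqref{vanish:2}$\implies$\eqref{vanish:1} it takes a genuinely different and more direct route than the paper.

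\emph{The paper's route.} It follows the template of \cite{BCFT}. For every $T\ge0$ it produces a stable field $J_T$ whose orthogonal component is constant on $[-T,\infty)$. In effect this is the $\las$-version of \cref{lemma:la-0-onesided-parallel-JF} applied at $\f_{-T}v$, a lemma that appears only after this one. The paper then normalizes and extracts a subsequence $T_k\to\infty$ along which $(J_{T_k}(0),J'_{T_k}(0))$ converges. The limiting field is parallel on all of $\re$. That compactness step is built for higher dimensions, where the stable space is multidimensional and the field realizing the minimum in \cref{def:lau-las-la} may change with $t$.

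\emph{Your route.} You use the two-dimensional fact that the orthogonal components of stable fields form a single line $\re y^s$. This follows from the existence proposition together with \cref{lemma:stable-y-is-monotonic}. Consequently $\las(\f_tv)=\lvert\dot y^s(t)/y^s(t)\rvert$ for one fixed function $y^s$, so \eqref{vanish:2} literally says that $y^s$ is constant, and no limit is needed. Indeed, all of the paper's $J_T$ have orthogonal components proportional to $y^s$.

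\emph{What each buys.} Your argument is shorter and avoids the forward reference. It also justifies something the paper uses without comment in \cref{lemma:la-continuous-subspaces}: that $t\mapsto\las(\f_tv)$ is the absolute value of a single Riccati solution. The paper's argument has the advantage of being the dimension-independent one from \cite{BCFT}.

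The direction \eqref{vanish:1}$\implies$\eqref{vanish:2},\eqref{vanish:3} and the final assertions are handled essentially as in the paper.

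One small imprecision: for $t<0$, the map $s\mapsto J^s(t+s)$ is a backward shift, not a forward one. It is still stable, because $y^s$ is continuous on $[t,0]$, so boundedness on $[0,\infty)$ is a tail property.
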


\begin{proof}
$\eqref{vanish:1}\implies \eqref{vanish:2}$ and $\eqref{vanish:1} \implies \eqref{vanish:3}$ follow from the proof of \cref{lemma:exponential-y-component}.
For $\eqref{vanish:2} \implies \eqref{vanish:1}$, if $\las(\f_tv)=0$ for all $t\in \re$, then for every $T\geq 0$ there is a stable \m\  Jacobi field $J_T=x_T\dot\ga_v+y_Ti\dot\ga_v$ along $\gav$ that is parallel (say, with $\lvert y_T(t)\rvert=1$) for $t\geq -T$ and $x_T(0)=0$ after reparametrization.
By compactness we get a sequence $T_k\to \infty$ for which $J_{T_k}(0)$ and $J'_{T_k}(0)$ converge to some $J(0)$, $J'(0)\in T_{\pi v}\Sigma$; 
the corresponding \m\  Jacobi field $J$ is parallel for all time, so $v\in \Sing$. 
Similarly, we have $\eqref{vanish:3})\implies \eqref{vanish:1}$.
\end{proof}

\begin{lemma}\label{lemma:la-0-onesided-parallel-JF}
For $v\in \sm$, the following are equivalent.
  \begin{enumerate}
    \item $\la^u(v)=0$;\label{onesided:1}
    \item $\la^u(\f_t(v))=0$ for all $t\leq 0$;\label{onesided:2}
    \item There is an unstable \m\  Jacobi field $J$ on $\ga_v$ such that $J(t)$ is parallel but not tangential for all $t\leq 0$.\label{onesided:3}
  \end{enumerate}
  The analogous result holds for $\la^s$ and $t\geq 0$.
\end{lemma}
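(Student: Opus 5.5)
The argument rests on the Riccati interpretation (\cref{remark:riccati-equation}), the monotonicity of unstable fields (\cref{lemma:stable-y-is-monotonic}), and the convexity of $y$ under \ref{H1}. Since we may scale, $\la^u(v)$ is the minimum of $|\dot y/y|(0)$ over unstable orthogonal components $y$ with $y(0)=1$. By \cref{lemma:stable-y-is-monotonic}, every such $y$ is positive and nondecreasing, so $\dot y\ge 0$ and $\la^u(v)=\min \dot y(0)$. Unstable fields form the limit of fields vanishing at $-n$, exactly as in the existence argument for stable fields. We will also use that the nontangential unstable fields, modulo tangential ones, form a one-dimensional space: two normalized unstable components differ by a solution $w$ with $w(0)=0$ that is bounded on $(-\infty,0]$, so $w\equiv 0$ by \cref{lemma:stable-y-is-monotonic}. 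Hence there is a unique normalized unstable $y^u$, and $\la^u(\f_t v)=\dot y^u(t)/y^u(t)$ for all $t$.

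For \eqref{onesided:1}$\implies$\eqref{onesided:2}, suppose $\dot y^u(0)=0$. Since $y^u>0$ and $\kmu\le 0$, we have $\ddot y^u=-\kmu y^u\ge 0$, so $y^u$ is convex and $\dot y^u$ is nondecreasing. Thus $\dot y^u(t)\le 0$ for $t\le 0$. On the other hand, $y^u$ is nondecreasing, so $\dot y^u\ge 0$, and therefore $\dot y^u\equiv 0$ on $(-\infty,0]$. Consequently $\la^u(\f_t v)=\dot y^u(t)/y^u(t)=0$ for all $t\le 0$.

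For \eqref{onesided:2}$\implies$\eqref{onesided:3}, the vanishing gives $\dot y^u\equiv 0$ on $(-\infty,0]$, so $y^u\equiv y^u(0)\neq 0$ there. Hence $J^u$ is parallel (in the sense $\dot y(t)=0$ of \cref{def:stable-unstable-magJF}) and nontangential at every $t\le 0$. For \eqref{onesided:3}$\implies$\eqref{onesided:1}, let $J$ be such a field. Its $y$ is a nonzero constant on $(-\infty,0]$, so $J$ is a nontangential unstable field with $\dot y(0)=0$. This forces the minimum in the definition of $\la^u(v)$ to be $0$.

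The stable case follows by the same argument with time reversed: stable components satisfy $y\dot y\le 0$, so $|\dot y|$ is nonincreasing in absolute value, and convexity then forces $\dot y\equiv 0$ on $[0,\infty)$. The only delicate point is the uniqueness of the normalized unstable field, which is what lets us identify $\la^u(\f_t v)$ with the logarithmic derivative of a single solution. For that uniqueness, \cref{lemma:stable-y-is-monotonic} applied to the difference of two normalized fields suffices, as argued above.
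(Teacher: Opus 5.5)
Your proof is correct and uses the same mechanism as the paper's: once $\dot y(0)=0$, convexity of the positive orthogonal component on $(-\infty,0]$, combined with its one-sided monotonicity there, forces $\dot y\equiv 0$ on $(-\infty,0]$; the paper uses boundedness where you use monotonicity, and everything else is formal. You cycle (1)$\Rightarrow$(2)$\Rightarrow$(3)$\Rightarrow$(1) instead of proving (1)$\Rightarrow$(3) directly, and your uniqueness of the normalized unstable component makes explicit that the minimum defining $\lambda^u$ is attained (indeed $\lambda^u(f^\mu_t v)=\dot y^u(t)/y^u(t)$), a point the paper's one-line argument takes for granted.
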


\begin{proof}
It is immediate that $\eqref{onesided:3}\implies\eqref{onesided:2}\implies \eqref{onesided:1}$. 
To prove $\eqref{onesided:1}\implies\eqref{onesided:3}$, suppose $\lau(v)=0$. Then there is an unstable \m\  Jacobi field $J$ such that $\dot{y}(0)=0$. By boundedness and convexity of $y(t)$ on $(-\infty,0]$, we have $\dot{y}(t)=0$ for all $t \leq 0$. Therefore, $J(t)$ is parallel for $t\leq 0$.
\end{proof} 
We now construct strong (un)stable \m\  Jacobi fields along regular orbits of magnetic flows (\cref{lem:strong-stable-jf,prop:strong_stable_spaces}) and the strong (un)stable spaces (\cref{def:strong-stable-space}). 
The following result establishes that for $v\in\Reg$, 
the strong stable (resp. unstable) \m\ Jacobi field is well defined 
for vectors in $\{\f_t( v)\mid t\in \re\}$ and $\mathcal{H}_\eps^s(\tilde v)$ (resp. $\mathcal{H}_{\eps'}^u(\tilde v)$) 
with respect to the Knieper metric \eqref{def:metric_dk}, where $\mathcal{H}_\eps^s(\tilde v)$ (resp. $\mathcal{H}_{\eps'}^u(\tilde v)$) is a local stable (resp. unstable) orthospherical leaf of $\tilde v$ on $\smm$ with length $\eps$ (resp. $\eps'$).

\begin{proposition}\label{lem:strong-stable-jf}
    Suppose $(\Sigma,\mu)$ satisfies \ref{H2} and let $v \in \sm$ with a lift $\tilde v\in\smm$.
    
    If $\las(v) >0$, then there exists $\eps>0$ such that for $w\in \{\f_t(v)\mid t\in (-\eps,\eps)\}\cup \operatorname{proj}_*\mathcal{H}_\eps^s(\tilde v)$, there is a unique (up to scaling) nontangential stable \m\  Jacobi field $J_w^{ss}$ along $\ga_w$ such that $J_w^{ss}(t)\xrightarrow{t\to\infty} 0$. 
    
    Similarly, if $\lau(v) >0$, then there exists $\eps'>0$ such that for $w\in \{\f_t(v)\mid t\in(-\eps',\eps')\}\cup \operatorname{proj}_*\mathcal{H}^u_{\eps'}(\tilde v)$, there is a unique (up to scaling) nontangential unstable \m\  Jacobi field $J_w^{uu}$ along $\ga_w$ such that $J_w^{uu}(t)\xrightarrow{t\to-\infty} 0$.

    In particular, there exists a  unique (up to scaling) nontangential stable (resp. unstable) \m\  Jacobi field $J_v^{ss}$ (resp. $J_v^{uu}$) along $\ga_v$ such that $J_v^{ss}(t)\xrightarrow{t\to\infty} 0$ (resp. $J_v^{uu}(t)\xrightarrow{t\to-\infty} 0$).
\end{proposition}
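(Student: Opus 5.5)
The key observation is that a nontangential stable Jacobi field has the form $J=x\dot\ga_w+y\,i\dot\ga_w$ with $|J|^2=x^2+y^2$. Here $|y|$ is nonincreasing by \cref{lemma:stable-y-is-monotonic}, and $\dot x=\mu y$ by \eqref{eqn:magJac1}. So $J\to0$ requires two things: $y(t)\to0$, and $x(t)=x(0)+\mu\int_0^t y\,ds\to0$. The second forces $\int_0^\infty y\,ds<\infty$ and then pins down $x(0)=-\mu\int_0^\infty y\,ds$. Uniqueness up to scaling follows quickly from this. Fix $y$ to be the orthogonal component of the (one-dimensional, since $w\in\Reg$ near $v$) stable family, normalized by $y(0)$. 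Then $x(0)$ is determined, and the only freedom left is the tangential field $C\dot\ga_w$ (\cref{prop:dim-of-tang-JF}), which must vanish since $|C\dot\ga_w|=|C|$. I would first check that $\mathcal J^s(\ga_w)$ modulo tangential fields is one-dimensional. This holds because any two stable orthogonal components with the same $y(0)$ coincide, by uniqueness of the stable Jacobi field with prescribed initial value, as in the proposition following \cref{thm:sturm}.

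The plan therefore reduces to proving exponential decay of $y$ along $\ga_w$. By \cref{lemma:exponential-y-component},
\[
|y(T)|\le e^{-\int_0^T\las(\f_tw)\,dt}|y(0)|.
\]
So it suffices that $\las(\f_tw)\ge\eta$ on a set of times of positive lower density, or at least with $\int_0^T\las(\f_tw)\,dt\ge cT$ for large $T$.

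For $w=v$ this comes from $\las(v)>0$ together with a recurrence argument. At $t=0$, $u=\dot y/y$ satisfies $u(0)\le-\las(v)<0$, and $u$ solves the Riccati equation \eqref{eqn:riccati} with $\kmu\le0$, so $\dot u=-u^2-\kmu\ge -u^2$. I would instead use convexity of $y$. Convexity gives $\dot y(t)\le\dot y(0)$ while $y>0$, and since $|y|$ is nonincreasing, $y(t)\le y(0)$. However, the ratio $\dot y/y$ can still tend to $0$, so pure convexity yields only linear decay at first, and a positive $\las(v)$ at one time is not sufficient on its own.

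The main obstacle is therefore to upgrade $\las(v)>0$ to a uniform positive rate along the forward orbit. My first attempt would exploit that $\las(v)>0$ means the orbit is not eventually parallel (\cref{lemma:la-0-onesided-parallel-JF}). Hence $\ga_w$ meets points with $\kmu<0$. I would then project to the compact surface and use \cref{lemma:finite-distance-to-point-with-negative-kmu} to argue that $\kmu<-a^2$ is encountered with bounded gaps along the orbit. On each such encounter the Riccati equation decreases $u$ by a definite amount, while between encounters $u$ stays $\le0$. Summing these contributions would give $\int_0^T\las\ge cT$. If bounded gaps fail, I would fall back on a weaker summability: show $\int_0^\infty y<\infty$ directly using $y(t)\le y(0)+\dot y(t_0)(t-t_0)$ on stretches together with repeated definite drops of $y$. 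Only integrability of $y$ is needed, not an exponential rate.

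Finally, for $w$ near $v$, either along the flow or on $\mathcal H^s_\eps(\tilde v)$, I would use the continuity from \cref{lemma:la-continuous-subspaces} to get $\las(w)\ge\las(v)/2>0$ for small $\eps$ and repeat the argument. The unstable case follows by time reversal, using \cref{rmk:general-behaviors-easy-observation}\eqref{ReverseVersion}.
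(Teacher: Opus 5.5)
Your reduction is the same as the paper's. A nontangential stable field along $\ga_w$ has orthogonal component a multiple of $y_w$, and $J\to0$ forces $x(0)=-\mu\int_0^\infty y_w\,dt$, which is the paper's $X_0$ in \eqref{eqn:X_0}. Uniqueness is then immediate, and everything hinges on $\int_0^\infty y_w\,dt<\infty$. You are also right that $\las(w)>0$ at a single instant does not give this.

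\textbf{The gap: bounded gaps fail.} The step you propose for this hard part does not work. \cref{lemma:finite-distance-to-point-with-negative-kmu} only says that some geodesic segment from $q$ to a point with $\kmu<0$ crosses $\ga$. It says nothing about which points $\ga_w$ itself visits. Moreover, bounded gaps are false in exactly the non-Anosov case $\Sing\neq\emptyset$ that the proposition is about:
\begin{itemize}
\item By \cref{thm:topologically-transitivity} there is a dense forward orbit, it is regular, and by \cref{lemma:la-vanish-on-sing} it contains a vector with $\las>0$.
\item This orbit passes arbitrarily close to singular vectors. By continuity of $\f$ on compact time intervals, it then follows their orbits for arbitrarily long times, and along those orbits $\kmu\equiv0$ (\cref{lemma:sing-zero-0-magnetic-curvature}).
\item Hence for every $a>0$ its visits to $\{\kmu<-a^2\}$ have unbounded gaps.
\end{itemize}
Your weaker condition can fail too. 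Along any orbit with $\kmu(\ga_w(t))\to0$, \eqref{eqn:riccati} forces $\las(\f_tw)\to0$: once $|\kmu|\le\epsilon^2$, a value $u<-2\epsilon$ would give $\dot u\le-\tfrac34u^2$, hence finite-time blow-up, i.e.\ $y$ would vanish. So $\int_0^T\las(\f_tw)\,dt\ge cT$ fails on such an orbit.

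\textbf{The fallback and the Riccati step.} The fallback has no mechanism. Definite drops of $\log y$ at times you do not control give $y\to0$, but not $\int_0^\infty y<\infty$. The inequality $y(t)\le y(0)+\dot y(t_0)(t-t_0)$ points the wrong way for a convex function, since tangent lines lie below the graph. Likewise, convexity gives $\dot y(t)\ge\dot y(0)$ for $t\ge0$, not $\le$. Your Riccati bookkeeping is also reversed: during an encounter with $\kmu\le-a^2$, the term $-\kmu$ in $\dot u=-u^2-\kmu$ pushes $u$ up. What an encounter window $[t_1,t_1+\ell]$ actually buys comes from backward comparison with $-a\tanh\bigl(a(t_1+\ell-t)\bigr)$: a drop of at least $\log\cosh(a\ell)$ in $\log y$ across the window.

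\textbf{Comparison with the paper.} The paper's proof does not supply the missing ingredient either. It establishes $\ln\bigl(y_w(1)/y_w(0)\bigr)<-\delta$ only for $w$ in an $\eps$-neighborhood of $v$, using the compactness claim and the continuity from \cref{lemma:la-continuous-subspaces}. It then bounds $\int_0^\infty y_w$ by $\sum_i y_w(0)e^{-i\delta}$ in \eqref{eqn:X_0}. That bound needs $y_w(i+1)\le e^{-\delta}y_w(i)$ for every $i$, i.e.\ the same one-step contraction at each $\f_i w$, and these vectors need not lie in that neighborhood. So the written proof tacitly assumes exactly the uniform rate along the forward orbit that you identified as the main obstacle. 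Neither argument derives $\int_0^\infty y_w<\infty$ from $\las(v)>0$. Closing the gap needs either a new idea or an extra global hypothesis on the forward orbit. One such hypothesis is $\las(\f_tw)\ge\eta$ on a set of times of positive lower density. With \cref{lemma:exponential-y-component} this gives exponential decay of $y_w$, hence integrability and the strong stable field.
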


\begin{definition}[Strong (un)stable magnetic Jacobi fields]\label{def:strong-mag-jf}
    A \m\ Jacobi field $J^{ss}_w(t)$ along $\ga_w(t)$ is said to be \emph{strong stable} if $J^{ss}_w(t)\xrightarrow{t\to\infty}0$; a \m\ Jacobi field $J^{uu}_w(t)$ along $\ga_w(t)$ is said to be \emph{strong unstable} if $J^{uu}_w(t)\xrightarrow{t\to-\infty}0$.
\end{definition}

\begin{proof}[Proof of \cref{lem:strong-stable-jf}]
We 
prove the stable case, and the unstable case is similar. For $v$ with $\las(v) >0$, by \cref{lemma:la-0-onesided-parallel-JF} a stable \m\ Jacobi field is parallel on $[0,\infty)$ if and only if it is tangential. 
We claim that for any nontangential stable \m\ Jacobi field $J=x\dot\ga+yi\dot\ga$ along $\ga=\gav$ (see \eqref{def:eqn:orthogonal-component-of magnetic-Jacobi-field}, and assuming $y\geq0$ for all $t\in\re$ without loss of generality by \cref{lemma:stable-y-is-monotonic}), there exists $\delta>0$ such that
\[
\ln\!\frac{y(1)}{y(0)} < -\delta.\]

\begin{proof}[Proof of the Claim:]
Otherwise, there exists a sequence of stable \m\  Jacobi fields $\{J_n=x_n\dot\ga+y_ni\dot\ga\} \subset E^s(v)$ (see  \eqref{eqn:stable-space}) along $\ga$ with $y_n(0) = 1$ and
\[\ln y_n(1) \ge -\frac{1}{n}.\]
Some subsequence of $\{J_n\}$ converges to a \m\  Jacobi field $J_0 = x_0\dot\ga+y_0 i\dot\ga \in E^s(v)$. 
Since $y_n$ are nonincreasing by \cref{lemma:stable-y-is-monotonic}, and since $y_n(0) = 1$ and 
$y_n(1) \geq e^{-1/n} \to 1$, we conclude that $y_0(0) = y_0(1) = 1$, and by \cref{lemma:la-0-onesided-parallel-JF} $J_0$ is not parallel. This contradicts that by convexity and monotonicity, we have $y_0(t) = 1$ for all $t \in [0, \infty)$, hence $y_0'(t) = 0$ for $t \in [0, \infty)$.
\end{proof}
We now return to the proof of the proposition.
Since $t\mapsto \lambda^s(\f_t(v))$ is continuous and $\widetilde{\las}$ is continuous on $\mathcal{H}^{s}(\tilde v)$ by \cref{lemma:la-continuous-subspaces}, there exist $\eps>0$ and $\de>0$ such that
\[\ln\!\frac{y_{w}(1)}{y_{w}(0)} < -\delta\] for $w \in \{\f_t(v)\mid t\in (-\eps,\eps)\}\cup \operatorname{proj}_*\mathcal{H}_\eps^s(\tilde v)$, where $J_{w}=x_{w}\dot \ga_{w}+y_{w}i\dot\ga_{w}\in E^s(w)$.
Hence,
\begin{align}\label{eqn:X_0}
    X_0 \dfn-\int^{\infty}_0\dot x_w(t)dt=-\mu\int_0^{\infty}y_w(t)dt\geq -\mu \sum_{i=0}^{\infty}y_w(0)e^{-i\de}=-\mu \frac{y_w(0)}{1-e^{-\de}}>-\infty.
    \end{align}
Therefore, $J^{ss}_w\dfn x^{ss}_w\dot \ga_w+y^{ss}_wi\dot \ga_w$ with \[y^{ss}_w=y_w, x_w^{ss}(0)=X_0, \text{ and } \dot x_w^{ss}=\mu y_w^{ss}\] is a nontangential stable \m\  Jacobi field along $\ga_w$ determined by the initial condition
\begin{equation}
    (J^{ss}_w(0),\dot J^{ss}_w(0))=((x_w^{ss}(0),y_w^{ss}(0)),(\mu y_w^{ss}(0),\dot y_w^{ss}(0))),\label{eqn:inv-stable-bundle}
\end{equation}
where $J^{ss}_w=(x_w^{ss},y_w^{ss})$ is the coordinate representation in \eqref{def:eqn:orthogonal-component-of magnetic-Jacobi-field}.
In particular, $x^{ss}_w(t)=x^{ss}_w(0)+\int^t_0\dot x^{ss}_w(t)dt\xrightarrow{t\to\infty}0$ by \eqref{eqn:X_0}, and $\las(w)>0$ implies $w\in \Reg$ by \cref{lemma:la-vanish-on-sing}, hence $y^{ss}_w(t)\xrightarrow{t\to\infty}0$ by \cref{lemma:stable-y-is-monotonic}. Therefore, $J_w^{ss}(t)\xrightarrow{t\to\infty}0$.



The uniqueness of $J^{ss}_w(t)$ follows from the fact that for $w\in \Reg$, $E^{cs}(w)$ is 2-dimensional and $E^{cs}(w)\cap E^{cu}(w) =E^c(w)$ by \cref{prop:reg-sing-dim-of-subbundle}. Therefore, $\dim E^s(w)=\dim E^u(w)=1$, and this proves the uniqueness (up to scaling).
\end{proof}
If 
$\lav>0$, then there are strong (un)stable \m\ Jacobi fields along $\gav(t)$ by \cref{lem:strong-stable-jf}. We now extend this to all $v\in\Reg$.
\begin{proposition}\label{prop:strong_stable_spaces}
If $(\Sigma,\mu)$ satisfies \ref{H2} and $v\in\Reg$, then there are unique (hence $\f$-equivariant) subspaces
\begin{align}
    &E^{ss}(v)=\re J^{ss}_v\label{eqn:strong-stable-space},\\
    &E^{uu}(v)=\re J^{uu}_v,\label{eqn:strong-unstable-space}
\end{align}
in $T_v\sm$, where $J^{ss}_v, J^{uu}_v$ are \emph{strong stable} and \emph{strong unstable} \m\ Jacobi fields along $\gav$.
\end{proposition}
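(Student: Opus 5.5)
The plan is to reduce everything to showing that for every $v\in\Reg$ there is a time $t_1$ with $\las(\f_{t_1}v)>0$, and then to transport the strong stable field back along the orbit. By \cref{lemma:la-vanish-on-sing}, $v\in\Reg$ gives $t_1,t_2\in\re$ and $\eta>0$ with $\las(\f_{t_1}v)>\eta$ and $\lau(\f_{t_2}v)>\eta$. Since $\Reg$ is $\f$-invariant (its complement $\Sing$ is invariant by \cref{prop:singular-vector-orbit-closed-invariant}), $w\dfn\f_{t_1}v\in\Reg$. We then apply \cref{lem:strong-stable-jf} to $w$. This yields a nontangential stable \m\ Jacobi field $J^{ss}_w$ along $\ga_w$ with $J^{ss}_w(t)\to0$ as $t\to\infty$, and it is unique up to scaling.

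Since $\ga_w(t)=\gav(t+t_1)$, the time-shifted field $J^{ss}_v(t)\dfn J^{ss}_w(t-t_1)$ is a \m\ Jacobi field along $\gav$. This uses that \eqref{eqn:magJac1} and \eqref{MagneticJacobiEquation} are autonomous along the orbit, or equivalently that \eqref{eqn:differential-of-fmu} lets us write $J^{ss}_v$ as the pullback of the vector at $w$ under $D\f_{-t_1}$. The shifted field is nontangential and stable, and it tends to $0$ as $t\to\infty$. Let $E^{ss}(v)$ be the line in $T_v\sm$ corresponding to $J^{ss}_v$ under \eqref{eqn:differential-of-fmu}. The unstable case is identical: use $t_2$ and the second half of \cref{lem:strong-stable-jf}, and set $E^{uu}(v)=\re J^{uu}_v$.

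For uniqueness, we argue as at the end of the proof of \cref{lem:strong-stable-jf}. Since $v\in\Reg$, \cref{prop:reg-sing-dim-of-subbundle} and \cref{prop:dim-of-subbundle} give $\dim E^s(v)=1$. Modulo the tangential field $\dot\ga_v$ (\cref{rmk:tangential-magnetic-JF}), every stable field therefore has the form $cJ^{ss}_v+C\dot\gav$. Such a field tends to $0$ only if $C=0$, because $\|\dot\gav\|\equiv1$ while $J^{ss}_v(t)\to0$. So the strong stable fields along $\gav$ form exactly the line $\re J^{ss}_v$. As a consequence the construction does not depend on the choice of $t_1$. Any other admissible $t_1'$ would produce a strong stable field along $\gav$ as well, and hence a multiple of the same one.

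Equivariance follows from uniqueness. $D\f_s$ maps Jacobi fields along $\gav$ to time-shifted Jacobi fields along $\ga_{\f_sv}$, and the properties of being stable, nontangential, and tending to $0$ are preserved by time shifts. Hence $D\f_s E^{ss}(v)$ is a line of strong stable fields at $\f_sv\in\Reg$, and by uniqueness it equals $E^{ss}(\f_sv)$. The main obstacle is the bookkeeping in the uniqueness step. We must check that the tangential direction is genuinely excluded, since $x$ is not forced to vanish for general stable fields; this is exactly what the decay $J^{ss}\to0$ rules out. Beyond that, the argument is assembling \cref{lemma:la-vanish-on-sing} with \cref{lem:strong-stable-jf}.
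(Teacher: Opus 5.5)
Your proposal is correct and follows the paper's proof. You use \cref{lemma:la-vanish-on-sing} to find a time $t_1$ with $\las(\f_{t_1}v)>0$, apply \cref{lem:strong-stable-jf} at $\f_{t_1}v$, and time-shift the resulting strong stable field back along $\ga_v$, with the unstable case handled symmetrically. Your uniqueness step (one-dimensionality of $E^s(v)$ on $\Reg$, with decay ruling out any $\dot\ga_v$ component) and your equivariance step spell out what the paper leaves implicit.
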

\begin{definition}[Strong (un)stable spaces]\label{def:strong-stable-space}
    Suppose $(\Sigma,\mu)$ satisfies \ref{H2}. For any $v\in\Reg$,  $E^{ss}(v)$ in \eqref{eqn:strong-stable-space} and $E^{uu}(v)$ in \eqref{eqn:strong-unstable-space} are called the \m\  \emph{strong stable} and \emph{strong unstable spaces} at $v$.
\end{definition}
\begin{proof}[Proof of \cref{prop:strong_stable_spaces}]
    We construct $J^{ss}_v$ for $v\in\Reg$; the unstable case is similar. 

By \cref{lemma:la-vanish-on-sing}, there exist $T\in\re$ and $\eta>0$ such that $\las(\f_{T}v)>\eta$. Thus, $\las(\f_tv)>0$ for all $t\le T$ by \cref{lemma:la-0-onesided-parallel-JF}.
    
    For $w=\f_{T+t_0}v$ with $t_0\geq0$, let $J^{ss}_w(t)\dfn J^{ss}_v(t+T+t_0)=J^{ss}_{\f_Tv}(t+t_0)$, where $J^{ss}_{\f_T v}$ is the strong stable \m\ Jacobi field along $\ga_{\f_T v}(t)$ from \cref{lem:strong-stable-jf}. Then $J^{ss}_w(t)\xrightarrow{t\to \infty}0$
    and applying \cref{lem:strong-stable-jf} to $w$ completes the proof. 
\end{proof}

\begin{remark}\label{rmk:obstacle-sing-strong-subspaces}
    For a singular vector, one should not expect convergence in \eqref{eqn:X_0}, so there is no good basis for a definition of the strong stable subspace. Likewise for a strong unstable direction.

Similarly, \cref{rmk:obstacles-defining-magnetic-busemann-in-obvious-way} points to difficulties with defining (un)stable magnetic horospheres in the most straightforward way, so unlike for the underlying geodesic flow, we do not as yet have a geometric definition of  strong (un)stable leaves---which would help with uniqueness of equilibrium states and topological mixing. These latter desiderata motivated this project.
\end{remark}
\bibliographystyle{amsalpha}
\bibliography{Z-bibs}

@article {Adachi1,
    AUTHOR = {Adachi, Toshiaki},
     TITLE = {Curvature bound and trajectories for magnetic fields on a
              {H}adamard surface},
   JOURNAL = {Tsukuba J. Math.},
  FJOURNAL = {Tsukuba Journal of Mathematics},
    VOLUME = {20},
      YEAR = {1996},
    NUMBER = {1},
     PAGES = {225--230},
      ISSN = {0387-4982,2423-821X},
   MRCLASS = {58E10 (53C22)},
  MRNUMBER = {1406045},
MRREVIEWER = {Edoh\ Amiran},
       DOI = {10.21099/tkbjm/1496162994},
       URL = {https://doi.org/10.21099/tkbjm/1496162994},
}

@article {Adachi2,
    AUTHOR = {Adachi, Toshiaki},
     TITLE = {A comparison theorem on magnetic {J}acobi fields},
   JOURNAL = {Proc. Edinburgh Math. Soc. (2)},
  FJOURNAL = {Proceedings of the Edinburgh Mathematical Society. Series II},
    VOLUME = {40},
      YEAR = {1997},
    NUMBER = {2},
     PAGES = {293--308},
      ISSN = {0013-0915,1464-3839},
   MRCLASS = {53C22 (53C55 78A35)},
  MRNUMBER = {1454024},
MRREVIEWER = {Edoh\ Amiran},
       DOI = {10.1017/S0013091500023737},
       URL = {https://doi.org/10.1017/S0013091500023737},
}

@article {Grognet,
    AUTHOR = {Grognet, St\'ephane},
     TITLE = {Flots magn\'etiques en courbure n\'egative},
   JOURNAL = {Ergodic Theory Dynam. Systems},
  FJOURNAL = {Ergodic Theory and Dynamical Systems},
    VOLUME = {19},
      YEAR = {1999},
    NUMBER = {2},
     PAGES = {413--436},
      ISSN = {0143-3857,1469-4417},
   MRCLASS = {37D40 (53D25)},
  MRNUMBER = {1685401},
MRREVIEWER = {Rafael\ Oswaldo\ Ruggiero},
       DOI = {10.1017/S0143385799126634},
       URL = {https://doi.org/10.1017/S0143385799126634},
}

@book {Anosov,
    AUTHOR = {Anosov, Dmitri},
     TITLE = {Geodesic flows on closed {R}iemann manifolds with negative
              curvature},
    SERIES = {Proceedings of the Steklov Institute of Mathematics},
    VOLUME = {No. 90 (1967)},
      NOTE = {Translated from the Russian by S. Feder},
 PUBLISHER = {American Mathematical Society, Providence, RI},
      YEAR = {1969},
     PAGES = {iv+235},
   MRCLASS = {57.50 (53.00)},
  MRNUMBER = {242194},
}

@incollection {Paternain-Paternain,
    AUTHOR = {Paternain, Gabriel P. and Paternain, Miguel},
     TITLE = {Anosov geodesic flows and twisted symplectic structures},
 BOOKTITLE = {International {C}onference on {D}ynamical {S}ystems
              ({M}ontevideo, 1995)},
    SERIES = {Pitman Res. Notes Math. Ser.},
    VOLUME = {362},
     PAGES = {132--145},
 PUBLISHER = {Longman, Harlow},
      YEAR = {1996},
      ISBN = {0-582-30296-X},
   MRCLASS = {58F17 (58F15)},
  MRNUMBER = {1460801},
MRREVIEWER = {Gerhard\ Knieper},
}

@book {Burns-Gidea,
    AUTHOR = {Burns, Keith and Gidea, Marian},
     TITLE = {Differential geometry and topology},
    SERIES = {Studies in Advanced Mathematics},
      NOTE = {With a view to dynamical systems},
 PUBLISHER = {Chapman \& Hall/CRC, Boca Raton, FL},
      YEAR = {2005},
     PAGES = {x+389},
      ISBN = {978-1-58488-253-4; 1-58488-253-0},
   MRCLASS = {53-01 (37-01 54H20 57-01 58-01)},
  MRNUMBER = {2148643},
MRREVIEWER = {Andrew\ Bucki},
}

@article {Burns-Paternain,
    AUTHOR = {Burns, Keith and Paternain, Gabriel P.},
     TITLE = {Anosov magnetic flows, critical values and topological
              entropy},
   JOURNAL = {Nonlinearity},
  FJOURNAL = {Nonlinearity},
    VOLUME = {15},
      YEAR = {2002},
    NUMBER = {2},
     PAGES = {281--314},
      ISSN = {0951-7715,1361-6544},
   MRCLASS = {37J05 (37B40 37D20 53D25)},
  MRNUMBER = {1888853},
MRREVIEWER = {Maciej\ P.\ Wojtkowski},
       DOI = {10.1088/0951-7715/15/2/305},
       URL = {https://doi.org/10.1088/0951-7715/15/2/305},
}

@book {Eberlein-book,
    AUTHOR = {Eberlein, Patrick B.},
     TITLE = {Geometry of nonpositively curved manifolds},
    SERIES = {Chicago Lectures in Mathematics},
 PUBLISHER = {University of Chicago Press, Chicago, IL},
      YEAR = {1996},
     PAGES = {vii+449},
      ISBN = {0-226-18197-9; 0-226-18198-7},
   MRCLASS = {53-02 (53C20 53C21 53C35)},
  MRNUMBER = {1441541},
MRREVIEWER = {Mar\'ia\ J.\ Druetta},
}

@article {HI,
    AUTHOR = {Heintze, Ernst and Im Hof, Hans-Christoph},
     TITLE = {Geometry of horospheres},
   JOURNAL = {J. Differential Geometry},
  FJOURNAL = {Journal of Differential Geometry},
    VOLUME = {12},
      YEAR = {1977},
    NUMBER = {4},
     PAGES = {481--491},
      ISSN = {0022-040X,1945-743X},
   MRCLASS = {53C20},
  MRNUMBER = {512919},
MRREVIEWER = {Midori\ Goto},
       URL = {http://projecteuclid.org/euclid.jdg/1214434219},
}

@article {Eberlein-non-conjugate,
    AUTHOR = {Eberlein, Patrick},
     TITLE = {Geodesic flow in certain manifolds without conjugate points},
   JOURNAL = {Trans. Amer. Math. Soc.},
  FJOURNAL = {Transactions of the American Mathematical Society},
    VOLUME = {167},
      YEAR = {1972},
     PAGES = {151--170},
      ISSN = {0002-9947,1088-6850},
   MRCLASS = {58E10 (53C20)},
  MRNUMBER = {295387},
MRREVIEWER = {W.\ Klingenberg},
       DOI = {10.2307/1996132},
       URL = {https://doi.org/10.2307/1996132},
}

@article {Ballmann,
    AUTHOR = {Ballmann, Werner},
     TITLE = {Axial isometries of manifolds of nonpositive curvature},
   JOURNAL = {Math. Ann.},
  FJOURNAL = {Mathematische Annalen},
    VOLUME = {259},
      YEAR = {1982},
    NUMBER = {1},
     PAGES = {131--144},
      ISSN = {0025-5831,1432-1807},
   MRCLASS = {53C22},
  MRNUMBER = {656659},
MRREVIEWER = {John\ Bolton},
       DOI = {10.1007/BF01456836},
       URL = {https://doi.org/10.1007/BF01456836},
}

@article {Eberlein-II-1973,
    AUTHOR = {Eberlein, Patrick},
     TITLE = {Geodesic flows on negatively curved manifolds. {II}},
   JOURNAL = {Trans. Amer. Math. Soc.},
  FJOURNAL = {Transactions of the American Mathematical Society},
    VOLUME = {178},
      YEAR = {1973},
     PAGES = {57--82},
      ISSN = {0002-9947,1088-6850},
   MRCLASS = {58F10 (53C20 53C70)},
  MRNUMBER = {314084},
MRREVIEWER = {Leon\ W.\ Green},
       DOI = {10.2307/1996689},
       URL = {https://doi.org/10.2307/1996689},
}

@article {BCFT,
    AUTHOR = {Burns, Keith and Climenhaga, Vaughn and Fisher, Todd and Thompson, Daniel J.},
     TITLE = {Unique equilibrium states for geodesic flows in nonpositive
              curvature},
   JOURNAL = {Geom. Funct. Anal.},
  FJOURNAL = {Geometric and Functional Analysis},
    VOLUME = {28},
      YEAR = {2018},
    NUMBER = {5},
     PAGES = {1209--1259},
      ISSN = {1016-443X,1420-8970},
   MRCLASS = {37D35 (37C40 37D25 37D40)},
  MRNUMBER = {3856792},
MRREVIEWER = {Boris\ Hasselblatt},
       DOI = {10.1007/s00039-018-0465-8},
       URL = {https://doi.org/10.1007/s00039-018-0465-8},
}

@article {Knieper,
    AUTHOR = {Knieper, Gerhard},
     TITLE = {The uniqueness of the measure of maximal entropy for geodesic
              flows on rank {$1$} manifolds},
   JOURNAL = {Ann. of Math. (2)},
  FJOURNAL = {Annals of Mathematics. Second Series},
    VOLUME = {148},
      YEAR = {1998},
    NUMBER = {1},
     PAGES = {291--314},
      ISSN = {0003-486X,1939-8980},
   MRCLASS = {37C40 (37B40 37D40 53D25)},
  MRNUMBER = {1652924},
MRREVIEWER = {Vadim\ A.\ Ka\u imanovich},
       DOI = {10.2307/120995},
       URL = {https://doi.org/10.2307/120995},
}

@book {Walters,
    AUTHOR = {Walters, Peter},
     TITLE = {An introduction to ergodic theory},
    SERIES = {Graduate Texts in Mathematics},
    VOLUME = {79},
 PUBLISHER = {Springer-Verlag, New York-Berlin},
      YEAR = {1982},
     PAGES = {ix+250},
      ISBN = {0-387-90599-5},
   MRCLASS = {28Dxx (54H20 58F11)},
  MRNUMBER = {648108},
MRREVIEWER = {M.\ A.\ Akcoglu},
}

@book {Katok-Hasselblatt,
    AUTHOR = {Katok, Anatole and Hasselblatt, Boris},
     TITLE = {Introduction to the modern theory of dynamical systems},
    SERIES = {Encyclopedia of Mathematics and its Applications},
    VOLUME = {54},
      NOTE = {With a supplementary chapter by Katok and Leonardo Mendoza},
 PUBLISHER = {Cambridge University Press, Cambridge},
      YEAR = {1995},
     PAGES = {xviii+802},
      ISBN = {0-521-34187-6},
   MRCLASS = {58Fxx (34Cxx 34Dxx 58-01 58F11 58F15)},
  MRNUMBER = {1326374},
MRREVIEWER = {Edoh\ Amiran},
       DOI = {10.1017/CBO9780511809187},
       URL = {https://doi.org/10.1017/CBO9780511809187},
}

@book {Fisher-Hasselblatt,
    AUTHOR = {Fisher, Todd and Hasselblatt, Boris},
     TITLE = {Hyperbolic flows},
    SERIES = {Zurich Lectures in Advanced Mathematics},
 PUBLISHER = {EMS Publishing House, Berlin},
      YEAR = {[2019] \copyright 2019},
     PAGES = {xiv+723},
      ISBN = {978-3-03719-200-9},
   MRCLASS = {37-02 (37A30 37A35 37D20 37D40)},
  MRNUMBER = {3972204},
MRREVIEWER = {Miguel\ Paternain},
       DOI = {10.4171/200},
       URL = {https://doi.org/10.4171/200},
}

@article {Gouda,
    AUTHOR = {Gouda, Norio},
     TITLE = {Magnetic flows of {A}nosov type},
   JOURNAL = {Tohoku Math. J. (2)},
  FJOURNAL = {The Tohoku Mathematical Journal. Second Series},
    VOLUME = {49},
      YEAR = {1997},
    NUMBER = {2},
     PAGES = {165--183},
      ISSN = {0040-8735,2186-585X},
   MRCLASS = {58F15 (53C22 58F17 78A35)},
  MRNUMBER = {1447180},
MRREVIEWER = {Gerhard\ Knieper},
       DOI = {10.2748/tmj/1178225145},
       URL = {https://doi.org/10.2748/tmj/1178225145},
}

@article {visibility,
    AUTHOR = {Eberlein, Patrick and O'Neill, Barrett},
     TITLE = {Visibility manifolds},
   JOURNAL = {Pacific J. Math.},
  FJOURNAL = {Pacific Journal of Mathematics},
    VOLUME = {46},
      YEAR = {1973},
     PAGES = {45--109},
      ISSN = {0030-8730,1945-5844},
   MRCLASS = {53C20},
  MRNUMBER = {336648},
MRREVIEWER = {J.\ Ferrand},
       URL = {http://projecteuclid.org/euclid.pjm/1102946601},
}

@article {Wojtkowski,
    AUTHOR = {Wojtkowski, Maciej P.},
     TITLE = {Magnetic flows and {G}aussian thermostats on manifolds of
              negative curvature},
   JOURNAL = {Fund. Math.},
  FJOURNAL = {Fundamenta Mathematicae},
    VOLUME = {163},
      YEAR = {2000},
    NUMBER = {2},
     PAGES = {177--191},
      ISSN = {0016-2736,1730-6329},
   MRCLASS = {37D20 (37D40 53D25)},
  MRNUMBER = {1752103},
MRREVIEWER = {Gerhard\ Knieper},
       DOI = {10.4064/fm-163-2-177-191},
       URL = {https://doi.org/10.4064/fm-163-2-177-191},
}

@article {Artin,
    AUTHOR = {Artin, Emil},
     TITLE = {Ein mechanisches {S}ystem mit quasiergodischen {B}ahnen},
   JOURNAL = {Abh. Math. Sem. Univ. Hamburg},
  FJOURNAL = {Abhandlungen aus dem Mathematischen Seminar der Universit\"at
              Hamburg},
    VOLUME = {3},
      YEAR = {1924},
    NUMBER = {1},
     PAGES = {170--175},
      ISSN = {0025-5858,1865-8784},
   MRCLASS = {99-04},
  MRNUMBER = {3069425},
       DOI = {10.1007/BF02954622},
       URL = {https://doi.org/10.1007/BF02954622},
}

@article {Hopf,
    AUTHOR = {Hopf, Eberhard},
     TITLE = {Statistik der geod\"atischen {L}inien in {M}annigfaltigkeiten
              negativer {K}r\"ummung},
   JOURNAL = {Ber. Verh. S\"achs. Akad. Wiss. Leipzig Math.-Phys. Kl.},
  FJOURNAL = {Berichte \"uber die Verhandlungen der S\"achsischen Akademie
              der Wissenschaften zu Leipzig. Mathematisch-Physische Klasse},
    VOLUME = {91},
      YEAR = {1939},
     PAGES = {261--304},
      ISSN = {0366-0036},
   MRCLASS = {46.3X},
  MRNUMBER = {1464},
MRREVIEWER = {Gustav\ A.\ Hedlund},
}

@article {Pesin1,
    AUTHOR = {Pesin, Yakov B.},
     TITLE = {Characteristic {L}japunov exponents, and smooth ergodic
              theory},
   JOURNAL = {Uspehi Mat. Nauk},
  FJOURNAL = {Akademija Nauk SSSR i Moskovskoe Matemati\v ceskoe Ob\v s\v
              cestvo. Uspehi Matemati\v ceskih Nauk},
    VOLUME = {32},
      YEAR = {1977},
    NUMBER = {4(196)},
     PAGES = {55--112, 287},
      ISSN = {0042-1316},
   MRCLASS = {34D05 (28A65 58F10 58F15)},
  MRNUMBER = {466791},
MRREVIEWER = {A.\ Morimoto},
}

@article {Pesin2,
    AUTHOR = {Pesin, Yakov B.},
     TITLE = {Geodesic flows in closed {R}iemannian manifolds without focal
              points},
   JOURNAL = {Izv. Akad. Nauk SSSR Ser. Mat.},
  FJOURNAL = {Izvestiya Akademii Nauk SSSR. Seriya Matematicheskaya},
    VOLUME = {41},
      YEAR = {1977},
    NUMBER = {6},
     PAGES = {1252--1288, 1447},
      ISSN = {0373-2436},
   MRCLASS = {58F15},
  MRNUMBER = {488169},
MRREVIEWER = {A.\ Morimoto},
}

@article {AnosovSinai,
    AUTHOR = {Anosov, Dmitri and Sinai, Yakov},
     TITLE = {Certain smooth ergodic systems},
   JOURNAL = {Uspehi Mat. Nauk},
  FJOURNAL = {Akademija Nauk SSSR i Moskovskoe Matemati\v ceskoe Ob\v s\v
              cestvo. Uspehi Matemati\v ceskih Nauk},
    VOLUME = {22},
      YEAR = {1967},
    NUMBER = {5(137)},
     PAGES = {107--172},
      ISSN = {0042-1316},
   MRCLASS = {28.70},
  MRNUMBER = {224771},
}

@incollection {Humsan,
    AUTHOR = {Katok, Anatole},
     TITLE = {The 1965 {H}umsan school in ergodic theory},
 BOOKTITLE = {The collected works of {A}natole {K}atok. {V}ol. 1},
     PAGES = {1205--1207},
 PUBLISHER = {World Sci. Publishing, Singapore},
      YEAR = {[2024] \copyright 2024},
      ISBN = {[9789811237751]; [9789811237768]; [9789811238062]},
   MRCLASS = {01A72 (37-03 37Axx)},
  MRNUMBER = {4893901},
}

@article {Comtet,
    AUTHOR = {Comtet, Alain},
     TITLE = {On the {L}andau levels on the hyperbolic plane},
   JOURNAL = {Ann. Physics},
  FJOURNAL = {Annals of Physics},
    VOLUME = {173},
      YEAR = {1987},
    NUMBER = {1},
     PAGES = {185--209},
      ISSN = {0003-4916,1096-035X},
   MRCLASS = {81G10 (78A35)},
  MRNUMBER = {870891},
       DOI = {10.1016/0003-4916(87)90098-4},
       URL = {https://doi.org/10.1016/0003-4916(87)90098-4},
}

@article {Sunada,
    AUTHOR = {Sunada, Toshikazu},
     TITLE = {Magnetic Flows on a {R}iemann Surface},
   JOURNAL = {Proc. KAIST Math. Ann},
  FJOURNAL = {Proceedings of KAIST Mathematics Workshop},
    VOLUME = {8},
      YEAR = {1993},
    NUMBER = {1},
     PAGES = {93--108},
 PUBLISHER ="Analysis and Geometry",
       URL = {https://cir.nii.ac.jp/crid/1574231873874473728},
}

@book {Paternainbook,
    AUTHOR = {Paternain, Gabriel P.},
     TITLE = {Geodesic flows},
    SERIES = {Progress in Mathematics},
    VOLUME = {180},
 PUBLISHER = {Birkh\"auser Boston, Inc., Boston, MA},
      YEAR = {1999},
     PAGES = {xiv+149},
      ISBN = {0-8176-4144-0},
   MRCLASS = {53D25 (37D40 37J99)},
  MRNUMBER = {1712465},
MRREVIEWER = {Boris\ Hasselblatt},
       DOI = {10.1007/978-1-4612-1600-1},
       URL = {https://doi.org/10.1007/978-1-4612-1600-1},
}

@article {PaternainRigidity,
    AUTHOR = {Paternain, Gabriel P.},
     TITLE = {Magnetic rigidity of horocycle flows},
   JOURNAL = {Pacific J. Math.},
  FJOURNAL = {Pacific Journal of Mathematics},
    VOLUME = {225},
      YEAR = {2006},
    NUMBER = {2},
     PAGES = {301--323},
      ISSN = {0030-8730,1945-5844},
   MRCLASS = {37D40 (37C27 37J05 53D25)},
  MRNUMBER = {2233738},
MRREVIEWER = {Gerhard\ Knieper},
       DOI = {10.2140/pjm.2006.225.301},
       URL = {https://doi.org/10.2140/pjm.2006.225.301},
}

@article {PeyerimhoffSiburg,
    AUTHOR = {Peyerimhoff, Norbert and Siburg, Karl Friedrich},
     TITLE = {The dynamics of magnetic flows for energies above {M}a\~n\'e's
              critical value},
   JOURNAL = {Israel J. Math.},
  FJOURNAL = {Israel Journal of Mathematics},
    VOLUME = {135},
      YEAR = {2003},
     PAGES = {269--298},
      ISSN = {0021-2172,1565-8511},
   MRCLASS = {37J50 (37D40 53D25)},
  MRNUMBER = {1997047},
MRREVIEWER = {Gabriel\ P.\ Paternain},
       DOI = {10.1007/BF02776061},
       URL = {https://doi.org/10.1007/BF02776061},
}

@article {BurnsMatveev,
    AUTHOR = {Burns, Keith and Matveev, Vladimir S.},
     TITLE = {On the rigidity of magnetic systems with the same magnetic
              geodesics},
   JOURNAL = {Proc. Amer. Math. Soc.},
  FJOURNAL = {Proceedings of the American Mathematical Society},
    VOLUME = {134},
      YEAR = {2006},
    NUMBER = {2},
     PAGES = {427--434},
      ISSN = {0002-9939,1088-6826},
   MRCLASS = {37D40 (37J05 53C24 53D25)},
  MRNUMBER = {2176011},
MRREVIEWER = {Norbert\ Peyerimhoff},
       DOI = {10.1090/S0002-9939-05-08196-7},
       URL = {https://doi.org/10.1090/S0002-9939-05-08196-7},
}

@article{ComtetHouston,
  title={Effective action on the hyperbolic plane in a constant external field},
  author={Alain Comtet and P. J. Houston},
  journal={Journal of Mathematical Physics},
  year={1985},
  volume={26},
  pages={185-191},
  url={https://api.semanticscholar.org/CorpusID:119496252}
}

@article {GomesRuggiero,
    AUTHOR = {Gomes, Jos\'e{} Barbosa and Ruggiero, Rafael O.},
     TITLE = {Rigidity of magnetic flows for compact surfaces},
   JOURNAL = {C. R. Math. Acad. Sci. Paris},
  FJOURNAL = {Comptes Rendus Math\'ematique. Acad\'emie des Sciences. Paris},
    VOLUME = {346},
      YEAR = {2008},
    NUMBER = {5-6},
     PAGES = {313--316},
      ISSN = {1631-073X,1778-3569},
   MRCLASS = {37D40 (37C15 37E35 53C12)},
  MRNUMBER = {2414176},
MRREVIEWER = {Donal\ Hurley},
       DOI = {10.1016/j.crma.2008.01.011},
       URL = {https://doi.org/10.1016/j.crma.2008.01.011},
}

@article {Paternain-Paternain-1997,
    AUTHOR = {Paternain, Gabriel P. and Paternain, Miguel},
     TITLE = {First derivative of topological entropy for {A}nosov geodesic
              flows in the presence of magnetic fields},
   JOURNAL = {Nonlinearity},
  FJOURNAL = {Nonlinearity},
    VOLUME = {10},
      YEAR = {1997},
    NUMBER = {1},
     PAGES = {121--131},
      ISSN = {0951-7715,1361-6544},
   MRCLASS = {58F15 (58F05 58F17)},
  MRNUMBER = {1430743},
MRREVIEWER = {Gerhard\ Knieper},
       DOI = {10.1088/0951-7715/10/1/008},
       URL = {https://doi.org/10.1088/0951-7715/10/1/008},
}

@article {Paternain-regularity,
    AUTHOR = {Paternain, Gabriel P.},
     TITLE = {On the regularity of the {A}nosov splitting for twisted
              geodesic flows},
   JOURNAL = {Math. Res. Lett.},
  FJOURNAL = {Mathematical Research Letters},
    VOLUME = {4},
      YEAR = {1997},
    NUMBER = {6},
     PAGES = {871--888},
      ISSN = {1073-2780},
   MRCLASS = {58F15 (58F17)},
  MRNUMBER = {1492126},
MRREVIEWER = {Boris\ Hasselblatt},
       DOI = {10.4310/MRL.1997.v4.n6.a7},
       URL = {https://doi.org/10.4310/MRL.1997.v4.n6.a7},
}

@article {Grognet-entropy,
    AUTHOR = {Grognet, St\'ephane},
     TITLE = {Entropies des flots magn\'etiques},
   JOURNAL = {Ann. Inst. H. Poincar\'e{} Phys. Th\'eor.},
  FJOURNAL = {Annales de l'Institut Henri Poincar\'e. Physique Th\'eorique},
    VOLUME = {71},
      YEAR = {1999},
    NUMBER = {4},
     PAGES = {395--424},
      ISSN = {0246-0211},
   MRCLASS = {37D40 (37B40 53D25)},
  MRNUMBER = {1721559},
MRREVIEWER = {Boris\ Hasselblatt},
       URL = {http://www.numdam.org/item?id=AIHPA_1999__71_4_395_0},
}

@book {John-Lee-Riemannian-MFD,
    AUTHOR = {Lee, John M.},
     TITLE = {Introduction to {R}iemannian manifolds},
    SERIES = {Graduate Texts in Mathematics},
    VOLUME = {176},
   EDITION = {Second},
 PUBLISHER = {Springer, Cham},
      YEAR = {2018},
     PAGES = {xiii+437},
      ISBN = {978-3-319-91754-2; 978-3-319-91755-9},
   MRCLASS = {53-01 (53B20 53B30 53C20 53C21)},
  MRNUMBER = {3887684},
MRREVIEWER = {Robert\ J.\ Low},
}

@book {Handbook-Hasselblatt-Katok,
     TITLE = {Handbook of dynamical systems. {V}ol. 1{A}},
    EDITOR = {Hasselblatt, B. and Katok, A.},
 PUBLISHER = {North-Holland, Amsterdam},
      YEAR = {2002},
     PAGES = {xii+1220},
      ISBN = {0-444-82669-6},
   MRCLASS = {37-06},
  MRNUMBER = {1928517},
}

@article {Myers-Steenrod-theorem,
    AUTHOR = {Myers, Sumner Byron and Steenrod, Norman Earl},
     TITLE = {The group of isometries of a {R}iemannian manifold},
   JOURNAL = {Ann. of Math. (2)},
  FJOURNAL = {Annals of Mathematics. Second Series},
    VOLUME = {40},
      YEAR = {1939},
    NUMBER = {2},
     PAGES = {400--416},
      ISSN = {0003-486X,1939-8980},
   MRCLASS = {99-04},
  MRNUMBER = {1503467},
       DOI = {10.2307/1968928},
       URL = {https://doi.org/10.2307/1968928},
}

@article {Hedlund-transitivity-of-magnetic-flat,
    AUTHOR = {Hedlund, Gustav A.},
     TITLE = {On the metrical transitivity of the geodesics on closed
              surfaces of constant negative curvature},
   JOURNAL = {Ann. of Math. (2)},
  FJOURNAL = {Annals of Mathematics. Second Series},
    VOLUME = {35},
      YEAR = {1934},
    NUMBER = {4},
     PAGES = {787--808},
      ISSN = {0003-486X,1939-8980},
   MRCLASS = {99-04},
  MRNUMBER = {1503197},
       DOI = {10.2307/1968495},
       URL = {https://doi.org/10.2307/1968495},
}

@article {Eberlein-geodesic-flow-negative-curvature-I,
    AUTHOR = {Eberlein, Patrick},
     TITLE = {Geodesic flows on negatively curved manifolds. {I}},
   JOURNAL = {Ann. of Math. (2)},
  FJOURNAL = {Annals of Mathematics. Second Series},
    VOLUME = {95},
      YEAR = {1972},
     PAGES = {492--510},
      ISSN = {0003-486X},
   MRCLASS = {58F15},
  MRNUMBER = {310926},
MRREVIEWER = {Leon\ W.\ Green},
       DOI = {10.2307/1970869},
       URL = {https://doi.org/10.2307/1970869},
}

@article {SSChen-Eberlein,
    AUTHOR = {Chen, Su Shing and Eberlein, Patrick},
     TITLE = {Isometry groups of simply connected manifolds of nonpositive
              curvature},
   JOURNAL = {Illinois J. Math.},
  FJOURNAL = {Illinois Journal of Mathematics},
    VOLUME = {24},
      YEAR = {1980},
    NUMBER = {1},
     PAGES = {73--103},
      ISSN = {0019-2082},
   MRCLASS = {53C10 (53C20)},
  MRNUMBER = {550653},
MRREVIEWER = {Witold\ Mozgawa},
       URL = {http://projecteuclid.org/euclid.ijm/1256047798},
}

@article {Karpelevivc,
    AUTHOR = {Karpelevi\v{c}, Fridrikh Israilevich},
     TITLE = {The geometry of geodesics and the eigenfunctions of the
              {B}eltrami-{L}aplace operator on symmetric spaces},
Journal={Trans. Moscow Math. Soc.},
FJOURNAL={Transactions of the Moscow Mathematical Society, 1965},
YEAR={1967},
     PAGES = {51--199. Amer. Math. Soc.,},
   MRCLASS = {53.73 (22.00)},
  MRNUMBER = {231321},
MRREVIEWER = {A.\ Kor\'anyi},
}

@article {Bishop-Oneil,
    AUTHOR = {Bishop, Richard L. and O'Neill, Barrett},
     TITLE = {Manifolds of negative curvature},
   JOURNAL = {Trans. Amer. Math. Soc.},
  FJOURNAL = {Transactions of the American Mathematical Society},
    VOLUME = {145},
      YEAR = {1969},
     PAGES = {1--49},
      ISSN = {0002-9947,1088-6850},
   MRCLASS = {53.72 (57.00)},
  MRNUMBER = {251664},
MRREVIEWER = {T.\ Nagano},
       DOI = {10.2307/1995057},
       URL = {https://doi.org/10.2307/1995057},
}

@article {Bowen-h-expansive,
    AUTHOR = {Bowen, Rufus},
     TITLE = {Entropy-expansive maps},
   JOURNAL = {Trans. Amer. Math. Soc.},
  FJOURNAL = {Transactions of the American Mathematical Society},
    VOLUME = {164},
      YEAR = {1972},
     PAGES = {323--331},
      ISSN = {0002-9947,1088-6850},
   MRCLASS = {28.70 (54.00)},
  MRNUMBER = {285689},
MRREVIEWER = {Karl\ Sigmund},
       DOI = {10.2307/1995978},
       URL = {https://doi.org/10.2307/1995978},
}

@book {Bridson-Haefliger,
    AUTHOR = {Bridson, Martin R. and Haefliger, Andr\'e},
     TITLE = {Metric spaces of non-positive curvature},
    SERIES = {Grundlehren der mathematischen Wissenschaften [Fundamental
              Principles of Mathematical Sciences]},
    VOLUME = {319},
 PUBLISHER = {Springer-Verlag, Berlin},
      YEAR = {1999},
     PAGES = {xxii+643},
      ISBN = {3-540-64324-9},
   MRCLASS = {53C23 (20F65 53C70 57M07)},
  MRNUMBER = {1744486},
MRREVIEWER = {Athanase\ Papadopoulos},
       DOI = {10.1007/978-3-662-12494-9},
       URL = {https://doi.org/10.1007/978-3-662-12494-9},
}

@incollection {Furstenburg,
    AUTHOR = {Furstenberg, Harry},
     TITLE = {The unique ergodicity of the horocycle flow},
 BOOKTITLE = {Recent advances in topological dynamics ({P}roc. {C}onf.
              {T}opological {D}ynamics, {Y}ale {U}niv., {N}ew {H}aven,
              {C}onn., 1972; in honor of {G}ustav {A}rnold {H}edlund)},
    SERIES = {Lecture Notes in Math.},
    VOLUME = {Vol. 318},
     PAGES = {95--115},
 PUBLISHER = {Springer, Berlin-New York},
      YEAR = {1973},
   MRCLASS = {22D40 (58F15)},
  MRNUMBER = {393339},
MRREVIEWER = {Leonard\ F.\ Richardson},
}

@article {Ballmann-Brin-Eberlein,
    AUTHOR = {Ballmann, Werner and Brin, Misha and Eberlein, Patrick},
     TITLE = {Structure of manifolds of nonpositive curvature. {I}},
   JOURNAL = {Ann. of Math. (2)},
  FJOURNAL = {Annals of Mathematics. Second Series},
    VOLUME = {122},
      YEAR = {1985},
    NUMBER = {1},
     PAGES = {171--203},
      ISSN = {0003-486X,1939-8980},
   MRCLASS = {58F17 (53C20)},
  MRNUMBER = {799256},
MRREVIEWER = {Midori\ Goto},
       DOI = {10.2307/1971373},
       URL = {https://doi.org/10.2307/1971373},
}

@article {Bowen75,
    AUTHOR = {Bowen, Rufus},
     TITLE = {Some systems with unique equilibrium states},
   JOURNAL = {Math. Systems Theory},
  FJOURNAL = {Mathematical Systems Theory. An International Journal on
              Mathematical Computing Theory},
    VOLUME = {8},
      YEAR = {1974/75},
    NUMBER = {3},
     PAGES = {193--202},
      ISSN = {0025-5661},
   MRCLASS = {28A65},
  MRNUMBER = {399413},
MRREVIEWER = {B.\ Weiss},
       DOI = {10.1007/BF01762666},
       URL = {https://doi.org/10.1007/BF01762666},
}

@misc{Ballmann_lecture,
  author       = {Ballmann, Werner},
  title        = {Lectures on Spaces of Nonpositive Curvature},
  note         = {Lecture notes, Max Planck Institute for Mathematics, Bonn},
  year         = {1995},
  url          = {https://people.mpim-bonn.mpg.de/hwbllmnn/archiv/NPC0606.pdf}
}

@article {Climenhaga-Thompson-advances,
    AUTHOR = {Climenhaga, Vaughn and Thompson, Daniel J.},
     TITLE = {Unique equilibrium states for flows and homeomorphisms with
              non-uniform structure},
   JOURNAL = {Adv. Math.},
  FJOURNAL = {Advances in Mathematics},
    VOLUME = {303},
      YEAR = {2016},
     PAGES = {745--799},
      ISSN = {0001-8708,1090-2082},
   MRCLASS = {37D35},
  MRNUMBER = {3552538},
MRREVIEWER = {Mike\ Todd},
       DOI = {10.1016/j.aim.2016.07.029},
       URL = {https://doi.org/10.1016/j.aim.2016.07.029},
}

@book {DoCarmo-Riem-Geo,
    AUTHOR = {do Carmo, Manfredo Perdig\~ao},
     TITLE = {Riemannian geometry},
    SERIES = {Mathematics: Theory \& Applications},
   EDITION = {Portuguese},
 PUBLISHER = {Birkh\"auser Boston, Inc., Boston, MA},
      YEAR = {1992},
     PAGES = {xiv+300},
      ISBN = {0-8176-3490-8},
   MRCLASS = {53-01},
  MRNUMBER = {1138207},
MRREVIEWER = {Bang-yen\ Chen},
       DOI = {10.1007/978-1-4757-2201-7},
       URL = {https://doi.org/10.1007/978-1-4757-2201-7},
}

@article {HurderKatok,
    AUTHOR = {Hurder, S. and Katok, A.},
     TITLE = {Differentiability, rigidity and {G}odbillon-{V}ey classes for
              {A}nosov flows},
   JOURNAL = {Inst. Hautes \'Etudes Sci. Publ. Math.},
  FJOURNAL = {Institut des Hautes \'Etudes Scientifiques. Publications
              Math\'ematiques},
    NUMBER = {72},
      YEAR = {1990},
     PAGES = {5--61},
      ISSN = {0073-8301,1618-1913},
   MRCLASS = {58F18 (57R30 58F15 58F17)},
  MRNUMBER = {1087392},
MRREVIEWER = {Takashi\ Tsuboi},
       URL = {http://www.numdam.org/item?id=PMIHES_1990__72__5_0},
}

@article {MarkedLength,
    AUTHOR = {Assenza, Valerio and De Simoi, Jacopo and Marshall Reber,
              James and Terek, Ivo},
     TITLE = {Marked length spectrum rigidity for {A}nosov magnetic
              surfaces},
   JOURNAL = {Adv. Math.},
  FJOURNAL = {Advances in Mathematics},
    VOLUME = {500},
      YEAR = {2026},
     PAGES = {Paper No. 111091, 25},
      ISSN = {0001-8708,1090-2082},
   MRCLASS = {37D40 (37C10 37C27 53C24)},
  MRNUMBER = {5083058},
       DOI = {10.1016/j.aim.2026.111091},
       URL = {https://doi.org/10.1016/j.aim.2026.111091},
}

@article{GLP25,
  author  = {Guillarmou, Colin and Lefeuvre, Thibault and Paternain, Gabriel P.},
  title   = {Marked length spectrum rigidity for {A}nosov surfaces},
  journal = {Duke Math. J.},
  volume  = {174},
  number  = {1},
  pages   = {131--157},
  year    = {2025}
}

@article{MRS24,
  author  = {Marshall Reber, James and Shen, Yumin},
  title   = {Anosov magnetic flows on surfaces},
  journal = {arXiv preprint arXiv:2406.18735},
  year    = {2024}
}

@article{MR23,
  author  = {Marshall Reber, James},
  title   = {Deformative magnetic marked length spectrum rigidity},
  journal = {Bull. Lond. Math. Soc.},
  volume  = {55},
  number  = {6},
  pages   = {3077--3096},
  year    = {2023},
  doi     = {10.1112/blms.12911}
}

@article{MR24Corr,
  author  = {Marshall Reber, James},
  title   = {Corrigendum: {D}eformative magnetic marked length spectrum rigidity},
  journal = {Bull. Lond. Math. Soc.},
  volume  = {56},
  number  = {12},
  pages   = {3920--3923},
  year    = {2024},
  doi     = {10.1112/blms.13195}
}
\end{document}

\hrule the following was commented out from the proof of \cref{prop:stable-mag-jf-same-}

